\newcommand{\koop}{\mathcal{K}}
\newcommand{\dom}[1]{\mathcal{D}(#1)}
\newcommand{\injmod}{\sigma_{\mathrm{inf}}}
\newcommand{\m}[1]{\mathbf{#1}}
\DeclareMathOperator{\dist}{dist}
\DeclareMathOperator{\spann}{span}
\DeclareMathOperator{\diag}{diag}
\DeclareMathOperator{\spec}{Sp}
\DeclareMathOperator{\res}{res}
\DeclareMathOperator{\closure}{Cl}
\DeclareMathOperator{\gridop}{Grid}
\newcommand*{\dd}{{\,\mathrm{d}}}
\newcommand*{\Kv}{{\mathbb{K}}}
\newtheorem{theorem}{Theorem}[section]
\newtheorem{proposition}[theorem]{Proposition}
\newtheorem{lemma}[theorem]{Lemma}
\newtheorem{example}[theorem]{Example}
\newtheorem{definition}[theorem]{Definition}
\newtheorem{corollary}[theorem]{Corollary}
\newtheorem{remark}[theorem]{Remark}
\numberwithin{equation}{section}
\def\thick{0.8}
\date{}
\begin{document}

\title{\vspace{0mm}Convergent Methods for Koopman Operators\\on Reproducing Kernel Hilbert Spaces}
\author{Nicolas Boull\'e\thanks{Department of Mathematics, Imperial College London, London, SW7 2AZ UK (n.boulle@imperial.ac.uk).}\and Matthew Colbrook\thanks{DAMTP, University of Cambridge, Cambridge, CB3 0WA UK (m.colbrook@damtp.cam.ac.uk, gjc51@cam.ac.uk).}\and Gustav Conradie\footnotemark[2]}
\maketitle

\begin{abstract}
    Data-driven spectral analysis of Koopman operators is a powerful tool for understanding numerous real-world dynamical systems, from neuronal activity to variations in sea surface temperature. The Koopman operator acts on a function space and is most commonly studied on the space of square-integrable functions. However, defining it on a suitable reproducing kernel Hilbert space (RKHS) offers numerous practical advantages, including pointwise predictions with error bounds, improved spectral properties that facilitate computations, and more efficient algorithms, particularly in high dimensions. We introduce the first general, provably convergent, data-driven algorithms for computing spectral properties of Koopman and Perron–Frobenius operators on RKHSs. These methods efficiently compute spectra and pseudospectra with error control and spectral measures while exploiting the RKHS structure to avoid the large-data limits required in the $L^2$ settings. The function space is determined by a user-specified kernel, eliminating the need for quadrature-based sampling as in $L^2$ and enabling greater flexibility with finite, externally provided datasets. Using the Solvability Complexity Index hierarchy, we construct adversarial dynamical systems for these problems to show that no algorithm can succeed in fewer limits, thereby proving the optimality of our algorithms. Notably, this impossibility extends to randomized algorithms and datasets. We demonstrate the effectiveness of our algorithms on challenging, high-dimensional datasets arising from real-world measurements and high-fidelity numerical simulations, including turbulent channel flow, molecular dynamics of a binding protein, Antarctic sea ice concentration, and Northern Hemisphere sea surface height. The algorithms are publicly available in the software package \texttt{SpecRKHS}.

    \vspace{3mm}
    \noindent
    \textit{Keywords:} dynamical systems, Koopman operator, Perron--Frobenius operator, data-driven discovery,
    dynamic mode decomposition, spectral theory, reproducing kernel Hilbert space, solvability complexity index

    \vspace{1mm}
    \noindent
    \textit{MSC 2020:}   37A30, 37M10, 37N10, 47A10, 47B32, 47B33, 65P99
\end{abstract}

\tableofcontents

\section{Introduction}

Dynamical systems underpin numerous scientific and engineering disciplines, including classical mechanics, neuroscience, fluid dynamics, and finance~\cite{strogatz2024nonlinear}. Understanding how a system's state evolves over time is essential for tasks such as prediction, control, and pattern discovery~\cite{cross1993pattern,skogestad2005multivariable}. We consider a discrete-time, autonomous dynamical system $(F,\mathcal{X})$ governed by the evolution equation:
\begin{equation}
    \label{the_dynamical_system}
    x_{n+1} = F(x_n),\quad n\in\mathbb{N},\,x_0\in\mathcal{X},
\end{equation}
where $\mathcal{X}$ is the state space and $F:\mathcal{X}\rightarrow\mathcal{X}$. Such discrete-time formulations are natural when studying iterated maps or working with time-series data sampled from an underlying continuous process. Since the 1960s, the interplay between numerical analysis and dynamical systems has driven significant advances in both fields~\cite{kalman1963mathematical,stuart1998dynamical}. The rise of big data~\cite{hey2009fourth}, statistical learning~\cite{friedman2017elements}, and machine learning~\cite{mohri2018foundations} has led to the growing importance of data-driven methods for analyzing and understanding complex dynamics~\cite{schmidt2009distilling}. Analyzing dynamical systems presents several challenges. Local models offer insight near fixed points but fail to capture global nonlinear behavior, which remains a major open problem~\cite{brunton2019data}. In practice, limited system knowledge or sparse observations often preclude local modeling, and high dimensionality further complicates geometric analysis.

Koopman operator theory provides a powerful framework for analyzing nonlinear dynamics through a form of global linearization, originally introduced by Koopman and von Neumann in the early $20^{\rm th}$ century~\cite{koopman1931hamiltonian,koopman1932dynamical}. More recently, the development of interpretable modal decompositions and practical algorithms for computing Koopman spectral properties has fueled a surge of interest in data-driven approaches for analysis, model reduction, prediction, and control. This growing popularity has even earned the label `Koopmanism'~\cite{budivsic2012applied}; see reviews \cite{mezic2013analysis,budivsic2012applied,brunton2021modern,colbrook2023multiverse}. Notable successes include robot control \cite{haggerty2023control}, identifying patterns in climate variability \cite{froyland2021spectral}, state-of-the-art training of neural networks \cite{orvieto2023resurrecting}, studying disease spread \cite{proctor2015discovering}, analyzing neural brain recordings \cite{brunton2016extracting}, non-autonomous systems \cite{froyland2024revealing}, and interpretable neural networks \cite{lusch2018deep}.

This paper introduces the first general suite of provably convergent algorithms for the data-driven approximation of Koopman operators on reproducing kernel Hilbert spaces (RKHSs). This setting has grown rapidly in recent years due to its flexibility and suitability for modern data-driven methods, and it presents several advantages over classical square-integrable ($L^2$) function spaces. We prove that our algorithms are optimal by establishing impossibility results showing that (even randomized) algorithms cannot converge with a probability greater than 2/3. Our contributions are summarized in \cref{fig:introdiagram} and outlined in \cref{sec:contributions}.

\begin{figure}[th]
    \centering
    \begin{overpic}[width=\textwidth]{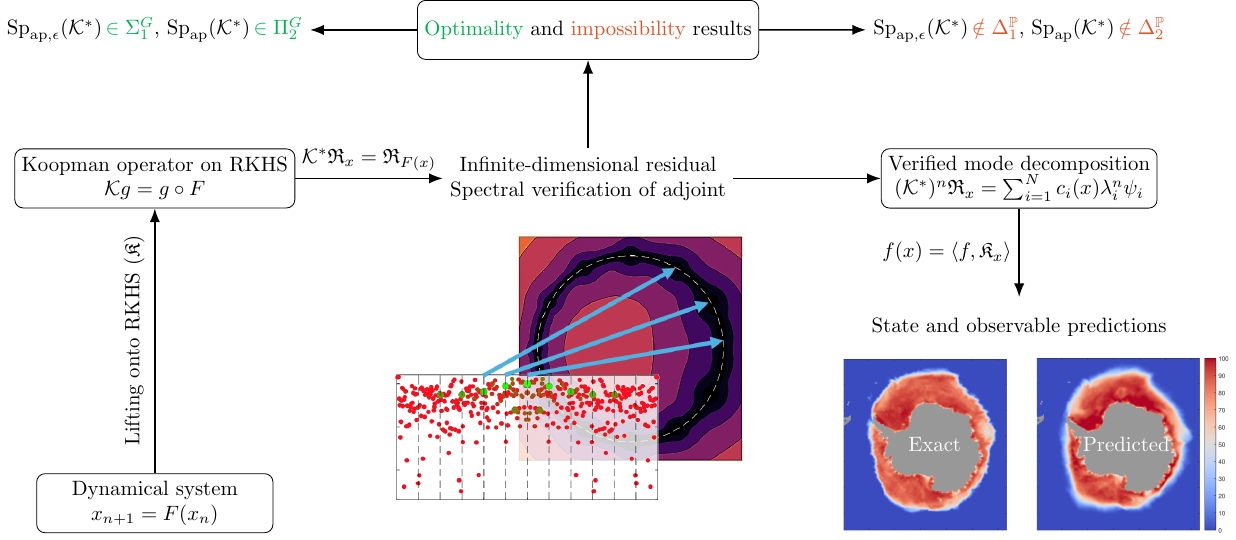}
    \end{overpic}
    \caption{Schematic of the main contributions summarized in \cref{algorithms-table,sci-classification-table}. We compute approximations of Koopman operators associated with dynamical systems on reproducing kernel Hilbert spaces (RKHS), and introduce a collection of provably convergent algorithms for computing spectral properties such as eigenvalues, approximate point pseudospectrum, spectral measures, and perform accurate state and observable predictions. The algorithms are supported by impossibility results, framed in the SCI hierarchy, to quantify the difficulty of each problem and prove the optimality of algorithms.}
    \label{fig:introdiagram}
\end{figure}

\subsection{Data-driven Koopman analysis}

Instead of linearizing the dynamics in the state space, which is generally impossible to do globally for nonlinear functions $F$, the Koopman approach considers an infinite-dimensional linear operator acting on observables—functions that map the state $x \in \mathcal{X}$ to some measurement of interest. Given an observable $g:\mathcal{X}\to\mathbb{C}$ in a suitable function space $\mathcal{H}$, the Koopman operator $\koop_F$ (denoted by $\koop$ if $F$ is clear from context) is defined by the composition rule:
$$
    [\koop_Fg](x)=[g\circ F](x)=g(F(x)).
$$
In other words, $\koop_F$ advances an observable $g$ one step along the dynamics. Because composition with $F$ is a linear operation, i.e., $(a g_1 + b g_2)\circ F = ag_1(F(x)) + bg_2(F(x))$, $\koop_F$ is a linear operator regardless of whether $F$ itself is linear or nonlinear. This remarkable fact means one can, in principle, analyze a nonlinear system by studying the linear action of $\koop_F$ on an appropriate function space of observables.

Koopman operators initially played a pivotal role in the ergodic theorems of von Neumann~\cite{neumann1932proof} and Birkhoff~\cite{birkhoff1931proof}, and became a key tool in ergodic theory~\cite{eisner2015operator}. Sparked by \cite{mezic2005spectral}, they have recently been extensively used in data-driven methods for studying dynamical systems to overcome the problem of incomplete knowledge of $F$. In this setting, one observes a collection of so-called `snapshots' of the system, i.e.,
\begin{equation}
    \label{snapshots_def}
    (x^{(m)},y^{(m)})\text{ such that } y^{(m)}=F(x^{(m)}),\quad\text{for }m\in\{1,\ldots,M\}.
\end{equation}
These snapshots could arise from a single trajectory or multiple trajectories. Finite-dimensional approximations of $\mathcal{K}$ can be constructed from the snapshot data in~\cref{snapshots_def} using tools from numerical linear algebra and statistics. For example, Dynamic Mode Decomposition (DMD) uses a linear model to fit trajectory data~\cite{schmid2010dynamic}, and Extended DMD (EDMD)~\cite{williams2015data} builds a Galerkin approximation of $\mathcal{K}$ using a dictionary of observables.

A major focus is on the Koopman operator's spectral properties (e.g., its eigenvalues, continuous spectrum, and eigenfunctions) since these encode essential characteristics of the dynamical system's behavior. For example, Koopman eigenfunctions can identify invariant modes and quasiperiodic structures in the dynamics, while continuous spectral components are associated with chaotic or mixing behavior \cite[Sec.~2.1.2]{colbrook2023multiverse}. Mixing and ergodicity can be characterized in terms of eigenvalues \cite{halmos_lectures_1956}. Koopman pseudoeigenfunctions exhibit approximate temporal coherence \cite{giannakis2019data,valva2023consistent} and can be used to provide reduced order models through a time-space separation of variables. Pseudoeigenfunctions encode information about the dynamics of the underlying system, such as the global stability of equilibria. Moreover, their level sets reveal structures such as ergodic partitions, invariant manifolds, and isostables \cite{budivsic2012applied,mauroy2013isostables}.

\subsection{The challenges of Koopman analysis and infinite-dimensional spectral computations}

Computing these spectral properties is an active area of research. However, since the function space $\mathcal{H}$ is typically infinite-dimensional, the spectrum of $\mathcal{K}$ is often far more intricate—and difficult to compute—than that of a finite matrix. Several interrelated challenges arise. One is spectral pollution, where discretizations introduce persistent spurious eigenvalues unrelated to the true spectrum~\cite{lewin2009spectral,colbrook2019compute}. Another is spectral invisibility, where parts of the spectrum are missed altogether. Methods such as EDMD suffer from both issues~\cite{colbrook2024limits}.

Even for a single observable $g\in\mathcal{H}$, the span of $\mathcal{K}^ng$ is generically infinite-dimensional.\footnote{A finite-dimensional invariant subspace of $\koop$ is a space of observables $\mathcal{G} = {\rm span}\{g_1,\ldots,g_k\}$ such that $\mathcal{K}g\in \mathcal{G}$ for all $g\in\mathcal{G}$.} A common assumption is that $\koop$ admits a nontrivial\footnote{If the constant function $1$ lies in $\mathcal{H}$, it generates an invariant subspace, but this is not informative for dynamics so `trivial'.} finite-dimensional invariant subspace, an assumption that often fails. Even when such subspaces exist, they may be hard to compute or insufficient to capture the dynamics of interest. Moreover, $\mathcal{K}$ can exhibit continuous spectra, which often has physical or dynamical significance~\cite[Sec.~2.1.2]{colbrook2023multiverse}. However, continuous spectra must be treated with considerable care as discretizing $\mathcal{K}$ destroys its presence.

In response to these challenges, a mature body of work has developed over the past decade on provably convergent algorithms for computing spectral properties of Koopman operators. These approaches almost exclusively operate in the Hilbert space $\mathcal{H}=L^2(\mathcal{X},\omega)$ of observables, where $\omega$ is a positive measure on $\mathcal{X}$. This setting is particularly suitable for measure-preserving dynamics on attractors, though it also accommodates more general scenarios. Residual Dynamic Mode Decomposition (ResDMD)\cite{colbrook2021rigorousKoop,colbrook2023residualJFM} has been developed to address the issues of spectral pollution and invisibility. Methods such as the Christoffel--Darboux kernel\cite{korda2020data}, Koopman resolvent analysis\cite{colbrook2025rigged}, and Measure-Preserving Extended Dynamic Mode Decomposition (mpEDMD)\cite{colbrook2023mpedmd} enable the computation of spectral measures and continuous spectra. In parallel, techniques for spectral analysis of flows directly from time-series data have also been advanced~\cite{giannakis2019data,das2021reproducing}. This list only scratches the surface—see~\cite{colbrook2023multiverse} for a recent review. Spectral computation for Perron--Frobenius operators is another active area~\cite{dellnitz1999approximation,froyland2007ulam}; on appropriate function spaces, these operators are adjoint to Koopman operators.

A potentially surprising consequence is that all these algorithms depend on several parameters that must be taken successively to infinity \cite[Table 1]{colbrook2024limits}. For example, in their study of chaotic systems, Dellnitz and Junge approximated measures that describe the statistical behavior of these systems using three successive limits~\cite[Thm.~4.4]{dellnitz1999approximation}: increasing the size of the data set, increasing the number of observables, and applying a smoothing step. EDMD, arguably the most popular algorithm for computing Koopman spectra, typically uses two limits: a large data limit $M\rightarrow\infty$ (increasing the number of snapshots in \cref{snapshots_def}) followed by a large subspace limit. This is not due to lack of analysis; it was shown in~\cite{colbrook2024limits} that these limits cannot, in general, be combined and that for many systems, no alternative algorithm converges in a single limit to the spectrum of Koopman operators in spaces $L^2$.

The need for multiple successive limits in infinite-dimensional spectral computations is a generic phenomenon captured by the Solvability Complexity Index (SCI) hierarchy~\cite{ben2015can,Hansen_JAMS,colbrook2020PhD}. The SCI framework delineates the boundaries of computational feasibility and has recently resolved the longstanding problem of computing spectra of general bounded operators on separable Hilbert spaces—a problem rooted in the work of Szeg{\H o} \cite{Szego} on finite section methods and orthogonal polynomials and Schwinger \cite{Schwinger} on finite-dimensional approximations in quantum mechanics. The algorithm $\Gamma_{n_3,n_2,n_1}$ from \cite{Hansen_JAMS} converges to the spectrum in the Hausdorff metric in three successive limits:
$$
    \lim_{n_3\rightarrow\infty}\lim_{n_2\rightarrow\infty}\lim_{n_1\rightarrow\infty}\Gamma_{n_3,n_2,n_1}(B)=\mathrm{Sp}(B)\quad\forall B\in\mathcal{B}(\ell^2(\mathbb{N})),
$$
where  $\mathcal{B}(\ell^2(\mathbb{N}))$ denotes the bounded operators on $\ell^2(\mathbb{N})$. It is impossible to reduce the number of successive limits via any algorithm \cite{ben2015can}. While traditional techniques rely on single-limit procedures—e.g., computing eigenvalues of increasingly large matrices via mesh refinement or dimensional truncation—this requirement for multiple limits highlights an intrinsic analytical complexity. For certain operator subclasses, single-limit algorithms with error control are possible~\cite{colbrook2019compute,colbrook3}. The SCI hierarchy thus not only guides algorithm design but also establishes optimality and the fundamental limitations of computational methods.

\subsection{Reproducing kernel Hilbert spaces of observables}

An RKHS $\mathcal{H}$ on $\mathcal{X}$ is a Hilbert space of functions in which point evaluation is continuous (precise definitions are delayed until \cref{rkhskedmd_sect}). Working in an RKHS corresponds to lifting the state $x$ through a (typically nonlinear) feature map into a high- or infinite-dimensional feature space, where linear operations encode nonlinear dynamics in the original state space. This approach offers several notable advantages over $L^2$ spaces\footnote{There are also important connections between RKHSs and $L^2$ spaces. For example, \cite{das2021reproducing} studies coherent observables of the continuous-time, unitary Koopman operator on $L^2$ by compactifying its generator. A family of compact, skew-adjoint operators is constructed that map between suitable RKHSs and spectral convergence is established in an appropriate sense. In a related development, \cite{valva2023consistent} applies similar compactification techniques to the resolvent of the Koopman generator. Additionally, \cite{bandtlow_edmd_2024} proves that for analytic expanding maps on the circle, an $L^2$ EDMD-type algorithm achieves exponentially fast spectral convergence to the spectrum on certain RKHSs.}:

\begin{itemize}
    \item First, this kernel-based lifting of dynamics enables the use of a much richer class of observables than traditional polynomial or $L^2$ bases, without explicitly constructing high-dimensional features (the so-called `kernel trick' \cite{scholkopf_kernel_2000}). Kernelized EDMD (kEDMD) \cite{williams2015kernel} is a powerful and widely used method for approximating the Koopman operator, particularly when the number of required observables exceeds the number of data points. Numerous efficient algorithms from kernel-based learning (e.g., kernel PCA, Gaussian process regression) can be adapted for Koopman spectral analysis. For instance, \cite{kawahara_dynamic_2016} projects the Koopman operator onto a Krylov subspace generated by the RKHS feature map, while \cite{klus_eigendecompositions_2020} develops RKHS-based formulations of stochastic Koopman and Perron--Frobenius operators. These techniques provide an effective way to address the challenges posed by high-dimensional state spaces.
    \item Second, since pointwise evaluation is bounded in an RKHS, error bounds on approximate eigenfunctions or Koopman mode decompositions (used for forecasting observables) can be translated into pointwise convergence guarantees (see \cref{sec:gauss_map_example} for an example showing the benefits of pointwise forecasts). For example, \cite{kohne_linfty-error_2024} shows that, for a broad class of dynamical systems, the Koopman operator is bounded on the native space of the Wendland kernel, which corresponds to a suitable Sobolev space. They derive error bounds for approximations of the Koopman operator that map functions from this Sobolev space to continuous bounded functions. These results are further applied to data-driven stability analysis and control in \cite{bold_kernel-based_2024}. Additionally, \cite{philipp_error_2023,philipp_error_2024} provide bounds for stochastic Koopman operators under ergodic and i.i.d. sampling.
    \item Third, the flexibility in choosing the kernel allows one to tailor the RKHS to capture specific dynamical features of interest. Many real-world systems are neither measure-preserving nor ergodic, and restricting to $L^2$ observables can be too coarse or impractical in data-driven settings—particularly when only finite samples are available. For example, \cite{mauroy_analytic_2024} studies RKHSs with analytic kernels and develops Analytic EDMD, which uses a data-driven Taylor expansion to capture the dissipative dynamics near fixed points. In \cite{mezic_spectrum_2020}, the modulated Fock space—an RKHS tailored to dissipative systems—is introduced, and the Koopman spectrum is computed on this space. Meanwhile, \cite{bevanda_koopman_2023} constructs a Koopman eigenfunction kernel from trajectory data, whose native space is invariant under the Koopman operator.
\end{itemize}

The RKHS approach thus combines the strengths of data-driven machine learning with the physics-based objective of obtaining linear representations of nonlinear dynamics. In the course of this paper, we shall uncover a fourth important advantage of RKHS spaces:
\begin{itemize}
    \item Fourth, working in an RKHS avoids the large data limit $M \rightarrow \infty$ typically required in $L^2$ settings, thereby reducing the SCI for computing Koopman spectral properties. Moreover, in the RKHS setting, we do not need to assume that the snapshots defined in \cref{snapshots_def} are drawn according to a quadrature rule associated with an $L^2(\mathcal{X},\omega)$ inner product. This makes the approach significantly more flexible and broadly applicable. Instead, the function space is determined by a user-specified kernel—an essential advantage in practical scenarios where the dataset is finite and externally provided.
\end{itemize}

Despite these advantages, there is currently no overarching framework for addressing the challenges outlined in the previous section when computing spectral properties in infinite-dimensional RKHSs.\footnote{While it has been attempted to verify kEDMD eigenpairs using residuals, it is known that trying to compute the standard $L^2$ residual fails without splitting the data into two subsets \cite{colbrook2021rigorousKoop,colbrook_another_2024}.} In particular, methods such as kEDMD suffer from spectral pollution and spectral invisibility. To overcome these issues, it is essential to connect the resulting approximations to the underlying Koopman operator rigorously.

\subsection{Contributions and outline}
\label{sec:contributions}

In response to these challenges, we develop the first general suite of rigorously convergent, data-driven algorithms for computing Koopman spectral properties on RKHSs. Crucially, we do not assume that $\mathcal{K}$ has a nontrivial finite-dimensional invariant subspace or a purely discrete spectrum—our methods require few assumptions. We provide some mathematical background in \cref{sec:maths_background} and summarize our contributions below.
\begin{itemize}
    \item In \cref{provconv_sect}, we develop the first provably convergent data-driven algorithms for computing spectral properties of Koopman and Perron--Frobenius operators on RKHSs. These algorithms avoid spectral pollution and invisibility, and include a-posteriori verification via error bounds (enabling assessment of kernel suitability). As shown in \cref{avoidquadapprox_sect}, working in an RKHS bypasses the large data limits required in $L^2$ settings, reducing the problem's SCI classification. We also provide a new connection: kEDMD is the transpose of a Galerkin approximation of the Perron--Frobenius operator on the relevant RKHS. We establish convergence for both noise-free (\cref{sec:initial_convergence}) and noisy (\cref{inexactupperbounds_sect}) data, and provide verified error control for observable predictions through Perron--Frobenius mode decompositions (\cref{sec:errorcontrolpfmd}).
          This method reveals a duality between $\koop$ and $\koop^*$ in observable prediction: $\koop$ requires solving a least-squares problem for each observable $g$, while $\koop^*$ requires solving one for each point $x_0$. As a result, Perron--Frobenius methods are advantageous when predicting many observables at few points (e.g., in high-dimensional systems), whereas Koopman-based methods are preferable when predicting a few observables at many points. Numerical examples are presented in \cref{sec:pseud_examples}.
    \item In \cref{sci_sect}, we show the optimality of our algorithms by proving that no algorithm can succeed in fewer limits for these problems. We do this through the construction of adversarial dynamical systems for which no algorithm can compute spectral properties. Our approach embeds interval exchange maps into the dynamics for Sobolev space RKHSs. This section also includes a self-contained introduction to the SCI hierarchy. In \cref{sec_extension_to random}, we extend these results to probabilistic algorithms, showing that no randomized method can succeed with probability greater than $2/3$. These results apply to any random algorithm or data collection, such as EDMD with random trajectories and algorithms using stochastic components, including SGD-based neural network training. Hence, the adversarial systems we construct are not pathological but illustrate fundamental computational barriers.
    \item In \cref{unisa_sect}, we address the challenge of continuous spectra by developing convergent algorithms to approximate spectral measures of Koopman and Perron–Frobenius operators on RKHSs. We first provide theorems that characterize when Koopman/Perron--Frobenius operators on RKHSs are self-adjoint or unitary. We then develop convergent, data-driven algorithms for computing their spectral measures by evaluating the resolvent $(\mathcal{K}^* - Iz)^{-1}$ to obtain smoothed approximations. Two numerical examples are provided: M\"obius maps on the Poincar\'e disk (unitary) and stochastic Koopman operators on infinite Markov chains (self-adjoint). These results allow for the effective capture and analysis of continuous spectrum components, which are beyond the reach of traditional eigenvalue-focused methods.
    \item In \cref{numexample_sect}, we demonstrate our methods in high-dimensional systems using real-world data and data from high-fidelity numerical simulations. Examples include turbulent channel flow, molecular dynamics of a binding protein, Antarctic sea ice concentration, and Northern Hemisphere sea surface height. These applications highlight the strength of the RKHS approach in handling high-dimensional state spaces.
\end{itemize}
In summary, we have developed the first general framework of provably convergent Koopman spectral algorithms on RKHSs, established their computational optimality, and demonstrated their effectiveness on real-world systems. These results apply to any RKHS. \cref{algorithms-table} shows the algorithms developed in this paper and \cref{sci-classification-table} shows the SCI classifications.  This work opens avenues for future research, such as extending these methods to other operator-theoretic problems or integrating them with machine learning approaches. Code and data accompanying this work are publicly available at \url{https://github.com/GustavConradie1/SpecRKHS}.

\begin{table}[t]
    \caption{The key algorithms that we develop in this paper and the sections providing the corresponding theoretical results. These come in the form of convergence proofs and/or error bounds (or control) on the output of the algorithms.}
    \begin{center}
        \begin{tabular}{ lllc }
            \toprule[\thick pt]
            Computed quantity                           & Algorithm                      & Name                 & Theoretical results         \\
            \midrule[\thick pt]
            Verified eigenvalues                        & \cref{evalverif_alg}           & SpecRKHS-Eig         & \cref{verepairs_sect}       \\
            Approximate point pseudospectrum of PF      & \cref{pspecadjoint_alg}        & SpecRKHS-PseudoPF
                                                        & \cref{sec:initial_convergence}                                                      \\
            Approximate point pseudospectrum of Koopman & \cref{pspeckoop_alg}           & SpecRKHS-PseudoKoop  & \cref{sec:fullpspecspec}    \\
            Future evolution of observables             & \cref{alg_verifiedPF}          & SpecRKHS-Obs         & \cref{sec:errorcontrolpfmd} \\
            Self-adjoint spectral measures              & \cref{saspecmeas_alg}          & SpecRKHS-SAdjMeasure  & \cref{compspecmeas_sec}     \\
            Unitary spectral measures                   & \cref{unispecmeas_alg}         & SpecRKHS-UniMeasure & \cref{compspecmeas_sec}     \\
            \bottomrule[\thick pt]
        \end{tabular}
        \label{algorithms-table}
    \end{center}
\end{table}

\begin{table}[t]
    \caption{A summary of the upper bounds (convergence results) and lower bounds (impossibility results) in terms of the SCI hierarchy for what is computationally possible for the different problem functions discussed in this paper. $\mathcal{X}$, $r$ and $\epsilon$ are as defined in \cref{rkhslowerbounds_thm}. The upper bounds are developed throughout \cref{provconv_sect}, and collected as a theorem in \cref{sci_sect}, where we also prove the lower bounds. \cref{random_no_help_thm} shows that our impossibility results extend to probabilistic algorithms (e.g., random sampling of trajectory data or execution of computations).  A result of the form $\not\in\Delta_1^G$ means that the problem cannot be solved with error control, $\not\in\Delta_{m+1}^G$ means that the problem cannot be solved in $m$ limits, the inclusions $\in\Sigma_{m}^G$ or $\in\Pi_{m}^G$ mean that the problem can be solved in $m$ limits with verification in the final limit.}
    \begin{center}
        \begin{tabular}{ lcccl }
            \toprule[\thick pt]
            Problem function                        & Upper bound     & Lower bound          & RKHS for lower bound & Reference for lower bound                                      \\
            \toprule[\thick pt]
            $\spec_{\mathrm{ap},\epsilon}(\koop^*)$ & $\in\Sigma_1^G$ & $\notin\Delta_1^G$   & $H^r(\mathcal{X})$   & \cref{notdelta1specapeps_thm}                                  \\
            $\spec_{\mathrm{ap},\epsilon}(\koop)$   & $\in\Sigma_2^G$ & $\notin\Delta_2^{G}$ & $h^r(\mathbb{N})$    & \cref{notdelta2specapepsdiscretesobolev_thm}                   \\
            $\spec_{\mathrm{ap}}(\koop^*)$          & $\in\Pi_2^G$    & $\notin\Delta_2^G$   & $H^r(\mathcal{X})$   & \cref{notdelta2specapinterval_thm,notdelta2specaprealline_thm} \\
            $\spec_{\mathrm{ap}}(\koop)$            & $\in \Pi_3^G$   & $\notin\Delta_3^G$   & $h^r(\mathbb{N})$    & \cref{notdelta3specapkoop_thm}                                 \\
            \bottomrule[\thick pt]
        \end{tabular}
        \label{sci-classification-table}
    \end{center}
\end{table}

\section{Mathematical background}
\label{sec:maths_background}

\subsection{Spectral properties of Koopman operators}
\label{specprop_sect}

The linearity of the Koopman operator makes it a powerful tool for analyzing dynamical systems. As noted in the introduction, the spectral features of the Koopman operator (such as eigenfunctions and pseudoeigenfunctions) are closely tied to the system's behavior and correspond to coherent observables~\cite{colbrook2023multiverse}.

Suppose that $\mathcal{H}$ is a separable Hilbert space with inner product $\langle\cdot,\cdot\rangle$ (conjugate linear in its second argument) and norm $\|\cdot\|$. If $(\lambda,\psi)$ is an eigenpair of $\koop$, then $\psi(x_n)=\koop^n\psi(x_0)=\lambda^n\psi(x_0)$ for $x_0\in\mathcal{X}$, $n\in\mathbb{N}$, and we have perfect coherence: the behavior of the observable $\psi$ is determined by the eigenvalue $\lambda$. Suppose instead that $(\lambda,\psi)$ is such that $\|\psi\|=1$ and $\|(\koop-\lambda I)\psi\|\leq\epsilon$ for some $\epsilon>0$, i.e., $\psi$ is an $\epsilon$-pseudoeigenfunction. Then
$$
    \|\koop^n\psi-\lambda^n\psi\|=\mathcal{O}(n\epsilon),\quad \text{as }\epsilon\to 0,
$$
i.e., we have approximate coherence. Often we can write a function as the sum of coherent or approximately coherent observables~\cite{brunton2021modern}. The Koopman mode decomposition (KMD), first introduced in \cite{mezic2005spectral}, generalizes this idea. For example, suppose that $\koop$ has an orthonormal basis of eigenfunctions $\{\varphi\}_{j=1}^{\infty}$ with corresponding eigenvalues $\lambda_j$. For a vector of observables $\m{g}\in\mathcal{H}^N$, we define $\m{v}_j=(\langle\varphi_j,g_1\rangle\,\cdots\, \langle\varphi_j,g_N\rangle)
$. Then for any $x_0$, we have
$$
    \m{g}(x_n)=\m{g}(F^n(x_0))=\koop^n\sum_{j=1}^{\infty}\varphi_j(x_0)\m{v}_j=\sum_{j=1}^{\infty}\lambda_j^n\varphi_j(x_0)\m{v}_j.
$$
The KMD triple $\{(\lambda_j,\varphi_j,\m{v}_j\}_{j=1}^{\infty}$ is a powerful tool to analyze the dynamical system. If $\m{g}(x)=x$, then we can predict the behavior of the dynamical system directly from its KMD. In general, the existence of a KMD relies on some spectral theorem that allows us to diagonalize $\koop$ \cite{colbrook2023multiverse}. We shall consider continuous spectra in \cref{unisa_sect} and see how these can lead to a more general KMD.

Let $T$ be a closed, densely defined operator on $\mathcal{H}$. The spectrum of $T$ is defined by
$$
    \spec(T)=\{\lambda\in\mathbb{C}:(T-\lambda I)\text{ is not boundedly invertible}\}.
$$
Note that if $T$ is not closed, then $\spec(T)=\mathbb{C}$; for a bounded operator, the spectrum is non-empty and compact, but for an unbounded operator, we only know that it is closed (and it may be empty). For $\epsilon>0$, the $\epsilon$-pseudospectrum of $T$ captures the sensitivity of $\spec(T)$ and is defined as
$$
    \spec_{\epsilon}(T)=\closure(\{z\in\mathbb{C}:\|(T-zI)^{-1}\|^{-1}<\epsilon\})=\closure\left(\bigcup_{E:\|E\|<\epsilon}\spec(T+E)\right),
$$
where $\closure$ denotes the closure of a set. Since $\spec(T)\subset\spec_{\epsilon}(T)$, pseudospectra can be a valuable tool for verifying spectral computations and ensuring that parts of the spectrum are not missed (note that any connected component of the pseudospectrum must intersect the spectrum). Pseudospectra can also help examine the transient behavior of a dynamical system  \cite[Sec.~IV]{trefethen2005spectra} that is not captured in the spectrum, which tends to capture asymptotic behavior.

We briefly introduce the injection modulus, which offers an alternative characterization of spectral sets that is more amenable to computation, and clarify the notions of convergence associated with spectral sets. The injection modulus of $T$ is
$\sigma_{\mathrm{inf}}(T)=\inf_{g\in\dom{T}}\|Tg\|/\|g\|$. If $T$ is a finite rectangular matrix acting between Euclidean spaces, then $\sigma_{\mathrm{inf}}(T)$ is its smallest singular value. In the general setting, the map $z\mapsto\sigma_{\inf}(T-zI)$ is continuous and it can be shown that for any $z\in\mathbb{C}$ \cite{colbrook3},
$$
    \|(T-zI)^{-1}\|^{-1}=\gamma(z,T)\coloneqq\min\{\sigma_{\mathrm{\inf}}(T-zI),\sigma_{\inf}(T^*-\overline{z}I)\}.
$$
This allows us to define the spectrum and pseudospectrum as
$$
    \spec(T)=\{z\in\mathbb{C}:\gamma(z,T)=0\},\quad\spec_{\epsilon}(T)=\closure(\{z\in\mathbb{C}:\gamma(z,T)<\epsilon\}).
$$
We may also define the approximate point spectrum and pseudospectrum as
$$
    \spec_{\mathrm{ap}}(T)=\{z\in\mathbb{C}:\sigma_{\inf}(T-zI)=0\},\quad \spec_{\mathrm{ap},\epsilon}(T)=\closure(\{z\in\mathbb{C}:\sigma_{\inf}(T-zI)<\epsilon\}).
$$
Hence, $\spec(T)=\spec_{\mathrm{ap}}(T)\cup\spec_{\mathrm{ap}}(T^*)$, and similarly for the pseudospectrum. The set $\spec_{\mathrm{ap},\epsilon}(T)$ is the set of $\lambda$ for which there exists an $\epsilon$-pseudoeigenfunction. Hence, $\spec_{\mathrm{ap},\epsilon}(\koop)$ and $\spec_{\mathrm{ap},\epsilon}(\koop^*)$ contain key information about dynamical systems. For bounded $T$, we define its surjective spectrum as
$$
    \spec_{\mathrm{su}}(T)=\{\lambda\in\mathbb{C}:\;T-\lambda I \text{ is not surjective}\},
$$
which satisfies $\spec_{\mathrm{ap}}(T)=\spec_{\mathrm{su}}(T^*)$ and $\spec_{\mathrm{ap}}(T^*)=\spec_{\mathrm{su}}(T)$ \cite[Lem.~1.30]{aiena_fredholm_2004}. Hence, to compute the approximate point spectrum of the Perron--Frobenius operator, we may instead compute the surjective spectrum of the Koopman operator. Accurately computing spectral properties of Koopman operators, such as eigenfunctions and pseudoeigenfunctions, is essential. Since these operators are infinite-dimensional and we only have access to finite information, careful methods are required to ensure reliable results. This issue will be addressed explicitly in \cref{verepairs_sect} and is a central theme of this paper.

\subsection{Galerkin methods on $L^2(\mathcal{X},\omega)$ and $M\rightarrow\infty$}
\label{sec:edmd}

The central idea of methods such as EDMD \cite{williams2015data} is a data-driven Galerkin approximation of the Koopman operator in the Hilbert space $\mathcal{H}=L^2(\mathcal{X},\omega)$ of observables, where $\omega$ is a positive measure on $\mathcal{X}$. Given a subspace $V_N=\spann\{g_1,\dots,g_N\}\subset L^2(\mathcal{X},\omega)$, EDMD computes an approximation of $\mathcal{P}_{V_N}\mathcal{K}\mathcal{P}_{V_N}^*$, where $\mathcal{P}_{V_N}:L^2(\mathcal{X},\omega)\rightarrow V_N$ denotes the corresponding orthogonal projection with adjoint $\smash{\mathcal{P}_{V_N}^*:V_N\rightarrow L^2(\mathcal{X},\omega)}$.

Let $G_{\mathrm{EDMD}},A_{\mathrm{EDMD}}\in\mathbb{C}^{N\times N}$ be the matrices defined by
$$
    [G_{\mathrm{EDMD}}]_{jk}=\langle g_k,g_j\rangle_{L^2},\qquad [A_{\mathrm{EDMD}}]_{jk}=\langle\koop g_k,g_j\rangle_{L^2},\quad 1\leq k,j\leq N.
$$
Then $\smash{\mathcal{P}_{V_N}\mathcal{K}\mathcal{P}_{V_N}^*}$ is represented by the matrix $G_{\mathrm{EDMD}}^{-1}A_{\mathrm{EDMD}}$ in the basis $\{g_1,\dots,g_N\}$. To approximate these matrices using snapshot data in \cref{snapshots_def}, i.e., pairs $\{(x^{(j)},y^{(j)})\}_{j=1}^M$ such that $y^{(j)}=F(x^{(j)})$, one assumes that the $\{x^{(j)}\}_{j=1}^M$ form a convergent quadrature rule for the $L^2$-inner product with quadrature weights $\{w_j\}_{j=1}^M$. The matrices are then recovered in a large data limit $M\rightarrow\infty$:
\begin{equation}\label{EDMD_quadrature_rule}
    \lim_{M\rightarrow\infty}\sum_{j=1}^Mw_jg_k(x^{(j)})\overline{g_j(x^{(j)})}=\langle g_k,g_j\rangle_{L^2}, \quad\lim_{M\rightarrow\infty}\sum_{j=1}^Mw_jg_k(y^{(j)})\overline{g_j(x^{(j)})}=\langle\koop g_k,g_j\rangle_{L^2},
\end{equation}
for $1\leq k,j\leq N$. Common choices of quadrature include random sampling, ergodic sampling, and high-order quadrature rules (e.g., Gauss--Legendre) \cite{colbrook2021rigorousKoop}. There is a large literature studying the limits in \cref{EDMD_quadrature_rule}; see \cite{aloisio2024spectral,nuske2023finite} for a small sample. We shall see in \cref{avoidquadapprox_sect} that working in an RKHS allows us to avoid having to take this limit.

If we define $\Psi_X,\Psi_Y\in\mathbb{C}^{N\times N}$ by $(\Psi_X)_{jk}=g_k(x^{(j)})$ and $(\Psi_Y)_{jk}=g_k(y^{(j)})$, then $\lim_{M\rightarrow\infty}\Psi_X^*W\Psi_X= G$ and $\lim_{M\rightarrow\infty}\Psi_X^*W\Psi_Y= A$. The approximation to $\mathcal{P}_{V_N}\koop\mathcal{P}_{V_N}^*$ is given by $\mathbb{K}=(\Psi_X^*W\Psi_X)^{-1}(\Psi_X^*W\Psi_Y)$.\footnote{One can use the pseudoinverse or a regularization technique if $\Psi_X^*W\Psi_X$ is not invertible.} We are interested in the spectral properties of $\koop$. Hence, we may compute the eigenvalues and eigenfunctions of $\mathbb{K}$, which we hope approximate a KMD of $\koop$. Under suitable conditions, $\mathcal{P}_{V_N}\koop\mathcal{P}_{V_N}^*$ converges to the Koopman operator $\koop$ as $N\rightarrow\infty$ in the strong operator topology \cite{korda2018convergence}. However, it is well-known that convergence in the strong operator topology does not imply convergence of spectral properties. In particular, the EDMD eigenvalues may completely miss regions of spectra (spectral invisibility), or have persisting spurious eigenvalues (spectral pollution), even as $M\rightarrow\infty$ and $N\rightarrow\infty$. This motivated the development of ResDMD \cite{colbrook2021rigorousKoop}, which provides guarantees of spectral convergence for Koopman operators on $L^2$ spaces.

\subsection{Reproducing kernel Hilbert spaces and kernelized methods}
\label{rkhskedmd_sect}

An RKHS on a set $\mathcal{X}$ is a Hilbert space of functions from $\mathcal{X}$ to $\mathbb{C}$ such that for every $x\in\mathcal{X}$, the pointwise evaluation map $E_x:\mathcal{H}\rightarrow\mathbb{C};E_x(f)=f(x)$ is continuous \cite{berlinet_reproducing_2004,paulsen_introduction_2016}. By the Riesz representation theorem, this continuity implies that for each $x \in \mathcal{X}$, there exists a unique element $\mathfrak{K}_x \in \mathcal{H}$ such that $f(x)=\langle f,\mathfrak{K}_x\rangle_{\mathfrak{K}}$ for all $f\in\mathcal{H}$. Throughout, we use $\langle\cdot,\cdot\rangle_{\mathfrak{K}}$ and $\|\cdot\|_\mathfrak{K}$ to denote the inner product and norm in $\mathcal{H}$. The function $\mathfrak{K}_x$ is called the reproducing kernel at $x$, and the map $\mathfrak{K} : \mathcal{X} \times \mathcal{X} \to \mathbb{C}$ defined by
\begin{equation}
    \label{kernel_inner_prod}
    \mathfrak{K}(x,y)=\langle \mathfrak{K}_x,\mathfrak{K}_y\rangle_\mathfrak{K}=\mathfrak{K}_x(y).
\end{equation}
is called the \textit{reproducing kernel} of $\mathcal{H}$.
The reproducing kernel $\mathfrak{K}$ is conjugate symmetric, meaning that $\mathfrak{K}(x,y)=\overline{\mathfrak{K}(y,x)}$, and positive definite:
$$
    \sum_{j=1}^n\sum_{k=1}^na_j\overline{a_k}\mathfrak{K}(x_j,x_k)\geq 0\quad \forall a\in\mathbb{C}^n,
    \{x_1,\dots,x_n\}\subset\mathcal{X}.
$$
Conversely, the Moore--Aronszajn theorem \cite{aronszajn_theory_1950} states that a conjugate symmetric, positive definite function (often called a kernel function) defines a unique RKHS and that $\mathrm{span}\{\mathfrak{K}_x:x\in\mathcal{X}\}$ is dense in this space.

The RKHS induced by a kernel function $\mathfrak{K}$ is often called the \emph{native space} for the kernel $\mathfrak{K}$, and is denoted $\mathcal{N}_{\mathfrak{K}}$. Let $\varphi:\mathcal{X}\rightarrow\mathbb{F}$ for some Hilbert space $\mathbb{F}$ and define $\mathfrak{K}(x,y)=\langle\varphi(x),\varphi(y)\rangle_{\mathbb{F}}$. Then $\mathfrak{K}$ is conjugate symmetric and positive definite, so it defines an RKHS $\mathcal{H}$ by the Moore--Aronszajn theorem. We say that $\varphi$ is the feature map of $\mathcal{H}$. Conversely, any RKHS has a feature map given by $\varphi(x)=\mathfrak{K}_x$. In the following section, we will see examples of kernels whose RKHS are Sobolev spaces. Other common examples are:
\begin{itemize}[leftmargin=*, itemsep=0pt, topsep=0pt]
    \item \textbf{Polynomial kernels} \cite[Lem.~4.7]{christmann_support_2008}: for $m\geq 0$, $d\geq 1$ integers, and $c\geq 0$ a real number, $\mathfrak{K}(x,y)=(x^Ty+c)^m$ for $x,y\in\mathbb{C}^d$ defines an RKHS on $\mathbb{R}^d$;
    \item \textbf{Radial basis function (RBF) kernels}, e.g., Gaussians \cite[Sec.~4.4]{christmann_support_2008}: one selects $\mathfrak{K}(x,y)=\varphi(\|x-y\|)$ for some radial function $\varphi$, e.g., $\varphi(r)=e^{-r^2}$;
    \item \textbf{The Hardy space on the unit disk $H^2(\mathbb{D})$} \cite[Sec.~1.4.1]{paulsen_introduction_2016}: this is the space of formal power series $\sum_{n=0}^{\infty}a_nz^n$ with finite norm defined by the inner product $\langle \sum_{n=0}^{\infty}a_nz^n,\sum_{n=0}^{\infty}b_nz^n\rangle=\sum_{n=0}^{\infty}a_n\overline{b_n}$; this space can be shown to be an RKHS with reproducing kernel $\mathfrak{K}(z,w)=(1-\overline{w}z)^{-1}$.
\end{itemize}

The kernel trick \cite{scholkopf_kernel_2000} is a technique in machine learning that allows algorithms to operate in a high-dimensional feature space without explicitly computing the coordinates in that space. Instead, it relies on \cref{kernel_inner_prod} to compute inner products. The kEDMD algorithm \cite{williams2015kernel} employs the kernel trick to handle dynamical systems where $N$ is much larger than $M$ to avoid the curse of dimensionality \cite{bishop_pattern_2016}. For example, if one wants to use a dictionary of polynomials up to degree $10$ on $\mathbb{R}^3$, this requires $10^6$ dictionary functions. Over a high-dimensional space such as $\mathbb{R}^{100}$, one needs $10^{20}$ dictionary functions. Such sizes are not practical for direct computation, but can be circumvented using a suitable polynomial kernel. The following compression proposition underlies kEDMD, and shows how one may compute the eigenvalues of a much larger matrix using a smaller, related one.

\begin{proposition}[Prop.~1 in \cite{williams2015kernel}]
    \label{kedmd_prop}
    Let $\Psi_X=\hat{Q}\hat{\Sigma} \hat{Z}^*$ be the SVD of $\Psi_X\in\mathbb{C}^{M\times N}$, where $\hat{Q}\in\mathbb{R}^{M\times M}$ is unitary, $\hat{\Sigma}\in\mathbb{R}^{M\times M}$ is diagonal with non-increasing nonnegative entries, and $\hat{Z}\in\mathbb{C}^{N\times M}$ is an isometry. Then $(\lambda,v)$ with $\lambda\neq 0$ is an eigenpair of
    $
        \hat{\mathbb{K}}=(\hat{\Sigma}^{\dagger}\hat{Q}^*)(\Psi_Y\Psi_X^*)(\hat{Q}\hat{\Sigma}^{\dagger})
    $
    if and only if $(\lambda,\hat{Z}v)$ is an eigenpair of $\mathbb{K}$.
\end{proposition}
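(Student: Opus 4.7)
The plan is to recognize $\hat{\mathbb{K}}\in\mathbb{C}^{M\times M}$ as the coordinate representation, in the orthonormal basis supplied by the SVD, of the restriction of the EDMD matrix $\mathbb{K}=\Psi_X^{\dagger}\Psi_Y\in\mathbb{C}^{N\times N}$ (with the Moore--Penrose pseudoinverse replacing the inverse in the definition from \cref{sec:edmd}) to its finite-dimensional range. The first step would be to substitute $\Psi_X = \hat Q\hat\Sigma\hat Z^*$, so that $\Psi_X^\dagger = \hat Z\hat\Sigma^\dagger\hat Q^*$, and obtain
\begin{equation*}
    \mathbb{K} = \hat Z\,\hat\Sigma^\dagger\hat Q^*\Psi_Y.
\end{equation*}
This immediately shows $\operatorname{range}(\mathbb{K})\subseteq\operatorname{range}(\hat Z)$, so any eigenvector of $\mathbb{K}$ with a nonzero eigenvalue must take the form $\hat Z v$ for some $v\in\mathbb{C}^M$.

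Next, I would introduce the intermediate matrix $\tilde{\mathbb{K}}:=\hat\Sigma^\dagger\hat Q^*\Psi_Y\hat Z\in\mathbb{C}^{M\times M}$ and verify two short algebraic identities. First, using $\hat Z^*\hat Z=I_M$, one gets the intertwining relation $\mathbb{K}\hat Z = \hat Z\tilde{\mathbb{K}}$. Second, expanding $\Psi_X^*=\hat Z\hat\Sigma\hat Q^*$ inside the definition of $\hat{\mathbb{K}}$ yields
\begin{equation*}
    \hat{\mathbb{K}} \;=\; \tilde{\mathbb{K}}\,\hat\Sigma\hat\Sigma^\dagger \;=\; \tilde{\mathbb{K}}\, P_r,
\end{equation*}
where $P_r = \hat\Sigma\hat\Sigma^\dagger$ is the orthogonal projection onto the coordinates corresponding to the nonzero singular values of $\hat\Sigma$.

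With these identities in hand, both directions of the equivalence are short. Injectivity of $\hat Z$ (it is an isometry) combined with the intertwining relation turns any eigenpair $(\lambda,v)$ of $\tilde{\mathbb{K}}$ into an eigenpair $(\lambda,\hat Z v)$ of $\mathbb{K}$, and conversely any eigenpair of $\mathbb{K}$ of the form $(\lambda,\hat Z v)$ yields $\tilde{\mathbb{K}}v=\lambda v$. The main obstacle is the mild subtlety that, when $\hat\Sigma$ is rank-deficient, $\hat{\mathbb{K}}\neq\tilde{\mathbb{K}}$ — they are related only through $\hat{\mathbb{K}}=\tilde{\mathbb{K}}P_r$. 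The hypothesis $\lambda\neq 0$ is precisely what dissolves this issue: since $\operatorname{range}(\hat{\mathbb{K}})\subseteq\operatorname{range}(\hat\Sigma^\dagger)=\operatorname{range}(P_r)$, any nonzero eigenvector $v$ of $\hat{\mathbb{K}}$ must satisfy $P_r v = v$, and therefore $\hat{\mathbb{K}}v=\tilde{\mathbb{K}}v$; the symmetric range argument applied to $\tilde{\mathbb{K}}$ gives the converse implication. Zero singular values consequently contribute only spurious zero eigenvalues of $\hat{\mathbb{K}}$, which the statement correctly excludes.
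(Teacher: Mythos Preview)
Your argument is correct. The paper itself does not prove this proposition; it is quoted verbatim from \cite{williams2015kernel} and used as a black box, so there is no in-paper proof to compare against. Your route---substituting the SVD into $\mathbb{K}=\Psi_X^{\dagger}\Psi_Y$, introducing the intermediate $\tilde{\mathbb{K}}=\hat\Sigma^{\dagger}\hat Q^*\Psi_Y\hat Z$, and using the intertwining relation $\mathbb{K}\hat Z=\hat Z\tilde{\mathbb{K}}$ together with the range constraint $P_r v=v$ forced by $\lambda\neq 0$---is clean and handles the rank-deficient case carefully, which is exactly the regime ($M\ll N$) in which kEDMD is applied.
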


Instead of the large $N\times N$ matrix $\mathbb{K}$, one can work with the smaller $M\times M$ matrix $\hat{\mathbb{K}}$. Define $\hat{G}=\Psi_X\Psi_X^*$, $\hat{A}=\Psi_Y\Psi_X^*$, and note that $\hat{G}=\hat{Q}\hat{\Sigma}^2\hat{Q}^*$. One can compute $\hat{Q}$ and $\hat{\Sigma}$ from the eigendecomposition of $\hat{G}$. The kernel trick provides an efficient way to compute $\hat{G}$ and $\hat{A}$. Let $\Psi:\mathcal{X}\rightarrow\mathbb{C}^N$ defined by $\Psi(x)=(\psi_1(x),\dots,\psi_N(x))^T$ and corresponding kernel $\mathfrak{K}(x,y)=\Psi(y)^*\Psi(x)$. Then we see that $\hat{G}_{jk}=\mathfrak{K}(x_j,x_k)$ and $\hat{A}_{jk}=\mathfrak{K}(y_j,x_k)$. Typically, rather than starting with a set of dictionary functions and defining a corresponding kernel, we begin with a kernel that implicitly defines the dictionary, as in \cref{kedmd_alg}. However, like EDMD, kEDMD can suffer from spectral pollution and invisibility. While \cite{colbrook2021rigorousKoop,colbrook_another_2024} provide methods for spectral verification of kEDMD for Koopman operators on $L^2$ spaces, there is currently no provably convergent technique for spectral properties of Koopman operators on RKHS spaces. The situation is further complicated by the fact that the kernel trick often obscures the underlying RKHS structure in analyses of kEDMD. We present a new interpretation of kEDMD in \cref{avoidquadapprox_sect}.

\begin{algorithm}[t]
    \caption{kEDMD: computing approximate eigenvalues and eigenvectors of Koopman operators from snapshot data and a kernel function~\cite{williams2015kernel}.}\label{kedmd_alg}
    \textbf{Input:} The snapshot data $\{x_i,y_i=F(x_i)\}_{i=1}^M$, a kernel function $\mathfrak{K}:\mathcal{X}\times\mathcal{X}\rightarrow\mathbb{C}$, and a rank $r\in\mathbb{N}$.

    \begin{algorithmic}[1]
        \STATE{Compute the matrices $\hat{G}_{jk}=\mathfrak{K}(x_j,x_k)$ and $\hat{A}_{jk}=\mathfrak{K}(y_j,x_k)$.}
        \STATE{Compute the eigendecomposition of $\hat{G}=\hat{Q}\hat{\Sigma}^2\hat{Q}^*$ to find $\hat{Q},\hat{\Sigma}$.}
        \STATE{For rank reduction, we may compute $\hat{\Sigma}_r=\hat{\Sigma}(1:r,1:r)$ and $\hat{Q}_r=\hat{Q}(:,1:r)$, the matrices of the $r$ dominant eigenvalues and eigenvectors respectively.}
        \STATE{Construct $\hat{\mathbb{K}}=(\hat{\Sigma}_r^{\dagger}\hat{Q}^*_r)\hat{A}(\hat{Q}_r\hat{\Sigma}^{\dagger}_r)$ and compute its eigenvalues $(\lambda_i,\m{g}_i)$ for $i=1,\dots,M$.}
    \end{algorithmic}
    \textbf{Output:} The $M$ non-zero eigenpairs of $\mathbb{K}$ $(\lambda_i,\hat{Z}^*\m{g}_i)\in\mathbb{C}\times\mathbb{C}^{N}$.
\end{algorithm}

\subsection{Koopman operators on RKHSs and Sobolev spaces}
\label{sobolevkoopman_sect}

The Koopman operator on an RKHS $\mathcal{H}$ is defined by
$$
    [\mathcal{K}g](x)=g(F(x)),\quad \mathcal{D}(\mathcal{K})=\{g\in\mathcal{H}: g \circ F\in \mathcal{H}\},
$$
and is always a closed operator (this relies on $\mathcal{H}$ being an RKHS). This means that its graph
$
    \{(g,Kg):g\in\mathcal{D}(K)\}
$
is a closed subset of $\mathcal{H}$.\footnote{Note this is different to other notions of closure such as closure under composition of the space.} However, we also want the Koopman operator to be densely defined. In this case, we define
$$
    \mathcal{D}(\mathcal{K}^*)=\left\{w\in \mathcal{H}:v\mapsto \langle \mathcal{K}v,w \rangle_\mathfrak{K} \text{ is a bounded functional on }\mathcal{D}(\mathcal{K})\right\}.
$$
If $w\in\mathcal{D}(\mathcal{K}^*)$, then, since $\mathcal{D}(\mathcal{K})$ is dense, there is a unique $u\in\mathcal{H}$ with
$$
    \langle \mathcal{K}v,w \rangle_\mathfrak{K}=\langle v,u\rangle_\mathfrak{K}\quad \forall v\in\mathcal{D}(\mathcal{K}).
$$
We denote $u$ by $\mathcal{K}^*w$, which defines a linear operator $\mathcal{K}^*$, the adjoint of $\mathcal{K}$ known as the Perron--Frobenius operator.

The choice of kernel affects the properties of the corresponding Koopman and Perron--Frobenius operators. If the full state observable $g:x\mapsto x$ is an element of the RKHS (or well approximated by elements of the RKHS), the analysis of $\mathcal{K}^*$ enables direct prediction of the state evolution (see \cref{sec:errorcontrolpfmd}). Bounded Koopman operators are particularly convenient to work with. However, given an RKHS, the class of maps $F$ for which $\koop$ is bounded is subtle. In particular, only affine dynamics yield bounded Koopman operators on spaces such as the Fock space \cite{carswell_compostion_2003}, Paley--Wiener spaces \cite{chacon_composition_2007}, and the native RKHS of the Gaussian RBF kernel \cite{gonzalez_kernel_2023}.

A class of RKHSs applicable to a broad range of dynamics, for which we shall also prove impossibility theorems, is given by Sobolev spaces \cite{ikeda_koopman_2024,kohne_linfty-error_2024}. Let $\mathcal{X}\subset\mathbb{R}^d$ be a non-empty open set. We define the derivative of a function $f$ at $x$ with respect to a multi-index $\alpha=(\alpha_1,\dots,\alpha_d)\in\{0,1,\dots\}^d$ to be
$$
    D^{\alpha}f(x)=\frac{\partial^{|\alpha|}f}{\partial x_1^{\alpha_1}\cdots\partial x_d^{\alpha_d}}(x), \quad |\alpha|=|\alpha_1|+\dots+|\alpha_d|,
$$
and assume that all partial derivatives commute. For $f\in L^2(\mathcal{X},\omega)$, we say that $g$ is the $\alpha$-weak derivative of $f$ if
$$
    \int_{\mathcal{X}}f(x)D^{\alpha}\varphi(x)\dd x=(-1)^{|\alpha|}\int_{\mathcal{X}}g(x)\varphi(x)\dd x\quad\forall \varphi\in C_c^{\infty}(\mathcal{X}).
$$
We denote the weak derivative of $f$ by $g=D^{\alpha}f$ (with $D^{0}f=f$). The Sobolev space $H^r(\mathcal{X})$ of order $r\in\mathbb{N}$ is
$$
    H^r(\mathcal{X})=\{f\in L^2(\mathcal{X}):D^{\alpha}f\in L^2(\mathcal{X})\text{ for all }|\alpha|\leq r\},
$$
with inner product $\langle f,g\rangle_{H^r} = \sum_{|\alpha|\leq r}\langle D^\alpha f,D^\alpha g\rangle_{L^2}$.
If the domain $\mathcal{X}$ satisfies the cone condition \cite[Def.~4.6]{adams_sobolev_2003}, the Sobolev embedding theorem \cite[Thm.~4.12]{adams_sobolev_2003} implies that if $r>d/2$ then $H^r(\mathcal{X})$ is an RKHS and a subspace of $C^0(\mathcal{X})$ equipped with the supremum norm $\|f\|_{C^0}=\sup_{x\in\mathcal{X}}|f(x)|$.
Indeed, there exists a constant $C>0$ such that for all $x\in\mathcal{X}$ and $f\in H^r(\mathcal{X})$, $|E_x(f)|=|f(x)|\leq \|f\|_{C^0}\leq C\|f\|_{H^r}$. Sobolev spaces are widely used in the theory of partial differential equations but will be used here for their properties as RKHSs. The cone condition is typical in practice and will be satisfied by all domains considered in this paper.

Having an explicit kernel for the RKHS is useful. Sobolev kernel functions are often given by relevant Green's functions \cite[Sec.~1.3.1]{paulsen_introduction_2016}. For example, with $\mathcal{X}=(a,b)\subset \mathbb{R}$ and on $H^1((a,b))$, $\mathfrak{K}_x$ satisfies $\langle f,\mathfrak{K}_x\rangle_{H^1}=\langle f,\mathfrak{K}_x\rangle_{L^2}+\langle f',\mathfrak{K}_x'\rangle_{L^2}=f(x)$. After integrating by parts, one solves $-\mathfrak{K}_x''+\mathfrak{K}_x=\delta_x$, yielding \cite[Ex.~12]{berlinet_reproducing_2004}
\begin{equation}
    \label{H1abreproducingkernel_eqn}
    \mathfrak{K}(x,y)=\begin{cases}
        \frac{\cosh(x-a)\cosh(b-y)}{\sinh(b-a)}, & a\leq x\leq y\leq b, \\
        \frac{\cosh(y-a)\cosh(b-x)}{\sinh(b-a)}, & a\leq y\leq x\leq b.
    \end{cases}
\end{equation}
The Wendland and Mat{\'e}rn kernels are a more convenient set of kernels whose native spaces are (potentially fractional) Sobolev spaces \cite{kohne_linfty-error_2024,wendland_scattered_2004}. Given $\varphi$ such that $t\mapsto t\varphi(t)$ is an element of $L^1([0,\infty))$, we define
$$
    (\mathcal{I}\phi)(r)=\int_r^{\infty}t\varphi(t)\dd t,\quad
    \varphi_l(r)=(1-r)_+^l=\begin{cases}
        (1-r)^l, & 0\leq r\leq 1,    \\
        0,       & \text{otherwise},
    \end{cases}
$$
and let $\varphi_{d,k}=\mathcal{I}^k\varphi_{\lfloor d/2\rfloor+k+1}$. The Wendland kernel for $d\in\mathbb{N}$ and $k\in\mathbb{N}\cup\{0\}$ is \cite[Def.~9.11]{wendland_scattered_2004}
\begin{equation}
    \label{wendlandkernel_defn}
    \mathfrak{K}^{\mathrm{W}}_{d,k}(x,y)=\varphi_{d,k}(\|x-y\|_2).
\end{equation}
Additionally, $\varphi_{d,k}$ is of the form \cite[Thms.~9.12 and 9.13]{wendland_scattered_2004}:
$$
    \varphi_{d,k}(r)=\begin{cases}
        p_{d,k}(r), & 0\leq r\leq 1, \\
        0,          & r\geq 1,
    \end{cases}
$$
where $p_{d,r}$ is a polynomial of degree $\lfloor d/2\rfloor+3k+1$ with rational coefficients that can be computed using a recurrence relation and $\varphi_{d,k}\in C^{2k}([0,\infty))$. Up to equivalent norms, the native space of the Wendland kernel (where if $k=0$ then $d\geq 3$) is $\mathcal{N}^{\mathrm{W}}_{d,k}(\mathbb{R}^d)=H^{(d+1)/2+k}(\mathbb{R}^d)$ \cite[Thm.~4.1]{kohne_linfty-error_2024}. The Mat{\'e}rn function (or Sobolev spline) on $\mathbb{R}^d$ for $n\in\mathbb{N}$, $n>d/2$ is defined by
\begin{equation}
    \label{defn:matern}
    \theta_{d,n}(x)=\cfrac{2^{1-n-d/2}}{\pi^{d/2}n!\sigma^{2n-d}}(\sigma\|x\|_2)^{n-d/2}K_{n-d/2}(\sigma\|x\|_2),\quad x\in\mathbb{R}^d.
\end{equation}
Here, $\sigma$ is a dimensionless scale factor, and $K_{\alpha}$ is the modified Bessel function of the second kind of order $\alpha$. The corresponding Mat{\'e}rn kernel is $\mathfrak{K}^{\mathrm{M}}_{d,n}(x,y)=\theta_{d,n}(x-y)$. We drop the constant in front of $\theta_{d,n}$ as it does not affect our algorithms. Up to equivalent norms, the native space of the Mat{\'e}rn kernel is $\mathcal{N}^{\mathrm{M}}_{d,n}(\mathbb{R}^d)=H^n(\mathbb{R}^d)$ \cite{bold_kernel-based_2024}.

\subsection{The meaning of convergent spectral computations} \label{sec_mean_spectral}

To develop algorithms that converge to these various spectral sets, we must make sense of the convergence of different sets computed by our algorithms. We wish to capture convergence without spectral pollution or invisibility. When considering bounded operators, the spectral sets are compact, so we use the Hausdorff metric space $(\mathcal{M}_{\mathrm{H}},d_{\mathrm{H}})$. This is the set of non-empty compact subsets subsets of $\mathbb{C}$ equipped with the metric
$$
    d_{\mathrm{H}}(X,Y)=\max\left\{\sup_{x\in X}d(x,Y),\sup_{y\in Y}d(X,y)\right\},
$$
where $d$ is the standard Euclidean metric. When considering unbounded operators, the spectral sets are closed, so we use the Attouch--Wets metric space \cite{beer_topologies_1993} $(\mathcal{M}_{\mathrm{AW}},d_{\mathrm{AW}})$ on $\mathbb{C}$. This is the set of non-empty closed subsets of $\mathbb{C}$ equipped with the metric
\begin{equation}
    \label{AW_metric_def}
    d_{\mathrm{AW}}(X,Y)=\sum_{n=1}^{\infty}2^{-n}\min\left\{1,\sup_{x\in B_m(0)}|d(x,X)-d(x,Y)|\right\},
\end{equation}
where $d$ is the standard Euclidean metric and $B_m(0)$ is the closed ball of radius $m$ and center $0$. To include the empty set, we say $X_n\rightarrow\emptyset$ as $n\rightarrow\infty$ if for any $m\in\mathbb{N}$, for all large $n$, $X_n\cap B_m(0)=\emptyset$.

It is known \cite{colbrook2020PhD} that for any non-empty closed sets $C,C_n\subset\mathbb{C}$, $d_{\mathrm{AW}}(C_n,C)\rightarrow 0$ if and only if for every $\delta>0$ and compact $K$, there exists $N\in\mathbb{N}$ such that for all $n>N$, $C_n\cap K\subset C+B_{\delta}(0)$ and $C\cap K\subset C_n+B_{\delta}(0)$. It suffices to consider $K=B_m(0)$ for $m\in\mathbb{N}$. Hence, this metric captures the notion of convergence whilst avoiding spectral pollution and invisibility.
Moreover, for any densely defined, closed operator $T$, $\spec_{\epsilon}(T)$ (resp. $\spec_{\mathrm{ap},\epsilon}(T)$) converges in the Attouch--Wets topology to $\spec(T)$ (resp. $\spec_{\mathrm{ap}}(T)$) as $\epsilon\rightarrow 0$.

\section{Provably convergent algorithms for spectral properties on RKHSs}
\label{provconv_sect}

We now develop the first provably generally convergent data-driven algorithms for computing spectral properties of Koopman and Perron--Frobenius operators on RKHSs and explore how to do so with verification (error bounds on the output). In particular, our algorithms avoid the issues of spectral pollution and invisibility. In \cref{sci_sect}, we prove that our algorithms are optimal by providing complementary lower bounds (impossibility results) for Koopman operators on Sobolev spaces. Given an RKHS $\mathcal{H}$ with kernel function $\mathfrak{K}$ and a dynamical system $(F,\mathcal{X})$, we define $\koop:\mathcal{D}(\koop)\rightarrow\mathcal{H}$ by $\koop g=g\circ F$; we assume throughout that $\koop$ is densely defined, so the Perron--Frobenius operator $\koop^*$ exists and is closed. Since $\koop$ is always closed, $\koop^*$ is also densely defined. A key advantage of RKHS methods will become apparent: the RKHS structure lets us completely bypass the need for a large data limit!

\subsection{The action of $\koop^*$ on the kernel functions and its consequences}
\label{avoidquadapprox_sect}

We first consider how $\koop^*$ acts on the kernel functions $\{\mathfrak{K}_x\;:x\in\mathcal{X}\}$. For any fixed $x\in\mathcal{X}$, the map
$$
    f\mapsto \langle \koop f,\mathfrak{K}_x\rangle_{\mathfrak{K}}=(\koop f)(x)=f(F(x))=\langle f,\mathfrak{K}_{F(x)}\rangle_{\mathfrak{K}},\quad f\in \mathcal{D}(\koop),$$
is a bounded linear functional on $\mathcal{D}(\koop)=\{g\in\mathcal{H}:g\circ F\in\mathcal{H}\}$, with bound $\|\mathfrak{K}_{F(x)}\|_{\mathfrak{K}}<\infty$. It follows that
\begin{equation}
    \label{koopstarkernelfunction_eqn}
    \mathfrak{K}_x\in\mathcal{D}(\koop^*)\quad\text{and}\quad
    \mathcal{K}^*\mathfrak{K}_x=\mathfrak{K}_{F(x)}\quad \forall x\in\mathcal{X}.
\end{equation}
This formula allows us to use the kernel $\mathfrak{K}$ to efficiently compute Galerkin matrices to approximate $\mathcal{K}^*$ on $\mathcal{H}$. Suppose that we have snapshots $\{(x^{(i)},y^{(i)}=F(x^{(i)}))\}_{i=1}^N$ and take $\{\mathfrak{K}_{x^{(i)}}\}_{i=1}^N\subset\mathcal{H}$ as our dictionary. In contrast to the $L^2$ case, here we take $M=N$. Then \cref{koopstarkernelfunction_eqn} implies that
$$
    \langle \mathfrak{K}_{x^{(i)}},\mathfrak{K}_{x^{(j)}}\rangle_{\mathfrak{K}}=\mathfrak{K}(x^{(i)},x^{(j)}),\quad
    \langle \koop^* \mathfrak{K}_{x^{(i)}},\mathfrak{K}_{x^{(j)}}\rangle_{\mathfrak{K}}=\langle \mathfrak{K}_{F(x^{(i)})},\mathfrak{K}_{x^{(j)}}\rangle_{\mathfrak{K}}=\mathfrak{K}(F(x^{(i)}),x^{(j)})=\mathfrak{K}(y^{(i)},x^{(j)}).
$$
Hence, we can immediately compute Galerkin approximations of $\mathcal{K}$ and $\mathcal{K}^*$ on the space $\mathcal{H}$. In particular, if we define
\begin{equation}
    \label{GA_defn}
    G_{jk}=\mathfrak{K}(x^{(k)},x^{(j)})=\langle \mathfrak{K}_{x^{(k)}},\mathfrak{K}_{x^{(j)}}\rangle_{\mathfrak{K}},\quad
    A_{jk}=\mathfrak{K}(y^{(k)},x^{(j)})=\langle \koop^*\mathfrak{K}_{x^{(k)}},\mathfrak{K}_{x^{(j)}}\rangle_{\mathfrak{K}}, \quad 1\leq j,k\leq N,
\end{equation}
then, assuming that $G$ is invertible (if not, we can use a regularization technique or take the pseudoinverse), the finite section approximation to $\koop^*$ with respect to the dictionary $\{\mathfrak{K}_{x^{(1)}},\dots,\mathfrak{K}_{x^{(N)}}\}$ is represented by the matrix $G^{-1}A$. Note that $A$ is the transpose of $\hat{A}$ defined in \cref{rkhskedmd_sect} since we are considering $\koop^*$ instead of $\koop$.

There are two key consequences of these connections:
\begin{itemize}[leftmargin=*,topsep=0pt]
    \item \textbf{Avoiding $M\rightarrow\infty$:}  Compared to standard EDMD methods—where computing the analog of $G$ and $A$ requires applying a quadrature rule to approximate $L^2$ inner products, as in \cref{EDMD_quadrature_rule}—the RKHS structure allows us to bypass this step entirely. In other words, we avoid having to take the large data limit $M\rightarrow\infty$ before taking $N\rightarrow\infty$, a step that is provably unavoidable when working in a standard $L^2$ space \cite{colbrook2024limits}. Additionally, the data, which is typically externally obtained, does not need to be drawn from a convergent quadrature rule, yielding much greater flexibility and reducing the need for assumptions such as ergodicity in the case of single trajectory data.
    \item \textbf{A new interpretation of kernelized EDMD:}
          Suppose that we take $r\leq M$ in \cref{kedmd_alg}. For notational convenience, let $U_r=\overline{\hat{Q}_r}$. We define the functions
          \begin{equation}
              \label{new_basis}
              u_j= \sum_{i=1}^N\sum_{\ell=1}^r\mathfrak{K}_{x^{(i)}} [U_r]_{i\ell}[\hat{\Sigma}_r^\dagger]_{\ell j}\in\mathcal{H},\quad 1\leq j\leq r.
          \end{equation}
          Using \cref{koopstarkernelfunction_eqn}, these functions satisfy the following orthogonality relations:
          $$
              \langle u_j,u_k \rangle_{\mathfrak{K}}=\left[\hat{\Sigma}_r^{\dagger}U_r^*\hat{G}^\top U_r\hat{\Sigma}_r^{\dagger}\right]_{kj}=\delta_{kj},\quad
              \langle \mathcal{K}^*u_j,u_k \rangle_{\mathfrak{K}}=\left[\hat{\Sigma}_r^{\dagger}U_r^*\hat{A}^\top U_r\hat{\Sigma}_r^{\dagger}\right]_{kj}=\left[\hat{\Kv}^\top\right]_{jk},
          $$
          where $\hat{G}$, $\hat{A}$, and $\hat{\Sigma}_r$ are as in in \cref{kedmd_alg}.
          It follows that $\hat{\Kv}^\top$ is a Galerkin approximation of $\mathcal{K}^*$ on $\mathcal{H}$ (the RKHS induced by the kernel $\mathfrak{K}$)! 
\end{itemize}
The first consequence enables us to prove that computing spectral properties of Koopman operators is typically strictly easier on an RKHS $\mathcal{H}$ than on an $L^2$ space (we shall prove this in \cref{sci_sect}). The second consequence allows us to extend the connection and define residuals directly in the space $\mathcal{H}$.

\subsection{Verifying eigenpairs and computing approximate point pseudospectra of $\mathcal{K}^*$}
\label{verepairs_sect}

As above, we first use $\{\mathfrak{K}_{x^{(i)}}\}_{i=1}^N$ as a dictionary. Following \cref{koopstarkernelfunction_eqn}, we have
$\langle\koop^*\mathfrak{K}_{x^{(k)}},\koop^*\mathfrak{K}_{x^{(j)}}\rangle_{\mathfrak{K}}=\mathfrak{K}(y^{(k)},y^{(j)})$. This motivates defining the following matrix:
\begin{equation}
    \label{R_defn}
    R_{jk}=\mathfrak{K}(y^{(k)},y^{(j)})=\langle \koop^*\mathfrak{K}_{x^{(k)}},\koop^* \mathfrak{K}_{x^{(j)}}\rangle_{\mathfrak{K}}, \quad 1\leq j,k\leq N,
\end{equation}
which allows us to perform verified spectral computations through residuals. Indeed, suppose that we have a candidate eigenpair $(\lambda,\m{g})$ for the Perron--Frobenius operator, where the components of $\m{g}$ are $\{g_i\}_{i=1}^N$ and correspond to the proposed eigenfunction $g=\sum_{i=1}^Ng_i\mathfrak{K}_{x^{(i)}}$. For now, we do not specify how such a candidate is produced. We want to verify whether $\lambda\in\spec_{\mathrm{ap},\epsilon}(\koop^*)$ by computing the relative residual
$$
    \res^*(\lambda,\m{g})=\frac{\|(\koop^*-\lambda I)g\|_{\mathfrak{K}}}{\|g\|_{\mathfrak{K}}}\geq
    \inf_{h\in\mathcal{D}(\koop^*)}\frac{\|(\koop^*-\lambda I)h\|_{\mathfrak{K}}}{\|h\|_{\mathfrak{K}}}
    =\sigma_{\mathrm{inf}}(\koop^*-\lambda I).
$$
If the residual is smaller than $\epsilon$, then $\lambda$ is guaranteed to lie inside the $\epsilon$-approximate point pseudospectrum, and $g$ is an $\epsilon$-pseudoeigenfunction. The squared residual can be re-written as
\begin{align*}
    \left[\mathrm{res}^*(\lambda,\m{g})\right]^2 & =\frac{\langle(\koop^*-\lambda I)g,(\koop^*-\lambda I)g\rangle_{\mathfrak{K}}}{\langle g,g\rangle_{\mathfrak{K}}}
    =\frac{\langle\koop^*g,\koop^*g\rangle_{\mathfrak{K}}-\lambda\langle g,\koop^*g\rangle_{\mathfrak{K}}-\overline{\lambda}\langle\koop^*g,g\rangle_{\mathfrak{K}}+|\lambda|^2\langle g,g\rangle_{\mathfrak{K}}}{\langle g,g\rangle_{\mathfrak{K}}}                            \\
                                                 & =\frac{\sum_{j,k=1}^Ng_j(\mathfrak{K}(y_j,y_k)-\lambda \mathfrak{K}(x_j,y_k)-\overline{\lambda} \mathfrak{K}(y_j,x_k)+|\lambda|^2\mathfrak{K}(x_j,x_k))\overline{g_k}}{\sum_{j,k=1}^Ng_j\mathfrak{K}(x_j,x_k)\overline{g_k}} \\
                                                 & =\frac{\sum_{j,k=1}^Ng_j(R-\lambda A^*-\overline{\lambda}A+|\lambda|^2G)_{kj}\overline{g_k}}{\sum_{j,k=1}^Ng_jG_{kj}\overline{g_k}}
    =\frac{\m{g}^*(R-\lambda A^*-\overline{\lambda}A+|\lambda|^2G)\m{g}}{\m{g}^*G\m{g}},
\end{align*}
which can be computed exactly without taking any limits. The associated algorithm is summarized in SpecRKHS-Eig (\cref{evalverif_alg}) and allows us to avoid spurious eigenvalues generated by naively applying kEDMD. Other methods can also be used to compute the candidate eigenpairs. Moreover, no large data limit is required in this process, similarly to \cref{avoidquadapprox_sect}.

\begin{algorithm}[t]
    \caption{SpecRKHS-Eig: Computing verified eigenvalues of Perron--Frobenius operators.}\label{evalverif_alg}
    \textbf{Input:} Snapshot data $\{(x^{(i)},y^{(i)}=F(x^{(i)}))\}_{i=1}^N$, kernel function $\mathfrak{K}:\mathcal{X}\times\mathcal{X}\rightarrow\mathbb{C}$, and target accuracy $\epsilon>0$.

    \begin{algorithmic}[1]
        \STATE{Compute the matrices $G$, $A$ and $R$ defined by $\smash{G_{jk}=\mathfrak{K}(x^{(k)},x^{(j)})}$, $\smash{A_{jk}=\mathfrak{K}(y^{(k)},x^{(j)})}$ and $\smash{R_{jk}=\mathfrak{K}(y^{(k)},y^{(j)})}$.\!\!\!\!\!}
        \STATE{Solve the generalized eigenvalue problem $A\m{g}^{(i)}=\lambda_iG\m{g}^{(i)}$ for $i=1,\dots,N$.}
        \STATE{Compute the residual $$\res^*(\lambda_i,\m{g}^{(i)})=\frac{(\m{g}^{(i)})^*[R-\lambda_i A^*-\overline{\lambda_i}A+|\lambda_i|^2G]\m{g}^{(i)}}{(\m{g}^{(i)})^*G\m{g}^{(i)}},$$ for each $i$ and discard the eigenpair if $\res^*(\lambda_i,g^{(i)})>\epsilon$.}
    \end{algorithmic}
    \textbf{Output:} Verified eigenpairs $(\lambda_i,\m{g}^{(i)})\in\mathbb{C}\times\mathbb{C}^{N}$ on the RKHS with kernel $\mathfrak{K}$.
\end{algorithm}

\begin{remark}[$L^2$ residual estimates]
    In some cases, it is possible to extract $L^2$ residual estimates from RKHS residual estimates. For example, the Sobolev space $H^1(\mathcal{X})$ satisfies $\|(\mathcal{K}^*-\lambda I)g\|_{L^2}\leq\|(\mathcal{K}^*-\lambda I)g\|_{H^1}$ for all $g\in\dom{\koop|_{H^1}}$. More generally, if an RKHS $\mathcal{H}$ with kernel ${\mathfrak{K}}$ satisfies
    $
        \|\mathfrak{K}\|_{L^2}^2=\int_{\mathcal{X}}\mathfrak{K}(x,x)\dd x<\infty,
    $
    then $\mathcal{H}\subset L^2(\mathcal{X})$ and for all $g\in\mathcal{H}$, $\|g\|_{L^2}\leq\|\mathfrak{K}\|_{L^2}\|g\|_{\mathfrak{K}}$ \cite[Thm.~4.26]{christmann_support_2008}.
    Hence, if we compute $\|g\|_{L^2}$ by quadrature, we can obtain upper bounds for $\|(\mathcal{K}^*-\lambda I)g\|_{L^2}/\|g\|_{L^2}$ using the RKHS methods, which provides a sufficient condition to confirm valid eigenpairs.
\end{remark}

Verifying eigenpairs addresses the problem of spectral pollution and spurious eigenvalues. However, applying SpecRKHS-Eig (\cref{evalverif_alg}) requires computing candidate eigenpairs. If these candidates are obtained via EDMD or kernelized EDMD, then verification alone does not resolve the issue of spectral invisibility—i.e., missing parts of the spectrum. To avoid this problem, we can adapt our algorithm to compute (approximate point) pseudospectra with SpecRKHS-PseudoPF (\cref{pspecadjoint_alg}). To determine if a point $z\in\mathbb{C}$ lies in $\spec_{\mathrm{ap},\epsilon}(\mathcal{K}^*)$, we consider
\begin{align}
    \sigma_{\inf}(\koop^*-zI)=\inf_{h\in\mathcal{D}(\koop^*)}\frac{\|(\koop^*-z I)h\|_{\mathfrak{K}}}{\|h\|_{\mathfrak{K}}}
     & \leq \inf_{g\in\spann\{\mathfrak{K}_{x^{(1)}},\dots,\mathfrak{K}_{x^{(N)}}\}}\frac{\|(\koop^*-z I)g\|_{\mathfrak{K}}}{\|g\|_{\mathfrak{K}}} \notag                    \\
     & =\inf_{\m{g}\in\mathbb{C}^n} \res^*(z,\m{g})=\inf_{\m{g}\in\mathbb{C}^n}\sqrt{\frac{\m{g}^*(R-z A^*-\overline{z}A+|z|^2G)\m{g}}{\m{g}^*G\m{g}}}.\label{minimized_res}
\end{align}
All of the matrices on the right-hand side are finite and can be computed exactly by evaluating kernel functions. Computing the infimum is equivalent to finding the smallest eigenvalue of the (finite-dimensional) generalized eigenvalue problem $(R-zA^*-\overline{z}A+|z|^2G)v=\lambda Gv$. If the right-hand side is less than $\epsilon$, then $z$ must lie in $\spec_{\mathrm{ap},\epsilon}(\koop^*)$. The above calculation can be performed for all points $z$ lying in some finite search grid:
$$
    \gridop(N)=\frac{1}{N}[\mathbb{Z}+i\mathbb{Z}]\cap\{z\in\mathbb{C}\;:\;|z|\leq N\}.
$$
Any sequence of finite grids with $\lim_{n\rightarrow\infty}\dist(\lambda,\gridop(N))$ for all $\lambda\in\mathbb{C}$ works, and if, for example, we know a priori that the spectrum lies in a given region, then we can restrict the grid to that region. As we take $N\rightarrow\infty$, we shall see in \cref{sec:initial_convergence} that the right-hand side of \cref{minimized_res} converges down to $\sigma_{\inf}(\koop^*-zI)$ locally uniformly to any compact set. Hence, we can prove that the output of SpecRKHS-PseudoPF (\cref{pspecadjoint_alg}) converges in the Attouch--Wets topology to the approximate point pseudospectrum (see \cref{sec:initial_convergence}).

\begin{algorithm}[t]
    \caption{SpecRKHS-PseudoPF: Computing the approximate point pseudospectrum of Perron--Frobenius operators.}\label{pspecadjoint_alg}
    \textbf{Input:} Snapshot data $\{(x^{(i)},y^{(i)}=F(x^{(i)}))\}_{i=1}^N$, kernel function $\mathfrak{K}:\mathcal{X}\times\mathcal{X}\rightarrow\mathbb{C}$, target accuracy $\epsilon>0$, and finite grid $\gridop(N)$.

    \begin{algorithmic}[1]
        \STATE{Compute the matrices $G$, $A$ and $R$ defined by $\smash{G_{jk}=\mathfrak{K}(x^{(k)},x^{(j)})}$, $\smash{A_{jk}=\mathfrak{K}(y^{(k)},x^{(j)})}$ and $\smash{R_{jk}=\mathfrak{K}(y^{(k)},y^{(j)})}$.\!\!\!\!\!}
        \FOR{$z_j\in\gridop(N)$}
        \STATE{Solve the generalized eigenvalue problem $\tau(z_j)=\min_{\m{g}\in\mathbb{C}^{N}}\res^*(z_j,\m{g})$.}
        \ENDFOR
    \end{algorithmic}
    \textbf{Output:} An approximation of the approximate point pseudospectrum $\{z\in\gridop(N)\;:\;\tau(z_j)<\epsilon\}$ and corresponding $\epsilon$-pseudoeigenfunctions ${\m{g}_j}$.
\end{algorithm}

\subsection{Low-rank compression with $r< M$}
\label{sec_compressed}

As discussed in \cref{rkhskedmd_sect}, kEDMD seeks to avoid the curse of dimensionality when $N$ is large. In the previous subsection, we considered the case of $N=M$. Here, we show that we can achieve further dimension reduction by using the basis in \cref{new_basis}:
$$
    u_j= \sum_{i=1}^N\sum_{\ell=1}^r\mathfrak{K}_{x^{(i)}} [U_r]_{i\ell}[\hat{\Sigma}_r^\dagger]_{\ell j}\in\mathcal{H},\quad 1\leq j\leq r.
$$
To motivate the discussion, consider the generalized eigenvalue problem:
\begin{equation}
    \label{geneigprob_eqn}
    A\m{g}=\lambda G\m{g}.
\end{equation}
Since $G$ is symmetric and positive-definite, there exists a diagonal matrix $\Sigma$ with positive non-increasing entries and a unitary matrix $U$ such that $G=U\Sigma^2U^*$. This is similar to \cref{kedmd_alg} but with $G_{jk}=\hat{G}_{kj}$ and $A_{jk}=\hat{A}_{kj}$. This transpose means that we can take $U=\overline{\hat{Q}}$ and $\Sigma=\hat\Sigma$, where the matrices $\hat{Q}$ and $\hat\Sigma$ are from \cref{kedmd_alg}.
Hence, \eqref{geneigprob_eqn} can be rewritten as
$$
    (U\Sigma^{-1})^*A(U\Sigma^{-1})(\Sigma U^*)\m{g}=\lambda (\Sigma U^*)\m{g}.
$$
Let $\Sigma_r$ be the diagonal matrix containing the $r$ dominant eigenvalues and $U_r$ be the matrix whose columns contain the first $r$ eigenvectors for $r\leq N$. Then we expect that the solutions $(\tilde{\lambda},\m{\tilde{g}})$ to the reduced-dimension eigenvalue problem
$$
    (U_r\Sigma^{-1}_r)^*A(U_r\Sigma^{-1}_r)(\Sigma_r U_r^*)\m{\tilde{g}}=\tilde{\lambda} (\Sigma_r U_r^*)\m{\tilde{g}}
$$
are close to an eigenpair for $\mathbb{K}^*=G^{-1}A$; additionally, $(U_r\Sigma^{-1}_r)^*A(U_r\Sigma^{-1}_r)\in\mathbb{C}^{r\times r}$ rather than $\mathbb{C}^{N\times N}$, so we can have a significant reduction in computation time.

Suppose $(\tilde{\lambda},\tilde{\m{g}})$ is an eigenpair of $(U_r\Sigma_r^{-1})^*A(U_r\Sigma_r^{-1})$. We verify this eigenpair using the full residual from SpecRKHS-Eig (\cref{evalverif_alg}). Since $(U_r\Sigma^{-1}_r)^*G(U_r\Sigma^{-1}_r)=I$, this yields
$$
    \res^*(\tilde{\lambda},U_r\Sigma^{-1}_r\m{\tilde{g}})=\sqrt{\frac{(U_r\Sigma^{-1}_r\m{\tilde{g}})^*(R-\tilde{\lambda} A^*-\overline{\tilde{\lambda}}A+|\tilde{\lambda}|^2G)(U_r\Sigma^{-1}_r\m{\tilde{g}})}{\m{\tilde{g}}^*\m{\tilde{g}}}}.
$$
Similarly, we may also compute pseudospectra in this compressed basis. We have that
\begin{equation*}
    \begin{split}
        \left[\tau_r(z_j)\right]^2=\min_{\m{\tilde{g}}\in\mathbb{C}^N}\left[\res^*(z_j,U_r\Sigma^{-1}_r\m{\tilde{g}})\right]^2=\min_{\m{g}\in\mathrm{im}(U_r\Sigma^{-1}_r)}\left[\res^*(z_j,\m{g})\right]^2\geq \min_{\m{g}\in\mathbb{C}^N}\left[\res^*(z_j,\m{g})\right]^2=\left[\tau(z_j)\right]^2,
    \end{split}
\end{equation*}
where $\mathrm{im}(B)$ denotes the image of a matrix $B$.
Hence, following \cref{verepairs_sect}, we find that $\tau_r(z_j)\geq \sigma_{\inf}(\koop^*-zI)$. If $\tau_r(z_j)<\epsilon$, then $z_j\in\spec_{\mathrm{ap},\epsilon}(\koop^*)$, and we may use this method to verify that points are in the approximate point pseudospectrum. Note that when $r=N$, we have $\mathrm{im}(U_N\Sigma_N^{-1})=\mathbb{C}^N$, and the two residuals are equal.

\subsection{Convergence results with noise-free snapshots}
\label{sec:initial_convergence}

We now prove the convergence of SpecRKHS-PseudoPF (\cref{pspecadjoint_alg}) to the approximate point pseudospectrum of $\koop^*$. We assume that the snapshot data are exact (i.e., $y^{(m)}=F(x^{(m)})$). Later in \cref{inexactupperbounds_sect}, we show that our algorithms can still be implemented and converge in the inexact setting (e.g., caused by noisy measurements). We also assume that the span of a countable subset of the kernel functions $\mathfrak{K}_x$ forms a core of $\koop^*$. This assumption ensures that even for unbounded operators, the operator's behavior—when restricted to a countable set of kernel functions—faithfully reflects that of the full operator. In the case of bounded operators, the following result shows that this assumption is automatically satisfied.

\begin{lemma}
    \label{kernelorthonormalbasis_lemma}
    Let $\mathcal{H}$ be an RKHS with kernel $\mathfrak{K}$ consisting of functions on $\mathcal{X}$. Suppose that there exists a countable subset $\mathcal{X}_0\subset\mathcal{X}$ such that for all $g\in\mathcal{H}$, $g\restriction_{\mathcal{X}_0}=0$ if and only if $g=0$. Then, we can construct an orthonormal basis of $\mathcal{H}$ from finite linear combinations of kernel functions at points in $\mathcal{X}_0$.
\end{lemma}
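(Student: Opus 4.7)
The plan is to exhibit that $\{\mathfrak{K}_x : x \in \mathcal{X}_0\}$ has dense span in $\mathcal{H}$, and then apply Gram--Schmidt to a countable enumeration to produce the desired orthonormal basis.

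First I would unpack the hypothesis using the reproducing property. For any $g \in \mathcal{H}$ and $x \in \mathcal{X}_0$, $g(x) = \langle g, \mathfrak{K}_x\rangle_{\mathfrak{K}}$, so $g\restriction_{\mathcal{X}_0} = 0$ is equivalent to $g \perp \mathfrak{K}_x$ for every $x \in \mathcal{X}_0$. Letting $V = \overline{\operatorname{span}}\{\mathfrak{K}_x : x \in \mathcal{X}_0\}$, this says $V^\perp = \{g \in \mathcal{H} : g\restriction_{\mathcal{X}_0}=0\}$. By the assumption, $V^\perp = \{0\}$, and since $V$ is a closed subspace of a Hilbert space we get $V = \mathcal{H}$. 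In particular, $\mathcal{H}$ is separable.

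Next I would enumerate $\mathcal{X}_0 = \{x_1, x_2, \ldots\}$ (the case $\mathcal{X}_0$ finite is trivial and handled identically with a finite process) and run the Gram--Schmidt procedure on the sequence $\mathfrak{K}_{x_1}, \mathfrak{K}_{x_2}, \ldots$, skipping any vector that lies in the span of its predecessors. Concretely, set $e_1 = \mathfrak{K}_{x_{j_1}}/\|\mathfrak{K}_{x_{j_1}}\|_{\mathfrak{K}}$ where $j_1$ is the least index for which $\mathfrak{K}_{x_{j_1}}\neq 0$, and inductively define
\[
\tilde e_{k+1} = \mathfrak{K}_{x_{j_{k+1}}} - \sum_{i=1}^{k} \langle \mathfrak{K}_{x_{j_{k+1}}}, e_i\rangle_{\mathfrak{K}}\, e_i, \qquad e_{k+1} = \tilde e_{k+1}/\|\tilde e_{k+1}\|_{\mathfrak{K}},
\]
where $j_{k+1}$ is the least index $>j_k$ such that $\tilde e_{k+1}\neq 0$. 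By construction each $e_k$ is a finite linear combination of kernel functions at points of $\mathcal{X}_0$, the sequence is orthonormal, and $\operatorname{span}\{e_1,\ldots,e_k\} = \operatorname{span}\{\mathfrak{K}_{x_{j_1}},\ldots,\mathfrak{K}_{x_{j_k}}\}$.

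Finally, I would verify completeness. Any index $n$ is either among the $j_k$ or gives $\mathfrak{K}_{x_n}$ in the span of earlier kernel functions; either way $\mathfrak{K}_{x_n}$ lies in $\overline{\operatorname{span}}\{e_k\}$ for each $n$, so $\{\mathfrak{K}_x : x \in \mathcal{X}_0\} \subset \overline{\operatorname{span}}\{e_k\}$. Combined with the first paragraph this gives $\overline{\operatorname{span}}\{e_k\} = \mathcal{H}$, so $\{e_k\}$ is an orthonormal basis of $\mathcal{H}$ consisting of finite linear combinations of kernel functions at points in $\mathcal{X}_0$, as desired. The only point requiring any care is the bookkeeping in the Gram--Schmidt skipping step to ensure the enumeration still exhausts $\{\mathfrak{K}_{x_n}\}_{n\geq 1}$ in the closure; this is routine but is the one place an otherwise trivial argument could be mishandled.
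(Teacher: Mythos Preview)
Your proof is correct and follows essentially the same approach as the paper: use the reproducing property to translate the hypothesis into density of $\operatorname{span}\{\mathfrak{K}_x:x\in\mathcal{X}_0\}$, then apply Gram--Schmidt to an enumeration. The paper orders the steps slightly differently (Gram--Schmidt first, then completeness via an inductive check that any $f$ orthogonal to all $f_i$ vanishes on $\mathcal{X}_0$), but the content is the same; if anything, your explicit handling of the skipping step is a bit more careful than the paper's.
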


\begin{proof}
    We begin by enumerating the elements of $\mathcal{X}_0$ as $\{\hat{x}_1,\hat{x}_2,\dots\}$. Using a Gram--Schmidt procedure, we construct an orthonormal system from $\{\mathfrak{K}_{\hat{x}_1},\mathfrak{K}_{\hat{x}_2},\dots\}$, and claim that the corresponding orthonormal system, $\{f_1,f_2,\dots\}$, is complete. Let $f\in\mathcal{H}$ such that $\langle f,f_i\rangle_{\mathfrak{K}}=0$ for each $i$. Then, $f(\hat{x}_1)=0$ since $\langle f,f_1\rangle_{\mathfrak{K}}=0$ and $f_1$ is proportional to $\mathfrak{K}_{\hat{x}_1}$. Moreover, since $\langle f,f_2\rangle_{\mathfrak{K}}=0$ and $f_2$ is a linear combination of $\mathfrak{K}_{\hat{x}_1}$ and $\mathfrak{K}_{\hat{x}_2}$, we also have $f(\hat{x}_2)=0$. Pursing by induction, we find that $f|_{\mathcal{X}_0}=0$, and so by assumption $f=0$ identically, completing the proof.
\end{proof}

\begin{example}[Sobolev space $H^r(U)$] \label{ex_sob_space}
    Let $r>d/2$ and consider the Sobolev space $H^r(U)$ for $U\subset\mathbb{R}^d$ open. All elements of $H^r(U)$ are continuous so taking $\mathcal{X}_0=U\cap\mathbb{Q}^d$ suffices. Hence, if $\{q_1,q_2,\dots\}$ is a list of the rationals in some open set $U\subset \mathbb{R}^d$, then the span of $\{\mathfrak{K}_{q_1},\mathfrak{K}_{q_2},\dots\}$ forms a core for any bounded operator on $H^r(U)$ for $r>d/2$. Alternatively, on any weighted sequence space, as functions are defined at countably many points, $\mathcal{X}_0=\mathcal{X}$ satisfies the assumptions of \cref{kernelorthonormalbasis_lemma}.
\end{example}

Let $\{\mathfrak{K}_1,\mathfrak{K}_2,\dots\}$ be a countable subset of the kernel functions whose span forms a core of $\koop^*$, define $V_N=\spann\{\mathfrak{K}_1,\dots,\mathfrak{K}_N\}$, and let $\mathcal{P}_N:\mathcal{H}\rightarrow V_N$ be the orthogonal projection onto $V_N$. Note that
\begin{equation}
    \label{eqn:sigmainfproj}
    \sigma_{\inf}((\koop^*-zI)\mathcal{P}_N^*)=\inf_{g\in\spann\{\mathfrak{K}_{1},\dots,\mathfrak{K}_{N}\}}\frac{\|(\koop^*-zI)g\|_{\mathfrak{K}}}{\|g\|_{\mathfrak{K}}}.
\end{equation}
We begin by proving that $\sigma_{\inf}((\koop^*-zI)\mathcal{P}_N^*)$ converges down to $\sigma_{\inf}(\koop^*-zI)$ as $N\rightarrow\infty$.

\begin{lemma}\label{sigmainfconv}
    The function $z\mapsto\sigma_{\inf}((\koop^*-zI)\mathcal{P}_N^*)$ is non-increasing in $N$ and converges down to $\sigma_{\inf}(\mathcal{K}^*-zI)$ as $N\rightarrow\infty$ uniformly on compact subsets of $\mathbb{C}$.
\end{lemma}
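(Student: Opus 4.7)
The plan is to establish monotonicity directly from a variational characterization, then obtain pointwise convergence by exploiting the core hypothesis, and finally upgrade pointwise convergence on a compact set to uniform convergence via Dini's theorem.

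First, I would record that every $\mathfrak{K}_j$ lies in $\dom{\koop^*}$ by \cref{koopstarkernelfunction_eqn}, so $V_N\subset\dom{\koop^*}$ and $(\koop^*-zI)\mathcal{P}_N^*$ is well defined. The variational identity \cref{eqn:sigmainfproj} then gives
\[
\sigma_{\inf}\bigl((\koop^*-zI)\mathcal{P}_N^*\bigr)=\inf_{0\neq g\in V_N}\frac{\|(\koop^*-zI)g\|_\mathfrak{K}}{\|g\|_\mathfrak{K}}.
\]
Because $V_N\subset V_{N+1}$, the infimum is taken over an increasing family of nontrivial subspaces, and therefore is non-increasing in $N$. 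The same variational identity, now applied with $V_N$ replaced by $\dom{\koop^*}$, yields the pointwise lower bound $\sigma_{\inf}((\koop^*-zI)\mathcal{P}_N^*)\geq \sigma_{\inf}(\koop^*-zI)$ for every $z\in\mathbb{C}$.

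Next, I would prove the matching pointwise upper bound using the core assumption. Given any $h\in\dom{\koop^*}$ with $\|h\|_\mathfrak{K}=1$, the fact that $\spann\{\mathfrak{K}_1,\mathfrak{K}_2,\dots\}$ is a core for $\koop^*$ produces a sequence $h_n\in V_{N_n}$ (with $N_n\to\infty$) such that $h_n\to h$ and $\koop^* h_n\to \koop^* h$ in $\|\cdot\|_\mathfrak{K}$. Hence
\[
\frac{\|(\koop^*-zI)h_n\|_\mathfrak{K}}{\|h_n\|_\mathfrak{K}}\longrightarrow \|(\koop^*-zI)h\|_\mathfrak{K}.
\]
Since $\sigma_{\inf}((\koop^*-zI)\mathcal{P}_{N_n}^*)$ is at most the left-hand side, taking the limit in $n$ and then the infimum over $h$ yields
\[
\lim_{N\to\infty}\sigma_{\inf}\bigl((\koop^*-zI)\mathcal{P}_N^*\bigr)\leq \sigma_{\inf}(\koop^*-zI),
\]
where the limit exists by monotonicity. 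Combined with the lower bound, this gives pointwise convergence.

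Finally, I would upgrade to uniform convergence on compact sets by appealing to Dini's theorem. The function $z\mapsto\sigma_{\inf}(\koop^*-zI)$ is $1$-Lipschitz in $z$ (as noted in \cref{specprop_sect}), and the same triangle inequality argument shows that each $z\mapsto\sigma_{\inf}((\koop^*-zI)\mathcal{P}_N^*)$ is $1$-Lipschitz. We thus have a monotone decreasing sequence of continuous functions converging pointwise to a continuous function on any compact $K\subset\mathbb{C}$, so Dini's theorem delivers uniform convergence on $K$.

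The main obstacle is the pointwise upper bound, precisely because $\koop^*$ may be unbounded: a sequence converging only in $\|\cdot\|_\mathfrak{K}$ does not, in general, admit control of $\koop^* h_n$. The core assumption provides graph-norm density, which is exactly the ingredient that lets the Rayleigh-type quotient pass to the limit; \cref{kernelorthonormalbasis_lemma} and \cref{ex_sob_space} guarantee that this hypothesis is not vacuous in the settings of interest.
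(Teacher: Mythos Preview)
Your proposal is correct and follows essentially the same approach as the paper: monotonicity from the nested subspaces $V_N\subset V_{N+1}$, the trivial lower bound, the upper bound via graph-norm approximation from the core hypothesis, and then Dini's theorem for uniform convergence on compacta. Your explicit mention of the $1$-Lipschitz property to justify continuity is a slight refinement over the paper, which simply asserts that ``all functions involved are continuous.''
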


\begin{proof}
    This result is a standard `folklore result' from operator theory, but we include a proof for completeness. As $V_{N}\subset V_{N+1}$, $\sigma_{\inf}((\koop^*-zI)\mathcal{P}_N^*)$ is a non-increasing function of $N$.
    Additionally, as $V_{N}\subset\mathcal{D}(\mathcal{K}^*)$, $\sigma_{\inf}((\koop^*-zI)\mathcal{P}_N^*)\geq \sigma_{\inf}(\mathcal{K}^*-zI)$ for all $z\in\mathbb{C}$ and $N$. Fix $0<\epsilon<1$ and $z\in\mathbb{C}$; by definition of the injection modulus there exists $g\in\mathcal{D}(\mathcal{K}^*)$ such that
    \begin{equation}\label{eq1lem1}
        \|g\|_{\mathfrak{K}}=1 \quad\text{and}\quad \|(\mathcal{K}^*-z I)g\|_{\mathfrak{K}}\leq \sigma_{\inf}(\mathcal{K}^*-z I)+\epsilon.
    \end{equation}
    As $\cup_{N=1}^{\infty}V_N$ forms a core, there exists a sequence $g_N\in V_N$ such that $g_N\rightarrow g$ and $\mathcal{K}^*g_N\rightarrow\mathcal{K}^*g$ as $N\rightarrow\infty$. Hence, for sufficiently large $N_0$, there exists $g_{N_0}\in V_{N_0}$ such that
    \begin{equation}\label{eq2lem1}
        \|g-g_{N_0}\|_{\mathfrak{K}}\leq\epsilon \quad\text{and}\quad \|(\mathcal{K}^*-z I)g_{N_0}\|_{\mathfrak{K}}\leq\|(\mathcal{K}^*-z I)g\|_{\mathfrak{K}}+\epsilon.
    \end{equation}
    Then by \cref{eq1lem1,eq2lem1} and the triangle inequality, we have that $\|g_{N_0}\|_{\mathfrak{K}}\geq 1-\epsilon$ and
    $$
        \sigma_{\inf}((\koop^*-zI)\mathcal{P}_{N_0}^*)\leq\frac{\|(\mathcal{K}^*-z I)g_{N_0}\|_{\mathfrak{K}}}{\|g_{N_0}\|_{\mathfrak{K}}}\leq\frac{\|(\mathcal{K}^*-z I)g\|_{\mathfrak{K}}+\epsilon}{\|g_{N_0}\|_{\mathfrak{K}}}\leq\frac{\sigma_{\inf}(\mathcal{K}^*-z I)+2\epsilon}{1-\epsilon}.
    $$
    Since $\epsilon$ with $0<\epsilon<1$ was arbitrary, $\lim_{N\rightarrow\infty}\sigma_{\inf}((\koop^*-zI)\mathcal{P}_N^*)=\sigma_{\inf}(\mathcal{K}^*-zI)$ pointwise, so as the convergence is monotonic and all functions involved are continuous, by Dini's theorem the convergence is uniform on compact subsets of $\mathbb{C}$.
\end{proof}

The output of SpecRKHS-PseudoPF (\cref{pspecadjoint_alg}) is the set of points in a grid $\gridop(N)$ such that $\sigma_{\inf}((\koop^*-zI)\mathcal{P}_N^*)<\epsilon$, i.e., we define
\begin{equation}
    \label{one_lim_needed}
    \Gamma_{N}^{\epsilon}(\mathcal{K}^*)=\{z\in\gridop(N)\;:\;\sigma_{\inf}((\koop^*-zI)\mathcal{P}_N^*)<\epsilon\}.
\end{equation}
The following theorem shows that $\Gamma_{N}^{\epsilon}(\mathcal{K}^*)$ converges to the approximate point pseudospectrum of $\koop^*$. This result guarantees that our method yields accurate results with sufficient (noise-free) data: no spurious eigenvalues in the limit and no missed spectra (within the RKHS setting). Moreover, this convergence occurs in a single limit as $N\rightarrow\infty$. Thus, computing spectra on an RKHS is fundamentally easier than on $L^2$, a fact we formalize in \cref{sci_sect}.

\begin{theorem}[Convergence to the approximate point pseudospectrum of $\koop^*$]\label{adjointapspecconv_thm}
    Let $\epsilon>0$, then for all $N\in\mathbb{N}$, $\Gamma_{N}^{\epsilon}(\mathcal{K}^*)\subset\spec_{\mathrm{ap},\epsilon}(\mathcal{K}^*)$ and
    $
        \lim_{N\rightarrow\infty}d_{\mathrm{AW}}\left(\Gamma_{N}^{\epsilon}(\mathcal{K}^*),\spec_{\mathrm{ap},\epsilon}(\mathcal{K}^*)\right)=0.
    $
\end{theorem}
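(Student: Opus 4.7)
The plan is to establish the two claims separately, leveraging \cref{sigmainfconv} and the set-theoretic characterization of Attouch--Wets convergence recalled in \cref{sec_mean_spectral}. The containment $\Gamma_N^\epsilon(\mathcal{K}^*) \subseteq \spec_{\mathrm{ap},\epsilon}(\mathcal{K}^*)$ is essentially free. If $z \in \Gamma_N^\epsilon(\mathcal{K}^*)$, then by \cref{eqn:sigmainfproj} we have $\sigma_{\inf}((\mathcal{K}^*-zI)\mathcal{P}_N^*) < \epsilon$, and since $V_N \subset \mathcal{D}(\mathcal{K}^*)$, taking the infimum over the larger domain gives $\sigma_{\inf}(\mathcal{K}^*-zI) \leq \sigma_{\inf}((\mathcal{K}^*-zI)\mathcal{P}_N^*) < \epsilon$, placing $z$ in the set whose closure defines $\spec_{\mathrm{ap},\epsilon}(\mathcal{K}^*)$.

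For the Attouch--Wets convergence, the plan is to verify, for every $\delta>0$ and $m \in \mathbb{N}$, the two-sided inclusion at scale $\delta$ on the ball $\overline{B_m(0)}$. The direction $\Gamma_N^\epsilon(\mathcal{K}^*) \cap \overline{B_m(0)} \subseteq \spec_{\mathrm{ap},\epsilon}(\mathcal{K}^*) + B_\delta(0)$ is immediate from the containment above, with no condition on $N$. The main content is the reverse inclusion $\spec_{\mathrm{ap},\epsilon}(\mathcal{K}^*) \cap \overline{B_m(0)} \subseteq \Gamma_N^\epsilon(\mathcal{K}^*) + B_\delta(0)$, which encodes the absence of spectral invisibility.

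For this reverse inclusion, the approach is a compactness/contradiction argument. Suppose there were a subsequence $N_k \to \infty$ and points $z_k \in \spec_{\mathrm{ap},\epsilon}(\mathcal{K}^*) \cap \overline{B_m(0)}$ with $\dist(z_k, \Gamma_{N_k}^\epsilon(\mathcal{K}^*)) \geq \delta$. Compactness of $\spec_{\mathrm{ap},\epsilon}(\mathcal{K}^*) \cap \overline{B_m(0)}$ (closed and bounded) yields a subsequential limit $z_\infty$ in the same set. Since $\spec_{\mathrm{ap},\epsilon}(\mathcal{K}^*)$ is defined as a closure, one selects $z_*$ with $|z_\infty - z_*| < \delta/4$ and $\sigma_{\inf}(\mathcal{K}^*-z_*I) < \epsilon$, and defines the gap $\beta = \epsilon - \sigma_{\inf}(\mathcal{K}^*-z_*I) > 0$. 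Applying \cref{sigmainfconv} at the single point $z_*$ fixes $N_0$ with $\sigma_{\inf}((\mathcal{K}^*-z_*I)\mathcal{P}_N^*) < \epsilon - \beta/2$ for all $N \ge N_0$. The 1-Lipschitz property of $w \mapsto \sigma_{\inf}((\mathcal{K}^*-wI)\mathcal{P}_N^*)$, inherited from $\|(\mathcal{K}^*-w_1 I)\mathcal{P}_N^* - (\mathcal{K}^*-w_2 I)\mathcal{P}_N^*\| \leq |w_1-w_2|\|\mathcal{P}_N^*\| \leq |w_1-w_2|$, upgrades this to $\sigma_{\inf}((\mathcal{K}^*-wI)\mathcal{P}_N^*) < \epsilon$ for all $|w-z_*| < \beta/2$. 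Taking $N$ large enough that $\gridop(N)$ is a $\min(\beta/2,\delta/4)$-net of $\overline{B_{m+1}(0)} \subset B_N(0)$ produces a grid point $\tilde z \in \Gamma_N^\epsilon(\mathcal{K}^*)$ with $|z_*-\tilde z| < \delta/4$. A triangle inequality then yields $|z_k - \tilde z| < 3\delta/4 < \delta$ for all sufficiently large $k$, contradicting $\dist(z_k, \Gamma_{N_k}^\epsilon(\mathcal{K}^*)) \geq \delta$.

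The main obstacle is the three-way interaction between the Galerkin subspace index, the grid resolution, and the uniformity needed over the compact set $\overline{B_m(0)}$. A direct pointwise-convergence argument struggles because the gap $\beta$ depends on the approximating point $z_*$, which in turn depends on the target point in the pseudospectrum; the contradiction/compactness reformulation circumvents this by passing to a single limit point where one $z_*$ and one $\beta$ suffice. The edge case $\spec_{\mathrm{ap},\epsilon}(\mathcal{K}^*) = \emptyset$ is automatic: the containment forces $\Gamma_N^\epsilon(\mathcal{K}^*) = \emptyset$ for every $N$, and Attouch--Wets convergence to $\emptyset$ holds trivially by the convention in \cref{sec_mean_spectral}.
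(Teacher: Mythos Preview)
Your proposal is correct and follows essentially the same contradiction-by-compactness route as the paper's proof: establish the containment from monotonicity, then for the reverse Attouch--Wets inclusion pick a subsequential limit, approximate it by a point $z_*$ with strict inequality $\sigma_{\inf}(\mathcal{K}^*-z_*I)<\epsilon$, and use \cref{sigmainfconv} together with grid density to produce nearby grid points in $\Gamma_N^\epsilon(\mathcal{K}^*)$. The only cosmetic difference is that you invoke pointwise convergence at $z_*$ plus the explicit $1$-Lipschitz bound in $w$, whereas the paper invokes the locally uniform convergence statement of \cref{sigmainfconv} together with continuity of $z\mapsto\sigma_{\inf}(\mathcal{K}^*-zI)$; these are equivalent here.
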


\begin{proof}
    Throughout this proof, let $\epsilon>0$.
    For any $z\in\Gamma_{N}^{\epsilon}(\mathcal{K}^*)$, by \cref{sigmainfconv} we have $\sigma_{\inf}(\mathcal{K}^*-zI)\leq\sigma_{\inf}((\koop^*-zI)\mathcal{P}_N^*)<\epsilon$. It follows that $\Gamma_{N}^{\epsilon}(\mathcal{K}^*)\subset\spec_{\mathrm{ap},\epsilon}(\mathcal{K}^*)$ for all $N\in\mathbb{N}$.

    To prove convergence, if $\spec_{\mathrm{ap},\epsilon}(\mathcal{K}^*)=\emptyset$ we are done, so assume that $\spec_{\mathrm{ap},\epsilon}(\mathcal{K}^*)\neq\emptyset$. Then there exists $z\in\mathbb{C}$ such that $\sigma_{\inf}(\mathcal{K}^*-z I)<\epsilon$.
    Since $\lim_{N\rightarrow\infty}\dist(z,\gridop(N))=0$ and the map $z\rightarrow\sigma_{\inf}(\mathcal{K}^*-zI)$ is continuous, \cref{sigmainfconv} implies that $\Gamma_{N}^{\epsilon}(\mathcal{K}^*)\neq \emptyset$ for large $N$. Hence, by the characterization of the Attouch--Wets topology for non-empty sets, as $\Gamma_{N}^{\epsilon}(\mathcal{K}^*)\subset\spec_{\mathrm{ap},\epsilon}(\mathcal{K}^*)$, it suffices to show that for every $\delta>0$ and all large $m\in\mathbb{N}$, $\spec_{\mathrm{ap},\epsilon}(\mathcal{K}^*)\cap\;B_m(0)\subset \Gamma_{N}^{\epsilon}(\mathcal{K}^*)+B_{\delta}(0)$ for sufficiently large $N$.

    Suppose, for a contradiction, that this final statement is false. By taking subsequences if necessary, we may assume that there exists $\delta>0$ and a sequence $z_N\in\spec_{\mathrm{ap},\epsilon}(\mathcal{K}^*)\cap B_m(0)$ such that $\dist(z_N,\Gamma_N^{\epsilon}(\mathcal{K}^*))\geq\delta$ for all $N$ and $z_N\rightarrow z$. As $\spec_{\mathrm{ap},\epsilon}(\mathcal{K}^*)$ is closed, $z\in\spec_{\mathrm{ap},\epsilon}(\mathcal{K}^*)\cap B_m(0)$. Since $\spec_{\mathrm{ap},\epsilon}(\mathcal{K}^*)=\closure(\{z\in\mathbb{C}\;:\;\sigma_{\inf}(\mathcal{K}^*-z I)<\epsilon\})$, there exists $w\in B_{\delta/2}(z)$ such that $\sigma_{\inf}(\mathcal{K}^*-w I)<\epsilon$. Then as $\lim_{N\rightarrow\infty}\dist(w,\gridop(N))=0$, there exist $w_N\in\gridop(N)$ such that $|w_N-w|\rightarrow 0$ as $N\rightarrow\infty$. By the triangle inequality,
    \begin{align*}
         & \sigma_{\inf}((\koop^*-w_NI)\mathcal{P}_N^*)                                                                                                                                                      \\
         & \quad\leq|\sigma_{\inf}((\koop^*-w_NI)\mathcal{P}_N^*)-\sigma_{\inf}(\mathcal{K}^*-w_N I)|+|\sigma_{\inf}(\mathcal{K}^*-w_N I)-\sigma_{\inf}(\mathcal{K}^*-wI)|+\sigma_{\inf}(\mathcal{K}^*-w I).
    \end{align*}
    The first term on the right-hand side converges to $0$ by \cref{sigmainfconv} since the points $w_N$ are contained in some compact set, the second term converges to $0$ by continuity as $w_N\rightarrow w$, and the third term is $<\epsilon$ by assumption. Hence, for large $N$, $w_N\in\Gamma_N^{\epsilon}(\mathcal{K}^*)$. However, $|w_N-z|\leq |w_N-w|+|w-z|\leq |w_N-w|+\delta/2<\delta$ for large $N$, giving the desired contradiction.
\end{proof}

\subsection{Computing the full pseudospectrum and spectrum}
\label{sec:fullpspecspec}

We now turn to computing the full pseudospectrum and spectrum of the Perron--Frobenius operator (and, hence, the Koopman operator). Recall that the pseudospectrum can be expressed as
$$
    \spec_{\epsilon}(\koop)=\spec_{\epsilon}(\koop^*)= \closure(\{z\in\mathbb{C}\;:\;\injmod(\koop^*-\overline{z} I)<\epsilon\})\cup \closure(\{z\in\mathbb{C}\;:\;\injmod(\koop-z I)<\epsilon\}).
$$
We aim to estimate $\sigma_{\mathrm{inf}}(\koop-z I)$ using the techniques of \cref{sec:initial_convergence}, which requires computing the following matrix:
$$
    L_{jk}=\langle\koop \mathfrak{K}_{x^{(k)}},\koop \mathfrak{K}_{x^{(j)}}\rangle_{\mathfrak{K}}, \quad 1\leq j,k\leq N.
$$
Unfortunately, there does not appear to be a straightforward way to express $\langle\koop \mathfrak{K}_{x_i},\koop \mathfrak{K}_{x_j}\rangle_{\mathfrak{K}}$ directly in terms of the kernel $\mathfrak{K}$. However, there are at least two methods to overcome this issue.

\subsubsection{Quadrature}

If the samples are chosen to form a convergent quadrature rule with weights $\{w_j\}_{1\leq j\leq M}$, then $L$ can be estimated by quadrature. For example, if we are working in $H^1(\mathbb{R})$, then
\begin{equation*}
    \begin{split}
        L_{jk} & =\lim_{M\rightarrow\infty}\sum_{i=1}^Mw_i\left(\koop \mathfrak{K}_{x^{(k)}}(x^{(i)})\overline{\koop \mathfrak{K}_{x^{(j)}}(x^{(i)})}+(\koop \mathfrak{K}_{x^{(k)}})'(x^{(i)})\overline{(\koop \mathfrak{K}_{x^{(j)}})'(x^{(i)})}\right) \\
               & =\lim_{M\rightarrow\infty}\sum_{i=1}^Mw_i\left(\mathfrak{K}(x^{(k)},y^{(i)})\mathfrak{K}(y^{(i)},x^{(j)})+|F'(x^{(i)})|^2\mathfrak{K}_{x^{(k)}}'(y^{(i)})\overline{\mathfrak{K}_{x^{(j)}}'(y^{(i)})}\right).
    \end{split}
\end{equation*}
The first part of the sum satisfies $\sum_{i=1}^Mw_i\mathfrak{K}(x^{(k)},y^{(i)})\mathfrak{K}(y^{(i)},x^{(j)})=\sum_{i=1}^M(A^*)_{ik}W_{ii}A_{ji}=(AWA^*)_{jk}$, where $W=\diag(w)$. In the spaces associated with the Wendland or Mat{\'e}rn kernels, which are equivalent to Sobolev spaces, estimating the $H^r$-inner products as above provides an estimate of the inner products up to a given constant.

\subsubsection{Rectangular truncations}

\begin{figure}[t]
    \centering
    \includegraphics[scale=0.5]{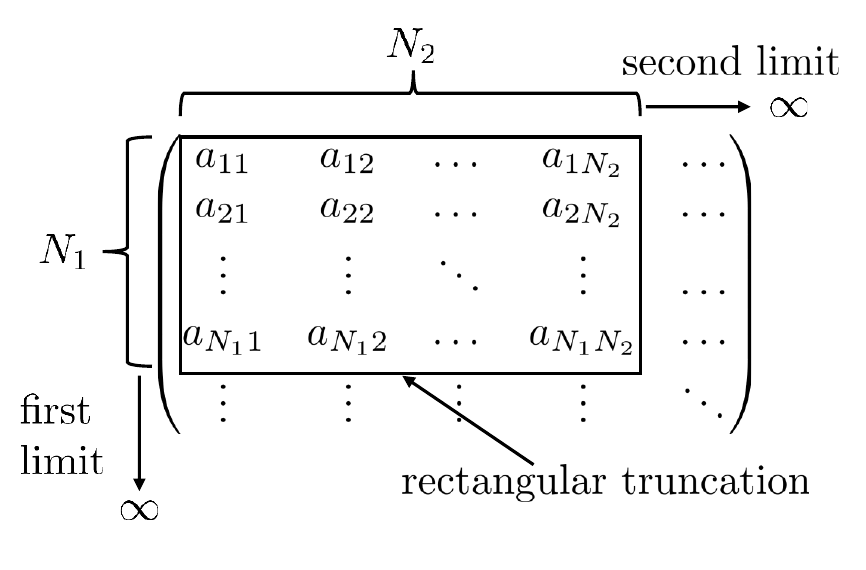}
    \caption{To compute $\sigma_{\inf}(\koop-z I)$, we begin with a $N_1\times N_2$ truncation of $\koop-zI$ from which we may compute $\sigma_{\inf}(\mathcal{P}_{N_1}(\koop-zI)\mathcal{P}_{N_2}^*)$, take the limit as $N_1\rightarrow\infty$ to compute $\sigma_{\inf}((\koop-zI)\mathcal{P}_{N_2}^*)$, and then take the limit as $N_2\rightarrow\infty$ to get $\sigma_{\inf}(\koop-z I)$. These two limits do not commute and generally cannot be combined into a single limit (e.g., by adaptively choosing $N_2$ dependent on $N_1$).}
    \label{fig:rectangular_trunc}
\end{figure}

An alternative and much simpler method is to use rectangular truncations and compute $\sigma_{\inf}(\mathcal{P}_{N_1}(\koop-zI)\mathcal{P}_{N_2}^*)$ for $N_1,N_2\in\mathbb{N}$ and any $z\in\mathbb{C}$. From there we prove that as we take $N_1\rightarrow\infty$ this converges to $\sigma_{\inf}((\koop-zI)\mathcal{P}_{N_2}^*)$ and then \cref{sigmainfconv} says this converges to $\sigma_{\inf}(\koop-zI)$ as $N_2\rightarrow\infty$. \cref{fig:rectangular_trunc} shows the idea. We define
$$
    L_{N_1}(z)=AG^{-1}A^*-{z} A-\overline{z} A^*+|z|^2G\in\mathbb{C}^{N_1\times N_1}.
$$
We assume that $\{\mathfrak{K}_1,\mathfrak{K}_2,\dots\}$ is a countable subset of the kernel functions whose span forms a core of $\koop$ and $\koop^*$, and let $V_n=\spann\{\mathfrak{K}_1,\dots,\mathfrak{K}_n\}$ and $\mathcal{P}_n$ be the orthogonal projection onto $V_n$.

\begin{lemma}
    \label{rectcompute_lemma}
    If $N_1\geq N_2$ and $g=\sum_{j=1}^{N_2}g_j \mathfrak{K}_{j}\in V_{N_2}\subset V_{N_1}$, then
    $$
        \m{g}^*L_{N_1}(z)\m{g}=\|\mathcal{P}_{{N_1}}^*\mathcal{P}_{{N_1}}(\mathcal{K}-{z}I)\mathcal{P}_{N_2}^*\mathcal{P}_{N_2}g\|_{\mathfrak{K}}^2.
    $$
\end{lemma}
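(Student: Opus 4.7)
The plan is to translate the right-hand side into a Galerkin coordinate computation. Since $g\in V_{N_2}$, we have $\mathcal{P}_{N_2}^*\mathcal{P}_{N_2}g=g$, so the inner projection is immaterial. Setting $u=(\mathcal{K}-zI)g$ and $P=\mathcal{P}_{N_1}^*\mathcal{P}_{N_1}$ (the orthogonal projection onto $V_{N_1}$), it suffices to show $\|Pu\|_{\mathfrak{K}}^2=\m{g}^*L_{N_1}(z)\m{g}$. Two ingredients drive the calculation: (i) the reproducing property to obtain the coordinate vector $\m{u}\in\mathbb{C}^{N_1}$ with entries $u_j=\langle u,\mathfrak{K}_j\rangle_{\mathfrak{K}}$; and (ii) the standard identity $\|Pu\|_{\mathfrak{K}}^2=\m{u}^*G^{-1}\m{u}$ for the squared norm of an orthogonal projection onto a finite-dimensional subspace expressed via its Gram matrix.

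For (i), write $\mathfrak{K}_j=\mathfrak{K}_{\tilde{x}_j}$. The reproducing property gives $\langle\mathcal{K}g,\mathfrak{K}_j\rangle_{\mathfrak{K}}=(\mathcal{K}g)(\tilde{x}_j)=\sum_{k=1}^{N_2}g_k\,\mathfrak{K}(\tilde{x}_k,F(\tilde{x}_j))$, and conjugate symmetry of $\mathfrak{K}$ rewrites this as $\sum_k(A^*)_{jk}g_k$, where $A_{jk}=\langle\mathcal{K}^*\mathfrak{K}_k,\mathfrak{K}_j\rangle_{\mathfrak{K}}$. Combined with $\langle g,\mathfrak{K}_j\rangle_{\mathfrak{K}}=\sum_k G_{jk}g_k$, this yields $\m{u}=(A^*-zG)\tilde{\m{g}}$, where $\tilde{\m{g}}\in\mathbb{C}^{N_1}$ denotes $\m{g}$ padded with zeros. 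For (ii), expand $Pu=\sum_j c_j\mathfrak{K}_j\in V_{N_1}$; the defining equations $\langle Pu,\mathfrak{K}_j\rangle_{\mathfrak{K}}=\langle u,\mathfrak{K}_j\rangle_{\mathfrak{K}}$ are exactly $G\m{c}=\m{u}$, so $\m{c}=G^{-1}\m{u}$ (with $G$ invertible under the standing assumption that the dictionary is linearly independent). Since $P$ is a self-adjoint idempotent, $\|Pu\|_{\mathfrak{K}}^2=\langle Pu,u\rangle_{\mathfrak{K}}=\m{u}^*\m{c}=\m{u}^*G^{-1}\m{u}$.

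Substituting the expression for $\m{u}$ and using $G^*=G$, one computes
$$\m{u}^*G^{-1}\m{u}=\tilde{\m{g}}^*(A-\bar{z}G)G^{-1}(A^*-zG)\tilde{\m{g}}=\tilde{\m{g}}^*\bigl[AG^{-1}A^*-zA-\bar{z}A^*+|z|^2G\bigr]\tilde{\m{g}}=\m{g}^*L_{N_1}(z)\m{g},$$
the last equality because $\tilde{\m{g}}$ vanishes outside its first $N_2$ entries, so the quadratic form reduces to one in $\m{g}$. The argument is essentially algebraic; the only real care is tracking conjugation conventions and keeping straight that $\mathcal{K}$ acts in coordinates through $A^*$ rather than $A$ (because $A$ is by definition the Galerkin matrix of $\mathcal{K}^*$). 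Conceptually, the lemma simply asserts that the right-hand side is the squared norm of the Galerkin projection of $(\mathcal{K}-zI)g$ onto $V_{N_1}$ expressed in the kernel basis.
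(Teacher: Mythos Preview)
Your proof is correct and takes a genuinely different, cleaner route from the paper. The paper's argument introduces auxiliary expansion coefficients $a_{ij}$ via $\mathcal{P}_{N_1}^*\mathcal{P}_{N_1}\mathcal{K}\mathfrak{K}_i=\sum_j a_{ij}\mathfrak{K}_j$ and then verifies the identity term by term, checking separately that the $AG^{-1}A^*$ piece matches $\langle \mathcal{P}_{N_1}^*\mathcal{P}_{N_1}\mathcal{K}g,\mathcal{P}_{N_1}^*\mathcal{P}_{N_1}\mathcal{K}g\rangle_{\mathfrak{K}}$ through a somewhat lengthy index manipulation. Your approach instead isolates two reusable facts---the coordinate formula $\mathbf{u}=(A^*-zG)\tilde{\mathbf{g}}$ for the dual coordinates of $(\mathcal{K}-zI)g$, and the standard Gram-matrix identity $\|Pu\|_{\mathfrak{K}}^2=\mathbf{u}^*G^{-1}\mathbf{u}$---and then the result drops out of the factorization $L_{N_1}(z)=(A-\bar{z}G)G^{-1}(A^*-zG)$. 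This is more conceptual, avoids the auxiliary coefficients entirely, and as a bonus makes the positive semidefiniteness of $L_{N_1}(z)$ manifest. One small remark: your step $\langle\mathcal{K}g,\mathfrak{K}_j\rangle_{\mathfrak{K}}=(\mathcal{K}g)(\tilde{x}_j)$ tacitly uses $g\in\mathcal{D}(\mathcal{K})$, which holds here because the kernel functions are assumed to span a core of $\mathcal{K}$; alternatively one can bypass this by writing $\langle\mathcal{K}g,\mathfrak{K}_j\rangle_{\mathfrak{K}}=\langle g,\mathcal{K}^*\mathfrak{K}_j\rangle_{\mathfrak{K}}$ directly, since $\mathfrak{K}_j\in\mathcal{D}(\mathcal{K}^*)$ always.
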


\begin{proof}
    Since $N_1\geq N_2$,  $\mathfrak{K}_{j}=\mathcal{P}_{{N_1}}^*\mathcal{P}_{{N_1}}\mathfrak{K}_{j}$ for $1\leq j\leq N_2$, and $\langle \mathfrak{K}_{k},\koop ^*\mathfrak{K}_{j}\rangle_{\mathfrak{K}}=\langle \koop \mathfrak{K}_{k}, \mathfrak{K}_{j}\rangle_{\mathfrak{K}}=\langle\koop \mathfrak{K}_{k},\mathcal{P}_{{N_1}}^*\mathcal{P}_{N_1}\mathfrak{K}_{j}\rangle_{\mathfrak{K}}=\langle \mathcal{P}_{{N_1}}^*\mathcal{P}_{N_1}\mathcal{K}\mathfrak{K}_{k},\mathfrak{K}_{j}\rangle_{\mathfrak{K}}$. It follows that
    \begin{align*}
        \sum_{j,k=1}^{N_2}\overline{g_j}(AG^{-1}A^*)_{jk}g_k & =\sum_{j,k=1}^{N_2}\sum_{m,n=1}^{N_1}\overline{g_j}A_{jm}(G^{-1})_{mn}(A^*)_{nk}{g_k}                                                                                                                                                                                                                       \\
                                                             & =\sum_{j,k=1}^{N_2}\sum_{m,n=1}^{N_1}\overline{g_j}\langle \koop^*\mathfrak{K}_{m},\mathfrak{K}_{j}\rangle_{\mathfrak{K}}(G^{-1})_{mn}\langle \mathfrak{K}_{k},\koop^*\mathfrak{K}_{n}\rangle_{\mathfrak{K}}{g_k}                                                                                           \\
                                                             & =\sum_{j,k=1}^{N_2}\sum_{m,n=1}^{N_1}\overline{g_j}\langle \mathfrak{K}_{m},\mathcal{P}_{{N_1}}^*\mathcal{P}_{{N_1}}\mathcal{K}\mathfrak{K}_{j}\rangle_{\mathfrak{K}}(G^{-1})_{mn}\langle \mathcal{P}_{{N_1}}^*\mathcal{P}_{{N_1}}\mathcal{K}\mathfrak{K}_{k},\mathfrak{K}_{n}\rangle_{\mathfrak{K}} {g_k}.
    \end{align*}
    If we define coefficients $a_{ij}$ such that $\mathcal{P}_{{N_1}}^*\mathcal{P}_{{N_1}}\mathcal{K}\mathfrak{K}_{i}=\sum_{j=1}^{N_1}a_{ij}\mathfrak{K}_{j}$, then $\sum_{j,k=1}^{N_2}\overline{g_j}(AG^{-1}A^*)_{jk}{g_k}$ is equal to
    \begin{align*}
        \sum_{j,k=1}^{N_2}\sum_{m,n,p,q=1}^{N_1}\overline{g_j}\langle \mathfrak{K}_{m},a_{jp}\mathfrak{K}_{p}\rangle_{\mathfrak{K}}(G^{-1})_{mn}\langle a_{kq}\mathfrak{K}_{q},\mathfrak{K}_{n}\rangle_{\mathfrak{K}} {g_k} & =\sum_{j,k=1}^{N_2}\sum_{m,n,p,q=1}^{N_1}\overline{g_j}\overline{a_{jp}}G_{pm}(G^{-1})_{mn}G_{nq}{a_{kq}}{g_k} \\
                                                                                                                                                                                                                            & =\sum_{j,k=1}^{N_2}\sum_{m,p=1}^{N_1}\overline{g_ja_{jp}}G_{pm}{a_{km}}{g_k}.
    \end{align*}
    Conversely,
    $$
        \langle\mathcal{P}_{{N_1}}^*\mathcal{P}_{{N_1}}\koop g,\mathcal{P}_{{N_1}}^*\mathcal{P}_{{N_1}}\koop g\rangle_{\mathfrak{K}}=\sum_{j,k=1}^{N_2}\sum_{m,p=1}^{N_1}\langle g_ka_{km}\mathfrak{K}_{m},g_ja_{jp}\mathfrak{K}_{p}\rangle_{\mathfrak{K}}=\sum_{j,k=1}^{N_2}\sum_{m,p=1}^{N_1}\overline{g_ja_{jp}}G_{pm}{a_{km}}{g_k}.
    $$
    Thus, we have agreement. The remaining terms are similar.
\end{proof}

It follows that
$$
    \sigma_{\inf}(\mathcal{P}_{N_1}(\koop^*-zI)\mathcal{P}_{N_2}^*)=\min_{g\in V_{N_2}}\m{g}^*L_{N_1}(z)\m{g},
$$
which can be computed as $L_{N_1}(z)$ is a finite matrix. This can be obtained by solving a generalized eigenvalue problem, and \cref{inexactupperbounds_sect} will discuss how this can be done using arithmetic algorithms and error control. We now want to show that $\lim_{N_2\rightarrow\infty}\lim_{N_1\rightarrow\infty}\sigma_{\inf}(\mathcal{P}_{N_1}(\koop-zI)\mathcal{P}_{N_2}^*)=\sigma_{\inf}(\koop-zI)$ using the following lemma. Note that \cref{sigmainfconv} previously showed that $\lim_{N_2\rightarrow\infty}\sigma_{\mathrm{\inf}}((\koop-zI)\mathcal{P}_{N_2}^*)=\sigma_{\mathrm{inf}}(\koop-zI)$.

\begin{lemma}
    \label{n2n1limit_lemma}
    The function
    $z\mapsto\sigma_{\inf}(\mathcal{P}_{N_1}(\koop-zI)\mathcal{P}_{N_2}^*)$ is non-decreasing in $N_1$ and converges up to $\sigma_{\inf}((\koop-zI)\mathcal{P}_{N_2}^*)$ as $N_1\rightarrow\infty$ uniformly on compact subsets of $\mathbb{C}$.
\end{lemma}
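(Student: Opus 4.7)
The plan is to prove the two assertions (monotonicity in $N_1$ and uniform convergence on compact subsets of $\mathbb{C}$) sequentially, mirroring the structure of \cref{sigmainfconv} but with the inequalities reversed since we are now projecting the \emph{image} rather than restricting the \emph{domain}. For convenience I would write $Q_n = \mathcal{P}_n^*\mathcal{P}_n$ for the orthogonal projection in $\mathcal{H}$ onto $V_n$, noting that $\|\mathcal{P}_{N_1} v\|_{V_{N_1}} = \|Q_{N_1} v\|_{\mathfrak{K}}$ for every $v\in\mathcal{H}$.

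For monotonicity, the inclusion $V_{N_1}\subset V_{N_1+1}$ gives the Pythagorean inequality $\|Q_{N_1}v\|_{\mathfrak{K}} \leq \|Q_{N_1+1}v\|_{\mathfrak{K}} \leq \|v\|_{\mathfrak{K}}$ for every $v\in\mathcal{H}$. Applying this to $v=(\koop-zI)h$ for $h\in V_{N_2}$ and then taking the infimum over the unit sphere of $V_{N_2}$ shows that $N_1 \mapsto \sigma_{\inf}(\mathcal{P}_{N_1}(\koop-zI)\mathcal{P}_{N_2}^*)$ is non-decreasing and bounded above by $\sigma_{\inf}((\koop-zI)\mathcal{P}_{N_2}^*)$.

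For pointwise convergence at a fixed $z\in\mathbb{C}$, I first fix $h\in V_{N_2}$ with $\|h\|_{\mathfrak{K}}=1$. Since $\bigcup_n V_n$ is a core for $\koop$, it is in particular dense in $\mathcal{H}$, so $Q_{N_1}\to I$ strongly, giving $\|Q_{N_1}(\koop-zI)h\|_{\mathfrak{K}} \to \|(\koop-zI)h\|_{\mathfrak{K}}$. The map $h\mapsto \|Q_{N_1}(\koop-zI)h\|_{\mathfrak{K}}$ is continuous on the (compact) unit sphere of the finite-dimensional space $V_{N_2}$, monotonically increasing in $N_1$, with continuous pointwise limit $h\mapsto\|(\koop-zI)h\|_{\mathfrak{K}}$ (the latter being continuous because $\koop|_{V_{N_2}}$ is a bounded operator from the finite-dimensional $V_{N_2}$ into $\mathcal{H}$). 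By Dini's theorem, convergence is uniform on the unit sphere, and hence the infima converge, i.e.\ $\sigma_{\inf}(\mathcal{P}_{N_1}(\koop-zI)\mathcal{P}_{N_2}^*) \to \sigma_{\inf}((\koop-zI)\mathcal{P}_{N_2}^*)$.

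To upgrade pointwise convergence to uniform convergence on compact subsets of $\mathbb{C}$, I would apply Dini's theorem a second time, now on a compact set $K\subset\mathbb{C}$. Each finite-$N_1$ function $z\mapsto\sigma_{\inf}(\mathcal{P}_{N_1}(\koop-zI)\mathcal{P}_{N_2}^*)$ is continuous because, by \cref{rectcompute_lemma}, it is obtained from the smallest generalized eigenvalue of the pencil $(L_{N_1}(z), G|_{V_{N_2}})$ whose entries depend continuously (indeed polynomially) on $z$ and $\overline{z}$. The limit function $z\mapsto\sigma_{\inf}((\koop-zI)\mathcal{P}_{N_2}^*)$ is continuous for the same reason applied to the bounded operator $(\koop-zI)|_{V_{N_2}}$. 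Since the convergence is monotone between continuous functions on the compact set $K$, Dini's theorem yields the required uniform convergence. The main obstacle is the double use of Dini—one invocation on the unit sphere of $V_{N_2}$ to secure pointwise convergence in $z$ at all, and a second invocation on $K$ to promote this to uniform convergence—but both are clean because $V_{N_2}$ is finite-dimensional and the continuity of the involved singular-value maps in $z$ follows directly from working with a fixed finite-dimensional domain.
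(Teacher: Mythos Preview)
Your proof is correct and follows essentially the same strategy as the paper: monotonicity from the nesting of ranges, pointwise convergence exploiting the finite-dimensionality of $V_{N_2}$ together with the strong convergence $Q_{N_1}\to I$, and then Dini's theorem for uniform convergence on compact subsets of $\mathbb{C}$. The only cosmetic difference is in the pointwise step: the paper observes that strong convergence of $Q_{N_1}$ upgrades to operator-norm convergence of $Q_{N_1}(\koop-zI)\mathcal{P}_{N_2}^*$ (since the domain is finite-dimensional) and then invokes the continuity of $\sigma_{\inf}$ in operator norm, whereas you use Dini on the compact unit sphere of $V_{N_2}$; these two arguments are equivalent in content.
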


\begin{proof}
    As $N_1$ increases, the range of $\mathcal{P}_{N_1}$ increases, so the injection modulus cannot decrease. Then, by Dini's theorem, the convergence is uniform on compact subsets of $\mathbb{C}$ if we can prove pointwise convergence. Let $z\in\mathbb{C}$, by strong convergence of orthogonal projections to the identity (using the fact that the kernel functions form a core) and the finite dimensionality of the range of $\mathcal{P}_{N_2}$, we have
    $$
        \lim_{N_1\rightarrow\infty}\|\mathcal{P}_{N_1}^*\mathcal{P}_{N_1}(\koop-zI)\mathcal{P}_{N_2}^*-(\koop-zI)\mathcal{P}_{N_2}^*\|_{\mathfrak{K}}=0.
    $$
    Since $\sigma_{\inf}$ is continuous with respect to the operator norm topology on the set of bounded operators (which we are dealing with due to the projections, without assuming $\koop$ is bounded), we have the desired pointwise convergence.
\end{proof}

\begin{algorithm}[t]
    \caption{SpecRKHS-PseudoKoop: Computing the approximate point pseudospectrum of Koopman operators.}\label{pspeckoop_alg}
    \textbf{Input:} Parameters $N_1\geq N_2$, snapshot data $\{(x^{(i)},y^{(i)}=F(x^{(i)}))\}_{i=1}^{N_1}$, kernel function $K:\mathcal{X}\times\mathcal{X}\rightarrow\mathbb{C}$, target accuracy $\epsilon>0$, and finite grid $\gridop(N_2)$.

    \begin{algorithmic}[1]
        \STATE{Compute the matrices $G$, $A$ and $R$ defined by $\smash{G_{jk}=\mathfrak{K}(x^{(k)},x^{(j)})}$, $\smash{A_{jk}=\mathfrak{K}(y^{(k)},x^{(j)})}$ and $\smash{R_{jk}=\mathfrak{K}(y^{(k)},y^{(j)})}$.\!\!\!\!\!}
        \FOR {$z_j\in\gridop(N_2)$}
        \STATE{Define $L_{N_1}(z_j)=AG^{-1}A^*-z_jA-\overline{z_j}A^*+|z_j|^2G$ and take its $N_2\times N_2$ truncation, $L_{N_1,N_2}(z)$.}
        \STATE{Define $G_{N_1,N_2}$ as the $N_2\times N_2$ truncation of $G$.}
        \STATE{Solve the generalized eigenvalue problem $L_{N_1,N_2}(z_j)\m{g}=\lambda G_{N_1,N_2}\m{g}$ for the smallest eigenvalue $\tau(z_j)$.}
        \ENDFOR
    \end{algorithmic}
    \textbf{Output:} An approximation of the approximate point pseudospectrum $\hat{\Gamma}_{N_2,N_1}^{\epsilon}(\koop)=\{z_j\in\gridop(N_2)\;:\;\tau(z_j)+1/N_2<\epsilon\}$ that converges as $N_1\rightarrow\infty$, and then as $N_2\rightarrow\infty$.
\end{algorithm}

To construct an algorithm converging to the approximate point pseudospectrum of the Koopman operator, one may try to choose $\Gamma_{N_2,N_1}^{\epsilon}(\koop)=\{z\in\gridop(N_2):\sigma_{\inf}(\mathcal{P}_{N_1}(\koop-zI)\mathcal{P}_{N_2}^*)<\epsilon\}$. This may not converge as the resolvent norm of an unbounded operator can be constant on open sets \cite{shargorodsky_level_2008}. Instead, we define
\begin{equation} \label{eq_alg_app_point_sp}
    \hat{\Gamma}_{N_2,N_1}^{\epsilon}(\koop)=\{z\in\gridop(N_2):\sigma_{\inf}(\mathcal{P}_{N_1}(\koop-zI)\mathcal{P}_{N_2}^*)+1/N_2\leq \epsilon\}.
\end{equation}
\cref{pspeckoop_alg} summarizes the algorithm, which we call SpecRKHS-PseudoKoop, for computing $\spec_{\mathrm{ap},\epsilon}(\koop)$.

We first show that $\hat{\Gamma}_{N_2,N_1}^{\epsilon}(\koop)$ converges as $N_1\rightarrow\infty$ to
\begin{equation} \label{eq_conv_Gamma_pseudo}
    \hat{\Gamma}_{N_2}^{\epsilon}(\koop)=\{z\in\gridop(N_2):\sigma_{\inf}((\koop-zI)\mathcal{P}_{N_2}^*)+1/N_2\leq\epsilon\}.
\end{equation}
Note the additional $1/N_2$ terms in \cref{eq_alg_app_point_sp,eq_conv_Gamma_pseudo} compared to the previous attempt to enable a non-strict inequality $\leq$ rather than a strict one, which allows us to prove the following lemma.

\begin{lemma}[Convergence as $N_1\to\infty$]
    \label{apspeclimit1_lemma}
    Let $\epsilon>0$ and $N_2\geq 1$, then $\lim_{N_1\rightarrow\infty}\hat{\Gamma}_{N_2,N_1}^{\epsilon}(\mathcal{K})=\hat{\Gamma}_{N_2}^{\epsilon}(\mathcal{K})$.
\end{lemma}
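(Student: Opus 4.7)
The plan is to exploit the fact that both $\hat{\Gamma}_{N_2,N_1}^{\epsilon}(\koop)$ and $\hat{\Gamma}_{N_2}^{\epsilon}(\koop)$ are subsets of the finite grid $\gridop(N_2)$, so they are finite sets and convergence in any reasonable metric (Hausdorff/Attouch--Wets) on compact sets reduces to eventual equality. The argument therefore comes down to showing that the sequence of sets stabilizes for large $N_1$ at exactly $\hat{\Gamma}_{N_2}^{\epsilon}(\koop)$.

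First, I would use \cref{n2n1limit_lemma} to establish that for each $z\in\gridop(N_2)$, the quantity $\sigma_{\inf}(\mathcal{P}_{N_1}(\koop-zI)\mathcal{P}_{N_2}^*)$ is non-decreasing in $N_1$ and converges up to $\sigma_{\inf}((\koop-zI)\mathcal{P}_{N_2}^*)$. This immediately gives the double inclusion/monotonicity picture: the sets $\hat{\Gamma}_{N_2,N_1}^{\epsilon}(\koop)$ are nested decreasing in $N_1$ (as the defining quantities grow), and each one contains $\hat{\Gamma}_{N_2}^{\epsilon}(\koop)$ (since $\sigma_{\inf}(\mathcal{P}_{N_1}(\koop-zI)\mathcal{P}_{N_2}^*)\leq\sigma_{\inf}((\koop-zI)\mathcal{P}_{N_2}^*)$).

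Second, I would show the reverse inclusion for the intersection. If $z\in\bigcap_{N_1\geq 1}\hat{\Gamma}_{N_2,N_1}^{\epsilon}(\koop)$, then $\sigma_{\inf}(\mathcal{P}_{N_1}(\koop-zI)\mathcal{P}_{N_2}^*)\leq \epsilon-1/N_2$ for every $N_1$. Passing to the limit using \cref{n2n1limit_lemma} gives $\sigma_{\inf}((\koop-zI)\mathcal{P}_{N_2}^*)\leq\epsilon-1/N_2$, so $z\in\hat{\Gamma}_{N_2}^{\epsilon}(\koop)$. This is precisely where the non-strict inequality $\leq$ (combined with the $1/N_2$ buffer built into the definition) is essential; with a strict inequality, a boundary point could fail to be captured in the limit even though it lies in every finite-$N_1$ approximation.

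Third, I would conclude that the nested decreasing sequence of finite subsets of $\gridop(N_2)$ stabilizes after finitely many steps at $\hat{\Gamma}_{N_2}^{\epsilon}(\koop)$, yielding eventual equality and hence convergence in both the Hausdorff and Attouch--Wets senses. The main subtlety — and essentially the only non-trivial step — is handling the closure/equality boundary in the definition, which is exactly what the $+1/N_2$ offset is designed to accommodate; everything else is a direct application of \cref{n2n1limit_lemma} together with the finiteness of the search grid.
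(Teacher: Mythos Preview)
Your proposal is correct and follows essentially the same approach as the paper: both arguments hinge on \cref{n2n1limit_lemma} (monotone convergence of $\sigma_{\inf}(\mathcal{P}_{N_1}(\koop-zI)\mathcal{P}_{N_2}^*)$ up to $\sigma_{\inf}((\koop-zI)\mathcal{P}_{N_2}^*)$) together with the finiteness of $\gridop(N_2)$, and both identify the non-strict inequality as the key device that makes the limit set coincide with $\hat{\Gamma}_{N_2}^{\epsilon}(\koop)$. The only cosmetic difference is that the paper argues point-by-point (each $z\notin\hat{\Gamma}_{N_2}^{\epsilon}(\koop)$ eventually leaves $\hat{\Gamma}_{N_2,N_1}^{\epsilon}(\koop)$), whereas you phrase the same fact via the intersection of a nested decreasing sequence of finite sets stabilizing; these are equivalent.
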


\begin{proof}
    We show that for $N_1$ large enough, $\hat{\Gamma}_{N_2,N_1}^{\epsilon}(\koop)=\hat{\Gamma}_{N_2}^{\epsilon}(\koop)$. Let $z\in\gridop(N_2)$, and suppose $z\in\hat{\Gamma}_{N_2}^{\epsilon}(\mathcal{K})$, so $\sigma_{\inf}((\koop-zI)\mathcal{P}_{N_2}^*)+1/N_2\leq\epsilon$. By the first part of \cref{n2n1limit_lemma}, $\sigma_{\inf}(\mathcal{P}_{N_1}(\koop-zI)\mathcal{P}_{N_2}^*)+1/N_2\leq\epsilon$ for all $N_1$, therefore $z\in\hat{\Gamma}_{N_2,N_1}^{\epsilon}(\mathcal{K})$ for all $N_1$. Conversely, suppose that $z\notin\hat{\Gamma}_{N_2}^{\epsilon}(\mathcal{K})$, such that $\sigma_{\inf}((\koop-zI)\mathcal{P}_{N_2}^*)+1/N_2>\epsilon$. Following the second part of \cref{n2n1limit_lemma}, $\lim_{N_1\rightarrow\infty}\sigma_{\inf}(\mathcal{P}_{N_1}(\koop-zI)\mathcal{P}_{N_2}^*)=\sigma_{\inf}((\koop-zI)\mathcal{P}_{N_2}^*)$ and so for $N_1$ large enough (and monotonicity), $\sigma_{\inf}(\mathcal{P}_{N_1}(\koop-zI)\mathcal{P}_{N_2}^*)+1/N_2>\epsilon$ and $z\notin\hat{\Gamma}_{N_2,N_1}^{\epsilon}(\mathcal{K})$. Moreover, since $\gridop(N_2)$ is finite (so we can pick $N_1$ sufficiently large for all $\lambda\notin\hat{\Gamma}_{N_2}^{\epsilon}(\mathcal{K})$) the claim holds.
\end{proof}

Finally, analogously to \cref{adjointapspecconv_thm}, we have the following result, whose proof is similar to before.

\begin{theorem}[Convergence to the approximate point pseudospectrum of $\koop$]\label{koopapspecconv_thm}
    For all $N_2$, $\hat{\Gamma}_{N_2}^{\epsilon}(\mathcal{K})\subset{\spec_{\mathrm{ap},\epsilon}(\mathcal{K})}$ and
    $
        \lim_{N_2\rightarrow\infty}d_{\mathrm{AW}}(\hat{\Gamma}^{\epsilon}_{N_2}(\mathcal{K}),{\spec_{\mathrm{ap},\epsilon}(\mathcal{K})})=0.
    $
\end{theorem}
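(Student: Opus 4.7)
The plan is to mirror the structure of the proof of \cref{adjointapspecconv_thm}, replacing $\koop^*$ by $\koop$ throughout and using \cref{sigmainfconv} applied to $\koop$ (which is valid since $\{\mathfrak{K}_1,\mathfrak{K}_2,\dots\}$ is assumed to form a core of both $\koop$ and $\koop^*$). The inclusion part is straightforward: if $z\in\hat{\Gamma}_{N_2}^{\epsilon}(\koop)$, then $\sigma_{\inf}((\koop-zI)\mathcal{P}_{N_2}^*)+1/N_2\leq\epsilon$, and by \cref{sigmainfconv},
$$
    \sigma_{\inf}(\koop-zI)\leq \sigma_{\inf}((\koop-zI)\mathcal{P}_{N_2}^*)\leq \epsilon-\tfrac{1}{N_2}<\epsilon,
$$
so $z\in\spec_{\mathrm{ap},\epsilon}(\koop)$.

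For the Attouch--Wets convergence, I would first deal with the case $\spec_{\mathrm{ap},\epsilon}(\koop)=\emptyset$, where the inclusion just established immediately forces $\hat{\Gamma}_{N_2}^{\epsilon}(\koop)=\emptyset$. Otherwise, using the characterization of $d_{\mathrm{AW}}$ recalled in \cref{sec_mean_spectral}, it suffices to show that for every $\delta>0$ and $m\in\mathbb{N}$, for all sufficiently large $N_2$,
$$
    \spec_{\mathrm{ap},\epsilon}(\koop)\cap B_m(0)\subset \hat{\Gamma}_{N_2}^{\epsilon}(\koop)+B_{\delta}(0).
$$
I would argue by contradiction as in \cref{adjointapspecconv_thm}: assume there exist $\delta>0$, a subsequence, and points $z_{N_2}\in \spec_{\mathrm{ap},\epsilon}(\koop)\cap B_m(0)$ with $\dist(z_{N_2},\hat{\Gamma}_{N_2}^{\epsilon}(\koop))\geq \delta$ and $z_{N_2}\to z$. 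Since $\spec_{\mathrm{ap},\epsilon}(\koop)$ is closed, $z\in\spec_{\mathrm{ap},\epsilon}(\koop)\cap B_m(0)$, so by definition of the closure there exists $w\in B_{\delta/2}(z)$ with $\sigma_{\inf}(\koop-wI)<\epsilon$. Choose grid points $w_{N_2}\in\gridop(N_2)$ with $w_{N_2}\to w$. By the triangle inequality,
\begin{align*}
    \sigma_{\inf}((\koop-w_{N_2}I)\mathcal{P}_{N_2}^*)
    &\leq |\sigma_{\inf}((\koop-w_{N_2}I)\mathcal{P}_{N_2}^*)-\sigma_{\inf}(\koop-w_{N_2}I)|\\
    &\quad +|\sigma_{\inf}(\koop-w_{N_2}I)-\sigma_{\inf}(\koop-w I)|+\sigma_{\inf}(\koop-wI),
\end{align*}
where the first term tends to $0$ by \cref{sigmainfconv} applied to $\koop$ on the compact set containing $\{w_{N_2}\}$, the second tends to $0$ by continuity of $z\mapsto\sigma_{\inf}(\koop-zI)$, and the third is $<\epsilon$. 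Adding $1/N_2\to 0$, for sufficiently large $N_2$ we get $\sigma_{\inf}((\koop-w_{N_2}I)\mathcal{P}_{N_2}^*)+1/N_2\leq \epsilon$, i.e., $w_{N_2}\in\hat{\Gamma}_{N_2}^{\epsilon}(\koop)$. But $|w_{N_2}-z|\leq |w_{N_2}-w|+|w-z|<\delta$ for large $N_2$, contradicting $\dist(z_{N_2},\hat{\Gamma}_{N_2}^{\epsilon}(\koop))\geq \delta$.

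The main subtlety, and the reason the $1/N_2$ cushion is built into the definition of $\hat{\Gamma}_{N_2}^{\epsilon}(\koop)$, is that for unbounded $\koop$ the map $z\mapsto \sigma_{\inf}(\koop-zI)$ can be constant (in particular, exactly equal to $\epsilon$) on open sets, so a strict inequality $<\epsilon$ in the grid test would not propagate nicely to the limit. Including the $1/N_2$ slack turns the inclusion test into a closed condition $\leq \epsilon$ whose limit set is $\{z:\sigma_{\inf}(\koop-zI)<\epsilon\}$-dense but allows us to absorb the non-strict inequality $\sigma_{\inf}((\koop-zI)\mathcal{P}_{N_2}^*)\geq \sigma_{\inf}(\koop-zI)$ in the direction of inclusion while still capturing every point of the closure in the reverse direction, and this is the only delicate point in the argument.
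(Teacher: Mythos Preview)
Your proposal is correct and follows exactly the route the paper intends: the paper itself does not write out a proof of this theorem, stating only that it is analogous to \cref{adjointapspecconv_thm}, and your argument carries out precisely that analogy with $\koop$ in place of $\koop^*$ and the $1/N_2$ slack absorbed in the obvious way. Your closing remark on the role of the $1/N_2$ cushion is also consistent with the paper's motivation given just before \cref{apspeclimit1_lemma}.
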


The combination of \cref{adjointapspecconv_thm,koopapspecconv_thm} shows that the full pseudospectrum, and hence the whole spectrum, can be recovered.

\begin{corollary}[Convergence to the full spectrum of $\mathcal{K}$]
    Let $\hat{\Gamma}_{N_2,N_1}^{\epsilon}(\koop)$ and $\Gamma_{N_2}^{\epsilon}(\mathcal{K}^*)$ be respectively defined by \cref{eq_alg_app_point_sp,one_lim_needed}, then
    $$\lim_{\epsilon\rightarrow 0}\lim_{N_2\rightarrow\infty}\lim_{N_1\rightarrow\infty}\left[\Gamma_{N_2}^{\epsilon}(\mathcal{K}^*)\cup\left\{\overline{z}\in\mathbb{C}:z\in\hat{\Gamma}^{\epsilon}_{N_2,N_1}(\mathcal{K})\right\}\right]=\spec(\mathcal{K}^*)=\left\{\overline{z}\in\mathbb{C}:z\in\spec(\koop)\right\}.$$
\end{corollary}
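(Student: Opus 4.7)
The plan is to peel the three nested limits from the inside out, invoking the convergence theorems already proven for $\Gamma_{N_2}^{\epsilon}(\koop^*)$ and $\hat{\Gamma}_{N_2,N_1}^{\epsilon}(\koop)$, and then identify the limiting set with $\spec(\koop^*)$ via the injection-modulus characterization from \cref{specprop_sect}. Throughout, I would use that complex conjugation is a Euclidean isometry on $\mathbb{C}$ and so preserves Attouch--Wets convergence, and that finite unions preserve such convergence (via $d(x,A\cup B)=\min\{d(x,A),d(x,B)\}$ and the characterization of Attouch--Wets convergence via compact balls given at the end of \cref{sec_mean_spectral}).

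For the innermost limit $N_1\to\infty$, I would invoke \cref{apspeclimit1_lemma}: for fixed $N_2$ and $\epsilon$, the grid $\gridop(N_2)$ is finite, so the lemma's proof gives pointwise eventual inclusion and hence eventual equality $\hat{\Gamma}_{N_2,N_1}^{\epsilon}(\koop)=\hat{\Gamma}_{N_2}^{\epsilon}(\koop)$ for all sufficiently large $N_1$. Since $\Gamma_{N_2}^{\epsilon}(\koop^*)$ does not depend on $N_1$, the inner limit yields
$$\Gamma_{N_2}^{\epsilon}(\koop^*)\cup\left\{\overline{z}\in\mathbb{C}: z\in\hat{\Gamma}_{N_2}^{\epsilon}(\koop)\right\}.$$

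Next, for $N_2\to\infty$ I would apply \cref{adjointapspecconv_thm} to obtain $\Gamma_{N_2}^{\epsilon}(\koop^*)\to\spec_{\mathrm{ap},\epsilon}(\koop^*)$ and \cref{koopapspecconv_thm} to obtain $\hat{\Gamma}_{N_2}^{\epsilon}(\koop)\to\spec_{\mathrm{ap},\epsilon}(\koop)$, both in Attouch--Wets. Combined with the continuity of conjugation and union noted above, this produces the limit $\spec_{\mathrm{ap},\epsilon}(\koop^*)\cup\{\overline{z}:z\in\spec_{\mathrm{ap},\epsilon}(\koop)\}$. For the outer limit $\epsilon\to 0$, I would use the fact recorded at the end of \cref{sec_mean_spectral} that $\spec_{\mathrm{ap},\epsilon}(T)\to\spec_{\mathrm{ap}}(T)$ in Attouch--Wets as $\epsilon\to 0$, applied to both $T=\koop$ and $T=\koop^*$, giving the limiting set $\spec_{\mathrm{ap}}(\koop^*)\cup\{\overline{z}:z\in\spec_{\mathrm{ap}}(\koop)\}$.

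It remains to identify this with $\spec(\koop^*)$. Applying the formula $\spec(T)=\{z\in\mathbb{C}:\gamma(z,T)=0\}$ with $\gamma(z,T)=\min\{\sigma_{\inf}(T-zI),\sigma_{\inf}(T^*-\overline{z}I)\}$ from \cref{specprop_sect} to $T=\koop^*$ (and using $(\koop^*)^*=\koop$, valid since $\koop$ is closed) yields exactly $\spec(\koop^*)=\spec_{\mathrm{ap}}(\koop^*)\cup\{z:\overline{z}\in\spec_{\mathrm{ap}}(\koop)\}$; the final equality $\spec(\koop^*)=\{\overline{z}:z\in\spec(\koop)\}$ is the standard identity $\spec(T^*)=\overline{\spec(T)}$. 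The main obstacles I expect are purely bookkeeping: checking that finite unions and conjugation commute with Attouch--Wets limits (straightforward from the definition in \cref{AW_metric_def}) and being careful with the fact that $\koop$ and $\koop^*$ may be unbounded, so that the spectra are closed but possibly non-compact --- this is precisely why the Attouch--Wets framework is the right setting, so no additional difficulty arises.
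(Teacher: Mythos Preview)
Your proposal is correct and follows exactly the route the paper intends: the paper treats this corollary as an immediate consequence of \cref{adjointapspecconv_thm,koopapspecconv_thm} together with $\spec_{\mathrm{ap},\epsilon}\to\spec_{\mathrm{ap}}$ as $\epsilon\to 0$, and you have simply written out the bookkeeping (conjugation, unions, the injection-modulus identification of $\spec(\koop^*)$) that the paper leaves implicit. There is no gap.
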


\begin{remark}
    As we require the image of $\mathcal{P}_{{N_2}}^*$ to be finite-dimensional in the proof of \cref{n2n1limit_lemma}, and that $\gridop(N_2)$ is finite even as $N_1\rightarrow\infty$ in \cref{apspeclimit1_lemma}, we cannot take the limits $N_1$ and $N_2$ simultaneously to infinity. In general, we require an extra limit to compute the pseudospectrum compared to the approximate point pseudospectrum because we cannot estimate $\langle\mathcal{K}g,\mathcal{K}g\rangle_{\mathfrak{K}}$ directly from the kernels.
\end{remark}

\subsection{Perron--Frobenius mode decompositions: Error control for observable predictions}
\label{sec:errorcontrolpfmd}

We can also use residuals to estimate the evolution of observables according to the dynamical system \cref{the_dynamical_system}, starting from the initial state $x_0\in\mathcal{X}$. Suppose that $\koop$ is bounded. For all $n\in\mathbb{N}$ and $g\in\mathcal{H}$, we have
$$
    g(x_n)=[\koop^ng](x_0)=\langle \koop^ng,\mathfrak{K}_{x_0}\rangle_\mathfrak{K}=\langle g,(\koop^*)^n\mathfrak{K}_{x_0}\rangle_\mathfrak{K},
$$
so it suffices to compute $(\koop^*)^n\mathfrak{K}_{x_0}$. Suppose that for some $\epsilon>0$ we have a set of verified approximate eigenpairs of $\koop^*$, $\{\lambda_i,\psi_i\}_{i=1}^{N_{\mathrm{ver}}}$, such that for each $i$,
$$
    \|(\koop^*-\lambda_i I)\psi_i\|_{\mathfrak{K}}\leq\epsilon,\quad\text{for }i=1,\ldots,N_{\mathrm{ver}}.
$$
Then,
\begin{equation} \label{ineq_K_eps}
    \|(\koop^*)^n\psi_i-\lambda_i^n \psi_i\|_{\mathfrak{K}}\leq \sum_{j=1}^n\|\lambda_i^{n-j}(\koop^*)^{j-1}(\koop^*-\lambda_i I)\psi_i\|_\mathfrak{K}\leq\epsilon\sum_{j=1}^n|\lambda_i|^{n-j}\|\koop^*\|_\mathfrak{K}^{j-1}\quad\forall n\in\mathbb{N}.
\end{equation}
In most cases, $|\lambda_i|\leq 1$. For example, we shall see in \cref{sec_duffing} that on an RKHS with radial kernel $\mathfrak{K}(x,y)=\varphi(\|x-y\|)$, if $\koop$ is densely defined, then it is bounded with $\|\koop\|_{\mathfrak{K}}=\|\koop^*\|_{\mathfrak{K}}=1$, so that $|\lambda_i|\leq 1$ for all $i$.

For any fixed $x_0\in\mathcal{X}$, there exist $\delta>0$ and coefficients $c_i$ such that
\begin{equation} \label{ineq_K_delta}
    \left\|\mathfrak{K}_{x_0}-\sum_{i=1}^{N_{\mathrm{ver}}}c_i\psi_i\right\|_{\mathfrak{K}}\leq\delta,
\end{equation}
where the coefficients are typically computed by solving a least squares problem, and all the inner products can be straightforwardly evaluated as they are between kernel functions. By the triangle inequality, we obtain
\begin{align*}
    \left\|(\koop^*)^n\mathfrak{K}_{x_0}-\sum_{i=1}^{N_{\mathrm{ver}}}c_i\lambda_i^n\psi_i\right\|_{\mathfrak{K}} & \leq
    \|(\koop^*)^n\|\left\|\mathfrak{K}_{x_0}-\sum_{i=1}^{N_{\mathrm{ver}}}c_i\psi_i\right\|_{\mathfrak{K}}+\sum_{i=1}^{N_{\mathrm{ver}}}|c_i|\left\|((\koop^*)^n-\lambda_i^n I)\psi_i\right\|_{\mathfrak{K}}                                                    \\
                                                                                                                  & \leq\|\koop^*\|_\mathfrak{K}^n\delta+\epsilon\sum_{i=1}^{N_{\mathrm{ver}}}|c_i|\sum_{j=1}^n|\lambda_i|^{n-j}\|\koop^*\|_\mathfrak{K}^{j-1},
\end{align*}
where we used \cref{ineq_K_delta} to bound the first term and \cref{ineq_K_eps} for the second term. Then, our approximation to $[\koop^ng](x_0)$ is given by
$\sum_{i=1}^{N_{\mathrm{ver}}}\overline{c_i}\overline{\lambda_i^n}\langle g,\psi_i\rangle_\mathfrak{K}$, where the conjugates arise from the definition of the inner product, $\langle g,\psi_i\rangle_\mathfrak{K}$ can be straightforwardly computed by simply evaluating $g$ as the $\psi_i$ are linear combinations of kernel functions, and the error is given by
\begin{equation}
    \label{error_pfmd}
    \left|g(F^n(x_0))-\sum_{i=1}^{N_{\mathrm{ver}}}\overline{c_i}\overline{\lambda_i^n}\langle g,\psi_i\rangle_\mathfrak{K}\right|\leq \|g\|_{\mathfrak{K}}\left(\delta\|\koop^*\|_\mathfrak{K}^n+\epsilon\sum_{i=1}^{N_{\mathrm{ver}}}|c_i|\sum_{j=1}^n|\lambda_i|^{n-j}\|\koop^*\|_\mathfrak{K}^{j-1}\right).
\end{equation}
Hence, spectral analysis of the Perron--Frobenius operator yields error control for predicting the future evolution of observables! We call this algorithm SpecRKHS-Obs and summarize it in \cref{errorcontrolpred_alg}.

\begin{algorithm}[t]
    \caption{SpecRKHS-Obs: Observables prediction with verified Perron--Frobenius mode decompositions.}\label{errorcontrolpred_alg}
    \textbf{Input:} Snapshot data $\{(x^{(i)},y^{(i)}=F(x^{(i)}))\}_{i=1}^N$, kernel function $\mathfrak{K}:\mathcal{X}\times\mathcal{X}\rightarrow\mathbb{C}$, $\epsilon>0$, verified approximate eigenpairs $\{\lambda_i,\psi_i\}_{i=1}^{N_{\mathrm{ver}}}$ with $\lambda_i\in\mathbb{C}$, $\psi_i\in\spann\{\mathfrak{K}_{x_1},\dots,\mathfrak{K}_{x_N}\}$ and $\|(\koop^*-\lambda_i)\psi_i\|\leq\epsilon$ (generated by, e.g., SpecRKHS-Eig (\cref{evalverif_alg})), starting point $x_0$, pairs of observables and numbers of time steps $\{g_k,n_k\}_{k=1}^N$ with each $g_k\in\mathcal{D}(\koop^*)$ and $n_k\in\mathbb{N}$.

    \begin{algorithmic}[1]
        \STATE{Solve the least squares problem $\min_{c_i}\|\mathfrak{K}_{x_0}-\sum_{i=1}^{N_{\mathrm{ver}}}c_i\psi_i\|_{\mathfrak{K}}$ for coefficients $\{c_i\}_{i=1}^{N_{\mathrm{ver}}}$.}
        \STATE{For each pair $(g_k,n_k)$, compute $\Phi(g_k,n_k)=\sum_{i=1}^{N_{\mathrm{ver}}}\overline{c_i}\overline{\lambda_i^{n_k}}\langle g_k,\psi_i\rangle $.}
    \end{algorithmic}
    \textbf{Output:} Approximations $\Phi(g_k,n_k)$ to  $[\koop^{n_k}g_k](x_0)$ with error that can be computed (if $\koop$ is bounded) by \eqref{error_pfmd}.
    \label{alg_verifiedPF}
\end{algorithm}

SpecRKHS-Obs constructs \textit{verified} Perron--Frobenius mode decompositions.
There are two key components of the error: $\delta$, which is determined by the richness of the space of pseudoeigenfunctions (i.e., how well linear combinations of the pseudoeigenfunctions approximate $\mathfrak{K}_{x_0}$), and $\epsilon$, which measures the accuracy of the pseudoeigenfunctions. This method also reveals a duality between $\koop$ and $\koop^*$ when predicting observables. For $\koop$, we have to solve a least squares problem for every different observable $g$, but then we can evaluate $g$ straightforwardly at any given point. For $\koop^*$, we have to solve a least squares problem at every point $x_0$ and corresponding kernel function $\mathfrak{K}_{x_0}$, but then can straightforwardly evaluate every observable at that point $x_0$. Hence, Perron--Frobenius methods may be preferred when the state space dimension $d$ is large (with several $g$ to predict), but the number of starting points is small. On the other hand, Koopman-based methods may be preferred when we want to predict the evolution of a small number of observables at a large number of starting points.

\subsection{Handling noise and approximate snapshots}
\label{inexactupperbounds_sect}

In applications, we may not have access to exact values $y^{(m)}=F(x^{(m)})$ as they may require physical measurements with an error tolerance. Moreover, we may not have exact values for the kernel functions due to some numerical errors with quadrature or the evaluation of special functions. This section shows how our algorithms can handle inexact inputs. We control the error produced at each stage and obtain a convergent algorithm as the input error decreases to zero. Additionally, we show that our algorithms can be implemented using finitely many arithmetic operations over $\mathbb{Q}$ (or another suitable discrete set).

Since we are dealing with inexact snapshots and kernel evaluations, we assume in this section that $(\mathcal{X},d)$ is a metric space and that the kernel $\mathfrak{K}$ has a modulus of continuity, defined as follows.

\begin{definition}[Kernel modulus of continuity]
    \label{moduluscty_defn}
    Let $\omega:\mathbb{R}_{\geq 0}\rightarrow\mathbb{R}_{\geq 0}$ be an increasing continuous function with $\omega(0)=0$ and let $(\mathcal{X},d)$ be a metric space. A kernel function $\mathfrak{K}:\mathcal{X}\times\mathcal{X}\rightarrow\mathbb{C}$ has a  modulus of continuity $\omega$ if
    $$
        |\mathfrak{K}(x,y)-\mathfrak{K}(\tilde{x},\tilde{y})|\leq \omega(d(x,\tilde{x})+d(y,\tilde{y}))\quad\forall x,y,\tilde{x},\tilde{y}\in\mathcal{X}.
    $$
\end{definition}

\noindent
Recall that to compute $\spec_{\mathrm{ap},\epsilon}(\koop^*)$ in SpecRKHS-PseudoPF (\cref{pspecadjoint_alg}), we approximated $\sigma_{\inf}((\koop^*-z I)\mathcal{P}_N^*)$ and then took the limit $N\rightarrow\infty$ to compute $\sigma_{\inf}(\koop^*-z I)$. The following theorem shows that we can compute $\sigma_{\inf}((\koop^*-zI)\mathcal{P}_N^*)$ to any desired accuracy in only finitely many arithmetic operations and comparisons, assuming that we have access to arbitrarily accurate snapshot data (see \cref{sec:inexact_input}; note that even though we may take a limit to obtain exact snapshot data, the challenge is to do the computation without taking any additional limits).

\begin{theorem}[Computation of $\sigma_{\inf}((\koop^*-zI)\mathcal{P}_N^*)$]\label{siginfcompute_thm}
    Let $(\mathcal{X},d)$ be a metric space, $\mathfrak{K}$ have modulus of continuity $\omega$, and $\{(x^{(m)},y^{(m)}=F(x^{(m)}))\}_{m=1}^N$ be a collection of `true' snapshot date. Suppose that for any $\epsilon>0$, we have access to snapshot data $\{(\tilde{x}^{(m)},\tilde{y}^{(m)})\}_{m=1}^N$ such that
    $$
        d(\tilde{x}^{(m)},x^{(m)})\leq\epsilon\quad\text{and}\quad d(\tilde{y}^{(m)},y^{(m)})\leq\epsilon,\quad 1\leq m\leq N.
    $$
    Suppose also that we can compute $\epsilon$-accurate approximations of $\mathfrak{K}(z,z')$ in finitely many arithmetic operations and comparisons for any $z,z'\in \{\tilde{x}^{(m)},\tilde{y}^{(m)}\}_{m=1}^N$. Then, $\sigma_{\inf}((\koop^*-zI)\mathcal{P}_N^*)$ can be computed to any desired accuracy in finitely many arithmetic operations and comparisons.
\end{theorem}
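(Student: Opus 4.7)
The plan is to reduce the problem to a finite-dimensional Hermitian generalized eigenvalue problem, approximate its matrices entrywise using the modulus of continuity, invoke a perturbation bound, and then invoke a finite-arithmetic eigensolver (bisection with Sturm sequences, or inertia counting) to extract the smallest eigenvalue to the required accuracy.

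First, I would use the identity from \cref{minimized_res},
$$
\bigl[\sigma_{\inf}((\koop^*-zI)\mathcal{P}_N^*)\bigr]^2 \;=\; \min_{\m{g}\in\mathbb{C}^N,\,\m{g}^*G\m{g}>0} \frac{\m{g}^*R_z\m{g}}{\m{g}^*G\m{g}}, \qquad R_z \coloneqq R - z A^* - \overline{z} A + |z|^2 G,
$$
so the problem reduces to computing the smallest generalized eigenvalue $\lambda_{\min}(R_z,G)$ of the Hermitian pair $(R_z,G)$, with $G$ positive semidefinite. Given a target accuracy $\eta>0$, I would pick an auxiliary tolerance $\epsilon>0$ and request the snapshots $\{(\tilde{x}^{(m)},\tilde{y}^{(m)})\}_{m=1}^N$ and the kernel values to accuracy $\epsilon$. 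By \cref{moduluscty_defn}, each entry of $G$, $A$, $R$ is then known up to $\omega(2\epsilon) + \epsilon$, so the full matrices $\tilde{G}, \tilde{A}, \tilde{R}_z$ satisfy
$$
\|\tilde{G}-G\|_F + \|\tilde{R}_z-R_z\|_F \;\leq\; C_N(1+|z|)^2\bigl(\omega(2\epsilon)+\epsilon\bigr),
$$
for an explicit constant $C_N$ depending only on $N$. Since $\omega$ is continuous with $\omega(0)=0$, we can decrease $\epsilon$ enough to make this bound as small as needed.

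Second, I would invoke a perturbation bound for Hermitian generalized eigenvalue problems. When $G$ is positive definite with $\sigma_{\min}(G)>0$, the smallest generalized eigenvalue is Lipschitz in the matrix entries: for perturbations with $\|\tilde{G}-G\|<\sigma_{\min}(G)/2$,
$$
\bigl|\lambda_{\min}(\tilde{R}_z,\tilde{G}) - \lambda_{\min}(R_z,G)\bigr| \;\leq\; \frac{2}{\sigma_{\min}(G)}\Bigl(\|\tilde{R}_z - R_z\| + |\lambda_{\min}(R_z,G)|\,\|\tilde{G} - G\|\Bigr),
$$
which follows from Weyl's inequality applied to $G^{-1/2}R_zG^{-1/2}$ together with a standard Cholesky perturbation estimate. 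Choosing $\epsilon$ small enough makes the right-hand side $\leq \eta/2$. When $G$ is merely positive semidefinite, I would factor out its kernel: compute a numerical rank $r$ from the spectrum of $\tilde{G}$ (a step that itself requires only finitely many arithmetic operations), restrict to the corresponding invariant subspace, and apply the same argument there. The modulus of continuity bound on $\|\tilde{G}-G\|$ lets us certify the correct effective rank without additional limits.

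Third, once we have rational (or otherwise discrete) matrices $\tilde{R}_z, \tilde{G}$, the smallest real root of the characteristic polynomial $p(\lambda)=\det(\tilde{R}_z - \lambda \tilde{G})$ can be located to within $\eta/2$ in finitely many arithmetic operations and comparisons. The simplest route is inertia bisection: by Sylvester's law of inertia, the sign of $\lambda_{\min}(\tilde{R}_z,\tilde{G})-\mu$ is determined by the signature of the Hermitian matrix $\tilde{R}_z-\mu\tilde{G}$, which is read off from a rational $\mathrm{LDL}^*$ factorization (performed with exact arithmetic and comparisons). Repeatedly bisecting an a priori interval containing all eigenvalues (e.g.\ $[-\|\tilde{R}_z\|_1/\sigma_{\min}(\tilde{G}),\|\tilde{R}_z\|_1/\sigma_{\min}(\tilde{G})]$) yields the required tolerance. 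Combining the three stages and taking square roots gives $\sigma_{\inf}((\koop^*-zI)\mathcal{P}_N^*)$ within $\eta$.

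The main obstacle I anticipate is the delicate handling of a (nearly) singular $G$: the Lipschitz constant in the perturbation bound blows up as $\sigma_{\min}(G)\to 0$, and the selection of the numerical rank of $\tilde{G}$ must be made using only the information available from the perturbed data. This is resolved by observing that we have exact two-sided control on $\|\tilde{G}-G\|$ (via the modulus of continuity), so we can correctly identify which eigenvalues of $\tilde{G}$ are genuinely nonzero, and thereafter work on the corresponding invariant subspace where uniform positive definiteness holds.
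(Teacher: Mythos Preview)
Your approach is essentially the paper's: reduce to the Hermitian generalized eigenproblem $(R_z,G)$, bound the entrywise matrix error via the modulus of continuity, apply a perturbation bound for generalized eigenvalues, and finish with a finite-arithmetic eigensolver. The paper cites Nakatsukasa's perturbation theorem directly and uses Cholesky followed by a standard self-adjoint eigensolver where you propose inertia bisection; both are valid.

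There is one genuine gap in your handling of $\sigma_{\min}(G)$. Your claim that ``we can correctly identify which eigenvalues of $\tilde{G}$ are genuinely nonzero'' is false: from perturbed data you can certify an eigenvalue is \emph{positive} (if $\lambda_j(\tilde{G})$ exceeds the known error bound), but you can never certify it is \emph{zero}, since an arbitrarily small positive eigenvalue of $G$ is indistinguishable from zero at any fixed tolerance. Fortunately the semi-definite detour is unnecessary here---the $\mathfrak{K}_{x^{(i)}}$ are linearly independent by construction of $V_N$, so $G$ is positive definite. The substantive issue is then that your ``choose $\epsilon$ small enough'' is non-constructive: the Lipschitz constant in your perturbation bound involves $1/\sigma_{\min}(G)$, which is unknown. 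The paper resolves this by an adaptive loop: compute an approximation $\hat{\lambda}_1$ to $\lambda_{\min}(\tilde{G})$, set $C=\hat{\lambda}_1-2N\omega(2\epsilon)$ as a certified lower bound on $\sigma_{\min}(G)$, and if $C\leq 0$ decrease $\epsilon$ and repeat. Positive definiteness of $G$ guarantees this terminates, after which $C$ can be plugged into the perturbation bound to determine how small $\epsilon$ must be for the target accuracy. Your proposal needs exactly this bootstrapping step to be complete.
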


We introduce some necessary results before proving \cref{siginfcompute_thm}. As discussed in \cref{verepairs_sect}, $\sigma_{\inf}((\koop^*-zI)\mathcal{P}_N^*)$ is exactly the square root of the smallest eigenvalue of the generalized eigenvalue problem
$$
    H(z)g=(R-z A^*-\overline{z}A+|z|^2G)g=\lambda Gg,
$$
where $H(z)$ is self-adjoint for all $z\in\mathbb{C}$, and $G$ is self-adjoint and positive definite (as $G_{jk}=\mathfrak{K}(x_k,x_j)$). Hence, we first consider computability and perturbation bounds for eigenvalues. The following lemma~\cite[Cor.~6.9]{colbrook3} states that one may compute eigenvalues of a self-adjoint matrix to any desired accuracy in finitely many arithmetic operations and comparisons.

\begin{lemma}[Cor.~6.9 in \cite{colbrook3}]
    \label{saevalcompute_lemma}
    Let $B\in\mathbb{C}^{n\times n}$ be a self-adjoint matrix, with eigenvalues $\lambda_1\leq\dots\leq\lambda_n$ (including multiplicity). Given any $\epsilon>0$, one can compute $\epsilon$-accurate approximations to $\lambda_1,\dots,\lambda_n$ in finitely many arithmetic operations and comparisons.
\end{lemma}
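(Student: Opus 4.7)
The plan is to turn eigenvalue computation into a bisection search driven by Sylvester's law of inertia, so that everything is done in finitely many rational arithmetic operations and comparisons. First I would fix a computable a priori enclosure for the spectrum: any rational upper bound on $\|B\|$ suffices, for instance $M = \sum_{j,k}|B_{jk}|$, which is computed in finitely many arithmetic operations from the entries of $B$ and satisfies $\spec(B)\subset[-M,M]$.

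The core subroutine is a counting oracle $N_{-}:\mathbb{Q}\to\mathbb{N}$ that returns the number of eigenvalues of $B$ strictly below a given $\lambda$. This is obtained from a block $LDL^*$ factorization of the self-adjoint matrix $B-\lambda I$, using the Bunch--Parlett (or Bunch--Kaufman) pivoting strategy so that every diagonal block of $D$ is either $1\times 1$ or a $2\times 2$ indefinite block. By Sylvester's law of inertia the inertia of $D$ equals that of $B-\lambda I$: each $1\times 1$ block contributes the sign of its entry, and for each $2\times 2$ block one reads off one positive and one negative eigenvalue from its trace and determinant. All branching in this algorithm is decided by sign comparisons of rational quantities, and the factorization terminates after $O(n^3)$ arithmetic operations and comparisons.

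With the oracle in hand, I would partition $[-M,M]$ into subintervals of length at most $\epsilon$, query $N_{-}$ at each endpoint, and compute the number of eigenvalues in each subinterval as the difference of consecutive counts. Returning the midpoint of the subinterval containing the $j$th eigenvalue (counted with multiplicity) gives an approximation of $\lambda_j$ with error at most $\epsilon/2$. The total work is finitely many invocations of the counting oracle, each of which is itself finite, so the overall algorithm is finite as required.

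The main obstacle is the potential for degenerate or vanishing pivots during the factorization: a naive $LDL^*$ factorization of an indefinite or singular matrix can fail or yield arbitrarily small pivots, which would force infinite refinement. The Bunch--Parlett strategy is what rules this out: whenever no $1\times 1$ candidate pivot is sufficiently large relative to the off-diagonal entries, a $2\times 2$ block is used instead, and the choice is made by finitely many rational comparisons. Exact zero pivots (occurring when $\lambda$ happens to be an eigenvalue) are harmless, as they contribute to the "zero" part of the inertia and leave the count $N_{-}(\lambda)$ of strictly negative eigenvalues well-defined, which is all the bisection step uses.
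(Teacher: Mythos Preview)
The paper does not prove this lemma itself; it is quoted verbatim as Corollary~6.9 of an external reference and used as a black box. Your inertia-bisection approach via a pivoted $LDL^*$ factorization is a standard and correct route to the result, so there is nothing to compare against in the paper.

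One small technical point worth tightening: since $B$ is complex Hermitian, the quantity $M=\sum_{j,k}|B_{jk}|$ and the magnitude comparisons inside Bunch--Parlett pivoting involve moduli of complex numbers, hence square roots, which are not single arithmetic operations. This is easily repaired by working with squared moduli throughout (compare $|B_{jk}|^2=(\mathrm{Re}\,B_{jk})^2+(\mathrm{Im}\,B_{jk})^2$ in the pivot selection) and by taking, say, $M=\sum_{j,k}\bigl(|\mathrm{Re}\,B_{jk}|+|\mathrm{Im}\,B_{jk}|\bigr)$ for the enclosure, both of which use only finitely many real arithmetic operations and comparisons. With that adjustment your argument is complete.
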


We extend \cref{saevalcompute_lemma} to generalized eigenvalue problems, as they apply to \cref{pspecadjoint_alg,pspeckoop_alg}. While we use \cref{sagenevalcompute_lemma} to convert the generalized eigenvalue problem to a standard eigenvalue problem for computational purposes, we will work with the original generalized eigenvalue problem to analyze the error.

\begin{corollary}[Approximation of generalized eigenvalues]
    \label{sagenevalcompute_lemma}
    Let $B\in\mathbb{C}^{n\times n}$ be a self-adjoint matrix and $C\in\mathbb{C}^{n\times n}$ be a self-adjoint and positive-definite matrix. Let $\mu_1,\dots,\mu_n$ denote the eigenvalues (including multiplicity) of the generalized eigenvalue problem $Bv=\mu Cv$. Given any $\epsilon>0$, one can compute $\epsilon$-accurate approximations to $\mu_1,\dots,\mu_n$ in finitely many arithmetic operations and comparisons.
\end{corollary}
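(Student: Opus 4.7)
The plan is to reduce the generalized eigenvalue problem to a standard self-adjoint eigenvalue problem via an approximate Cholesky-type congruence, then invoke Lemma~\ref{saevalcompute_lemma} on the resulting self-adjoint matrix, controlling the perturbation through Weyl's inequality. Exactly, if $C = LL^*$ is the Cholesky factorization (unique since $C$ is self-adjoint positive-definite), then $Bv = \mu C v$ is equivalent to $\tilde{B} w = \mu w$ with $w = L^* v$ and $\tilde{B} = L^{-1} B L^{-*}$, which is self-adjoint. The obstacle is that Cholesky requires extracting square roots of positive reals, which cannot be done in finitely many arithmetic operations.

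To obtain computable lower bounds on the conditioning, I would first apply Lemma~\ref{saevalcompute_lemma} to $C$ itself to compute, in finitely many operations, an approximation $\underline{c}$ of $\lambda_{\min}(C)$ accurate to within any desired tolerance. Since $C$ is positive-definite, successively halving the tolerance eventually produces a certified lower bound $\lambda_{\min}(C) \geq \underline{c}/2 > 0$, verified by a single comparison. Similarly, $\|B\|$ and $\|C\|$ can be bounded above. These bounds depend only on data that is computable in finitely many arithmetic operations and comparisons.

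Next, I would construct an approximate Cholesky factor $\hat{L}$ of $C$. The Cholesky recursion uses only arithmetic operations and square roots of positive quantities which, thanks to the previous step, are bounded below by a known function of $\underline{c}$. Each such square root is approximated by Newton iteration (or bisection) with termination certified by comparisons, yielding, for any prescribed $\eta > 0$, a lower-triangular $\hat{L}$ with $\|\hat{L} - L\| \leq \eta$ and provable invertibility. Standard matrix-inverse perturbation bounds give $\|\hat{L}^{-1} - L^{-1}\| \leq c_1 \eta$ for an explicitly computable constant $c_1 = c_1(\underline{c})$. Forming the Hermitian part
$$
\tilde{B}_\eta = \tfrac{1}{2}\bigl(\hat{L}^{-1} B \hat{L}^{-*} + (\hat{L}^{-1} B \hat{L}^{-*})^*\bigr),
$$
yields a self-adjoint matrix satisfying $\|\tilde{B} - \tilde{B}_\eta\| \leq c_2 \eta$ for a computable constant $c_2 = c_2(\underline{c}, \|B\|)$.

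Finally, Weyl's inequality for self-adjoint matrices gives $|\lambda_i(\tilde{B}_\eta) - \mu_i| \leq c_2 \eta$ for each $i$. Applying Lemma~\ref{saevalcompute_lemma} to $\tilde{B}_\eta$ produces $\hat{\mu}_i$ with $|\hat{\mu}_i - \lambda_i(\tilde{B}_\eta)| \leq \epsilon/2$, so choosing $\eta = \epsilon/(2 c_2)$ yields $|\hat{\mu}_i - \mu_i| \leq \epsilon$. The main obstacle is bookkeeping the nested approximations: one must first certify positive-definiteness (and thereby a quantitative lower bound $\underline{c}$ on $\lambda_{\min}(C)$) before the Cholesky construction is even computationally well defined, and then propagate this quantitative information through $c_1$ and $c_2$. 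Each ingredient—eigenvalue approximation for the self-adjoint $C$, Newton iteration for square roots, triangular inversion, and the final application of Lemma~\ref{saevalcompute_lemma}—is carried out in finitely many arithmetic operations and comparisons, so no additional limits are introduced.
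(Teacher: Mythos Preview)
Your approach is essentially the same as the paper's: reduce via the Cholesky factorization $C=LL^*$ to the self-adjoint problem $L^{-1}BL^{-*}w=\mu w$ and invoke Lemma~\ref{saevalcompute_lemma}. The paper's proof is two sentences---it simply asserts that ``both the Cholesky factorization and matrix inversion can be performed in finitely many arithmetic operations and comparisons'' and applies the lemma directly.

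Where you differ is in taking the square-root issue seriously. The paper treats the Cholesky step as exact, while you correctly observe that the diagonal square roots cannot be computed exactly in finitely many field operations, and you supply the missing machinery: certify a quantitative lower bound $\underline{c}$ on $\lambda_{\min}(C)$ via Lemma~\ref{saevalcompute_lemma}, run an approximate Cholesky with Newton-iterated square roots, symmetrize, and close with Weyl's inequality (the paper's Theorem~\ref{weyl_thm}). This is all sound, and in fact your version is more rigorous than the paper's on precisely the point the paper glosses over. The extra bookkeeping you flag (propagating $\underline{c}$ through $c_1,c_2$) is routine; the only subtlety is that the search for $\underline{c}$ terminates in finitely many steps for each fixed positive-definite $C$, which it does since $\lambda_{\min}(C)>0$.
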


\begin{proof}
    Using the Cholesky factorization of $C$, we can write $C=LL^*$, where $L$ is lower triangular with real and positive diagonal entries (hence it is invertible) \cite[Cor.~7.2.9]{horn_matrix_2012}. Then, the generalized eigenvalue problem $Bv=\mu Cv$ can be rewritten as $L^{-1}B(L^*)^{-1}L^*v=\mu L^*v$. Note that $L^{-1}B(L^*)^{-1}$ is self-adjoint because $B$ is self-adjoint. Moreover, since $L^*$ is invertible, this problem is equivalent to computing the eigenvalues of the matrix $L^{-1}B(L^*)^{-1}$. Both the Cholesky factorization and matrix inversion can be performed in finitely many arithmetic operations and comparisons, so the result follows from \cref{saevalcompute_lemma}.
\end{proof}

To analyze the errors in solving the eigenvalue problems arising from errors in the matrices, we use the following two theorems, where $\|B\|_2=\sigma_{\max}(B)$ denotes the spectral norm of $B$, and $\|B\|_F$ the Frobenius norm of $B$.

\begin{theorem}[Thm.~1.1 in \cite{nakatsukasa_absolute_2010}]
    \label{weyl_thm}
    Let $G,\tilde{G}\in\mathbb{C}^{n\times n}$ be self-adjoint matrices. Suppose that $G$ has eigenvalues $\lambda_1\leq\lambda_2\leq\dots\leq\lambda_n$ with multiplicity and $\tilde{G}$ has eigenvalues $\tilde{\lambda}_1\leq\tilde{\lambda}_2\leq\dots\leq\tilde{\lambda}_n$. Then for $i=1,\dots,n$,
    $$
        |\lambda_i-\tilde{\lambda}_i|\leq\|G-\tilde{G}\|_2.
    $$
\end{theorem}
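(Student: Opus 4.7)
The plan is to prove this by the standard Courant--Fischer variational characterization of eigenvalues of self-adjoint matrices, which is the cleanest route to Weyl's inequality. Writing $E = \tilde{G} - G$, the matrix $E$ is self-adjoint, so the quadratic form $x \mapsto \langle E x, x\rangle$ satisfies $|\langle E x, x\rangle| \leq \|E\|_2 \|x\|^2$. This inequality is the only analytic input we really need.

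First, I would recall the min--max formula: for any self-adjoint $B\in\mathbb{C}^{n\times n}$ with eigenvalues $\mu_1\leq\cdots\leq\mu_n$,
\[
\mu_i = \min_{\substack{S\subset\mathbb{C}^n \\ \dim S = i}} \ \max_{\substack{x\in S \\ \|x\|=1}} \langle B x, x\rangle.
\]
Applying this to both $G$ and $\tilde{G} = G + E$, I would fix an index $i$ and let $S^\star$ be a subspace of dimension $i$ that achieves the minimum for $G$, so that $\max_{x\in S^\star,\|x\|=1}\langle G x, x\rangle = \lambda_i$. Then for any unit $x\in S^\star$,
\[
\langle \tilde{G} x, x\rangle = \langle G x, x\rangle + \langle E x, x\rangle \leq \lambda_i + \|E\|_2,
\]
so plugging $S^\star$ into the min--max for $\tilde{G}$ gives $\tilde\lambda_i \leq \lambda_i + \|E\|_2$. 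Swapping the roles of $G$ and $\tilde{G}$ (using that $\|E\|_2 = \|{-E}\|_2$) yields $\lambda_i \leq \tilde\lambda_i + \|E\|_2$, and combining these two inequalities gives $|\lambda_i - \tilde\lambda_i|\leq \|E\|_2 = \|G-\tilde G\|_2$.

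There is no real obstacle here; the proof is essentially a one-line consequence of Courant--Fischer once the perturbation bound $|\langle E x, x\rangle|\leq \|E\|_2$ is in hand. The only point requiring mild care is to state the min--max principle in the correct orientation (min over $i$-dimensional subspaces of the max of the Rayleigh quotient, producing eigenvalues in increasing order), which matches the ordering convention used in the statement. Since the bound must hold for every $i$, I would present the argument for a generic $i$ and note that the same proof works uniformly. I would cite \cite{horn_matrix_2012} for the min--max principle rather than reproving it, keeping the proof to just a few lines.
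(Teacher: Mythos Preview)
Your proof via the Courant--Fischer min--max principle is correct and is the standard argument for Weyl's inequality. Note, however, that the paper does not itself prove this statement: it is quoted verbatim as Theorem~1.1 of \cite{nakatsukasa_absolute_2010} and used as a black box in the proof of \cref{siginfcompute_thm}, so there is no ``paper's own proof'' to compare against. Your argument is exactly the classical one (as in \cite{horn_matrix_2012}) and would serve perfectly well if a self-contained proof were desired.
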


\begin{theorem}[Thm.~2.2 in \cite{nakatsukasa_absolute_2010}]
    \label{weylgeneralized_thm}
    Let $G,H\in\mathbb{C}^{n\times n}$ be self-adjoint, and $G$ be positive definite. Suppose the generalized eigenvalue problem $Hg=\mu Gg$ has eigenvalues $\mu_1\leq\mu_2\leq\dots\leq\mu_n$ with multiplicity. If $\tilde{G}, \tilde{H}$ are Hermitian and $\|G-\tilde{G}\|_2<\sigma_{\inf}(G)$, then the eigenvalues of the generalized eigenvalue problem $\tilde{H}(z)g=\tilde{\mu}\tilde{G}g$, $\tilde{\mu}_1\leq\tilde{\mu}_2\leq\dots\leq\tilde{\mu_n}$, satisfy
    $$
        |\mu_i-\tilde{\mu}_i|\leq\frac{\|H-\tilde{H}\|_2}{\sigma_{\inf}(G)}+\frac{\|H\|_2+\|H-\tilde{H}\|_2}{\sigma_{\inf}(G)(\sigma_{\inf}(G)-\|G-\tilde{G}\|_2)}\|G-\tilde{G}\|_2,\quad 1\leq i\leq n.
    $$
\end{theorem}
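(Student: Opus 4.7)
The plan is to reduce the generalized eigenvalue problem to a Rayleigh quotient comparison via the Courant--Fischer min-max characterization. Since $G$ is self-adjoint and positive definite, the eigenvalues of the pencil $(H,G)$ admit the representation
$$
\mu_i = \min_{\substack{V\subset\mathbb{C}^n\\ \dim V = i}}\ \max_{0\neq g\in V}\ \frac{g^*Hg}{g^*Gg},
$$
and, provided $\tilde{G}$ is positive definite, an analogous formula holds for $\tilde{\mu}_i$. The first preliminary step is to establish positive definiteness of $\tilde{G}$: by \cref{weyl_thm} applied to $G$ and $\tilde{G}$, the smallest eigenvalue of $\tilde{G}$ is at least $\sigma_{\inf}(G)-\|G-\tilde{G}\|_2$, which is strictly positive by hypothesis. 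This also gives the lower bound $\sigma_{\inf}(\tilde{G})\ge \sigma_{\inf}(G)-\|G-\tilde{G}\|_2$ that will appear in the denominator of the final estimate.

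Next I would estimate the pointwise difference of Rayleigh quotients. The clean algebraic identity is obtained by inserting $\pm\, g^*\tilde{H}g\cdot g^*\tilde{G}g$ in the numerator of
$$
\frac{g^*Hg}{g^*Gg}-\frac{g^*\tilde{H}g}{g^*\tilde{G}g}
= \frac{g^*Hg\cdot g^*\tilde{G}g - g^*\tilde{H}g\cdot g^*Gg}{g^*Gg\cdot g^*\tilde{G}g},
$$
which, after rearranging, splits as
$$
\frac{g^*(H-\tilde{H})g}{g^*Gg}\;+\;\frac{g^*\tilde{H}g\cdot g^*(\tilde{G}-G)g}{g^*Gg\cdot g^*\tilde{G}g}.
$$
Bounding each factor by the spectral norm (using $|g^*Bg|\le \|B\|_2\|g\|^2$) and the Rayleigh quotients in the denominators by $\sigma_{\inf}(G)\|g\|^2$ and $\sigma_{\inf}(\tilde{G})\|g\|^2$ respectively, then substituting $\|\tilde{H}\|_2\le \|H\|_2+\|H-\tilde{H}\|_2$ and the lower bound on $\sigma_{\inf}(\tilde{G})$ from the previous step, yields exactly the estimate
$$
\left|\frac{g^*Hg}{g^*Gg}-\frac{g^*\tilde{H}g}{g^*\tilde{G}g}\right|
\le \frac{\|H-\tilde{H}\|_2}{\sigma_{\inf}(G)}
+\frac{(\|H\|_2+\|H-\tilde{H}\|_2)\,\|G-\tilde{G}\|_2}{\sigma_{\inf}(G)(\sigma_{\inf}(G)-\|G-\tilde{G}\|_2)},
$$
which is a bound independent of $g$.

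Finally, I would transfer this pointwise bound to the eigenvalues via Courant--Fischer. Writing $R(g)=g^*Hg/g^*Gg$ and $\tilde{R}(g)=g^*\tilde{H}g/g^*\tilde{G}g$, and letting $C$ denote the right-hand side of the previous display, the pointwise estimate gives $R(g)\le \tilde{R}(g)+C$ and $\tilde{R}(g)\le R(g)+C$ for every $g\neq 0$. Applying $\min_V\max_{g\in V}$ to both inequalities and using monotonicity of these operations yields $\mu_i\le \tilde{\mu}_i+C$ and $\tilde{\mu}_i\le \mu_i+C$, hence $|\mu_i-\tilde{\mu}_i|\le C$ for all $i$. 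The main technical point, and the step requiring the most care, is the algebraic splitting of the numerator: a less symmetric decomposition produces a correct-looking but suboptimal bound involving $\|H\|_2\|G-\tilde{G}\|_2$ over $\sigma_{\inf}(G)\sigma_{\inf}(\tilde{G})$ alone (without the additive $\|H-\tilde{H}\|_2/\sigma_{\inf}(G)$ term), whereas the identity above, which isolates $H-\tilde{H}$ against $G$ rather than $\tilde{G}$, is what matches the stated constant.
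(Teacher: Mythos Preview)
The paper does not prove this theorem; it is stated as a citation of \cite[Thm.~2.2]{nakatsukasa_absolute_2010} and used as a black box in the proof of \cref{siginfcompute_thm}. Your argument via Courant--Fischer and a pointwise Rayleigh-quotient comparison is correct and is essentially the standard route to such Weyl-type bounds for definite generalized eigenvalue problems; the algebraic splitting you isolate is exactly what is needed to recover the stated constant.
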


We can now prove \cref{siginfcompute_thm}.

\begin{proof}[Proof of \cref{siginfcompute_thm}]
    Define $G_{jk}=\mathfrak{K}(x^{(k)},x^{(j)})$ and $\tilde{G}_{jk}=\mathfrak{K}(\tilde{x}^{(k)},\tilde{x}^{(j)})$. Since they are self-adjoint, we only need to compute the $N(N+1)/2$ upper triangular entries of these matrices. Note that
    \begin{align*}
        \|G-\tilde{G}\|_2^2\leq\|G-\tilde{G}\|_F^2 & =\sum_{j=1}^N\sum_{k=1}^N|\mathfrak{K}(x^{(k)},x^{(j)})-\mathfrak{K}(\tilde{x}^{(k)},\tilde{x}^{(j)})|^2                       \\
                                                   & \leq \sum_{j=1}^N\sum_{k=1}^N[\omega(d(x^{(k)},\tilde{x}^{(k)})+d(x^{(j)},\tilde{x}^{(j)}))]^2\leq N^2[\omega({2}\epsilon)]^2,
    \end{align*}
    so that $\|G-\tilde{G}\|_2\leq N\omega({2}\epsilon)$. To use \cref{weylgeneralized_thm}, we want to find $C>0$ such that $\sigma_{\inf}(G)>C$ so we can choose $\|G-\tilde{G}\|$ smaller than $\sigma_{\inf}(G)$. Following \cref{weyl_thm} and the triangle inequality, we have
    $$
        \sigma_{\inf}(G)=\lambda_1=|\tilde{\lambda}_1+(\lambda_1-\tilde{\lambda}_1)|\geq |\tilde{\lambda}_1|-|\lambda_1-\tilde{\lambda}_1|\geq |\tilde{\lambda}_1|-N\omega({2}\epsilon).
    $$
    Suppose we compute an $N\omega(2\epsilon)$-accurate approximation $\hat{\lambda}_1$ to $\tilde{\lambda}_1$; then $\sigma_{\inf}(G)\geq|\tilde{\lambda}_1|-N\omega(2\epsilon)\geq |\hat{\lambda}_1|-2N\omega(2\epsilon)\coloneqq C$ and so if $C>0$ we have the desired lower bound. Also, note that $C\geq \lambda_1-4N\omega(2\epsilon)$ and so for all suitably small $\epsilon C$ will be positive. We can then choose $\epsilon$ sufficiently small such that $\sigma_{\inf}(G)>C>N\omega({2}\epsilon)\geq \|G-\tilde{G}\|_2$ so the conditions of \cref{weylgeneralized_thm} are satisfied.

    Let $A_{jk}=\mathfrak{K}(y^{(k)},x^{(j)})$, $\tilde{A}_{jk}=\mathfrak{K}(\tilde{y}^{(k)},\tilde{x}^{(j)})$, $R_{jk}=\mathfrak{K}(y^{(k)},y^{(j)})$, and $\tilde{R}_{jk}=\mathfrak{K}(\tilde{y}^{(k)},\tilde{y}^{(j)})$. Following similar arguments as before, we have $\|A-\tilde{A}\|_2\leq N\omega({2}\epsilon)$ and $\|R-\tilde{R}\|_2\leq N\omega({2}\epsilon)$.
    Fix $z\in\mathbb{C}$ and note that
    $$
        |\sigma_{\inf}((\koop^*-zI)\mathcal{P}_N^*)-\sigma_{\inf}((\koop^*-\tilde{z}I)\mathcal{P}_N^*)|\leq|z-\tilde{z}|\quad\forall\tilde{z}\in\mathbb{C}.
    $$
    So without loss of generality, we may take $z\in\mathbb{Q}+i\mathbb{Q}$.
    Then we define
    $$
        H(z)=R-zA^*-\overline{z}A+|z|^2G,\quad \tilde{H}(z)=\tilde{R}-z\tilde{A}^*-\overline{z}\tilde{A}+|z|^2\tilde{G},
    $$
    which is automatically self-adjoint. Then, we observe that $\|H(z)-\tilde{H}(z)\|_2\leq(1+|z|)^2N\omega({2}\epsilon)$, which gives
    $$
        \|H(z)\|_2\leq\|\tilde{H}(z)\|_2+\|H(z)-\tilde{H}(z)\|_2\leq \|\tilde{H}(z)\|_F+(1+|z|)^2N\omega({2}\epsilon).
    $$
    Therefore, we can compute an upper bound $\|H(z)\|_2<C'$ in finitely many arithmetic operations and comparisons. Note that, to compute $\|H(z)\|_F$, we can compute a square root to any desired accuracy in finitely many arithmetic operations and comparisons. Then, by \cref{weylgeneralized_thm}, we find that
    $$
        |\mu_1-\tilde{\mu}_1|\leq \frac{(1+|z|)^2N\omega({2}\epsilon)}{C}+\frac{C'+(1+|z|^2)N\omega({2}\epsilon)}{C^2}N\omega({2}\epsilon).
    $$
    Finally, by taking $\epsilon$ sufficiently small and using \cref{saevalcompute_lemma}, we can approximate $\mu_1=\sqrt{\sigma_{\inf}((\koop^*-zI)\mathcal{P}_N^*)}$ to any desired accuracy in finitely many arithmetic operations and comparisons.
\end{proof}

\cref{siginfcompute_thm} applies to most kernels used in practice. For example, the assumptions of \cref{siginfcompute_thm} are satisfied by the Wendland kernels, defined in \cref{wendlandkernel_defn}.

\begin{lemma}
    \label{wendlandcompute_lemma}
    The Wendland kernels satisfy the assumptions in \cref{siginfcompute_thm}.
\end{lemma}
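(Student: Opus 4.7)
The plan is to verify the two hypotheses of \cref{siginfcompute_thm}, both of which follow from the structural description of the Wendland profile $\varphi_{d,k}$ recalled in \cref{sobolevkoopman_sect}: on $[0,1]$ it equals a polynomial $p_{d,k}$ of known degree with rational coefficients, it vanishes on $[1,\infty)$, and it belongs to $C^{2k}([0,\infty))$.

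For the modulus of continuity, I would first establish that $\varphi_{d,k}$ is globally Lipschitz on $[0,\infty)$. Setting $L := \max_{r\in[0,1]}|p_{d,k}'(r)| < \infty$, continuity of $\varphi_{d,k}$ at $r=1$ forces $p_{d,k}(1)=0$, and then a short case split on whether the two arguments lie in $[0,1]$ or in $[1,\infty)$ shows $|\varphi_{d,k}(r)-\varphi_{d,k}(s)|\le L|r-s|$ for all $r,s\ge 0$. Combined with the reverse triangle inequality $\big|\|x-y\|_2-\|\tilde x-\tilde y\|_2\big|\le \|x-\tilde x\|_2+\|y-\tilde y\|_2$, this yields
$$|\mathfrak{K}^{\mathrm{W}}_{d,k}(x,y)-\mathfrak{K}^{\mathrm{W}}_{d,k}(\tilde x,\tilde y)|\le L\big(\|x-\tilde x\|_2+\|y-\tilde y\|_2\big),$$
so $\omega(r)=Lr$ is a valid modulus of continuity in the sense of \cref{moduluscty_defn}.

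For computability, assume $z,z'$ lie in a set closed under the required arithmetic, say $\mathbb{Q}^d$ (the natural class in which approximate snapshot data can be stored and manipulated exactly). Compute $r^2 := \|z-z'\|_2^2$ exactly in $\mathbb{Q}$, then produce $s \in \mathbb{Q}$ with $|s-\sqrt{r^2}|\le \delta$ in finitely many arithmetic operations and comparisons by rational bisection comparing $s^2$ against $r^2$. Next compare $s$ against $1-\delta$ and $1+\delta$: in the branch $s\le 1-\delta$ output $p_{d,k}(s)$ (evaluated exactly by Horner's scheme, since the coefficients are rational), and in the other two branches output $0$. The first branch has error at most $L\delta$ by Lipschitzness; the branch $s\ge 1+\delta$ forces $r\ge 1$ so the true value is $0$ and the output is exact; in the remaining branch $|s-1|<\delta$ one has $|r-1|\le 2\delta$, and the Lipschitz bound together with $\varphi_{d,k}(1)=0$ gives $|\varphi_{d,k}(r)|\le 2L\delta$, so outputting $0$ is again $2L\delta$-accurate. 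Choosing $\delta=\epsilon/(2L)$ produces an $\epsilon$-accurate approximation of $\mathfrak{K}^{\mathrm{W}}_{d,k}(z,z')$. The only genuinely subtle step is this boundary branch: no finite sequence of comparisons can decide $r<1$ versus $r\ge 1$ when $r=1$, and the scheme works only because the continuity statement $p_{d,k}(1)=0$ collapses both alternatives to a common, small value there; without it the piecewise construction would be discontinuous and the entire scheme would fail.
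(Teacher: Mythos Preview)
Your proposal is correct and follows essentially the same route as the paper: establish a Lipschitz modulus of continuity for $\varphi_{d,k}$ from its piecewise-polynomial form, transfer it to the kernel via the reverse triangle inequality, and then approximate $\mathfrak{K}(z,z')$ by rationally approximating $\|z-z'\|_2$ and exploiting Lipschitzness. Your case split on the locations of $r,s$ (using $p_{d,k}(1)=0$) is slightly cleaner than the paper's split on $k=0$ versus $k\ge 1$, and your explicit handling of the undecidable branch near $r=1$ is more careful than the paper's sketch; the only small point you leave implicit is that a \emph{rational} upper bound for $L$ (e.g.\ $\sum_l |l a_l|$) is needed so that the tolerance $\delta$ can itself be chosen in finitely many arithmetic steps.
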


\begin{proof}
    Any Wendland kernel with $d\in\mathbb{N}$ and $k\in\mathbb{N}\cup\{0\}$ can be written as
    $$
        \mathfrak{K}(x,y)=\varphi(\|x-y\|), \text{ where }\varphi(r)=\begin{cases}
            p(r), & 0\leq r\leq 1, \\
            0,    & r>1,
        \end{cases}
    $$
    where $p(r)$ is a polynomial in $r$ whose rational coefficients can be computed recursively \cite[Thm.~9.12]{wendland_scattered_2004}. We show that the Wendland kernel is Lipschitz continuous with a computable constant, which implies the result. Note that by the triangle inequality, for any $x,y,\tilde{x},\tilde{y}\in\mathcal{X}$,
    $$
        |r-\tilde{r}|\coloneqq|\|x-y\|-\|\tilde{x}-\tilde{y}\||\leq \|x-\tilde{x}\|+\|y-\tilde{y}\|.
    $$
    If $\varphi$ is Lipschitz with constant $C>0$, this implies that
    $$
        |\mathfrak{K}(x,y)-\mathfrak{K}(\tilde{x},\tilde{y})|=|\varphi(r)-\varphi(\tilde{r})|\leq C|r-\tilde{r}|\leq C\left(\|x-\tilde{x}\|+\|y-\tilde{y}\|\right).
    $$
    Let $p(r)=\sum_{l=0}^{\lfloor{d/2}\rfloor+3k+1}a_lr^l$ where $a_l\in\mathbb{Q}$ for all $l$, then
    $  \sup_{r\in[0,1]}|p'(r)|\leq\sum_{l=1}^{\lfloor{d/2}\rfloor+3k+1}|la_l|=C\in\mathbb{Q}$.
    For $k\geq 1$, $\varphi$ has a continuous first derivative \cite[Thm.~9.13]{wendland_scattered_2004}, and by the mean value theorem, $|\varphi(r)-\varphi(\tilde{r})|\leq C|r-\tilde{r}|$. Hence, $\mathfrak{K}$ is Lipschitz continuous. If $k=0$, then $d\geq 3$ and $\varphi(r)=(1-r)_+^{\lfloor{d/2}\rfloor}$ and so for $r,\tilde{r}<1$,
    $$
        |\mathfrak{K}(x,y)-\mathfrak{K}(\tilde{x},\tilde{y})|=|(1-r)^{\lfloor{d/2}\rfloor}-(1-\tilde r)^{\lfloor{d/2}\rfloor}|\leq \lfloor{d/2}\rfloor|r-\tilde{r}|.
    $$
    If instead $r>1$ and $\tilde{r}<1$ (one could also choose the reverse without loss of generality), then
    $$
        |\mathfrak{K}(x,y)-\mathfrak{K}(\tilde{x},\tilde{y})|=|\mathfrak{K}(\tilde{x},\tilde{y})|=(1-\tilde{r})^{\lfloor{d/2}\rfloor}\leq 1-\tilde{r}\leq |r-\tilde{r}|.
    $$
    Finally, if both $r,\tilde{r}>1$, the left-hand side vanishes and any constant works. This achieves to prove the case $k=0$. In the end, we note that, given inputs $x,y\in\mathcal{X}$ and arbitrarily close rational approximations of $x,y\in\mathcal{X}$, we can compute arbitrarily close rational approximations to the square root $\|x-y\|_2$, by using the fact that the square root is strictly increasing. Then, since we have a known Lipschitz constant for $\mathfrak{K}$, $\mathfrak{K}(x,y)$ can be computed to any desired accuracy.
\end{proof}

Let $\tau_N(z,\koop)$ be an approximation of $\sigma_{\inf}((\koop^*-zI)\mathcal{P}_N^*)$ with accuracy $1/N$, which we can compute by \cref{siginfcompute_thm}, and define
$$
    \tilde{\Gamma}_N^{\epsilon}(\koop^*)=\{z\in\gridop(N):\tau_N(z,\koop)+1/N<\epsilon\}\subset\spec_{\mathrm{ap},\epsilon}(\koop^*).
$$
By minor adjustments to the proof of \cref{adjointapspecconv_thm}, we find that $\lim_{N\rightarrow\infty}\tilde{\Gamma}_N^{\epsilon}(\koop^*)=\spec_{\mathrm{ap},\epsilon}(\koop^*)$, with convergence from below. Similarly, following \cref{rectcompute_lemma} we can compute $\sigma_{\inf}(\mathcal{P}_{N_1}(\koop-zI)\mathcal{P}_{N_2}^*)$ by computing the smallest eigenvalue of the $N_2\times N_2$ truncation of the generalized eigenvalue problem $[AG^{-1}A^*-zA-\overline{z}A^*+|z|^2G]\m{g}=\lambda G\m{g}.$ Using similar arguments, we can compute this to any desired accuracy under the same assumptions.

\subsection{Numerical examples}
\label{sec:pseud_examples}

We now provide three numerical examples of our algorithms from this section. Examples, including real-world experimental data, in high dimensions are discussed later in \cref{numexample_sect}.

\subsubsection{The Gauss iterated map}
\label{sec:gauss_map_example}

We first consider the Gauss iterated map on the domain $\mathcal{X}=(-1,0)$, defined by
$$F(x)=\exp(-\alpha x^2)+\beta,$$
where $\alpha=2$ and $\beta=-1-\exp(-2)$ are chosen so that $F(\mathcal{X})\subset\mathcal{X}$. The Sobolev space $H^1(\mathcal{X})$ has kernel
$$
    \mathfrak{K}(x,y)=\begin{cases}
        \frac{\cosh(x+1)\cosh(y)}{\sinh(1)}, \quad & \text{if } x\leq y, \\
        \frac{\cosh(x)\cosh(y+1)}{\sinh(1)}, \quad & \text{if } x\geq y.
    \end{cases}
$$

We first claim that the Gauss iterated map gives rise to a bounded Koopman operator on $H^1(\mathcal{X})$. Since $F$ is not a bijection and $1/F'$ is unbounded, we cannot use the results discussed in \cref{sobolevkoopman_sect} to prove this. Instead, we argue directly. By a simple density argument (e.g., \cite[Ex.~IV.19]{ikeda_koopman_2024}), it suffices to show that $\koop$ is bounded on $C^1(\mathcal{X})$ functions. Suppose $\koop\restriction_{H^1(\mathcal{X})\cap C^1(\mathcal{X})}$ is bounded, then $\koop$ extends to a bounded operator on the entire Sobolev space, $\tilde{\koop}:H^1(\mathcal{X})\rightarrow H^1(\mathcal{X})$, by density of $C^1(\mathcal{X})$. Indeed, given $g\in H^1(\mathcal{X})$, there exists $g_m\in C^1(\mathcal{X})\cup H^1(\mathcal{X})$ such that $g_m\rightarrow g$ as $m\rightarrow\infty$. Since $\tilde{\koop}$ is bounded $\koop g_m=\tilde{\koop}g_m\rightarrow\tilde{\koop}g$, but as $\koop$ is closed, we must have $g\in\mathcal{D}(\koop)$ and $g\circ F=\koop g=\tilde{\koop}g$. Let $g\in C^1(\mathcal{X})$, then
\begin{align*}
    \|\koop g\|^2_{H^1} & =\int_{-1}^0g(F(x))^2\dd x+\int_{-1}^0F'(x)^2g'(F(x))^2\dd x
    \leq \|g\|_{L^{\infty}}^2+\int_{-1}^{\exp(-2)}F'(F^{-1}(u))g'(u)^2\dd u                                                                                                                                                                                           \\
                        & \leq \sup_{x\in\mathcal{X}}|\langle g,\mathfrak{K}_x\rangle_{H_1}|^2+\|F'\|_{L^{\infty}}\|g'\|_{L^2}^2\leq \left(\sup_{x\in \mathcal{X}}\|\mathfrak{K}_x\|_{H^1}\right)^2\|g\|_{H^1}^2+\|F'\|_{L^{\infty}}\|g\|_{H^1}^2\leq C\|g\|_{H^1}^2,
\end{align*}
for some constant $C>0$, which can be explicitly computed, and the result follows.

Interestingly, while the Koopman operator on $H^1(\mathcal{X})$ is bounded, the Koopman operator on $L^2(\mathcal{X})$ is unbounded. This follows from the facts that $1/F'(x)$ diverges as $x\rightarrow 0$ and functions in $L^2$ can have arbitrarily large values near the origin independently of their $L^2$ norm, unlike in the Sobolev space case. In particular, for any $g\in L^2(\mathcal{X})$,
$$
    \|g\circ F\|_{L^2}^2  =\int_{-1}^0g(F(x))^2\dd x =\int_{-1}^{-\exp(-2)}g(u)^2/F'(F^{-1}(u))\dd u,
$$
and so by choosing $g$ of unit norm but highly peaked around $u=\exp(-2)$, we can make $\|g\circ F\|_{L^2}$ arbitrarily large.

We approximate the Koopman operator associated with the Gauss iterated map in the Sobolev space $H^1(\mathcal{X})$ with kEDMD and compare it with EDMD. For snapshot data, we evaluate $F$ at $201$ Chebyshev nodes on the interval $(-1,0)$, and take $\{w_i\}_{i=1}^{201}$ to be the corresponding Clenshaw--Curtis quadrature weights. When implementing EDMD we use a dictionary of Chebyshev polynomials of the first kind transformed to the interval $(-1,0)$, i.e., $T_0(2x+1),T_1(2x+1),\dots,T_{200}(2x+1)$, and to compute the matrices $G$, $A$ and $R$ in \eqref{GA_defn} and \eqref{R_defn} we use the kernel defined in \cref{H1abreproducingkernel_eqn} with $(a,b)=(-1,0)$. These yield finite section approximations of the Koopman operator $\mathbb{K}_{\mathrm{EDMD}}=(\Psi_X^*W\Psi_X)^{-1}(\Psi_X^*W\Psi_Y)$ (see \cref{sec:edmd}) and $\mathbb{K}_{\mathrm{kEDMD}}=G^{-1}A$ respectively. Since $g(x)=x$ is contained in the span of our EDMD dictionary (as $T_1(2x+1)=2x+1$) we can immediately use the action of $\mathbb{K}_{\mathrm{EDMD}}$ on this observable to predict the trajectory of the state. For kEDMD, we compute a vector $c$, which is the best projection of the kernel functions onto the state $x$ in the least squares sense; we then use $\mathbb{K}_{\mathrm{kEDMD}}$ to predict the evolution of the kernel functions and then project back onto the state space. We take $c$ to be the solution of the least squares problem $\min_{c\in\mathbb{C}^{N}}\|X-c^TG\|$, where $X=(x_1,\dots,x_N)$, the solution of which is given by $c=G^{-1}X^T$. The left plot of \cref{fig:gauss_map} shows the results of these two predictors, illustrating that kEDMD on a Sobolev space much more accurately predicts the evolution of the dynamical system compared to EDMD. This is due to both the increase in accuracy from bypassing the large data limit and the pointwise, rather than $L^2$, convergence of observables.

\begin{figure}[t]
    \centering
    \includegraphics[height=0.35\textwidth]{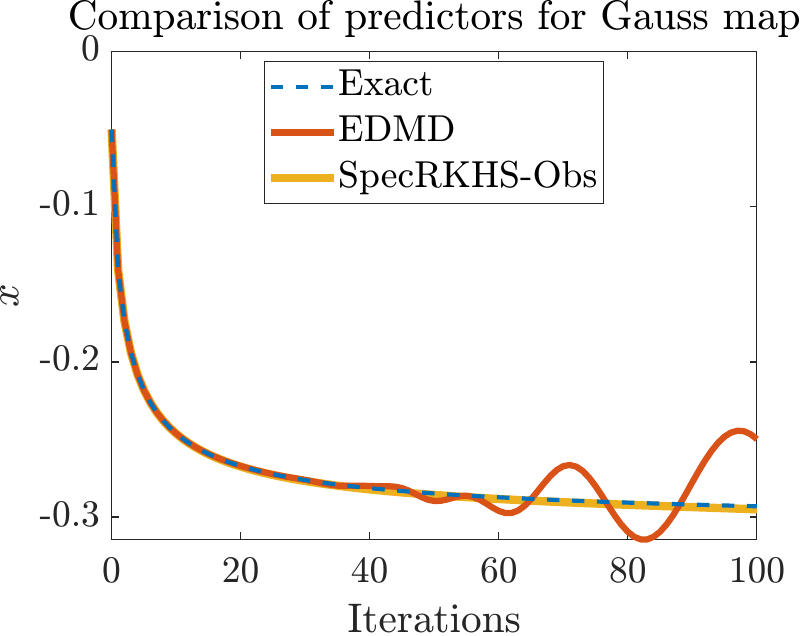}\hspace{1cm}
    \includegraphics[height=0.35\textwidth]{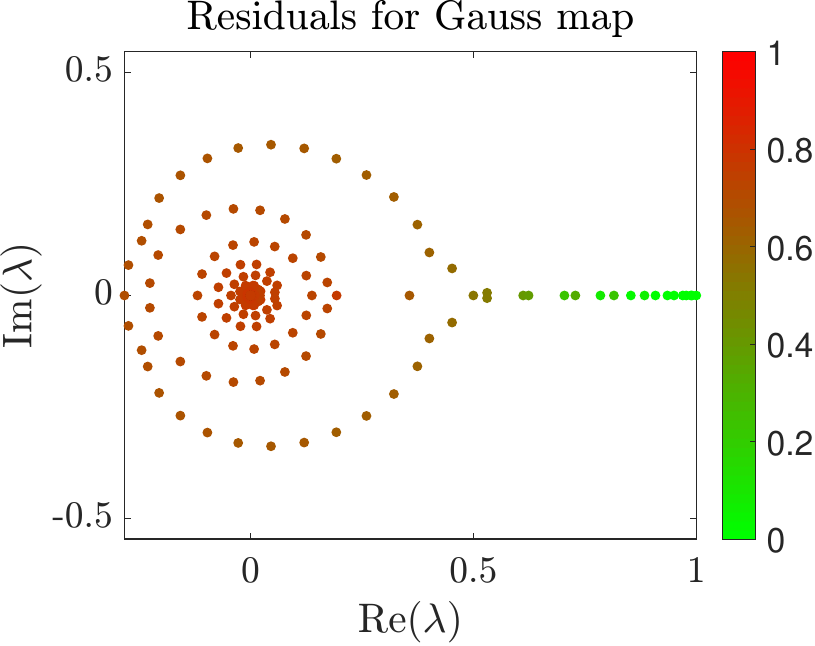}
    \caption{Gauss map. Left: Relative forecast errors for the Gauss map for kEDMD compared to EDMD. Right: The eigenvalues outputted by kEDMD, with the color showing the size of their residuals computed by SpecRKHS-Eig (\cref{evalverif_alg}).}
    \label{fig:gauss_map}
\end{figure}

For the right image of \cref{fig:gauss_map}, we compute the eigenpairs of $\mathbb{K}_{\mathrm{kEDMD}}=G^{-1}A$ and use SpecRKHS-Eig (\cref{evalverif_alg}) and the matrix $R$ to compute the corresponding residuals. These are shown on a scatter plot with color depending on the size of the residual. There is heavy spectral pollution, with the eigenvalues around $\lambda=0$ having residuals close to $1$, indicating that they are spurious; by contrast, the eigenvalues near $\lambda=1$ have comparatively small residuals and yield accurate pseudoeigenfunctions. By \cref{sigmainfconv} the residuals that we compute converge down to $\sigma_{\inf}(\koop^*-\lambda I)$, and so any eigenvalues with computed residuals less than $\epsilon$ must lie in the $\epsilon$-approximate point pseudospectrum, without needing to take any limits to confirm.

\subsubsection{The Duffing oscillator} \label{sec_duffing}

As a second (and more complicated) example, we consider the unforced damped Duffing oscillator defined by
$$
    \ddot{u}+\delta\dot{u}+\alpha u+\beta u^3=0,\quad \alpha,\beta,\delta>0.
$$
We discretize the system at times $t_n = n\Delta t$, where $\Delta t$ is the time step, and define the state $x_n=(u(t_n),\dot{u}(t_n))\in\mathcal{X}=\mathbb{R}^2$ and evolution function $F$ such that $F(x)$ gives the solution at time $\Delta t$ to the Duffing oscillator with initial condition $x\in\mathbb{R}^2$ at time $t_0=0$. We consider the Koopman operator defined on the native space of the Wendland kernel $\varphi_{2,k}$ for $k\geq 1$, which is equivalent to the fractional Sobolev space $H^{3/2+k}(\mathbb{R}^2)$ \cite[Thm.~4.1]{kohne_linfty-error_2024}.

We first prove that the corresponding Koopman operator is bounded whenever $\Delta t<1/(4\delta)$. Indeed, the Hamiltonian of the system is
$H(t)=\frac{1}{2}\dot{u}^2+\frac{1}{2}\alpha u^2+\frac{1}{4}\beta u^4$, and $dH/dt=-\delta[\dot{u}]^2$. Hence,
$$
    H(t_{k+1})=H(t_k)-\delta\int_{t_k}^{t_{k+1}}\dot{u}^2(s)\dd s\geq H(t_k)-2\delta (t_{k+1}-t_k)H(t_k),
$$
and for $\Delta t=t_{k+1}-t_k<1/(4\delta)$, we have $H(t_k)\geq H(t_{k+1})\geq \frac{1}{2}H(t_k)$. Let $g\in C_c^\infty(\mathbb{R}^2)$ be a compactly supported smooth function. By \cite[Cor.~4.1]{hartman_ordinary_2002}, the solution at a given time $t>0$ is a smooth function of the initial conditions, and so $\koop g$ is also smooth. Additionally, since $H(t_{k+1})\geq \frac{1}{2}H(t_k)$ and $\alpha,\beta>0$, the distance $x\in\mathbb{R}^2$ can move towards the origin after application of $F$ is bounded below, and so as $g$ is compactly supported so too is $\koop g$. Since compactly supported smooth functions are dense in Sobolev spaces \cite{meyers_h_1964}, including fractional Sobolev spaces \cite{di_nezza_hitchhikers_2012}, $\koop$ is densely defined, and $\koop^*$ is well-defined. As $\mathfrak{K}$ is a radial kernel, we have $\|\mathfrak{K}_x\|^2_{\mathfrak{K}}=\langle \mathfrak{K}_x,\mathfrak{K}_x\rangle_{\mathfrak{K}}=\mathfrak{K}_x(x)=\varphi_{2,k}(0)$ for all $x\in\mathcal{X}$, and $\|\koop^*\mathfrak{K}_x\|_{\mathfrak{K}}=\|\mathfrak{K}_{F(x)}\|_{\mathfrak{K}}=\|\mathfrak{K}_x\|_{\mathfrak{K}}=\varphi_{2,k}(0)$ for all $x\in\mathcal{X}$. We conclude by density that $\koop^*$ (and hence $\koop$) is bounded with $\|\koop\|_{\mathfrak{K}}=\|\koop^*\|_{\mathfrak{K}}=1$.

\begin{remark}[Argument for radial kernel functions]
    This argument implies that on the native space of a radial kernel function, any densely defined operator is immediately bounded. Similar energy-type arguments (where we can also use Lyapunov functions in the place of $H$) can be used to show that a number of Koopman operators arising from the solution to ordinary differential equations are bounded.
\end{remark}

For a specific example, take $\alpha=\beta=1$ and $\delta=0.2$. The snapshot data consists of trajectories running for $0.3$ seconds, each sampled every $\Delta t=0.01$ seconds. Note that $\Delta t<1/(4\delta)$ as desired. The trajectories start from random initial conditions uniformly and independently sampled in the square $[-1,1]^2$ and are computed using MATLAB's ode45 routine.
We want to analyze which kernel is best for the Duffing oscillator out of three Wendland kernels with $d=2$ and $k=1,2,3$ and three Mat{\'e}rn kernels with $d=2$ and $n=2,3,4$. By `best', we mean the one that achieves the most accurate predictive model analogously to how we use kEDMD to predict the evolution of states under the Gauss map.

\begin{table}[t]
    \caption{Mean relative errors for the predictions generated by kEDMD on the Duffing oscillator over $10$ test trajectories with Wendland and Mat{\'e}rn kernels.}
    \begin{center}
        \begin{tabular}{ cc }
            \toprule[\thick pt]
            Kernel          & Mean relative error  \\
            \midrule[\thick pt]
            Wendland, $k=1$ & $9.09\times 10^{-5}$ \\
            Wendland, $k=2$ & $3.62\times 10^{-4}$ \\
            Wendland, $k=3$ & $7.00\times 10^{-2}$ \\
            \bottomrule[\thick pt]
        \end{tabular}
        \hspace{1cm}
        \begin{tabular}{ cc }
            \toprule[\thick pt]
            Kernel            & Mean relative error  \\
            \midrule[\thick pt]
            Mat{\'e}rn, $n=2$ & $2.17\times 10^{-4}$ \\
            Mat{\'e}rn, $n=3$ & $3.50\times 10^{-5}$ \\
            Mat{\'e}rn, $n=4$ & $4.77\times 10^{-3}$ \\
            \bottomrule[\thick pt]
        \end{tabular}
        \label{table:duffingerrors}
    \end{center}
\end{table}

\begin{figure}[t]
    \centering
    \includegraphics[height=0.4\textwidth]{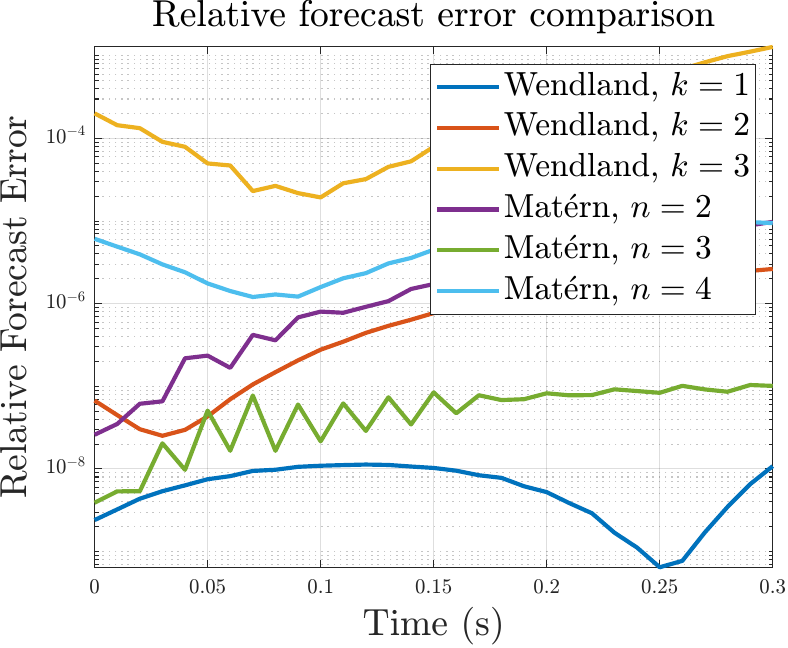}\hfill
    \includegraphics[height=0.4\textwidth]{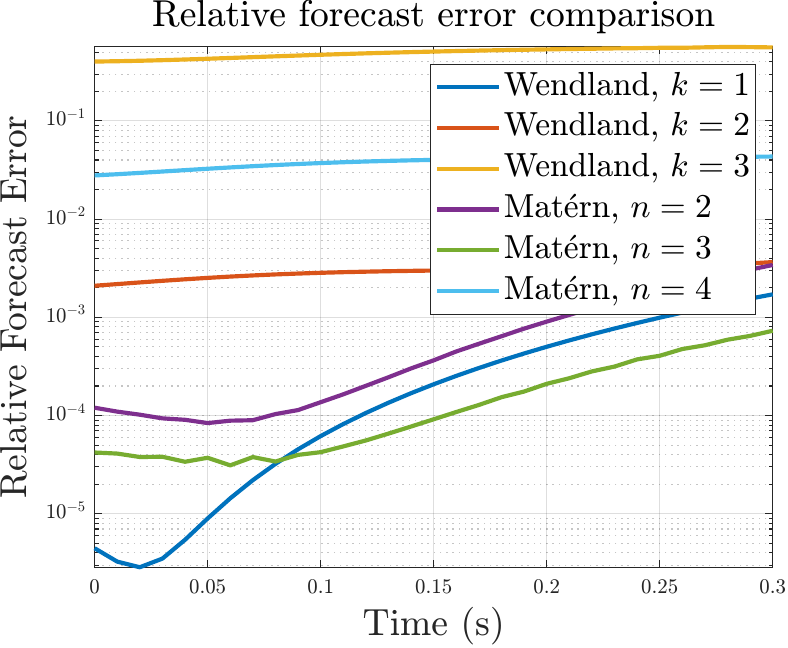}
    \caption{Duffing oscillator. Comparison of relative errors for predicting trajectories of the Duffing oscillator for six different kernels.}
    \label{fig:duffingkernelcomparison}
\end{figure}

For each kernel, the choice of scale factor $\sigma$ for $r$ impacts the conditioning of the matrices $G$, $A$, and $R$ and the method's accuracy. To tune this hyperparameter, we use an adaptive grid search. We begin by generating (independently and identically) $50$ trajectories, the first $40$ of which we use to train Koopman models for each kernel and $\sigma$ pair, which we then use to predict the trajectories beginning from the starting value of the remaining $10$ trajectories. We then compute the average error compared to numerical integration. We adaptively vary the set of $\sigma$ we try for each kernel until minimal improvement is achieved. Once a $\sigma$ for each kernel is achieved, we compare the kernels against each other. We generate (independently and uniformly to each other and the first set) another set of $50$ trajectories and use $40$ of these $50$ trajectories to train Koopman models to predict the future evolution of the system for the six kernels, which we then compare to the remaining trajectory we generated and compute the mean errors. \cref{table:duffingerrors} shows the results, and some illustrative relative error plots are shown in \cref{fig:duffingkernelcomparison}. We see that the `best' kernel for analyzing the dynamics of the Duffing oscillator is the Mat{\'e}rn kernel with $n=3$ given by
\begin{equation*}
    \mathfrak{K}(x,y)=\begin{cases}
        1,                                       & x=y,              \\
        (\sigma\|x-y\|_2)^2K_2(\sigma\|x-y\|_2), & \text{otherwise},
    \end{cases}
\end{equation*}
where $K_2$ is the modified Bessel function of the second kind, the native space of which is equivalent to the Sobolev space $H^3(\mathbb{R}^2)$.

\begin{figure}[htbp]
    \centering
    \includegraphics[height=0.4\textwidth]{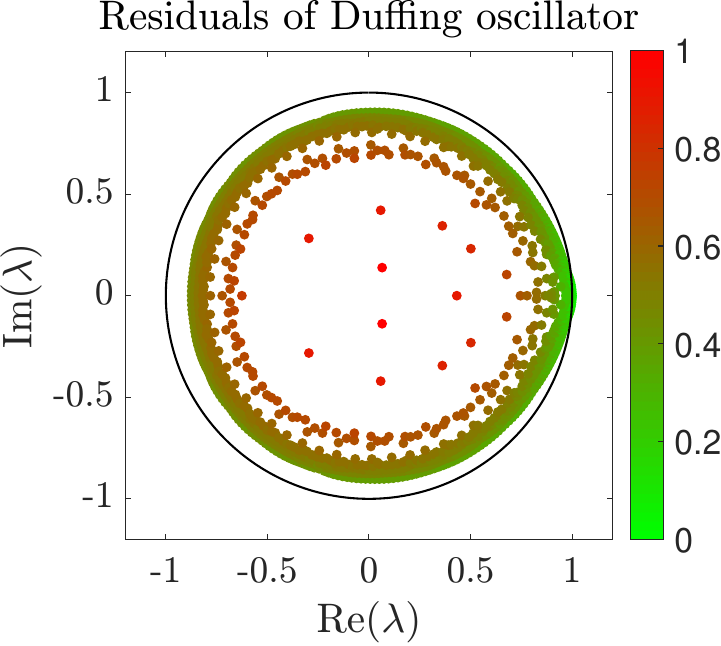}\hfill
    \includegraphics[height=0.4\textwidth]{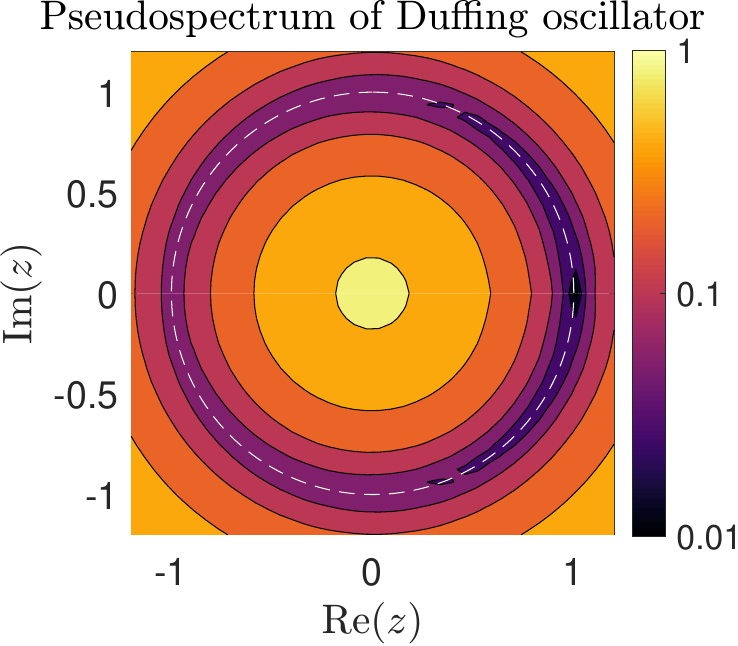}
    \caption{Duffing oscillator. Left: The eigenvalues outputted by kEDMD, with the color showing the size of their residuals computed by SpecRKHS-Eig (\cref{evalverif_alg}). Right: Pseudospectrum of the Duffing oscillator computed using SpecRKHS-PseudoPF (\cref{pspecadjoint_alg}). The unit circle is highlighted in both panels.}
    \label{fig:duffingoscillator}
\end{figure}

Using this kernel with $\sigma=6$, we generate a further $40$ independent trajectories and compute the spectrum of the Perron--Frobenius operator. We use SpecRKHS-Eig (\cref{evalverif_alg}) to compute the residuals of eigenpairs outputted by kEDMD and use SpecRKHS-PseudoPF (\cref{pspecadjoint_alg}) to compute the pseudospectrum. The results are shown in \cref{fig:duffingoscillator}. As for the Gauss map, there is spectral pollution in the eigenvalues outputted by kEDMD, with only those near the unit circle, and in particular near $\lambda=1$, having small residuals. This shows agreement with the pseudospectral plot.

\subsubsection{The Lorenz system}

\begin{figure}[t]
    \centering
    \includegraphics[height=0.4\textwidth]{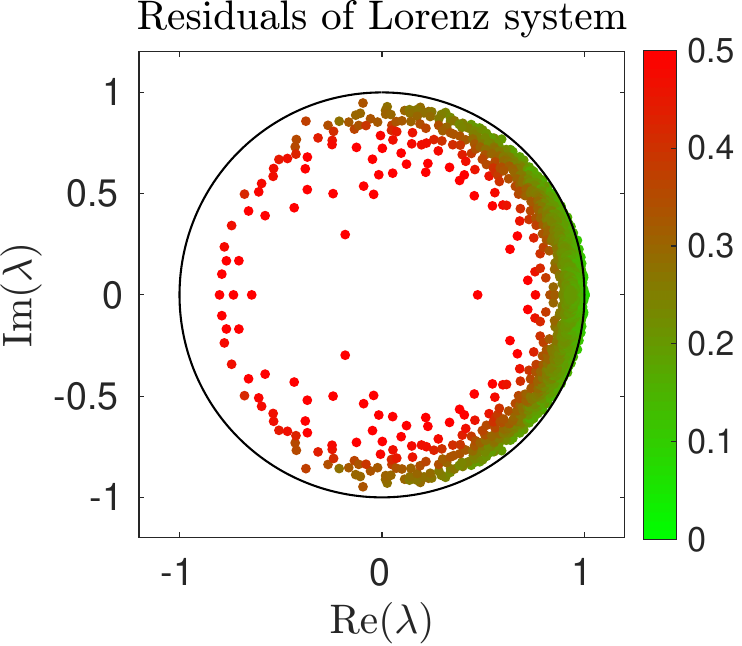}\hfill
    \includegraphics[height=0.4\textwidth]{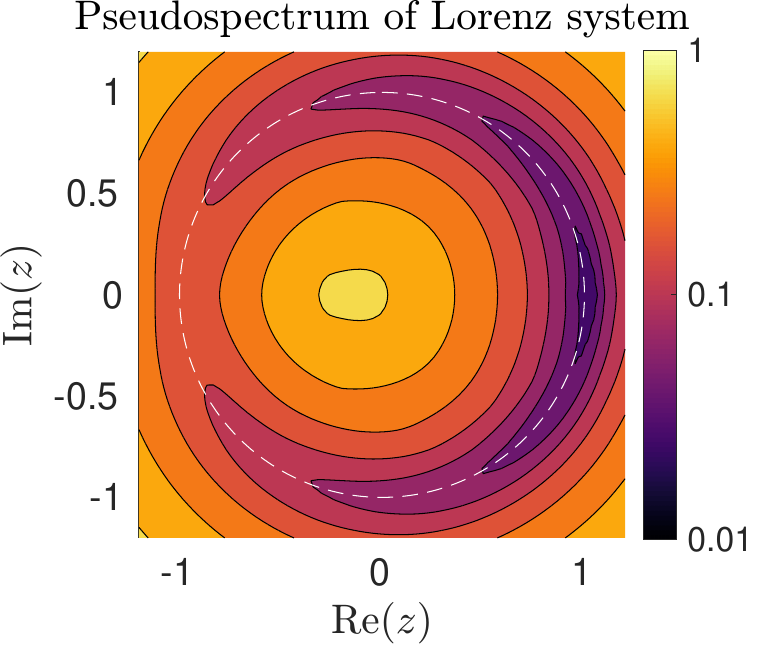}
    \caption{Lorenz system. Left: The eigenvalues outputted by kEDMD, with the color showing the size of their residuals computed by SpecRKHS-Eig (\cref{evalverif_alg}). Right: Pseudospectrum of the Lorenz system computed using SpecRKHS-PseudoPF (\cref{pspecadjoint_alg}). The unit circle is highlighted in both panels.}
    \label{fig:lorenzsystem}
\end{figure}

\begin{figure}[htbp]
    \centering
    \includegraphics[width=0.48\linewidth]{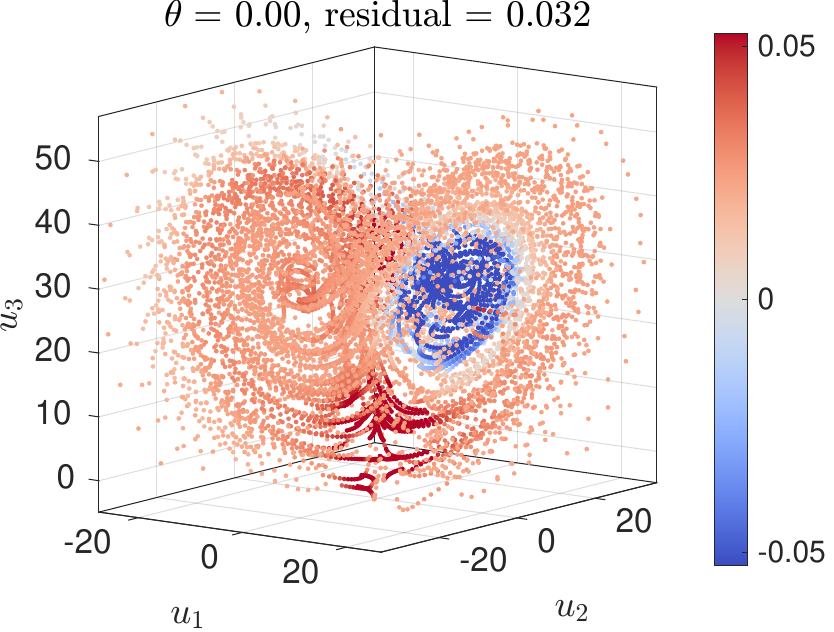}\hfill
    \includegraphics[width=0.48\linewidth]{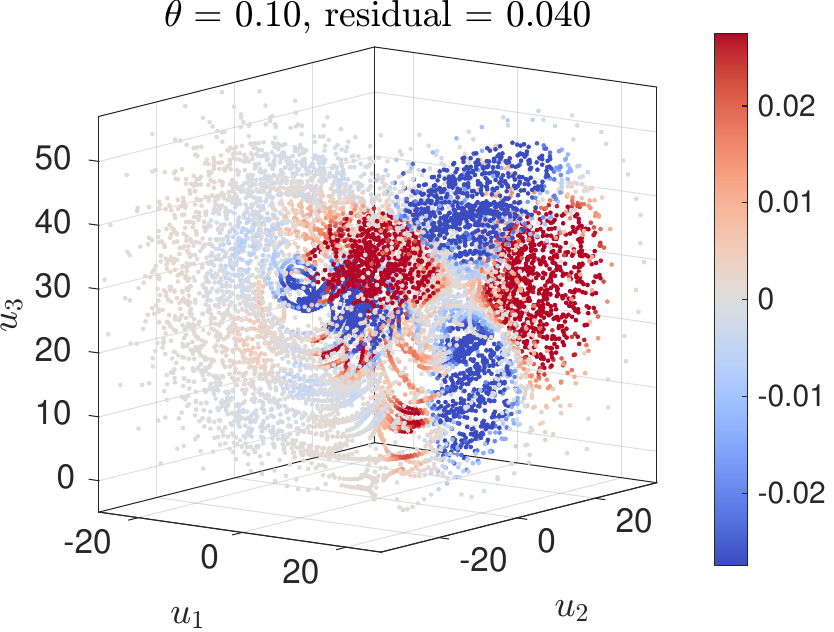}\\ \vspace{0.15cm}
    \includegraphics[width=0.48\linewidth]{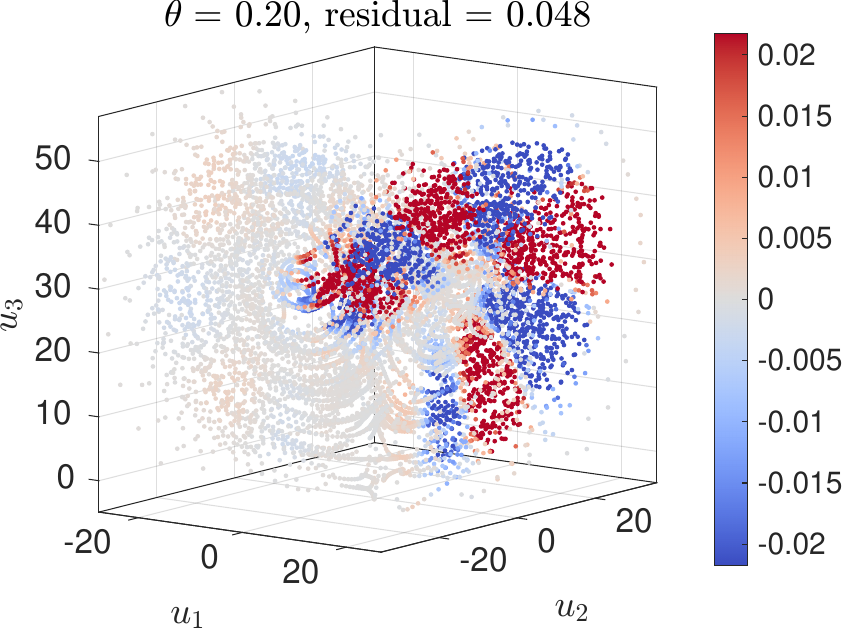}\hfill
    \includegraphics[width=0.48\linewidth]{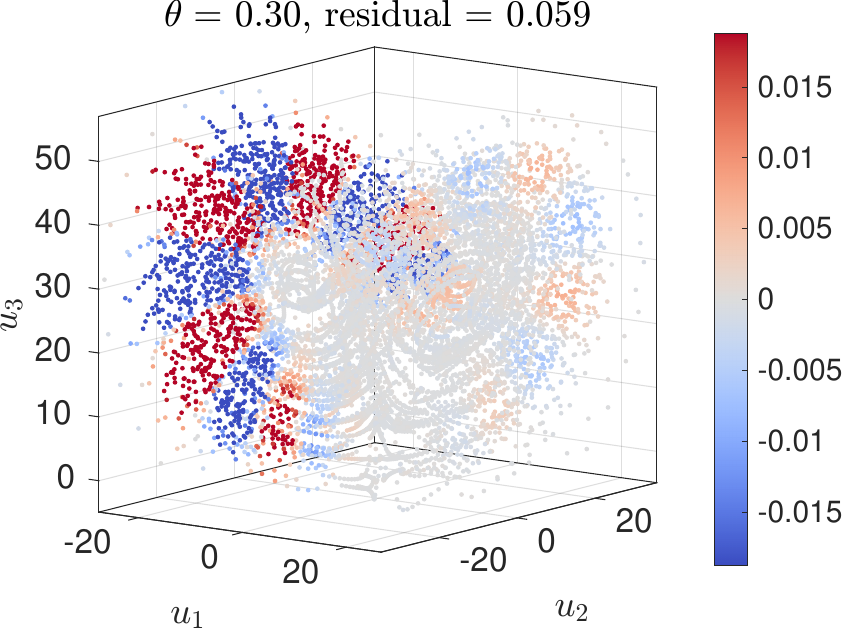}\\ \vspace{0.15cm}
    \includegraphics[width=0.48\linewidth]{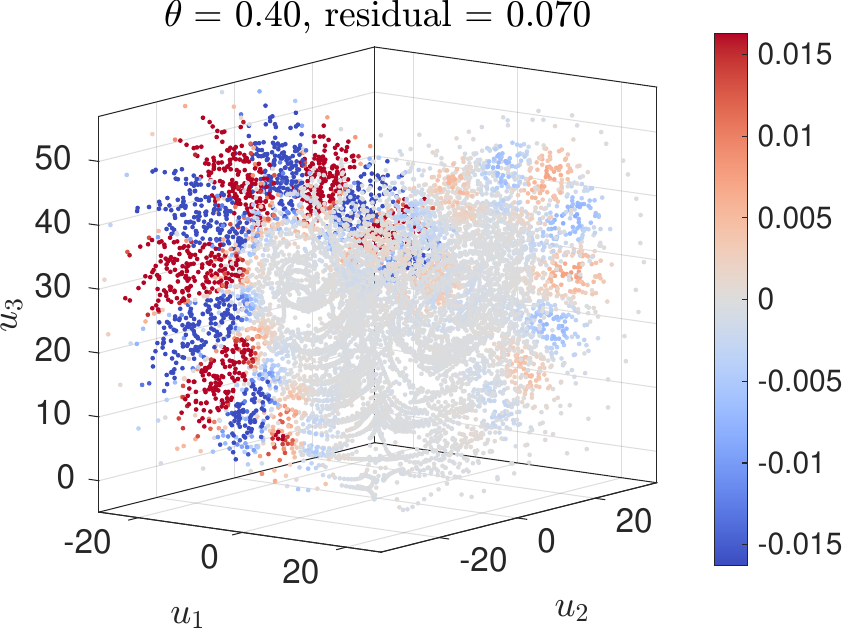}\hfill
    \includegraphics[width=0.48\linewidth]{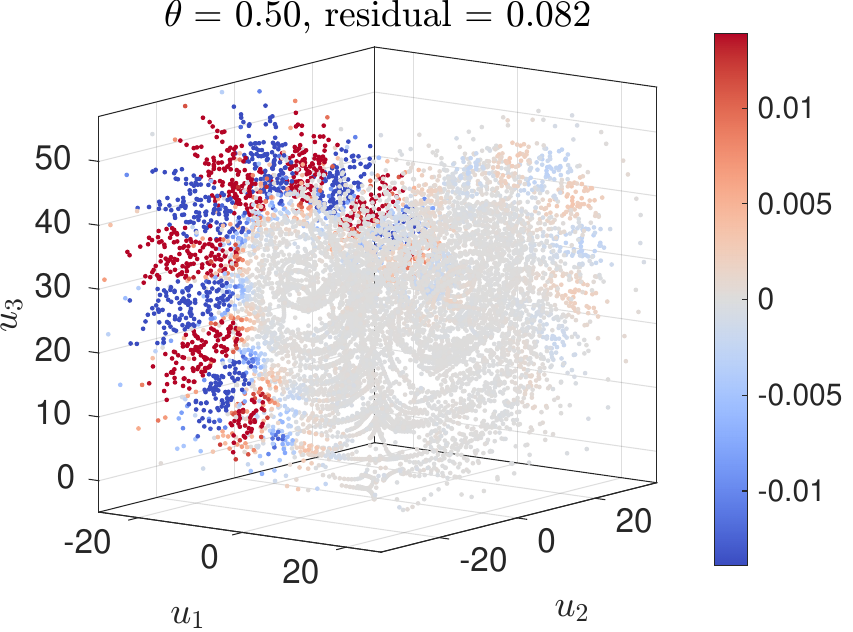}
    \caption{Lorenz system. Six pseudoeigenfunctions for the chaotic Lorenz system with eigenvalue $\lambda=e^{i\pi\theta}$ computing using SpecRKHS-PseudoPF (\cref{pspecadjoint_alg}) plotted along the trajectory data used to generate the Koopman model.}
    \label{fig:lorenzpefuns}
\end{figure}

We now consider the Lorenz system defined by
$$
    \dot{u_1}=\sigma(u_2-u_1),\quad
    \dot{u_2}=u_1(\rho-u_3)-u_2,\quad
    \dot{u_3}=u_1u_2-\beta u_3,
$$
with classical parameters $\rho=28$, $\sigma=10$ and $\beta=8/3$ and state $x=(u_1,u_2,u_3)\in\mathbb{R}^3$. The system is chaotic, and almost all initial points end up in a (fractal) invariant set known as the Lorenz attractor. We consider the Perron--Frobenius operator associated with this Lorenz system on $H^2(\mathbb{R}^3)$. This RKHS is the native space of the Wendland kernel, which we scale to $\mathfrak{K}(x,y)=\varphi_{3,0}(\|x-y\|/10)$ \cite[Thm.~4.1]{kohne_linfty-error_2024} where
\begin{equation}
    \varphi_{3,0}(\|x-y\|/10)=\begin{cases}
        (1-\|x-y\|/10)^2, & \text{if }\|x-y\|\leq 10, \\
        0,                & \text{otherwise}.
    \end{cases}
\end{equation}
We generate $500$ trajectories starting from randomly generated initial conditions inside the cuboid $[-25,25]\times [-25,25]\times [0,50]$ and integrate them forward in time using MATLAB's ode45 for $0.2$ seconds, sampling every $0.01$ seconds to obtain $10000$ snapshots total across the trajectories. We then use the compressed basis described in \cref{sec_compressed} with $r=1000$ basis functions.

Using \cref{evalverif_alg,pspecadjoint_alg}, we compute residuals for the kEDMD eigenvalues as well as the approximate point pseudospectrum of the Perron--Frobenius operator and show the results in \cref{fig:lorenzsystem}. As is typical, kEDMD exhibits heavy spectral pollution that can be spotted using residuals; most of the eigenvalues inside the unit circle are spurious. This is echoed in the pseudospectral plot.

We can also examine the pseudoeigenfunctions of the Lorenz system to study its behavior. Applying SpecRKHS-PseudoPF (\cref{pspecadjoint_alg}) to the points $z_j=\exp(i\pi \times j/10)$ for $j=0,1,\ldots,5$, we compute the corresponding pseudoeigenfunctions $g_j$ at these points along with their corresponding residuals. We plot the pseudoeigenfunctions along the trajectory data points in \cref{fig:lorenzpefuns}, where the geometric shape of the attractor can be seen. However, we stress that our function space is not supported just on the attractor. The residual increases moving from $z_0$ to $z_5$, as the pseudoeigenfunctions become more oscillatory and harder to approximate in our chosen subspace. The number of oscillations increases as the angle increases. For example, there are approximately twice as many oscillations for $g_2$ than $g_1$, which is expected from $z_2=z_1^2$. Finally, we notice that the oscillations are prominent on the wings of the attractor.

\section{The boundaries of Koopman computations on RKHSs}
\label{sci_sect}

In this section, we prove the optimality of our algorithms for computing spectral properties by classifying the associated computational problems within the Solvability Complexity Index (SCI) hierarchy~\cite{colbrook2020PhD,ben2015can,Hansen_JAMS}. The SCI hierarchy classifies computational problems based on the number of limits necessary for their solution and refinements, such as error control. For example, by taking $\epsilon\downarrow 0$ in \cref{adjointapspecconv_thm}, we can compute the approximate point spectrum $\spec_{\mathrm{ap}}(\koop^*)$ in two successive limits: we shall prove that there is no alternative algorithm that can converge in a single limit. As discussed in \cref{avoidquadapprox_sect}, our approach circumvents the need for quadrature approximations by directly evaluating the kernel. This typically lowers the SCI of the algorithms by one. The RKHS structure makes computing spectral properties significantly easier compared to the standard $L^2$ case due to the action of $\koop^*$ on the kernel functions.

The lower bounds (impossibility results) we prove match the upper bounds (convergence of algorithms), and involve constructing families of adversarial dynamical systems such that no algorithm can converge to compute their spectral properties. These lower bounds are derived under the assumption of exact information, emphasizing that the computational complexity arises not from sampling error but from the fundamental structure of the problems. Furthermore, we extend our impossibility results to probabilistic algorithms in \cref{sec_extension_to random}. This scenario covers techniques such as EDMD with randomly sampled trajectories and algorithms employing randomness in their computations, such as stochastic gradient descent for neural network training. Consequently, the impossibility results established directly imply that randomized algorithms cannot succeed with probability exceeding $2/3$. Thus, our adversarial dynamical systems are neither rare nor pathological but fundamental examples illustrating inherent computational limitations.

\subsection{The SCI hierarchy: Classifying difficulty and proving optimality}
\label{scihierarchyintro_sect}

First, we provide some background material on the necessary aspects of the SCI hierarchy. These are included in a compact self-contained format. For further discussion, we point the reader to \cite{colbrook2020PhD,ben2015can,Hansen_JAMS}.

\subsubsection{Computational problems and towers of algorithms}

To classify the difficulty of a computational problem, such as computing the spectral properties of Koopman operators, we must first precisely define a computational problem.

\begin{definition}[Computational problem]\label{compprob_defn}
    A computational problem is a quadruple $\{\Xi,\Omega,\mathcal{M},\Lambda\}$ consisting of an input class $\Omega$, a metric space $(\mathcal{M},d)$, a problem function $\Xi:\Omega\rightarrow\mathcal{M}$ (what we want to compute), and a set of evaluation functions $\Lambda$ consisting of functions from $\Omega$ to $\mathbb{C}$ that we allow algorithms access to. We require that if $\zeta_1,\zeta_2\in\Omega$ with $\Xi(\zeta_1)\neq\Xi(\zeta_2)$, then there exists $f\in \Lambda$ such that $f(\zeta_1)\neq f(\zeta_2)$ (otherwise it would be impossible to compute both $\Xi(\zeta_1)$ and $\Xi(\zeta_2)$ correctly).
\end{definition}

\begin{example}
    In the context of computing spectra of Koopman operators associated to dynamics $(F,\mathcal{X})$ on an RKHS $\mathcal{H}$, one may choose $\Omega=\{F:\mathcal{X}\rightarrow\mathcal{X}\text{ s.t. }\koop_F\text{ is densely defined on } \mathcal{H}\}$, $(\mathcal{M},d)=(\mathcal{M}_{\mathrm{AW}},d_{\mathrm{AW}})$, $\Xi(F)=\spec(\koop_F)$ where the spectrum is computed on $\mathcal{H}$, and $\Lambda_{\mathcal{X}}=\left\{F\mapsto F(\hat{x}_j):j=1,2,\dots\right\}$ for $\{\hat{x}_j\}_{j=1}^{\infty}$, a dense subset of $\mathcal{X}$.
\end{example}

We must also clarify what we mean by an algorithm. To make our impossibility results as strong as possible, we begin with a general definition that captures the essential features of any deterministic algorithm (we shall prove in \cref{sec_extension_to random} that our impossibility results extend to probabilistic algorithms). A deterministic algorithm can only use finitely many evaluation functions to compute its output and must be consistent. If two inputs yield the same information read by the algorithm, then the algorithm must produce the same output for both.

\begin{definition}[General algorithm]\label{genalg_defn}
    A general algorithm for a computational problem $\{\Xi,\Omega,\mathcal{M},\Lambda\}$ is a map $\Gamma:\Omega\to \mathcal{M}$ with the following property: for any $\zeta\in\Omega$, there exists a non-empty finite subset of evaluations $\Lambda_\Gamma(\zeta) \subset\Lambda$ such that if $\zeta'\in\Omega$ with $f(\zeta)=f(\zeta')$ for every $f\in\Lambda_\Gamma(\zeta)$, then $\Lambda_\Gamma(\zeta)=\Lambda_\Gamma(\zeta')$ and $\Gamma(\zeta)=\Gamma(\zeta')$.
\end{definition}

For infinite-dimensional computational problems, successive limits are often needed to compute the desired output, which motivates the notion of towers of algorithms.

\begin{definition}[Tower of algorithms]
    \label{toweralg_defn}
    A tower of algorithms of height $k\in\mathbb{N}$ for a computational problem $\{\Xi,\Omega,\mathcal{M},\Lambda\}$ is a set of functions $\Gamma_{n_k,\dots,n_1},\dots,\Gamma_{n_1}:\Omega\rightarrow\mathcal{M}$, $n_k,\dots,n_1\in\mathbb{N}$, where each $\Gamma_{n_k,\dots,n_1}$ is a general algorithm and for every $\zeta\in\Omega$, we have
    $$
        \lim_{n_1\rightarrow\infty}\Gamma_{n_k,\dots,n_2,n_1}(\zeta)=\Gamma_{n_k,\dots,n_2}(\zeta),\quad \dots,\quad \lim_{n_{k-1}\rightarrow\infty}\Gamma_{n_k,n_{k-1}}(\zeta)=\Gamma_{n_k}(\zeta),\quad\lim_{n_k\rightarrow\infty}\Gamma_{n_k}(\zeta)=\Xi(\zeta).
    $$
    where the convergence holds in $(\mathcal{M},d)$. We say that a tower of algorithms is general, denoted by $\alpha=G$, if we impose no further restrictions, and arithmetic, denoted by $\alpha=A$, if each $\Gamma_{n_k,\dots,n_1}$ can be computed using only $\Lambda$ and finitely many arithmetic operators and comparisons.
\end{definition}

We can now define the SCI of a computational problem.

\begin{definition}[Solvability Complexity Index]
    \label{sci_defn}
    The SCI of a computational problem $\{\Xi,\Omega,\mathcal{M},\Lambda\}$ with respect to the type $\alpha$ is the smallest integer $k$ such that there exists a tower of algorithms of type $\alpha$ and height $k$ for the computational problem, and is denoted by $\mathrm{SCI}(\Xi,\Omega,\mathcal{M},\Lambda)_{\alpha}=k$. If there is no tower of any height for the computational problem then we say that $\mathrm{SCI}(\Xi,\Omega,\mathcal{M},\Lambda)_{\alpha}=\infty$; if there exists a general algorithm $\Gamma$ of type $\alpha$ such that $\Gamma=\Xi$, then we say that $\mathrm{SCI}(\Xi,\Omega,\mathcal{M},\Lambda)_{\alpha}=0$.
\end{definition}

This definition allows us to order and classify problems using the SCI hierarchy. We let $\mathcal{T}_{\alpha}$ denote the collection of type $\alpha$ towers of algorithms for a given computational problem.

\begin{definition}[Class of computational problems]
    \label{scihier_defn}
    Let $\alpha\in\{G,A\}$, we define the following classes of computational problems:
    \begin{align*}
        \Delta_0^{\alpha}     & =\{\{\Xi,\Omega,\mathcal{M},\Lambda\}:\mathrm{SCI}(\Xi,\Omega,\mathcal{M},\Lambda)_{\alpha}=0\},                                                                      \\
        \Delta_1^{\alpha}     & =\{\{\Xi,\Omega,\mathcal{M},\Lambda\}:\exists\{\Gamma_n\}_{n=1}^{\infty}\in\mathcal{T}_{\alpha} \text{ s.t. } \forall \zeta\in\Omega, d(\Gamma_n(\zeta),\Xi(\zeta))\leq 2^{-n}\}, \\
        \Delta_{m+1}^{\alpha} & =\{\{\Xi,\Omega,\mathcal{M},\Lambda\}:\mathrm{SCI}(\Xi,\Omega,\mathcal{M},\Lambda)_{\alpha}\leq m\}, \quad m\geq 1.
    \end{align*}
\end{definition}

\begin{remark}
    The term $2^{-n}$ in the definition for $\Delta_1^{\alpha}$ can be replaced by any positive sequence $\{a_n\}$ that is computable from $\Lambda$ and has $\lim_{n\rightarrow\infty}a_n=0$.
\end{remark}

\subsubsection{Error control}
\label{error_control_sci_sect}

The class $\Delta_1^{\alpha}$ defined in \cref{scihier_defn} provides error control: in addition to a convergence algorithm, we have a bound on how close any given output is to the correct output. However, this form of error control is often too strong in practice. For example, when computing eigenvalues of elliptic operators using the finite element method, we may be able to compute error bounds on a finite number of eigenvalues (how close the output is to the spectrum) but not on all eigenvalues at once (how close the spectrum is to the output). In the results of \cref{provconv_sect}, we frequently see that the output is either approximately contained in or approximately contains the desired spectral set. This motivates the following refinement of the SCI hierarchy, which can be interpreted as providing `half' an error control.

\begin{definition}[Refinement of SCI hierarchy]
    \label{sigmapi_defn}
    Consider the Hausdorff or Attouch--Wets metric  $(\mathcal{M},d)$ on $\mathbb{C}$, set $\Sigma_0^{\alpha}=\Pi_0^{\alpha}=\Sigma_0^{\alpha}$, and for $m\in\mathbb{N}$ define
    \begin{align*}
        \Sigma_m^{\alpha} & =\left\{\{\Xi,\Omega,\mathcal{M},\Lambda\}\in\Delta_{m+1}^{\alpha}:\exists\{\Gamma_{n_m,\dots,n_1}\}\in\mathcal{T}_{\alpha},\exists\{X_{n_m}(\zeta)\}\subset\mathbb{C}\text{ s.t. }\forall \zeta\in\Omega\;\Gamma_{n_m}(\zeta)\subset X_{n_m}(\zeta),\right. \\
                          & \quad\quad\lim_{n_m\rightarrow\infty}\Gamma_{n_m}(\zeta)=\Xi(\zeta),d(X_{n_m}(\zeta),\Xi(\zeta))\leq 2^{-n_m}\left.\right\},                                                                                                                                 \\
        \Pi_m^{\alpha}    & =\left\{\{\Xi,\Omega,\mathcal{M},\Lambda\}\in\Delta_{m+1}^{\alpha}:\exists\{\Gamma_{n_m,\dots,n_1}\}\in\mathcal{T}_{\alpha},\exists\{X_{n_m}(\zeta)\}\subset\mathbb{C}\text{ s.t. }\forall \zeta\in\Omega\;\Xi(\zeta)\subset X_{n_m}(\zeta),\right.          \\
                          & \quad\quad\lim_{n_m\rightarrow\infty}\Gamma_{n_m}(\zeta)=\Xi(\zeta),d(X_{n_m}(\zeta),\Gamma_{n_m}(\zeta))\leq 2^{-n_m}\left.\right\}.
    \end{align*}
\end{definition}

Similarly to \cref{scihier_defn}, one can replace the term $2^{-n}$ with any sequence that converges to zero. For the Attouch--Wets metric, which includes the empty set, for $\Sigma_m^{\alpha}$ we require that $\Gamma_{n_m}(\zeta)=\emptyset$ if $\Xi(\zeta)=\emptyset$, and for $\Pi_m^{\alpha}$ we may take $X_{n_m}(\zeta)=\Gamma_{n_m}(\zeta)$ if $\Xi(\zeta)=\emptyset$. The classes $\Delta_1$, $\Sigma_1$, and $\Pi_1$ are crucial, as they allow the verification of the output of our algorithms. In addition to being vital for real-world applications, such algorithms have also been used in computed-assisted proofs, such as Kepler's conjecture (Hilbert's 18th problem) \cite{bastounis_extended_2022,hales_formal_2017}.

\subsubsection{Inexact input}
\label{sec:inexact_input}

The SCI can accommodate the scenario where we do not know $f(\zeta)$ exactly for some $f\in\Lambda$ and $\zeta\in\Omega$. For example, in the case of Koopman operators, we may only have access to the snapshot data to a certain tolerance (as considered in \cref{inexactupperbounds_sect}). In this case, we must alter our definition of a computational problem.

\begin{definition}[Computational problem with $\Delta_1$-information]\label{inexactcompprob_defn}
    Let $\{\Xi,\Omega,\mathcal{M},\Lambda\}$ be a computational problem, where $\Lambda=\{f_j:\Omega\rightarrow\mathbb{C}\}_{j\in I}$ for some index set $I$. The corresponding problem with $\Delta_1$-information is denoted by $\{\Xi,\Omega,\mathcal{M},\Lambda\}^{\Delta_1}=\{\Xi^{\Delta_1},\Omega^{\Delta_1},\mathcal{M},\Lambda^{\Delta_1}\}$, where $\Omega^{\Delta_1}$ is the class of all possible $\tilde{\zeta}=\left\{f_{j,n}(\zeta):j\in I,n\in\mathbb{N}\right\}$ such that $\zeta\in\Omega$ and $|f_{j,n}(\zeta)-f_j(\zeta)|\leq 2^{-n}$ for all $\zeta\in\Omega$, $\Xi^{\Delta_1}(\tilde{\zeta})=\Xi(\zeta)$, and $\Lambda^{\Delta_1}=\{\tilde{f}_{j,n}\}_{(j,n)\in I\times \mathbb{N}}$, where $\tilde{f}_{j,n}(\tilde{\zeta})=f_{j,n}(\zeta)$.
\end{definition}

The key idea is to replace an element $\zeta\in\Omega$ with all possible sets of approximations of its evaluation functions to arbitrary accuracy. This is natural as for any $\zeta$, there are many different choices of inputs $f_{j,n}(\zeta)$ that converge to $f_j(\zeta)$ in the desired way, and we want towers of algorithms that converge for all possible choices. This approach is common when using interval arithmetic \cite{tucker2011validated}, for example. \cref{inexactcompprob_defn} can be expanded in the obvious manner to $\Delta_m$ information for $m\geq 2$.

\subsection{Summary of results: Upper bounds and lower bounds}
\label{scisummary_sect}

Given any RKHS $\mathcal{H}$ of functions on $\mathcal{X}$ (the state space) with kernel $\mathfrak{K}$, we define the input class
$$
    \Omega_{\mathcal{H}}=\left\{ F:\mathcal{X}\rightarrow\mathcal{X}\text{ s.t. }\koop \text{ is densely defined on }\mathcal{H},\text{ span of countably many kernel functions is a core of }\koop, \koop^*\right\}.
$$
We work with the Attouch--Wets metric $\mathcal{M}_{\mathrm{AW}}$ defined in \cref{AW_metric_def}. Let $\{\hat{x}_j\}_{i=1}^{\infty}$ be a dense subset of $\mathcal{X}$, and define the evaluation set $\Lambda_{\mathcal{X}}=\left\{F\mapsto F(\hat{x}_j):j=1,2,\dots\right\}$. We employ the notation $\Xi_{\spec_{\mathrm{ap}}}$ (respectively, $\Xi_{\spec_{\mathrm{ap}}^*}$) for the problem function of computing $\spec_{\mathrm{ap}}(\koop)$ (respectively, $\spec_{\mathrm{ap}}(\koop^*)$), and $\Xi_{\spec_{\mathrm{ap},\epsilon}}$ (respectively, $\Xi_{\spec_{\mathrm{ap},\epsilon}^*}$) for the problem function of computing $\spec_{\mathrm{ap},\epsilon}(\koop)$ (respectively, $\spec_{\mathrm{ap},\epsilon}(\koop^*)$) for a given $\epsilon>0$.

\subsubsection{Upper bounds (convergent algorithms)}

We first summarize the results of \cref{provconv_sect} in the language of the SCI hierarchy. Our convergence results proven above lead to the following upper bounds in the SCI hierarchy.

\begin{theorem}[Upper bounds for spectral computations]\label{rkhsupperbounds_thm}
    Let $\epsilon>0$, we have the following classifications:
    \begin{align*}
         & \{\Xi_{\spec_{\mathrm{ap},\epsilon}^*},\Omega_{\mathcal{H}},\mathcal{M}_{\mathrm{AW}},\Lambda_{\mathcal{X}}\}\in\Sigma_1^G,\quad
        \{\Xi_{\spec_{\mathrm{ap}}^*},\Omega_{\mathcal{H}},\mathcal{M}_{\mathrm{AW}},\Lambda_{\mathcal{X}}\}\in\Pi_2^G,                     \\
         & \{\Xi_{\spec_{\mathrm{ap},\epsilon}},\Omega_{\mathcal{H}},\mathcal{M}_{\mathrm{AW}},\Lambda_{\mathcal{X}}\}\in\Sigma_2^G,\quad
        \{\Xi_{\spec_{\mathrm{ap}}},\Omega_{\mathcal{H}},\mathcal{M}_{\mathrm{AW}},\Lambda_{\mathcal{X}}\}\in\Pi_3^G.
    \end{align*}
    If, in addition, $\mathfrak{K}$ satisfies the assumptions of \cref{siginfcompute_thm}, all of the above classifications hold with inexact information and with general algorithms replaced by arithmetic algorithms.
\end{theorem}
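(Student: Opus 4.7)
The plan is to translate the convergence results of \cref{provconv_sect} into the SCI language of \cref{scihier_defn,sigmapi_defn}, invoking each of the four algorithms of \cref{provconv_sect} exactly as designed and choosing the verification sets $X_{n_m}$ as trivially as possible. For every classification, the proposed tower accesses only finitely many values $F(\hat{x}_j)$, so each layer is automatically a general algorithm in the sense of \cref{genalg_defn}; after reordering $\{\hat{x}_j\}$ if necessary, we may assume it contains the core-forming sequence guaranteed by the definition of $\Omega_{\mathcal{H}}$.

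For the two $\Sigma$-classifications, set $\Gamma_N(\zeta) = \Gamma_N^{\epsilon}(\koop^*)$ from \cref{one_lim_needed}. \cref{adjointapspecconv_thm} delivers both the containment $\Gamma_N \subset \Xi$ and Attouch--Wets convergence, so the choice $X_N(\zeta) = \Xi(\zeta)$ makes $d(X_N,\Xi)=0$ and handles the empty case automatically. Similarly, taking $\Gamma_{n_2,n_1}(\zeta) = \hat{\Gamma}^{\epsilon}_{n_2,n_1}(\koop)$ from \cref{eq_alg_app_point_sp}, \cref{apspeclimit1_lemma,koopapspecconv_thm} deliver the two limits and the containment $\hat{\Gamma}_{n_2}^\epsilon \subset \Xi$, so again $X_{n_2} = \Xi$ works. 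For the two $\Pi$-classifications, I would nest an additional $\epsilon$-shrinking limit on the outside: choose $\epsilon_{n_m} = 2^{-n_m}$ and set $\Gamma_{n_2,n_1}(\zeta) = \Gamma_{n_1}^{\epsilon_{n_2}}(\koop^*)$ for $\Pi_2^G$, and $\Gamma_{n_3,n_2,n_1}(\zeta) = \hat{\Gamma}^{\epsilon_{n_3}}_{n_2,n_1}(\koop)$ for $\Pi_3^G$. The innermost limits reduce to the two $\Sigma$-cases, and the outermost limit converges in $\mathcal{M}_{\mathrm{AW}}$ because $\spec_{\mathrm{ap},\epsilon}(T)$ decreases to $\spec_{\mathrm{ap}}(T)$ as $\epsilon \to 0^+$ (recorded at the end of \cref{sec_mean_spectral}). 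The key simplification is to pick $X_{n_m}(\zeta) = \Gamma_{n_m}(\zeta) = \spec_{\mathrm{ap},\epsilon_{n_m}}(\koop^{(*)})$: the containment $\Xi \subset X_{n_m}$ follows from $\spec_{\mathrm{ap}} \subset \spec_{\mathrm{ap},\epsilon}$, and $d(X_{n_m},\Gamma_{n_m}) = 0$ holds trivially.

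The arithmetic and inexact-input refinement follows from \cref{siginfcompute_thm}: under the kernel's modulus-of-continuity hypothesis, $\sigma_{\inf}((\koop^*-zI)\mathcal{P}_N^*)$---and, by an analogous Cholesky-reduction argument (\cref{sagenevalcompute_lemma}) applied to the rectangular truncation of \cref{rectcompute_lemma}, also $\sigma_{\inf}(\mathcal{P}_{N_1}(\koop-zI)\mathcal{P}_{N_2}^*)$---is computable to arbitrary precision from $\Delta_1$-accurate snapshots using only finitely many arithmetic operations and comparisons. Replacing each $\Gamma_N^\epsilon$ by the approximation $\tilde{\Gamma}_N^\epsilon$ introduced at the end of \cref{inexactupperbounds_sect} preserves every limit, yielding arithmetic towers of the same heights as the general ones above. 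I expect the only delicate step to be the bookkeeping around strict-versus-non-strict inequalities once the grid is restricted to $\gridop(N)\cap(\mathbb{Q}+i\mathbb{Q})$ and $\tau_N$ merely approximates $\sigma_{\inf}$; the $+1/N$ margins already built into $\hat{\Gamma}$ and $\tilde{\Gamma}$ absorb both errors via continuity of $z\mapsto \sigma_{\inf}$ and density of the grids $\gridop(N)$.
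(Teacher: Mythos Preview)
Your proposal is correct and takes essentially the same approach as the paper, which simply points back to \cref{sec:initial_convergence,sec:fullpspecspec,inexactupperbounds_sect} without spelling out the translation into SCI language. Your explicit choices $X_{n_m}=\Xi$ for the $\Sigma$-cases and $X_{n_m}=\Gamma_{n_m}=\spec_{\mathrm{ap},\epsilon_{n_m}}$ for the $\Pi$-cases are exactly the trivial witnesses the definitions allow, and your handling of the arithmetic/inexact refinement via \cref{siginfcompute_thm} and the $+1/N$ margins matches the paper's reasoning in \cref{inexactupperbounds_sect}.
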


The results for the computations of spectral properties of $\koop^*$ and $\koop$ are proven in \cref{sec:initial_convergence,sec:fullpspecspec} respectively, while the extension to inexact information and arithmetic algorithms can be found in \cref{inexactupperbounds_sect}. As discussed in \cref{error_control_sci_sect}, the $\Sigma_1^G$ result is particularly powerful, as it allows us to verify the existence of the approximately coherent observables described by the approximate point pseudospectrum. In general, we require an additional limit to compute spectra compared to pseudospectra since $\spec_{\mathrm{ap},\epsilon}(\koop)\rightarrow\spec_{\mathrm{ap}}(\koop)$ as $\epsilon\rightarrow 0$. Moreover, computing spectral properties of $\koop$ is generally harder than computing those of $\koop^*$, which will be explored more in \cref{discretesobimposs_sect}.

\subsubsection{Lower bounds (impossibility results)}

In the rest of the section, we prove impossibility results that show the optimality of \cref{rkhsupperbounds_thm}. To obtain stronger impossibility results, given a metric space $\mathcal{X}$ and an RKHS $\mathcal{H}$ of functions on $\mathcal{X}$ with kernel $\mathfrak{K}$, we define $\Omega_{\mathcal{H}}^B=\left\{F:\mathcal{X}\rightarrow\mathcal{X},\text{ s.t. } \koop \text{ is bounded on }\mathcal{H}\right\}$, and use the Hausdorff metric space in this case, with the same evaluation functions.\footnote{Bounded operators automatically have countable subsets of the kernel functions whose span forms a core of $\koop$ and $\koop^*$ by \cref{kernelorthonormalbasis_lemma}.} This yields a stronger result as if there does not exist a tower of height $k$ for the space of bounded operators, there cannot exist a tower of height $k$ for the larger class of unbounded operators (the Hausdorff and Attouch--Wets topologies agree if all sets are contained in a common compact set).

We prove our impossibility results on two types of spaces. The first is the Sobolev space $H^r(\mathcal{X})$ for a suitable open subset $\mathcal{X}\subset\mathbb{R}^d$ and $r>d/2$ a positive integer, as discussed in \cref{sobolevkoopman_sect}. The second is the discrete Sobolev space $h^r(\mathbb{N})$ defined for $r\in\mathbb{R}$ by \cite{de_hoop_convergence_2023}
\begin{equation}
    \label{discretesobolevspace_defn}
    h^r(\mathbb{N})=\left\{v:\mathbb{N}\rightarrow\mathbb{C}:\sum_{j=1}^{\infty}j^{2r}|v_j|^2<\infty\right\}.
\end{equation}
These can be seen as alterations of $\ell^2(\mathbb{N})$ to impose more or less `smoothness' on the elements of the sequence. For example, we can regard the sequence as Fourier coefficients for a function lying in a Sobolev space with certain smoothness \cite[Thm.~4.13]{haroske_distributions_2007}. \cref{rkhslowerbounds_thm,sci-classification-table} summarize the key impossibility results we prove.

\begin{theorem}[Lower bounds for spectral computations]\label{rkhslowerbounds_thm} Let $\epsilon>0$.
    Consider the Sobolev space $H^r(\mathcal{X})$ defined in \cref{sobolevkoopman_sect}, where $\mathcal{X}\subset\mathbb{R}^d$ is a product of finite open intervals and $r>d/2$ is an integer. We have the following classifications:
    $$
        \left\{\Xi_{\spec_{\mathrm{ap},\epsilon}^*},\Omega_{H^r(\mathcal{X})}^B,\mathcal{M}_{\mathrm{H}},\Lambda_{\mathcal{X}}\right\}\notin \Delta_1^G, \quad
        \left\{\Xi_{\spec^*_{\mathrm{ap}}},\Omega_{H^r(\mathcal{X})},\mathcal{M}_{\mathrm{AW}},\Lambda_{\mathcal{X}}\right\}\notin\Delta_2^G.
    $$
    If instead $\mathcal{X}\subset\mathbb{R}^d$ is a product of finite open intervals and the real line, with at least one copy of the real line, we have the following classifications, the second of which is improved from the previous case:
    $$
        \left\{\Xi_{\spec_{\mathrm{ap},\epsilon}^*},\Omega_{H^r(\mathcal{X})}^B,\mathcal{M}_{\mathrm{H}},\Lambda_{\mathcal{X}}\right\}\notin \Delta_1^G, \quad
        \left\{\Xi_{\spec^*_{\mathrm{ap}}},\Omega_{H^r(\mathcal{X})}^B,\mathcal{M}_{\mathrm{H}},\Lambda_{\mathcal{X}}\right\}\notin\Delta_2^G.
    $$
    Finally, for the discrete Sobolev space $h^r(\mathbb{N})$ defined in \cref{discretesobolevspace_defn} with $r>0$ not necessarily an integer, there exists $R>0$ such that for all $0<\epsilon<R$ the following classifications hold:
    $$
        \left\{\Xi_{\spec_{\mathrm{ap},\epsilon}},\Omega_{h^r(\mathbb{N})},\mathcal{M}_{\mathrm{H}},\Lambda_{\mathbb{N}}\right\}\notin\Delta_2^G, \quad
        \left\{\Xi_{\spec_{\mathrm{ap}}},\Omega_{h^r(\mathbb{N})}^B,\mathcal{M}_{\mathrm{H}},\Lambda_{\mathbb{N}}\right\}\notin\Delta_3^{G}.
    $$
\end{theorem}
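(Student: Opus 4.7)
The plan is to prove all the impossibility results through explicit adversarial dynamical systems, following the general SCI lower bound strategy of constructing inputs whose evaluations appear indistinguishable on finitely many queries but whose spectra differ drastically. As highlighted in \cref{sec:contributions}, interval exchange transformations (IETs) are the main building blocks in the Sobolev space case, while in the discrete Sobolev setting we use permutation-type shifts on $\mathbb{N}$. I would treat the six classifications by grouping them according to which SCI class is negated, with each technical step isolated as a separate lemma in \cref{sci_sect}.

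For the $\notin\Delta_1^G$ results on $H^r(\mathcal{X})$, the plan is to exhibit, for any fixed algorithm $\Gamma$ with putative error control $d_{\mathrm{H}}(\Gamma(F),\spec_{\mathrm{ap},\epsilon}(\koop_F^*))\leq 2^{-n}$ and any $n\in\mathbb{N}$, two bounded-Koopman maps $F_0, F_1\in\Omega_{H^r(\mathcal{X})}^B$ that agree on the finite evaluation set $\Lambda_\Gamma(F_0)\subset\Lambda_{\mathcal{X}}$ while their $\epsilon$-approximate point pseudospectra are at Hausdorff distance at least $1$. The construction embeds a one-parameter family of IETs (or their smoothed surrogates, built by convolving the IET graph against a compactly supported mollifier that leaves the Koopman operator bounded on $H^r$) into one coordinate of $F$: for rational rotation numbers the pseudospectrum clusters around a finite set of roots of unity, while for irrational ones it fills an arc of the unit circle. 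By choosing the rational/irrational alternative outside the finite set $\{\hat{x}_j\}_{j\in\Lambda_\Gamma(F_0)}$, the consistency axiom of \cref{genalg_defn} forces $\Gamma(F_0)=\Gamma(F_1)$, contradicting the error bound.

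For $\notin\Delta_2^G$ (both for $\spec_{\mathrm{ap}}^*$ on $H^r(\mathcal{X})$ and for $\spec_{\mathrm{ap},\epsilon}$ on $h^r(\mathbb{N})$) and $\notin\Delta_3^G$ (for $\spec_{\mathrm{ap}}$ on $h^r(\mathbb{N})$), the plan is to run the standard nested diagonal argument of \cite{ben2015can,Hansen_JAMS}. Given any tower $\{\Gamma_{n_k,\dots,n_1}\}$ of height $k$, one recursively constructs an input $F^\star$ such that $\Gamma_{n_k}(F^\star)$ oscillates between two fixed compact sets $S_0,S_1$ as $n_k\to\infty$, preventing convergence in $\mathcal{M}_{\mathrm{H}}$ or $\mathcal{M}_{\mathrm{AW}}$. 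At each stage of the recursion we exploit the fact that general algorithms query only finitely many $F(\hat{x}_j)$, so the values of $F^\star$ outside a finite set can be modified freely. For $h^r(\mathbb{N})$ the key building block is a bijection $F:\mathbb{N}\to\mathbb{N}$ built from arbitrarily long cycles: an $m$-cycle contributes $m$-th roots of unity to the approximate point spectrum, and by splicing together cycles of varying length at indices hidden beyond the current query set, one toggles the presence of prescribed points in $\spec_{\mathrm{ap}}(\koop)$. For the $\spec_{\mathrm{ap}}$ ($\notin\Delta_3^G$) case we use the asymmetry between $\koop$ and $\koop^*$ on $h^r(\mathbb{N})$: a finitely-supported perturbation of a cyclic shift can introduce surjectivity failures that are invisible to the approximate point spectrum of $\koop^*$, forcing a third limit. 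The improvement in Part~3 (real line factor) uses the same recipe but leverages the non-compactness of $\mathcal{X}$ to encode spectral information ``at infinity'', yielding the stronger $\Omega_{H^r(\mathcal{X})}^B$ and $\mathcal{M}_{\mathrm{H}}$ version.

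The main obstacle will be verifying that the adversarial maps genuinely lie in $\Omega_{H^r(\mathcal{X})}^B$ (or at least $\Omega_{H^r(\mathcal{X})}$), i.e., that $\koop_F$ is bounded (or densely defined) on the Sobolev space. IETs are only piecewise smooth, so proving boundedness requires either (i) replacing the IET by a smoothed version whose spectral behaviour still mimics that of the IET on the scales relevant to the argument, which must be done without destroying the spectral toggle, or (ii) proving directly, as in the argument surrounding \cref{sobolevkoopman_sect} and the Gauss-map analysis of \cref{sec:gauss_map_example}, that the piecewise composition yields a bounded operator via Jacobian bounds and reproducing-kernel estimates on $\|\koop F_x\|_{\mathfrak{K}}$. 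A secondary technical point is the quantitative spectral dichotomy: we need to certify, independently of the algorithm, that the two ``hidden'' choices of parameters produce pseudospectra separated by a fixed Hausdorff distance, which relies on classical results on the unique ergodicity and spectral classification of IETs together with the stability of $\spec_{\mathrm{ap},\epsilon}$ under small Koopman perturbations in the RKHS norm.
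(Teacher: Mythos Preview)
Your overall SCI strategy (adversarial families exploiting finite evaluation sets and consistency) is correct, but the specific constructions you propose diverge from the paper's and would run into real difficulties.

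First, the paper's ``interval exchange maps'' are \emph{not} the classical piecewise-translation IETs you have in mind. They are smooth, strictly increasing bijections $F:(0,1)\to(0,1)$ (or $\mathbb{R}\to\mathbb{R}$) that map each interval $I_n$ onto $I_{n+1}$, with $F'=\alpha$ near one endpoint and $F'=1/\alpha$ near the other. The spectral dichotomy is not rational versus irrational rotation number; it comes from the parameter $\alpha$: Lemma~\ref{iemspecinterval_lemma} shows $\{\alpha^{1/2}\le|z|\le\alpha^{-1/2}\}\subset\spec_{\mathrm{ap}}(\koop_F^*)\subset\{\alpha^{r-1/2}\le|z|\le\alpha^{-(r-1/2)}\}$. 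The adversarial family is built by alternating $\alpha_1$ and $\alpha_2$ on intervals beyond the algorithm's query set, making $\sup\{|z|:z\in\Gamma_n(F)\}$ oscillate. Your rational/irrational mechanism would require classical IET spectral theory on $H^r$, which is not established here, and your ``main obstacle'' (smoothing discontinuous IETs) simply does not arise in the paper's construction because the maps are smooth from the outset.

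Second, for $h^r(\mathbb{N})$, the paper does not use cycles and roots of unity. It builds Jordan-like lower-triangular blocks $B_n$ (with $\spec=\{0,1\}$) and a one-sided shift $B_\infty$ (with $\spec_{\mathrm{ap}}=\mathbb{T}$); the key trichotomy (Lemma~\ref{nearlyjordanblocks_lemma}) is that $\bigoplus B_{l_k}$ has $\spec_{\mathrm{ap}}=\{0,1\}$ for bounded $\{l_k\}$ but $\spec_{\mathrm{ap}}=\mathbb{D}$ for unbounded $\{l_k\}$. Crucially, the $\notin\Delta_3^G$ result is not obtained by a direct three-level diagonal argument but by a \emph{reduction} from a known $\notin\Delta_3^G$ combinatorial problem $\Xi_{2,Q}$ on $\{0,1\}$-matrices: one encodes each column of the matrix as a block of the dynamics, so that $\Xi_{2,Q}(a)=1$ iff $\spec_{\mathrm{ap}}(\koop_{F_a})=\{0\}\cup\mathbb{T}$ and $\Xi_{2,Q}(a)=0$ iff $\spec_{\mathrm{ap}}(\koop_{F_a})=\mathbb{D}$. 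Your vague ``surjectivity failures invisible to $\koop^*$'' does not supply this reduction, and a direct diagonal construction at height three would be substantially harder to carry out.
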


The results on $h^r(\mathbb{N})$ are proven in \cref{discretesobimposs_sect}, the results for $H^r(\mathcal{X})$ on finite intervals are proven in \cref{sobolevintervalimposs_sect}, and the extension to the real line is discussed in \cref{sobolevreallineimposs_sect}. Combining with \cref{rkhsupperbounds_thm}, these results tell us that we can compute (for any $\epsilon>0$) $\spec_{\mathrm{ap},\epsilon}(\koop^*)$ in one limit with half an error control (i.e., we have verification that the points we compute are indeed inside the approximate point pseudospectrum), but that full error control is impossible (we can never tell how close we are to finding all points in the approximate point pseudospectrum). Moreover, in general, the additional $\epsilon\rightarrow 0$ limit is required for computing the $\spec_{\mathrm{ap},\epsilon}(\koop^*)$ and so two limits are required, with convergence from above in the second limit. Furthermore, computing the approximate point spectrum and pseudospectrum of $\koop$ requires an additional limit compared to $\koop^*$, leading to two limits being necessary for $\spec_{\mathrm{ap},\epsilon}(\koop)$, with convergence from below in the second limit; as before, the final $\epsilon\rightarrow0$ limit is required to compute $\spec_{\mathrm{ap}}(\koop)$, so this computation requires three limits with convergence from above in the third limit.

\begin{remark}
    The proof techniques exploit different properties of Koopman and Perron--Frobenius operators. For the results on $\koop^*$, we exploit the non-normality of Perron--Frobenius operators. Interval exchange maps as defined in \cref{sobolevintervalimposs_sect} provide a versatile example of such cases. For the results on $\koop$, we exploit the lack of knowledge of off-diagonal decay of the matrix representation of the Koopman operator. By contrast, for the Perron--Frobenius operator, the formula $\koop^*\mathfrak{K}_x=\mathfrak{K}_{F(x)}$ limits this.
\end{remark}

\subsection{Impossibility results for $h^r(\mathbb{N})$}
\label{discretesobimposs_sect}

In this section, we show that computing $\spec_{\mathrm{ap},\epsilon}(\koop)$ requires an additional limit compared to computing $\spec_{\mathrm{ap},\epsilon}(\koop^*)$. That is, spectral computations for the Koopman operator are more difficult than for the Perron--Frobenius operator in the RKHS setting. We prove this result for the discrete Sobolev space $h^r(\mathbb{N})$, but first present a version on $\ell^2(\mathbb{N})$ to build intuition. Since there exists a bijection between $\mathbb{N}^d$ and $\mathbb{N}$ for any $d\geq 1$, this result extends to $\ell^2(\mathbb{N}^d)$.

\subsubsection{Proof for $\ell^2(\mathbb{N})$}

The space $\ell^2(\mathbb{N})$ is an RKHS with kernel functions $\mathfrak{K}_i(j)=\delta_i(j)=\delta_{ij}$. These form an orthonormal basis with $\koop^*\delta_i=\koop^*\delta_{F(i)}$ for every $i\in\mathbb{N}$. When written as an infinite matrix with respect to this basis, $\koop^*$ has exactly one $1$ in every column, and all remaining elements are $0$. Similarly, $\koop$ can be represented as an infinite matrix with exactly one $1$ in every row and the remaining elements all $0$. Conversely, any infinite matrix with exactly one $1$ in every column (respectively row) and the remaining elements all $0$ defines a Perron--Frobenius (respectively Koopman) operator and a corresponding $F$. These operators are densely defined provided that $F$ does not map infinitely many states to any given states, and for such operators, the kernel functions automatically form a core of $\koop$ and $\koop^*$; we denote the set of such $F$ by $\Omega_{\ell^2(\mathbb{N})}$, to distinguish them from $\Omega_{h^r(\mathbb{N})}$ later.

The key intuition of the proof below is that while for $\koop^*$ we have $\koop^*\delta_j=\delta_{F(j)}$, for $\koop$ we have the more complicated equation $\koop\delta_j=\sum_{i\in F^{-1}(\{j\})}\delta_i$; the former relates to the forward dynamics and be computed without needing a limit, but the latter relates to the backward dynamics and requires a limit to compute. For the proof, we also introduce the notation $\mathbb{T}=\{z\in\mathbb{C}:|z|=1\}$ and $\mathbb{D}=\{z\in\mathbb{C}:|z|\leq 1\}$ for the unit circle and disk, respectively.

\begin{theorem}[Approximate point pseudospectrum lower bound on $\ell^2(\mathbb{N})$]\label{notdelta2specapepskoop_thm}
    For all $0<\epsilon<1$,
    $$
        \left\{\Xi_{\spec_{\mathrm{ap},\epsilon}},\Omega_{\ell^2(\mathbb{N})}^B,\mathcal{M}_{\mathrm{H}},\Lambda_{\mathbb{N}}\right\}\notin\Delta_2^G.
    $$
\end{theorem}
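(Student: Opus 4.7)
The plan is a standard adversarial construction for SCI lower bounds in the spirit of~\cite{ben2015can,Hansen_JAMS}. Suppose for contradiction that $\{\Gamma_n\}$ is a general height-$1$ tower for $\Xi_{\spec_{\mathrm{ap},\epsilon}}$ on $\Omega_{\ell^2(\mathbb{N})}^B$. I shall construct an $F^\ast\in\Omega_{\ell^2(\mathbb{N})}^B$ for which the sequence $\Gamma_n(F^\ast)$ has two subsequential Hausdorff limits separated by a positive constant, contradicting convergence. The two limits will come from the templates $F_A(n)=n$ and $F_B(n)=n+1$: on $\ell^2(\mathbb{N})$ one has $\koop_{F_A}=I$ with $\spec_{\mathrm{ap},\epsilon}(\koop_{F_A})=\overline{B}_\epsilon(1)$, while $\koop_{F_B}=S^\ast$ (the backward shift) has $\spec(\koop_{F_B})=\overline{\mathbb{D}}$ and so $\spec_{\mathrm{ap},\epsilon}(\koop_{F_B})=\overline{B}_{1+\epsilon}(0)$. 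For $\epsilon<1$ the point $-1$ lies in the latter at distance $2-\epsilon>1$ from the former, so any two compact sets one of which is Hausdorff-close to $\overline{B}_{1+\epsilon}(0)$ and the other of which avoids a neighborhood of $-1$ are separated in $d_{\mathrm H}$ by a fixed positive constant.

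I build $F^\ast$ inductively via a nested chain of finite partial functions $\mathcal{S}_0\subset\mathcal{S}_1\subset\cdots$ on $\mathbb{N}$, finally setting $F^\ast$ to agree with $\bigcup_k\mathcal{S}_k$ on its domain and to be the identity on the rest of $\mathbb{N}$. At stage $k\geq 1$ I pick a tentative extension $\tilde F^{(k)}\in\Omega_{\ell^2(\mathbb{N})}^B$ of $\mathcal{S}_{k-1}$, alternating by parity: at odd $k$ (``type A'') I extend by the identity on $\mathbb{N}\setminus\dom(\mathcal{S}_{k-1})$, while at even $k$ (``type B'') I first pick a fresh infinite set $A_k\subset\mathbb{N}\setminus\dom(\mathcal{S}_{k-1})$ disjoint from all previously chosen $A_j$---so that each $j\in\mathbb{N}$ has at most two preimages under $F^\ast$ and hence $\koop_{F^\ast}$ is bounded with $\|\koop_{F^\ast}\|\le\sqrt{2}$---and then set $\tilde F^{(k)}$ equal to $\mathcal{S}_{k-1}$ on $\dom(\mathcal{S}_{k-1})$, to the shift in some enumeration of $A_k$ on $A_k$, and to the identity elsewhere. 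The type-B choice makes $\koop_{\tilde F^{(k)}}$ contain an isometric copy of the backward shift on $\ell^2(A_k)$, so $\spec_{\mathrm{ap},\epsilon}(\koop_{\tilde F^{(k)}})\supseteq\overline{B}_{1+\epsilon}(0)\ni -1$; the type-A choice produces the identity plus a finite-rank chain perturbation coming from earlier committed shift pieces, whose pseudospectrum concentrates near $\{0,1\}$ and, provided chain lengths stay bounded (see below), remains uniformly away from $-1$. Using convergence of $\Gamma_n(\tilde F^{(k)})\to\spec_{\mathrm{ap},\epsilon}(\koop_{\tilde F^{(k)}})$ I pick $n_k>n_{k-1}$ with $d_{\mathrm H}(\Gamma_{n_k}(\tilde F^{(k)}),\spec_{\mathrm{ap},\epsilon}(\koop_{\tilde F^{(k)}}))<(1-\epsilon)/4$, let $Q_k=\Lambda_{\Gamma_{n_k}}(\tilde F^{(k)})$ be the finite query set of $\Gamma_{n_k}$ on $\tilde F^{(k)}$, and commit $\mathcal{S}_k=\mathcal{S}_{k-1}\cup\{(q,\tilde F^{(k)}(q)):q\in Q_k\}$. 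Because $F^\ast$ is then forced to agree with $\tilde F^{(k)}$ on $Q_k$, the consistency property of general algorithms (\cref{genalg_defn}) gives $\Gamma_{n_k}(F^\ast)=\Gamma_{n_k}(\tilde F^{(k)})$ for every $k$. Along the subsequence $\{n_{2k}\}$ the outputs thus cluster near $\overline{B}_{1+\epsilon}(0)$ (which contains $-1$) while along $\{n_{2k+1}\}$ they stay bounded away from $-1$, so $\Gamma_n(F^\ast)$ cannot converge.

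The delicate step, which I view as the main obstacle, is controlling the contribution of previously committed type-B arrows to the type-A pseudospectrum. Each chain of length $L$ appearing in $\tilde F^{(2k+1)}$---of matrix form $\koop\delta_{a_{i+1}}=\delta_{a_i}$ for $i<L$ and $\koop\delta_{a_L}=\delta_{a_{L-1}}+\delta_{a_L}$---has a pseudospectrum reaching into $\overline{B}_{\epsilon^{1/L}}(0)$, so if $L$ were allowed to grow without bound this ball would eventually swallow $-1$ and destroy the separation. I expect to bound $L$ uniformly by an adversarial choice of the enumeration of $A_k$ at each type-B stage: since $\Lambda_{\Gamma_{n_k}}(\tilde F^{(k)})$ depends on the enumeration through the values $\tilde F^{(k)}$ returns, one can iterate between candidate enumerations and induced query sets until no run of consecutively enumerated elements of $A_k$ appearing in $Q_k$ exceeds a fixed length $L_0=L_0(\epsilon)$. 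Combined with the elementary estimate $\|(M_L+I)^{-1}\|\le L$ for the chain-shifted matrix, this yields $\sigma_{\inf}(\koop_{\tilde F^{(2k+1)}}+I)\ge 1/L_0>\epsilon$ for small enough $\epsilon$, keeping $-1\notin\spec_{\mathrm{ap},\epsilon}(\koop_{\tilde F^{(2k+1)}})$ throughout the induction and closing the argument.
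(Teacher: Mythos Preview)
Your high-level adversarial strategy is right, but the step you flag as ``the main obstacle'' is a genuine gap that the proposal does not close. There are two concrete problems. First, the ``iterate between candidate enumerations and induced query sets'' procedure is not well-defined: the query set $Q_k=\Lambda_{\Gamma_{n_k}}(\tilde F^{(k)})$ depends on the enumeration through the values of $\tilde F^{(k)}$, so changing the enumeration changes $Q_k$, and there is no argument that this game terminates or that an enumeration with short committed runs exists for an arbitrary general algorithm. Second, even granting a uniform chain bound $L_0$, your estimate gives $\sigma_{\inf}(\koop_{\tilde F^{(2k+1)}}+I)\geq 1/L_0$, and you need $1/L_0>\epsilon$. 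For $\epsilon$ close to $1$ this forces $L_0=1$, but a single committed arrow already yields the $2\times 2$ block $\begin{pmatrix}0&1\\0&1\end{pmatrix}$ with $\sigma_{\min}(M_2+I)=\sqrt{3-\sqrt{5}}\approx 0.87<1$. So the argument cannot cover the full range $0<\epsilon<1$ claimed in the theorem; your own ``for small enough $\epsilon$'' admits this.

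The paper avoids the non-normality trap entirely by replacing your identity/shift dichotomy with a single-template construction built from \emph{unitary} finite cyclic shifts. Writing $C$ for the backward shift and $A_n$ for the $n\times n$ cyclic permutation matrix, one takes decoys $C_k=A_{l_1}\oplus\cdots\oplus A_{l_k}\oplus C$ (with $0\in\spec_{\mathrm{ap}}(C_k)$, so $\Gamma_{n_k}(F_{C_k})$ can be forced near $0$) and chooses $l_{k+1}$ large enough that the final adversary $A=\bigoplus_{r}A_{l_r}$ agrees with $C_k$ on the queried sites. Because $A$ is unitary, $\spec_{\mathrm{ap},\epsilon}(A)\subset\mathbb{T}+\overline{B}_\epsilon(0)$, so $\dist(0,\spec_{\mathrm{ap},\epsilon}(A))\geq 1-\epsilon>0$ for \emph{every} $\epsilon<1$, with no chain-length bookkeeping at all. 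The key idea you are missing is to close off committed shift segments into cycles rather than letting them terminate as nilpotent chains: this keeps every finite committed block normal and its pseudospectrum exactly an $\epsilon$-thickening of a subset of $\mathbb{T}$.
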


\begin{proof}
    Define the operators
    $$
        C=\begin{pmatrix}
            0 & 1 &        &        & \\
              & 0 & 1                 \\
              &   & \ddots & \ddots
        \end{pmatrix},\quad
        A_n=\begin{pmatrix}
            0      & 1 &        &        & 0 \\
            \vdots & 0 & 1                   \\
                   &   & \ddots & \ddots     \\
            0      &   &        & 0      & 1 \\
            1      & 0 & \dots  &        & 0
        \end{pmatrix}\in\mathbb{C}^{n\times n},
    $$
    where $C:\ell^2(\mathbb{N})\to \ell^2(\mathbb{N})$ is expressed in the canonical basis.
    Since $C^*$ is a non-invertible isometry, $\spec_{\mathrm{ap}}(C^*)=\mathbb{T}$ and $\spec(C^*)=\mathbb{D}$. In particular, the surjective spectrum satisfies $\spec_{\mathrm{su}}(C^*)=\mathbb{D}$, hence $\spec(C)=\spec_{\mathrm{ap}}(C)=\mathbb{D}$. However, $\spec(A_n)=\spec_{\mathrm{ap}}(A_n)=\{z\in\mathbb{C}:z^n=1\}$, and $\spec_{\epsilon}(A_n)=\spec_{\mathrm{ap},\epsilon}(A_n)=\{z\in\mathbb{C}:z^n=1\}+B_{\epsilon}(0)$ because $A_n$ is unitary. Define $A=\bigoplus_{r=1}^{\infty}A_{l_r}$ for $l_r\in\mathbb{N}\setminus\{1\}$ to be chosen below. Since $A$ is unitary, $\spec(A)=\spec_{\mathrm{ap}}(A)\subset\mathbb{T}$, hence $\spec_{\epsilon}(A)=\spec_{\mathrm{ap},\epsilon}(A)\subset\mathbb{T}+B_{\epsilon}(0)$. Note also that all of the above infinite matrices correspond to the Koopman operators on discrete dynamical systems; we denote the evolution function corresponding to $B:\ell^2(\mathbb{N})\rightarrow\ell^2(\mathbb{N})$ by $F_B$.

    We proceed by contradiction and assume the existence of general algorithms $\{\Gamma_n\}$ such that
    $$
        \lim_{n\rightarrow\infty}\Gamma_n(F)=\spec_{\mathrm{ap},\epsilon}(\koop_F)\quad \forall F\in\Omega_{\ell^2(\mathbb{N})}.
    $$
    Let $0<\epsilon<\eta<1$. Since $0\in\spec_{\mathrm{ap}}(C)\subset\spec_{\mathrm{ap},\epsilon}(C)$, \cref{genalg_defn} implies that there exists $n_1>0$ and $0<N_1<\infty$ such that
    $$
        \dist(\Gamma_{n_1}(F_C),0)<1-\eta,\quad\Lambda_{\Gamma_{n_1}}(F_C)\subset \{F_C(i):i=1,\ldots, N_1\}
    $$
    Hence, if we take $l_1=N_1+1$, by the consistency of general algorithms,
    $
        \dist(\Gamma_{n_1}(F_A),0)<1-\eta.
    $

    Now, we proceed by induction. Suppose that $l_1,\dots,l_k$ have been chosen and let $C_k=A_{l_1}\oplus \cdots\oplus A_{l_k}\oplus C$. Then $0\in\spec_{\mathrm{ap}}(C_k)\subset\spec_{\mathrm{ap},\epsilon}(C_k)$ and so by \cref{genalg_defn}, there exists $n_k>n_{k-1}$ and $N_{k-1}<N_k<\infty$ such that
    $$
        \dist(\Gamma_{n_k}(F_{C_k}),0)<1-\eta,\quad\Lambda_{\Gamma_{n_k}}(F_{C_k})\subset\{F_{C_k}(i):i=1,\ldots, N_k\}.
    $$
    Then we may select $l_{k+1}\geq 2$ so that, by the consistency of general algorithms,
    $
        \dist(\Gamma_{n_k}(F_A),0)<1-\eta.
    $
    By induction, this then holds for all $k$. However,
    $$
        \dist(\spec_{\mathrm{ap},\epsilon}(A),0)\geq 1-\epsilon>1-\eta,
    $$
    and so $\Gamma_{n_k}(F_A)$ cannot converge to $\spec_{\mathrm{ap},\epsilon}(\koop_{F_A})$, the required contradiction.
\end{proof}

\subsubsection{Proof for $h^r(\mathbb{N})$}

Next, we prove the lower bounds in \cref{rkhslowerbounds_thm} for $h^r(\mathbb{N})$ and begin by proving that we need strictly more than two limits to compute the approximate point spectrum. The space $h^r(\mathbb{N})$ has an orthonormal basis given by \begin{equation}
    \label{fl_defn}
    f_l=l^{-r}e_l=l^{-r}(0,\dots,0,1,0,\dots),
\end{equation}
where the only non-zero element is in the $l$th position. It is an RKHS with kernel functions given by $\mathfrak{K}_l=l^{-2r}e_l=l^{-r}f_l$ and
$$\langle v,\mathfrak{K}_l\rangle_{h^r}=\sum_{j=1}^{\infty}j^{2r}v_j\overline{(\mathfrak{K}_l)_j}=l^{2r}v_ll^{-2r}=v_l=v(l)\quad \forall v\in h^r(\mathbb{N}).
$$
The Koopman operator $\koop_F:h^r(\mathbb{N})\rightarrow h^r(\mathbb{N})$ is densely defined provided that $F$ only maps finitely many states to any given state. We denote the set of such functions by $\Omega_{h^r(\mathbb{N})}$. For any such $F$, we have
$$
    \koop_F^*f_l=l^r\koop_F^* \mathfrak{K}_l=l^r\mathfrak{K}_{F(l)}=(l/F(l))^{r}f_{F(l)}.
$$
Hence, the matrix corresponding to $\koop_F^*$ with respect to the orthonormal basis $\{f_j\}_{j\in\mathbb{N}}$ has exactly one non-zero element in the $l$th column, which lies in the $F(l)$th row and has value $(l/(F(l))^r$. Conversely, any such a matrix defines a Perron--Frobenius operator and a corresponding $F$; similarly, its transpose defines a Koopman operator.

We first prove the following technical lemma.

\begin{lemma}
    \label{nearlyjordanblocks_lemma}
    Given $n\in\mathbb{N}$ and a strictly increasing sequence of positive integers $\{a_j\}_{j=1}^n$, define the matrix
    $$
        B_n=B_n(\{a_j\})=\begin{pmatrix}
            1                                                                                                                                                                                                      \\
            ((a_1+1)/a_1)^r & 0                                                                                                                                                                                    \\
                            & ((a_2+1)/a_2)^r & 0                                                                                                                                                                  \\
                            &                 & \makebox[\widthof{\(\left(\frac{a_n + 1}{a_n}\right)^r\)}][c]{\(\ddots\)} & \ddots                                                                                 \\
                            &                 &                                                                           & ((a_n+1)/a_n)^r & 0
        \end{pmatrix}\in\mathbb{C}^{(n+1)\times (n+1)}.
    $$
    Similarly, given an infinite strictly increasing sequence of positive integers $\{a_j\}_{j=1}^{\infty}$, we define
    $$
        B_{\infty}=B_{\infty}(\{a_j\})=\begin{pmatrix}
            1                                                                                                                                                                                         \\
            ((a_1+1)/a_1)^r & 0                                                                                                                                                                       \\
                            & ((a_2+1)/a_2)^r & 0                                                                                                                                                     \\
                            &                 & \makebox[\widthof{\(\left(\frac{a_n + 1}{a_n}\right)^r\)}][c]{\(\ddots\)} & \makebox[\widthof{\(\left(\frac{a_n + 1}{a_n}\right)^r\)}][c]{\(\ddots\)}
        \end{pmatrix}\in \mathcal{B}(h^r(\mathbb{N)}).
    $$
    Then, for any strictly increasing sequence of positive integers $\{a_j\}_{j=1}^{\infty}$, $\spec_{\mathrm{ap}}(B_{\infty}(\{a_j\}))=\mathbb{T}$. Given a sequence $\{l_k\}_{k=1}^{\infty}\subset\mathbb{N}$ and a strictly increasing sequence of positive integers $\{a_j\}_{j=1}^{\infty}$, let $m_1=0$ and $m_r=\sum_{k=1}^{r-1}l_k$ for $r\geq 2$, and define $B(\{l_k\},\{a_j\})=\bigoplus_{k=1}^{\infty}B_{l_k}(\{a_j\}_{j=m_k+1}^{m_{k+1}})$. Then, if $\{l_k\}$ is bounded, $\spec_{\mathrm{ap}}(B(\{l_k\},\{a_j\}))=\{0,1\}$, while if $\{l_k\}$ is unbounded, $\spec_{\mathrm{ap}}(B(\{l_k\},\{a_j\}))=\mathbb{D}$.
\end{lemma}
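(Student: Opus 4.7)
My plan is to work in the orthonormal basis $\{f_\ell\}$ from \eqref{fl_defn}, which identifies $h^r(\mathbb{N})$ isometrically with $\ell^2(\mathbb{N})$. In this basis each $B_{l_k}$ is a finite lower-bidiagonal matrix with diagonal $(1, 0, \dots, 0)$ and subdiagonal entries $\beta_j := ((a_j+1)/a_j)^r$, a strictly decreasing sequence in $(1, 2^r]$ that tends to $1$ since $a_j \nearrow \infty$. Because $B_{l_k}$ is lower triangular, $\spec(B_{l_k}) = \spec_{\mathrm{ap}}(B_{l_k}) = \{0,1\}$ immediately. The infinite matrix decomposes as $B_\infty = S_\beta + P_1$, where $S_\beta f_j = \beta_j f_{j+1}$ is a weighted right shift and $P_1$ is the rank-one projection onto $\spann\{f_1\}$; since $\beta_j - 1 \to 0$, $S_\beta$ (and hence $B_\infty$) is a compact perturbation of the unweighted right shift $S$.

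For the first claim $\spec_{\mathrm{ap}}(B_\infty) = \mathbb{T}$, the inclusion $\mathbb{T} \subseteq \spec_{\mathrm{ap}}(B_\infty)$ will use, for each $z \in \mathbb{T}$, the approximate eigenvectors $g_N = N^{-1/2}\sum_{k=2}^{N+1} z^{-(k-1)} f_k$. Since $g_N$ is supported away from $f_1$, $(B_\infty - zI)g_N = (S_\beta - zI)g_N$, and a direct computation bounds its squared norm by $1/N + N^{-1}\sum_{k=2}^{N}(\beta_k - 1)^2 + \beta_{N+1}^2/N$, which tends to zero by the Ces\`aro mean of $\beta_k - 1 \to 0$. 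For the reverse inclusion, when $|z| < 1$ the lower bound $\|B_\infty g\|^2 \geq \|g\|^2$ (from $\beta_k \geq 1$) gives $\sigma_{\inf}(B_\infty - zI) \geq 1-|z| > 0$ via the reverse triangle inequality. When $|z| > 1$, $B_\infty - zI$ is Fredholm of index zero (as a compact perturbation of the invertible $S - zI$), and a direct recurrence analysis of $B_\infty g = zg$ shows the kernel is trivial, so $B_\infty - zI$ is invertible.

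The claims about $B = B(\{l_k\},\{a_j\})$ rest on the direct-sum identity $\sigma_{\inf}(B - zI) = \inf_k \sigma_{\inf}(B_{l_k} - zI)$, which already gives $\{0,1\} \subseteq \spec_{\mathrm{ap}}(B)$ in both cases. When $\{l_k\}$ is bounded by some $L$, each $B_{l_k}$ is determined by at most $L$ subdiagonal entries lying in the compact interval $[1, 2^r]$; by continuity and compactness, $\sigma_{\inf}(B_n(\beta) - zI)$ is bounded below by a positive constant uniformly over $\beta \in [1, 2^r]^n$, $n \leq L$, and $z \in K$ for any compact $K \subset \mathbb{C}\setminus\{0,1\}$, yielding $\spec_{\mathrm{ap}}(B) = \{0,1\}$. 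When $\{l_k\}$ is unbounded, for $z \in \mathbb{D}\setminus\{0,1\}$ I will construct $g_k \in \mathbb{C}^{l_k+1}$ supported in a single block with $c_1 = 0$, $c_2 = 1$, and $c_j = z^{-(j-2)}\prod_{i=2}^{j-1}\beta_i$ for $j \geq 3$; direct substitution gives $\|(B_{l_k} - zI)g_k\| = |z|$ while $\|g_k\|^2 \geq l_k$ (since $\beta_i \geq 1$ and $|z| \leq 1$), forcing $\sigma_{\inf}(B_{l_k} - zI) \to 0$ along the unbounded subsequence and extending via closedness of $\spec_{\mathrm{ap}}$ to all of $\mathbb{D}$. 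For the reverse containment, given $|z| > 1$ I will pick $N$ with $\beta_j/|z| \leq \rho < 1$ for $j \geq N$ and solve $(B_n - zI)x = y$ by forward substitution, separating the first $N$ initial terms from the geometrically decaying tail to obtain a uniform resolvent bound $\|(B_n - zI)^{-1}\| \leq C(z)$.

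The technically most delicate step is the uniform resolvent bound $\|(B_n - zI)^{-1}\| \leq C(z)$ in the annulus $1 < |z| \leq \beta_1$, where the forward-substitution formula involves partial products $\prod_{i=1}^k(\beta_i/z)$ that can exceed one before the tail regime $\beta_j/|z| < 1$ forces geometric decay; bounding the finitely many ``bad'' initial amplifications uniformly in $n$ will require a careful separation-of-scales argument.
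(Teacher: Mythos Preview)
Your proposal is correct, and every step checks out, but your route differs from the paper's in two substantive places.

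For $\spec_{\mathrm{ap}}(B_\infty)=\mathbb{T}$, the paper never constructs approximate eigenvectors on $\mathbb{T}$. Instead it (i) computes $B_\infty^n$ explicitly and bounds $\|B_\infty^n\|$ to get $\spec(B_\infty)\subset\mathbb{D}$ via Gelfand's formula, (ii) uses the same lower bound $\|B_\infty u\|\ge\|u\|$ you use to exclude the open disk from $\spec_{\mathrm{ap}}$, and then (iii) invokes $\partial\spec\subset\spec_{\mathrm{ap}}$ together with $0\in\spec(B_\infty)$ (non-surjectivity) to force $\spec(B_\infty)=\mathbb{D}$ and hence $\spec_{\mathrm{ap}}(B_\infty)=\partial\mathbb{D}=\mathbb{T}$. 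Your direct construction of Weyl sequences on $\mathbb{T}$ and the compact-perturbation/Fredholm-index argument for $|z|>1$ are more explicit and arguably more conceptual; the paper's boundary-of-spectrum argument is slicker but less constructive.

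For the containment $\spec_{\mathrm{ap}}(B(\{l_k\},\{a_j\}))\subset\mathbb{D}$ in the unbounded case, the paper again avoids resolvent bounds entirely: it reuses the uniform estimate on $\|B^n\|$ (the same product-of-$\beta_j$ bound works for the direct sum) and applies Gelfand once more. This sidesteps precisely the ``delicate step'' you flag---your forward-substitution argument does go through (split off the at most $N$ global indices where $\beta_j/|z|$ can exceed $1$; the corresponding initial segment of any block is handled by a single $(N{+}1)\times(N{+}1)$ triangular inverse with entries in $[1,2^r]$, and the tail decays geometrically with rate $\rho$, giving a bound independent of both $k$ and $l_k$), but Gelfand's formula gives the same conclusion in one line. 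Your compactness argument for the bounded-$\{l_k\}$ case and your block-supported approximate eigenvectors for the unbounded case match the paper's ideas closely (the paper's vector starts at position $1$ of the block rather than position $2$, picking up a harmless factor $(1-z)$ in the residual).
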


\begin{proof}
    As we have written operators on $h^r(\mathbb{N})$ as infinite matrices with respect to their orthonormal bases, we may work on $\ell^2$ throughout; note in particular that even though $h^r(\mathbb{N})$ is isomorphic to $\ell^2(\mathbb{N})$ as such, their Koopman operators are different. By direct calculation, we have that
    $$
        B_{\infty}^n=\begin{pmatrix}
            1                                    & 0                                           & 0      & \dots  \\
            ((a_1+1)/a_1)^r                      & 0                                           & 0      & \dots  \\
            ((a_2+1)/a_2)^r((a_1+1)/a_1)^r       & 0                                           & 0      & \dots  \\
            \vdots                               & \vdots                                      & \vdots          \\
            ((a_n+1)/a_n)^r\cdots((a_1+1)/a_1)^r & 0                                           & 0      & \dots  \\
            0                                    & ((a_{n+1}+1)/a_{n+1})^r\dots((a_2+1)/a_2)^r & 0      & \dots  \\
            \vdots                               & \ddots                                      & \ddots & \ddots
        \end{pmatrix}
    $$
    and so obtain the very loose upper bound $\|B_{\infty}^n\|\leq\|B_{\infty}^ne_1\|\leq(n+1)((a_n+1)/a_n)^r\cdots ((a_1+1)/a_1)^r$. Then as $(a_n+1)/a_n\rightarrow1$ as $n\rightarrow\infty$, $\limsup_{n\rightarrow\infty}\|B^n_{\infty}\|^{1/n}\leq 1$ and so by Gelfand's spectral radius formula, $\spec(B_{\infty})\subset \mathbb{D}$. Next, note that $$\|B_{\infty}u\|^2=(1+((a_1+1)/a_1)^r)^2|u_1|^2+\sum_{k=2}^{\infty}((a_k+1)/a_k)^{2r}|u_k|^2\geq \|u\|^2$$ for any $u\in \ell^2(\mathbb{N})$ and so for any $|z|<1$,
    $$
        \|(B_{\infty}-zI)u\|\geq \|B_{\infty}u\|-|z|\|u\|\geq (1-|z|)\|u\|,
    $$
    and $B_{\infty}-zI$ is bounded below. Hence, for all $z\in\mathbb{C}$ such that $|z|<1$, we have $z\notin\spec_{\mathrm{ap}}(B_{\infty})$, which implies that $\partial\spec(B_{\infty})\subset \spec_{\mathrm{ap}}(B_\infty)\subset \mathbb{T}$. However, $0\in \spec(B_{\infty})$ (since $B_{\infty}$ is not surjective) so as $\spec(B_{\infty})\subset\mathbb{D}$ we must have that $\spec(B_{\infty})=\mathbb{D}$ and $\spec_{\mathrm{ap}}(B_{\infty})=\mathbb{T}$.

    Next, note that since the relevant matrices are lower triangular, $\spec_{\mathrm{ap}}(B_{l_k})=\spec(B_{l_k})=\{0,1\}$ for any $l_k\in\mathbb{N}$, and so if $z\notin \{0,1\}$ and $\{l_k\}$ is bounded then for each $k$, $(B_{l_k}-zI)^{-1}$ exists and has uniformly bounded norm. Hence, $\spec_{\mathrm{ap}}(B(\{l_k\}))=\spec(B(\{l_k\}))=\{0,1\}$. Finally, suppose that $\{l_k\}$ is unbounded. Note that for $n\leq l_k-1$,
    $$
        B_{l_k}(\{a_j\}_{j=m_k+1}^{m_{k+1}})^n=\begin{pmatrix}
            1                                                                      & 0                                                                          & 0      & \dots                                                                               &   &        & 0      \\
            \left(\frac{a_{m_k+1}+1}{a_{m_k+1}}\right)^r                           & 0                                                                          & 0      & \dots                                                                               &   &        & 0      \\
            \displaystyle\prod_{s=1}^2\left(\frac{a_{m_k+s}+1}{a_{m_k+s}}\right)^r & 0                                                                          & 0      & \dots                                                                               &   &        & 0      \\
            \vdots                                                                 & \vdots                                                                     & \vdots & \vdots                                                                              &   &        & \vdots \\
            \displaystyle\prod_{s=1}^n\left(\frac{a_{m_k+s}+1}{a_{m_k+s}}\right)^r & 0                                                                          & 0      & \dots                                                                               &   &        & 0      \\
            0                                                                      & \displaystyle\prod_{s=2}^{n+1}\left(\frac{a_{m_k+s}+1}{a_{m_k+s}}\right)^r & 0      & \dots                                                                               &   &        & \vdots \\

            \vdots                                                                 & \ddots                                                                     & \ddots & \ddots                                                                              &   &        & \vdots \\
            0                                                                      & \dots                                                                      & 0      & {\displaystyle \prod_{s=l_k+1-n}^{l_k}}\left(\frac{a_{m_k+s}+1}{a_{m_k+s}}\right)^r & 0 & \cdots & 0
        \end{pmatrix}
    $$
    while for $n\geq l_k$,
    $$
        B_{l_k}(\{a_j\}_{j=m_k+1}^{m_{k+1}})^n=\begin{pmatrix}
            1                                                                & 0      &  & \dots  &  & 0      \\
            ((a_{m_k+1}+1)/a_{m_k+1})^r                                      & 0      &  & \dots  &  & 0      \\
            ((a_{m_k+2}+1)/a_{m_k+2})^r((a_{m_k+1}+1)/a_{m_k+1})^r           & 0      &  & \dots  &  & 0      \\
            \vdots                                                           & \vdots &  & \ddots &  & \vdots \\
            ((a_{m_{k+1}}+1)/a_{m_{k+1}})^r\cdots((a_{m_k+1}+1)/a_{m_k+1})^r & 0      &  & \cdots &  & 0
        \end{pmatrix}.
    $$
    Using the fact that the sequence $\{(a_k+1)/a_k\}$ converges downwards to $1$ as $k\rightarrow\infty$, we find that
    $$
        \|B(\{l_k\},\{a_j\})^n\|^{1/n}\leq \left((n+1)\prod_{j=1}^n\left(\frac{a_j+1}{a_j}\right)^r\right)^{1/n}\rightarrow1,\text{ as }n\rightarrow\infty,
    $$
    and so by Gelfand's spectral radius formula, we have that $\spec(B(\{l_k\},\{a_j\}))\subset\mathbb{D}$. Now take $u_n=z^{l_n-1}f_{m_n+1}+((a_{m_n+1}+1)/a_{m_n})^rz^{l_n-2}f_{m_n+2}+\dots+((a_{m_{n}+1}+1)/a_{m_n+1})^r\cdots ((a_{m_{n+1}}+1)/a_{m_{n+1}})^rf_{m_{n+1}}$ for $|z|<1$. Then $(B(\{l_k\},\{a_j\})-zI)u_n=(1-z)z^{n-1}f_{m_n+1}$ and so $\|(B(\{l_k\},\{a_j\})-zI)u_n\|/\|u_n\|\rightarrow 0$ as $n\rightarrow\infty$. Hence for $\{l_k\}$ unbounded, we see that $z\in \spec_{\mathrm{ap}}(B(\{l_k\},\{a_j\}))$ for $|z|<1$. Hence as the approximate point spectrum is closed $\spec_{\mathrm{ap}}(B(\{l_k\},\{a_j\}))=\mathbb{D}$.
\end{proof}

We are now ready to state the main result of this section, which says that we cannot compute the approximate point spectrum of bounded Koopman operators in two limits, even with exact information. To prove this, we combine \cref{nearlyjordanblocks_lemma} with a suitably difficult canonical combinatorial problem within the SCI hierarchy, showing that any algorithm to compute the approximate point spectrum in two limits would also solve the combinatorial problem in two limits, yielding a contradiction. In the proof, we also use the (immediate) fact that for any bounded operators $\{A_i\}_{i\in\mathbb{N}}$ on a Hilbert space $\mathcal{H}$,
$$
    \bigcup_{i=1}^{\infty}\spec_{\mathrm{ap}}(A_i)\subset \spec_{\mathrm{ap}}\left(\bigoplus_{i=1}^{\infty}A_i\right).
$$

\begin{theorem}[Approximate point spectrum lower bound on $h^r(\mathbb{N})$]
    \label{notdelta3specapkoop_thm}
    The approximate point spectrum of bounded Koopman operators cannot be computed in two limits, even with exact information, i.e.,
    $$
        \left\{\Xi_{\spec_{\mathrm{ap}}},\Omega_{h^r(\mathbb{N})}^B,\mathcal{M}_{\mathrm{H}},\Lambda_{\mathbb{N}}\right\}\notin\Delta_3^{G}.
    $$
\end{theorem}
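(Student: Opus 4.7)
The plan is to assume for contradiction that a height-$2$ general tower $\{\Gamma_{n_2,n_1}\}$ computes $\spec_{\mathrm{ap}}(\koop_F)$ for every $F\in\Omega_{h^r(\mathbb{N})}^B$, and then reduce a canonical decision problem known to lie outside $\Delta_3^G$ to the spectral computation. The lever is \cref{nearlyjordanblocks_lemma}: by choosing $a_j=j$, the operators $B(\{l_k\},\{a_j\})$ are Koopman operators $\koop_F$ for the explicit $F\colon\mathbb{N}\to\mathbb{N}$ that is the identity at each block-start and the backward shift inside each block, and they are bounded on $h^r(\mathbb{N})$. The lemma gives the dichotomy $\spec_{\mathrm{ap}}(\koop_F)=\{0,1\}$ (bounded $\{l_k\}$) versus $\spec_{\mathrm{ap}}(\koop_F)=\mathbb{D}$ (unbounded $\{l_k\}$). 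Picking $z_0=1/2$, the two spectra are at Hausdorff-distance $1/2$ apart in $z_0$, so any convergent tower for $\spec_{\mathrm{ap}}$ can be post-composed with $X\mapsto\mathbf{1}[\dist(z_0,X)<1/4]$ to yield a height-$2$ tower deciding which spectral regime holds.

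The next step is to transport this binary decision to a $\Sigma_3$-complete problem. I would introduce the canonical computational problem
\[
\Xi^{(3)}(x)=\lim_{n_3\to\infty}\lim_{n_2\to\infty}\lim_{n_1\to\infty}x(n_1,n_2,n_3),\qquad x\in\Omega^{(3)},
\]
where $\Omega^{(3)}$ is the class of $\{0,1\}$-valued arrays whose triple iterated limit exists, with oracle access $x\mapsto x(n_1,n_2,n_3)$. A short diagonal argument in the style of \cref{notdelta2specapepskoop_thm}, iterated at one further level, shows $\Xi^{(3)}\notin\Delta_3^G$. I would then construct a reduction $x\mapsto F_x\in\Omega_{h^r(\mathbb{N})}^B$ with each $F_x(i)$ depending on finitely many values of $x$, such that the block-sequence $\{l_k\}_{F_x}$ is unbounded if and only if $\Xi^{(3)}(x)=1$. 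The encoding reads $x$ online: for each new block $k$, I run a finite search over $(n_2,n_3)$ that is driven by the $k$-th slice of $x$ and determines $l_k$ so that, as the triple iterated limit stabilises, $\{l_k\}_{F_x}$ captures the outer limit — for instance, $l_k$ grows past every bound exactly when, for the outer index $n_3$ assigned to $k$, the inner double limit $\lim_{n_2}\lim_{n_1}x(n_1,n_2,n_3)$ has witnessed value $1$ up to the current search depth. Crucially, each $F_x(i)$ depends on only finitely many entries of $x$.

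Composing the (hypothetical) tower $\Gamma_{n_2,n_1}$ for $\spec_{\mathrm{ap}}$ with the reduction and the decision map $X\mapsto\mathbf{1}[\dist(z_0,X)<1/4]$ then produces a height-$2$ general tower for $\Xi^{(3)}$, contradicting $\Xi^{(3)}\notin\Delta_3^G$. This contradiction will prove the theorem.

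The main obstacle is step three: designing the online encoding $x\mapsto F_x$ so that $\{l_k\}_{F_x}$ is a finite-read computable sequence whose boundedness faithfully mirrors the $\Sigma_3$ property of $x$. A naive encoding via a direct sum of independent block-systems fails because, as the proof of \cref{nearlyjordanblocks_lemma} shows, the spectrum of a direct sum is governed by the supremum of \emph{all} block sizes, which is only a $\Pi_2$ condition on $x$ and hence lies in $\Delta_3^G$. The remedy is to interleave blocks from different levels of the iterated limit so that growth in $\{l_k\}_{F_x}$ is forced only by the outer limit stabilising at $1$: a level $n_3$ contributes arbitrarily large block sizes to $\{l_k\}_{F_x}$ only once its inner double limit has settled, and the finite-read oracle ensures we can realise this while preserving $\koop_{F_x}\in\Omega_{h^r(\mathbb{N})}^B$. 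Making this precise is the technical crux; all other steps follow the standard SCI-reduction template.
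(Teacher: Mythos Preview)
Your proposal has a genuine gap that cannot be repaired within the framework you set up. You correctly observe in your final paragraph that deciding whether the block sequence $\{l_k\}$ is bounded is only a $\Pi_2$ condition when each $l_k$ is determined by finitely many oracle calls. But your proposed remedy of ``interleaving'' does not escape this limitation: no matter how you interleave, if each $l_k$ depends on only finitely many entries of $x$ (which you need in order to make $F_x(i)$ computable from finite data), then the predicate ``$\{l_k\}$ is unbounded'' remains $\Pi_2$ in $x$. Consequently no encoding of this shape can reduce a $\Sigma_3$-hard problem to the spectral dichotomy $\{0,1\}$ versus $\mathbb{D}$; your target problem $\Xi^{(3)}$ simply cannot factor through it.

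The paper's proof avoids this obstacle by exploiting \emph{all three} regimes of \cref{nearlyjordanblocks_lemma}, in particular the infinite block $B_\infty$ that you discard. The input is a matrix $a\in\{0,1\}^{\mathbb{N}\times\mathbb{N}}$, and each column of $a$ generates one block of $\koop_{F_a}$: if the column sum is infinite the block is $B_\infty$ (contributing $\mathbb{T}$ to $\spec_{\mathrm{ap}}$), and if finite it is some $B_{l}$. The crucial point is that whether a given block is infinite or finite is \emph{not} decidable from finite data, yet every individual value $F_a(i)$ still is, since one only needs to look back along the column to the previous $1$. The decision problem used is $\Xi_{2,Q}(a)=1$ iff all but finitely many columns have infinite sum, which is known to be $\notin\Delta_3^G$. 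If $\Xi_{2,Q}(a)=1$, only finitely many finite blocks arise alongside infinitely many $B_\infty$'s, giving $\spec_{\mathrm{ap}}(\koop_{F_a})=\{0\}\cup\mathbb{T}$; if $\Xi_{2,Q}(a)=0$, infinitely many finite blocks arise, and an index-shift in the encoding forces their sizes to be unbounded, giving $\spec_{\mathrm{ap}}(\koop_{F_a})=\mathbb{D}$. It is precisely the presence of blocks whose finiteness is itself an infinite-information question that lifts the complexity to level three, and this is the idea missing from your proposal.
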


\begin{proof}
    Let $\Omega'$ denote the class of all infinite matrices $a=\{a_{i,j}\}_{i,j\in\mathbb{N}}$ with entries $a_{i,j}\in\{0,1\}$. Define the problem function $\Xi_{2,Q}$ by
    $$
        \Xi_{2,Q}(\{a_{i,j}\})=\begin{cases}
            1, & \text{if }\sum_{i,j}a_{i,j}=\infty\text{ for all but finitely many }j, \\
            0, & \text{otherwise}.
        \end{cases}
    $$
    It is known that if $\Lambda'$ is the set of component-wise evaluation of $\{a_{i,j}\}$ then $\{\Xi_{2,Q},\Omega',[0,1],\Lambda'\}\notin\Delta_3^G$ \cite{colbrook4}. We prove that we can reduce this decision problem to the problem of computing the approximate point spectrum of suitable Koopman operators, establishing the result.

    There exists a computable bijection between $\mathbb{N}$ and $\bigoplus_{l=1}^{\infty}\mathbb{N}$; using this, we can treat a dynamical system on $\bigoplus_{l=1}^{\infty}\mathbb{N}$ as one on $\mathbb{N}$. Given a sequence $\{c_i\}_{i\in\mathbb{N}}$, where each $c_i\in\{0,1\}$, define a corresponding $F_{\{c_i\}}:\mathbb{N}\rightarrow\mathbb{N}$ by
    $$
        F_{\{c_i\}}(k)=l\text{ if }\begin{cases}
            l<k, c_l=c_k=1,\text{ and }c_n=0\text{ for all }l<n<k,      \\
            l=k,c_k=1,\text{ and }c_n=0\text{ for all }n<k, \text{ or,} \\
            l=k \text{ and }c_k=0.
        \end{cases}
    $$
    Using the notation of \cref{nearlyjordanblocks_lemma}, the bounded Koopman operator corresponding to $F_{\{c_i\}}$ acts as $B_{\sum_i c_i}(\{b_i\})$ (where $\sum_i c_i$ may be infinite) on the closure of the span of $\{f_i:c_i=1\}$ (as defined in \eqref{fl_defn}), where $b_1<b_2<\cdots$ are such that $c_{b_i}=1$ for all $i$, and $c_n=0$ for $n\notin\{b_i\}$, and the identity on the orthogonal complement of this subspace. Given $a\in\Omega'$, we define $\{c_i^{(j)}\}_{i\in\mathbb{N}}$ for $j\in\mathbb{N}$ by
    $$
        c_i^{(j)}=c_i^{(j)}(a)=\begin{cases}
            1, & \text{if }i\leq j, \\ a_{i+1-j}, & \text{otherwise;}
        \end{cases}.
    $$
    This indexing choice ensures that if we have infinitely many columns with finitely many $1$s, the matrices $B_{l_k}$ they produce have unbounded $l_k$. Using the bijection between $\mathbb{N}$ and $\bigoplus_{l=1}^{\infty}\mathbb{N}$, we can treat $F_a:\bigoplus_{l=1}^{\infty}\mathbb{N}\rightarrow\bigoplus_{l=1}^{\infty}\mathbb{N}$ defined by $F_a(k_1,k_2,\dots)=(F_{\{c_i^{(1)}\}}(k_1),F_{\{c_i^{(2)}\}}(k_2),\dots)$ as $F_a:\mathbb{N}\rightarrow\mathbb{N}$. If $\Xi_{2,Q}(a)=1$, then $\koop_{F_a}$ is a direct sum of identity operators, a finite number of matrices $B_{l_k}$ and (infinitely) countably many copies of $B_{\infty}$ so by \cref{nearlyjordanblocks_lemma}, $\{0\}\cup\mathbb{T}\subset \spec_{\mathrm{ap}}(\koop_{F_a})$. Additionally, since by the proof of \cref{nearlyjordanblocks_lemma} each copy of $B_{\infty}$ has $B_{\infty}-zI$ bounded below by $(1-|z|)$ for $|z|<1$, we see that $\koop_{F_a}$ is bounded below for all $0<|z|<1$. Moreover, the Gelfand spectral radius formula arguments in the proof of \cref{nearlyjordanblocks_lemma} still carry through to the direct sum case as the bounds on the matrix norm are uniform, and so we have that $\spec_{\mathrm{ap}}(\koop_{F_a})=\{0\}\cup\mathbb{T}$. If instead $\Xi_{2,Q}(a)=0$, then $\koop_{F_a}$ is a direct sum of identity operators, (finitely or infinitely) countably many matrices $B_{\infty}$ and finite matrices $B_{l_k}$ with $\{l_k\}$ unbounded, and so by \cref{nearlyjordanblocks_lemma} $\mathbb{D}\subset \spec_{\mathrm{ap}}(\koop_{F_a})$, and so by Gelfand spectral radius formula arguments we have that $\spec_{\mathrm{ap}}(\koop_{F_a})=\mathbb{D}$.

    Finally, suppose by contradiction that $\{\Gamma_{n_2,n_1}\}$ is a $\Delta_3^{G}$-tower of algorithms for $\{\Xi_{\spec_{\mathrm{ap}}},\Omega_{h^r(\mathbb{N})},\mathcal{M}_{\mathrm{H}},\Lambda_{\mathbb{N}}\}$. Then, given $a\in \Omega'$, we can construct the corresponding Koopman operator $\koop_{F_a}$ as discussed above, and its matrix entries can be computed in only finitely many calls to $\Lambda'$ with input $a$. Hence,
    $$
        \tilde{\Gamma}_{n_2,n_1}(a)=\min\{2\dist(1/2,\Gamma_{n_2,n_1}({F_a})),1\}
    $$
    defines a general algorithm that maps into the desired output space $[0,1]$ and uses the evaluation functions $\Lambda'$, and has
    $$
        \lim_{n_2\rightarrow\infty}\lim_{n_1\rightarrow\infty}\tilde{\Gamma}_{n_2,n_1}(a)=\Xi_{2,Q}(a),
    $$ yielding the desired contradiction.
\end{proof}

The proof of \cref{notdelta3specapkoop_thm} fails when considering $\koop^*$ instead of $\koop$ because it is not true that $\|B_{\infty}^*u\|\geq\|u\|$ for all $u\in h^r(\mathbb{N})$ (e.g., $u=f_1-f_2$ for $f_i$ as defined in \eqref{fl_defn}). Moreover,  $\spec_{\mathrm{ap}}(B^*_{\infty})=\mathbb{D}$ rather than $\mathbb{T}$, as $f_1\notin \mathrm{im}(B_{\infty}-zI)$ for any $|z|<1$, which causes the subsequent proof to fail. As the approximate point pseudospectrum converges to the approximate point spectrum as $\epsilon\rightarrow 0$, \cref{notdelta3specapkoop_thm} implies the following result for $\Xi_{\spec_{\mathrm{ap},\epsilon}}$.

\begin{theorem}[Approximate point pseudospectrum lower bound on $h^r(\mathbb{N})$]\label{notdelta2specapepsdiscretesobolev_thm}
    There exists $R>0$ such that for all $0<\epsilon<R$,
    $$
        \left\{\Xi_{\spec_{\mathrm{ap},\epsilon}},\Omega_{h^r(\mathbb{N})},\mathcal{M}_{\mathrm{H}},\Lambda_{\mathbb{N}}\right\}\notin\Delta_2^G.
    $$
\end{theorem}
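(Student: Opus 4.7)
The theorem will be proved by adapting the adversarial diagonalization used for \cref{notdelta2specapepskoop_thm} in the $\ell^2(\mathbb{N})$ setting, with $\ell^2$-building blocks replaced by their $h^r(\mathbb{N})$-analogues. Fix $\epsilon\in (0,R)$ for some $R>0$ to be determined below, and suppose for contradiction that $\{\Gamma_n\}$ is a $\Delta_2^G$-tower for $\Xi_{\spec_{\mathrm{ap},\epsilon}}$ on $\Omega_{h^r(\mathbb{N})}$.

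The base ingredient is a ``shift'' map $F_C$ producing a Koopman operator with $0\in\spec_{\mathrm{ap}}(\koop_C)$. In the orthonormal basis $\{f_l\}$ of \cref{fl_defn}, the Koopman operator of $F_C(j)=j+1$ is the weighted backward shift $\koop_C f_l=((l-1)/l)^r f_{l-1}$ for $l\geq 2$ with $\koop_C f_1=0$, so $f_1$ is a kernel vector and $0\in\spec_{\mathrm{ap}}(\koop_C)\subset\spec_{\mathrm{ap},\epsilon}(\koop_C)$. The complementary ingredient is a family of cyclic maps $F_{A_n}$ whose matrix entries in $\{f_l\}$ are shift weights $((i-1)/i)^r$ together with a wrap-around entry; a direct computation gives $\|\koop_{A_n}u\|^2\geq (1/2)^{2r}\|u\|^2$, hence $\sigma_{\inf}(\koop_{A_n})\geq R:=(1/2)^r$. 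The same estimate persists for direct sums $\koop_A=\bigoplus_k\koop_{A_{l_k}}$, so by the reverse triangle inequality $\sigma_{\inf}(\koop_A-zI)\geq R-|z|$ for $|z|<R$, giving $\dist(0,\spec_{\mathrm{ap},\epsilon}(\koop_A))\geq R-\epsilon>0$ whenever $\epsilon<R$.

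Fix $\eta\in(\epsilon,R)$. Define intermediate systems $F_{C_k}$ that act cyclically on the blocks $[M_{i-1}+1,M_i]$ for $i\le k$ (with $M_0=0$, $M_i=l_1+\cdots+l_i$) and as the shift on $\{M_k+1,M_k+2,\ldots\}$, so that $\koop_{F_{C_k}}=\koop_{A_{l_1}}\oplus\cdots\oplus\koop_{A_{l_k}}\oplus\koop_{C'}$ and the kernel of the shift tail still witnesses $0\in\spec_{\mathrm{ap},\epsilon}(\koop_{F_{C_k}})$. Inductively choose $n_k>n_{k-1}$ with $\dist(0,\Gamma_{n_k}(F_{C_{k-1}}))<R-\eta$, let $N_k$ be the largest index at which $\Gamma_{n_k}$ queries $F_{C_{k-1}}$, and choose $l_k>N_k-M_{k-1}$. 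Then $F_{C_k}$ agrees with $F_{C_{k-1}}$ on $\{1,\ldots,N_k\}$, so by the consistency clause of \cref{genalg_defn}, $\Gamma_{n_k}(F_{C_k})=\Gamma_{n_k}(F_{C_{k-1}})$. Passing to the pointwise limit $F_A$ (in which every positive integer lies in exactly one cycle block), $F_A$ continues to agree with each $F_{C_k}$ on $\{1,\ldots,N_k\}$, so $\dist(0,\Gamma_{n_k}(F_A))<R-\eta$ for every $k$. But $\Gamma_n(F_A)\to\spec_{\mathrm{ap},\epsilon}(\koop_{F_A})$ in $\mathcal{M}_{\mathrm{H}}$, and the bound from the preceding paragraph gives $\dist(0,\spec_{\mathrm{ap},\epsilon}(\koop_{F_A}))\geq R-\epsilon>R-\eta$, a contradiction.

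The main obstacle I anticipate is ensuring that the adversarial limit $\koop_{F_A}$ lies in the claimed input class with a bounded pseudospectrum (needed for the Hausdorff metric to apply). A naive cyclic shift of length $l$ starting at position $a$ has wrap-around weight $((a+l-1)/a)^r$, which blows up when $l\gg a$. To circumvent this, I would either (i) embed the cycles at sufficiently large starting positions, so that the constraint $l_k\le M_{k-1}$ (or an analogous preloading with a large identity/warm-up block) keeps every wrap-around weight at most $2^r$, or (ii) restrict to cycles of length $2$ throughout, which automatically yields $\|\koop_{F_A}\|\le 2^r$ while preserving $\sigma_{\inf}(\koop_{F_A})\geq R$. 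Since impossibility on the smaller class $\Omega_{h^r(\mathbb{N})}^B$ implies impossibility on $\Omega_{h^r(\mathbb{N})}$, this suffices; the technical bookkeeping lies in coordinating the starting positions and lengths of the cycles with the growing query ranges $N_k$ without losing the kernel in the shift tail that witnesses $0\in\spec_{\mathrm{ap},\epsilon}(\koop_{F_{C_k}})$.
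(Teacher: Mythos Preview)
Your approach is genuinely different from the paper's, and the paper's is far simpler. The paper does not redo any adversarial construction at all: it argues by contradiction that if no such $R$ existed, one could pick $\epsilon_m\downarrow 0$ together with height-$1$ towers $\{\Gamma_n^{\epsilon_m}\}$ for each $\spec_{\mathrm{ap},\epsilon_m}$, and then the diagonal family $\Gamma_{n_2,n_1}(F)=\Gamma_{n_1}^{\epsilon_{n_2}}(F)$ is a height-$2$ tower for $\spec_{\mathrm{ap}}$ (since $\spec_{\mathrm{ap},\epsilon}\to\spec_{\mathrm{ap}}$ as $\epsilon\to 0$), contradicting \cref{notdelta3specapkoop_thm}. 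This is a one-paragraph reduction with no new dynamics, whereas you are reproving a weighted version of \cref{notdelta2specapepskoop_thm} from scratch. The upside of your route is an explicit value $R=(1/2)^r$; the paper's argument is non-constructive in $R$.

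That said, your sketch has a real gap in exactly the place you flag. Workaround (ii) does not work: the whole point of the adversarial step is that the $k$th cycle must agree with the shift tail $F(j)=j+1$ on all queried indices $\{M_{k-1}+1,\dots,N_k\}$, which forces $l_k>N_k-M_{k-1}$; fixing $l_k=2$ makes $F_A$ and $F_{C_{k-1}}$ disagree at $M_{k-1}+2$, so consistency fails and you cannot transfer $\Gamma_{n_k}(F_{C_{k-1}})$ to $\Gamma_{n_k}(F_A)$. Workaround (i) is also not complete as stated: you need $l_k\le (C-1)(M_{k-1}+1)$ to bound the wrap-around weight by $C^r$, but $N_k$ is chosen adversarially by the algorithm \emph{after} $M_{k-1}$ is fixed, so you cannot guarantee both $l_k>N_k-M_{k-1}$ and $l_k\lesssim M_{k-1}$ simultaneously without modifying the tail map. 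A viable fix is to replace the simple shift tail by one that already carries geometric growth (e.g., a shift interleaved with long identity buffers so that the starting index of each new block is at least the previous block's end), but this requires rethinking which vector witnesses $0\in\spec_{\mathrm{ap}}(\koop_{F_{C_k}})$ and checking it survives; none of that bookkeeping is done. Given that \cref{notdelta3specapkoop_thm} is already available, the paper's reduction is the cleaner path.
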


\begin{proof}
    Suppose that such $R$ does not exist by contradiction. Then, there exists a sequence $\epsilon_m\downarrow 0$ with corresponding towers of algorithms $\{\Gamma_n^{\epsilon_m}\}$ such that for all $F\in\Omega_{h^r(\mathbb{N})}$ and $m\in\mathbb{N}$, $\lim_{n\rightarrow\infty}\Gamma_n^{\epsilon_m}(F)=\spec_{\mathrm{ap},\epsilon_m}(\koop_F)$, where the convergence is in the Hausdorff metric. Define a tower of algorithms $\{\Gamma_{n_2.n_1}\}$ by $\Gamma_{n_2,n_1}(F)=\Gamma_{n_2}^{\epsilon_{n_1}}(F)$ for each $F\in\Omega_{h^r(\mathbb{N})}$. Then, $\lim_{n_2\rightarrow\infty}\lim_{n_1\rightarrow\infty}\Gamma_{n_2,n_1}(F)=\spec_{\mathrm{ap}}(\koop_F)$, contradicting \cref{notdelta3specapkoop_thm}.
\end{proof}

\subsection{Impossibility results for $H^r((0,1)^d)$}
\label{sobolevintervalimposs_sect}

We now prove impossibility results for the space $H^r((0,1)^d)$. Our proofs use particular constructions of interval exchange maps, exploiting the non-normality of the associated operators. An interval exchange map on $(0,1)$ is defined as follows. Suppose that $\{c_n\}_{n=0}^{\infty}\subset[1/2,1)$ is a strictly increasing sequence, where $c_0=1/2$ and $\lim_{n\rightarrow\infty}c_n=1$. Set $c_{-n}=1-c_n$ for $n\in\mathbb{N}$ and define a collection of intervals $\{I_n\}_{n=-\infty}^{\infty}$ by $I_n=[c_n,c_{n+1})$. An interval exchange map corresponding to $\{c_n\}$ is a continuous, strictly increasing (hence bijective) map $F:(0,1)\rightarrow(0,1)$ such that $F(c_n)=c_{n+1}$ and $F(I_n)=I_{n+1}$ for all $n\in\mathbb{Z}$. Often we define the sequence $\{c_n\}_{n=0}^{\infty}$ implicitly by defining the set of intervals $\{I_n\}_{n=0}^\infty$.

\subsubsection{Computing the approximate point spectrum of $\koop^*$ is not in $\Delta_2^G$}

We prove that computing the approximate point spectrum of $\koop^*$ is not in $\Delta_2^G$ and focus on the space $H^r((0,1))$. The extension to $H^r((0,1)^d)$ is straightforward and discussed later.
To prove the main theorem of this section, we use the following lemma, which bounds the spectrum of a particular type of interval exchange map.

\begin{lemma}
    \label{iemspecinterval_lemma}
    Let $F:(0,1)\rightarrow(0,1)$ be a $C^{\infty}$ interval exchange map on $(0,1)$ with intervals $\{I_n\}_{n=-\infty}^{\infty}$, such that $F$ has $C^1$ inverse and there exist $1/2<x_0,x_1<1$ for which $F'(x)=\alpha\in (0,1)$ for all $x>x_0$ and $F'(x)=1/\alpha$ for all $x<1-x_1$. The Koopman operator $\koop_F$ defined on the Sobolev space $H^r((0,1))$ for some $r\in\mathbb{N}$ is bounded and satisfies
    $$
        \{z\in\mathbb{C}:\alpha^{1/2}\leq |z|\leq \alpha^{-1/2}\}\subset\spec_{\mathrm{ap}}(\koop_F^*)\subset\spec(\koop^*_F)\subset\{z\in\mathbb{C}:\alpha^{r-1/2}\leq |z|\leq \alpha^{-(r-1/2)}\}.
    $$
\end{lemma}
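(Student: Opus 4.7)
The plan is to prove the three claims (boundedness, outer spectral inclusion, inner approximate-point inclusion) separately, exploiting the piecewise-affine structure of $F$ near the fixed points $0$ and $1$.

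First, $\koop_F$ is bounded on $H^r((0,1))$: since $F \in C^\infty$ with $F'$ bounded above and below on $(0,1)$ and $F^{-1} \in C^1$, Fa\`a di Bruno's formula expresses each $D^j(g \circ F)$ as a polynomial in $g^{(k)} \circ F$ and derivatives of $F$; after the substitution $y = F(x)$, the uniform control on $F$ and its derivatives gives $\|\koop_F g\|_{H^r} \lesssim \|g\|_{H^r}$.

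For the outer inclusion $\spec(\koop_F^*) \subset \{\alpha^{r-1/2} \leq |z| \leq \alpha^{-(r-1/2)}\}$, I would use a block decomposition along the intervals $I_n$. Pick $\tilde\phi \in C_c^\infty((0,1))$ vanishing to order $r$ at the endpoints and set $\phi_n(x) = \tilde\phi((x-c_n)/|I_n|)$, extended by zero. The $\phi_n$ have pairwise disjoint supports and are $H^r$-orthogonal, with $\|\phi_n\|_{H^r}^2 \asymp |I_n|^{1-2r}$ for $|n|$ large (the top-derivative weight dominates as $|I_n| \to 0$). In this asymptotic regime $F$ maps $I_{n-1}$ affinely onto $I_n$, so $\koop_F \phi_n = \phi_{n-1}$ exactly. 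On the normalized basis $e_n = \phi_n/\|\phi_n\|_{H^r}$, $\koop_F$ then acts as a weighted bilateral shift with weights $\mu_n \asymp \alpha^{r-1/2}$ in the $n \to +\infty$ tail and $\mu_n \asymp \alpha^{-(r-1/2)}$ in the $n \to -\infty$ tail. Standard weighted-shift estimates yield $\|\koop_F^{\pm k}\| \lesssim \alpha^{-k(r-1/2)}$, so the spectral radii of both $\koop_F$ and $\koop_F^{-1}$ are $\leq \alpha^{-(r-1/2)}$, and $\spec(\koop_F^*) = \overline{\spec(\koop_F)}$ lies in the outer annulus.

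For the inner inclusion $\{\alpha^{1/2} \leq |z| \leq \alpha^{-1/2}\} \subset \spec_{\mathrm{ap}}(\koop_F^*)$, I would construct Weyl sequences using the adjoint identity $\koop_F^*\mathfrak{K}_x = \mathfrak{K}_{F(x)}$. Fix $x_* \in (0,1)$, let $x_n = F^n(x_*)$, and for a smooth cutoff $\chi \in C_c^\infty(\mathbb{R})$ put $\psi_N = \sum_n z^{-n}\chi(n/N)\mathfrak{K}_{x_n}$. Then $(\koop_F^* - zI)\psi_N$ has coefficient $z^{-(n-1)}[\chi((n-1)/N) - \chi(n/N)] = O(1/N)$ on each $\mathfrak{K}_{x_n}$. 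Since $\mathfrak{K}(x,x)$ is uniformly bounded above and below on $(0,1)$, both $\|\psi_N\|_{H^r}^2$ and $\|(\koop_F^* - zI)\psi_N\|_{H^r}^2$ are equivalent to Gram-matrix quadratic forms in the coefficient sequences, and the orbit spacings $|x_n - x_{n-1}| \asymp |I_n| \asymp \alpha^{|n|}$ endow the Gram matrix with an effective $L^2$-scale weighting. The resulting weighted series $\sum |z|^{-2n}|I_n|$ controls $\|\psi_N\|^2$ and converges precisely for $|z|^2 \in (\alpha, \alpha^{-1})$, which pins down the inner radii $\alpha^{\pm 1/2}$; for $|z|$ strictly inside this annulus the ratio $\|(\koop_F^* - zI)\psi_N\|/\|\psi_N\| = O(1/N) \to 0$, and the boundary circles follow by a limiting argument.

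The principal obstacle is the lower bound on $\|\psi_N\|^2_{H^r}$: as $|n| \to \infty$ the orbit points $x_n$ accumulate at $\{0,1\}$ and the kernel functions $\mathfrak{K}_{x_n}$ become nearly parallel, threatening destructive interference in the truncated sum. A careful Gram-matrix analysis must extract the $L^2$-scale weight $|I_n|$ from the near-boundary geometry, which also explains why the approximate-point containment sits on the smaller annulus $[\alpha^{1/2}, \alpha^{-1/2}]$ rather than on the larger outer annulus $[\alpha^{r-1/2}, \alpha^{-(r-1/2)}]$ dictated by the top-derivative weights.
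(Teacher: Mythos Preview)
Your boundedness argument matches the paper's. The other two parts both have genuine gaps.

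\textbf{Outer inclusion.} The bump-function ``basis'' $\{\phi_n\}$ does not span $H^r((0,1))$: a generic $g\in H^r$ is not a sum of functions vanishing to order $r$ at every interval endpoint $c_n$. So even if $\koop_F$ acts as a weighted shift on $\overline{\spann}\{\phi_n\}$, this says nothing about $\|\koop_F^k\|$ on the full space, and Gelfand's formula cannot be invoked. (There is also a minor issue that $\koop_F\phi_n=\phi_{n-1}$ only holds for $|n|$ large, but that could be patched.) The paper instead bounds $\|\koop_F^l g\|_{H^r}$ directly for \emph{arbitrary} $g$: it applies Fa\`a di Bruno to $(g\circ F^l)^{(r)}$, splits the resulting integrals over the $I_n$, and tracks the products $\prod_j \sup_{I_{n+j}}|F'|$. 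An AM--GM argument on these products gives $\limsup_l\|\koop_F^l\|^{1/l}\le\alpha^{-(r-1/2)}$, and the same for $\koop_{F^{-1}}$ yields the inner radius.

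\textbf{Inner inclusion.} Your Weyl-sequence approach via $\koop_F^*\mathfrak{K}_x=\mathfrak{K}_{F(x)}$ is conceptually sound but runs into exactly the obstacle you flag: as $x_n\to\{0,1\}$ the kernel functions $\mathfrak{K}_{x_n}$ become nearly parallel (for $H^1((0,1))$, $\mathfrak{K}(x_n,x_m)\to\coth(1)$ for all large $n,m$), so the Gram matrix degenerates and a lower bound on $\|\psi_N\|$ is genuinely hard. Your heuristic that the Gram form picks up an ``$L^2$-scale weight $|I_n|$'' is not substantiated and does not obviously follow from the kernel asymptotics. The paper sidesteps this entirely by using $\spec_{\mathrm{ap}}(\koop_F^*)=\spec_{\mathrm{su}}(\koop_F)$: it fixes $h$ supported in $I_0$, assumes $\koop_Fg-zg=h$ has a solution $g$, derives the explicit propagation $g|_{I_n}=z^{n-1}(p\circ F^{-n})$, and then shows directly that $\|g'\|_{L^2}^2\gtrsim\sum_n|z|^{2n}\prod_{j\le n}\inf_{I_j}(F^{-1})'$ diverges for $\sqrt{\alpha}<|z|<1/\sqrt{\alpha}$. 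This avoids any Gram-matrix analysis.
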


\begin{proof}
    The Sobolev norm $\|f\|_{H^r}^2$ is equivalent to $\|f\|_{L^2}^2+\|f^{(r)}\|_{L^2}^2$ on $(0,1)$, i.e., we may consider only the smallest and largest derivatives \cite[Thm.~5.2]{adams_sobolev_2003}. Moreover, there exists a constant $C$ such that for all $g\in H^r((0,1))$,
    $\|g\|_{L^{\infty}}\leq C\|g\|_{H^r}
    $ \cite[Thm.~4, Sec.~5.6.2]{evans_partial_2010}.  By the inverse function theorem, since $F$ is $C^{\infty}$ and strictly increasing, $F^{-1}$ exists and is $C^{\infty}$. Note also that all derivatives of $F$ and $F^{-1}$ are bounded on $(0,1)$ (by the compactness of $[1-x_1,x_0]$); additionally, $F'$ and $(F^{-1})'$ are bounded both above and below for the same reason and that $F$ is strictly increasing. We will repeatedly use the Fa{\'a} di Bruno's formula \cite{faa_di_bruno_note_1857}, which provides a generalization of the chain rule to arbitrarily many function compositions. It states that for any $r\in\mathbb{N}\cup\{0\}$ and $l\in\mathbb{N}$, and for any $g\in C^r((0,1))$ and $x\in(0,1)$, using the notation $F^1=F$ and $F^l=F\circ F^{l-1}$ for $l\geq 1$ for the composition of $F$ with itself $l$ times, we have that
    \begin{equation} \label{eq_faa_bruno}
        \frac{d^r}{dx^r}(g\circ F^l)(x)=\sum_{m_1+\cdots+rm_r=r}\frac{r!}{m_1!1!^{m_1}\cdots m_r!r!^{m_r}}g^{(m_1+\cdots +m_r)}(F^l(x))\prod_{j=1}^r((F^l)^{(j)}(x))^{m_j}.
    \end{equation}
    We begin by showing that $\koop_F$ is bounded on $H^r((0,1))$. In particular, for any $g\in C^{\infty}((0,1))$, we have (here $\lesssim$ indicates inequality up to a constant independent of $g$),
    \begin{equation}
        \begin{aligned}\label{iec_boundedness}
            \|\koop_F g\|_{H^r}^2 & \lesssim \|\koop_F g\|_{L^2}^2+\|(\koop_F  g)^{(r)}\|_{L^2}^2 \\ &\lesssim \|g\|^2_{H^r}+\sum_{m_1+\cdots+rm_r=r}\int_0^1|g^{(m_1+\dots+m_r)}(F(x))|^2\prod_{j=1}^r|F^{(j)}(x)|^{2m_j}\dd x,
        \end{aligned}
    \end{equation}
    where the inequality $\|\koop_F g\|_{L^2}\leq \|g\|$ follows as $\int_0^1|g(F(x))|^2\;dx \leq 1\cdot \|g\|_{\infty}^2\lesssim\|g\|_{H^r}^2$. Then using that the derivatives of $F$ and $F^{-1}$ are bounded, we have that
    \begin{equation*}
        \begin{aligned}
            \|\koop_F g\|_{H^r}^2 & \lesssim \|g\|^2_{H^r}+\sum_{m_1+\dots+rm_r=r}\sup_{x\in   (0,1)}\{\prod_{j=1}^r|F^{(j)}(x)|^{2m_j}\}\int_0^1|g^{(m_1+\dots+m_r)}(F(x))|^2\dd x \\
                                  & \lesssim \|g\|_{H^r}^2+\sum_{m_1+\dots+rm_r=r}\int_0^1|g^{(m_1+\dots+m_r)}(u)|^2(F^{-1})'(u)\dd u\lesssim \|g\|^2_{H^r}.
        \end{aligned}
    \end{equation*}
    The boundedness of $\koop_F$ (and hence $\koop_F^*$) on $H^r((0,1))$ follows by the density of $C^{\infty}((0,1))$ and a closedness argument \cite[Ex.~IV.19]{ikeda_koopman_2024}.

    We now study the approximate point spectrum of $\koop_F$ and $\koop_F^*.$ For any $l\in\mathbb{N}$ and $g\in C^{\infty}((0,1))$, where $\lesssim$ indicates inequality up to a constant not depending on $g$ or $l$, by the same arguments as for \eqref{iec_boundedness}, we have that
    \begin{equation} \label{eq_koop_l_g}
        \begin{aligned}
            \|\koop_F^l g\|_{H^r}^2 & \lesssim \|\koop_F^l g\|_{L^2}^2+\|(\koop_F^l g)^{(r)}\|_{L^2}^2                                                                 \\
                                    & \lesssim \|g\|^2_{H^r}+\sum_{m_1+\cdots+rm_r=r}\int_0^1|g^{(m_1+\dots+m_r)}(F^l(x))|^2\prod_{j=1}^r|(F^l)^{(j)}(x)|^{2m_j}\dd x.
        \end{aligned}
    \end{equation}
    Splitting the above integral inside the sum into the intervals $I_n$ yields
    \begin{equation*}
        \begin{split}
            \int_0^1|g^{(m_1+\dots+m_r)}(F^l(x))|^2\prod_{j=1}^r|(F^l)^{(j)}(x)|^{2m_j}\dd x=\sum_{n\in\mathbb{Z}}\int_{I_n}|g^{(m_1+\dots+m_r)}(F^l(x)|^2\prod_{j=1}^r|(F^l)^{(j)}(x)|^{2m_j}\dd x \\
            =\sum_{n\in\mathbb{Z}}\int_{I_{n+l}}|g^{(m_1+\dots+m_r)}(u)|^2(\prod_{j=1}^r|(F^l)^{(j)}(F^{-l}(u))|^{m_j})^2[(F^l)'(F^{-l}(u))]^{-1}\dd u.
        \end{split}
    \end{equation*}
    Following repeated applications of the product and chain rules and the fact that $m_1+\dots+rm_r=r$, we find that $\prod_{j=1}^r|(F^l)^{(j)}(x)|^{m_j}$ is a sum of at most $l^r$ terms, where each term is a product of at most $lr$ copies of shifted $F'$ terms, at most $r$ of which have argument in the interval $I_{n+j-1}$ for $1\leq j\leq l$, and at most $r$ higher derivative terms. Let $x_n=\min\{1,\sup_{t\in I_n}|F'(t)|\}$ and $y_n=\inf_{t\in I_n}|F'(t)|$, then
    \begin{equation*}
        \begin{split}
             & \sum_{n\in\mathbb{Z}}\int_{I_{n+l}}|g^{(m_1+\dots+m_r)}(u)|^2(\prod_{j=1}^r|(F^l)^{(j)}(F^{-l}(u))|^{m_j})^2[(F^l)'(F^{-l}(u))]^{-1}\dd u                                                                               \\
             & \quad\quad\quad\lesssim\sum_{n\in\mathbb{Z}}l^{2r}\prod_{j=1}^{l}x_{n+j-1}^{2r}y_{n+j-1}\int_{I_n}|g^{(m_1+\dots+m_r)}(u)|^2\dd u\lesssim l^{2r}\|g\|_{H^r}^2\sup_{n\in\mathbb{Z}}\prod_{j=1}^lx_{n+j-1}^{2r}y_{n+j-1}.
        \end{split}
    \end{equation*}
    Combining this equation with \cref{eq_koop_l_g}, if $\|g\|_{H^r}=1$, we have the following inequality:
    \begin{equation} \label{eq_prod_zj}
        \|\koop_F^lg\|_{H^r}^{1/l}\leq C^{1/2l}l^{r/l}\left(\sup_{n\in\mathbb{Z}}\prod_{j=1}^lz_{n+j}\right)^{1/2l},
    \end{equation}
    for some constant $C>0$ not depending on $l$ or $g$, and where $z_{n}=\min\{x_n^{2r}y_n,1\}$. Next, by the AM-GM inequality we note that $(\sup_{n\in\mathbb{Z}}\prod_{j=1}^lz_{n+j})^{1/2l}\leq(\sup_{n\in\mathbb{Z}}l^{-1}\sum_{j=1}^l z_{n+j})^{1/2}$,
    and for any $L\in\mathbb{N}$, we have
    $$
        \limsup_{l\rightarrow\infty}\sup_{n\in\mathbb{Z}}\frac{1}{l}\sum_{j=1}^lz_{n+j}\leq \limsup_{l\rightarrow\infty}\frac{1}{l}\sum_{j=1}^l\sup_{|n|>L}z_{n}=\sup_{|n|>L}z_n.
    $$
    Therefore, since for all sufficiently large $t$ $F'(t)=\alpha$ and for all sufficiently small $t$ $F'(t)=1/\alpha$, after taking the limit as $L\rightarrow\infty$ by definition of $z_n$ we have that
    $$
        \limsup_{l\rightarrow\infty}\sup_{n\in\mathbb{Z}}\frac{1}{l}\sum_{j=1}^l z_{n+j}\leq \limsup_{|l|\rightarrow\infty}z_l=\alpha^{-(2r-1)}.
    $$
    Hence, following \cref{eq_prod_zj}, we have
    $\lim_{l\rightarrow\infty}\|\koop_F^l\|^{1/l}_{H^r}\leq \alpha^{-(r-1/2)}.
    $
    By Gelfand's formula for the spectral radius, we conclude that $\sup_{z\in\spec(\koop_F)}|z|\leq\alpha^{-(r-1/2)}$. The same argument applies if we replace $l$ by $-l$ but with $F$ replaced by $F^{-1}$ and vice versa. Hence,
    $$
        \inf_{z\in\spec(\koop_F)}|z|=\frac{1}{\sup_{z\in\spec(\koop_F^{-1})}|z|}\geq\frac{1}{\alpha^{-(r-1/2)}}=\alpha^{r-1/2},
    $$
    which implies that $\spec(\koop_F^*)\subset\{z\in\mathbb{C}:\alpha^{r-1/2}\leq|z|\leq\alpha^{-(r-1/2)}\}$ since $\spec(\koop_F)=\spec(\koop_F^*)$.

    For the other inclusion, fix $z\in\mathbb{C}$ such that $\sqrt{\alpha}< |z|< 1/\sqrt{\alpha}$ and $|z|\neq 1$. Suppose for contradiction that $\koop_{F}-zI$ is surjective, and choose $h\in H^r((0,1))$ such that $\operatorname{supp}h\subset I_0$. Then, there exists $g\in H^r((0,1))$ such that $\koop_F g-z g=h$. Let $f=g|_{I_0}$. Then taking $x\in I_0$ we have that $g(F(x))=zf(x)+h(x)$ and so as $F(x)\in I_1$ and $F$ is bijective, we see that
    \begin{equation}
        g|_{I_1}=z f\circ F^{-1}+h\circ F^{-1}.
    \end{equation}
    Next, take $x\in I_1$. Then $F(x)\in I_2$ and $\operatorname{supp}h\subset I_0$ so $g|_{I_2}(F(x))=zg|_{I_1}(x)$ and so combining with the previous expression for $g|_{I_1}$ we get that $g|_{I_2}=z^2f\circ F^{-2}+zh\circ F^{-2}$. By induction, setting $p(x)=z f(x)+h(x)$ for all $x\in I_0$, we have that $g|_{I_n}=z^{n-1}(p\circ F^{-n})$ for all $n\geq 1$. We first assume that $\int_{I_0}|p'(u)|^2\dd u>0$ and estimate $\|g\|_{H^r}$ as follows:
    \begin{align*}
        \|g\|_{H^r}  \geq \|g'\|_{L^2} & =\sum_{n=-\infty}^{\infty}\int_{I_n}|z|^{2(n-1)}|(p\circ F^{-n})'(x)|^2\dd x                                 \\
                                       & \geq\sum_{n=1}^{\infty}\int_{I_n}|z|^{2(n-1)}|(p\circ F^{-n})'(x)|^2\dd x                                    \\
                                       & =\sum_{n=1}^{\infty}|z|^{2(n-1)}\int_{I_n}|p'(F^{-n}(x))(F^{-1})'(F^{1-n}(x))\cdots (F^{-1})'(x)|^2\dd x     \\
                                       & =\sum_{n=1}^{\infty}|z|^{2(n-1)}\int_{I_0}|p'(u)|^2(F^{-1})'(F(u))\cdots (F^{-1})'(F^n(u))\dd u              \\
                                       & \geq|z|^{-2}\int_{I_0}|p'(u)|^2\dd u\sum_{n=1}^{\infty}|z|^{2n}\prod_{j=1}^{n}\inf_{t\in I_j}|(F^{-1})'(t)|.
    \end{align*}
    Then by assumption, there exists $J\in\mathbb{N}$ such that for all $j\geq\tilde J$, $F'|_{I_j}=\alpha$ and $(F^{-1})'|_{I_j}=1/\alpha$, and so as $1/|z|<1/\sqrt{\alpha}$, we have that $\inf_{t\in I_j}|(F^{-1})'(t)|>1/|z|^2$. Additionally, as $(F^{-1})'$ is bounded away from $0$, $C\coloneq |z|^{-2}\int_{I_0}|p'(u)|^2\dd u\prod_{j=1}^{J-1}\inf_{t\in I_j}|(F^{-1})'(t)|>0$. Then, discarding the positive terms corresponding to $n=1$ to $J-1$, we have that
    $$
        \|g\|_{H^r}\geq C\sum_{n=J}^{\infty}|z|^{2n}\prod_{j=J}^n \inf_{t\in I_j}|(F^{-1})'(t)| \geq C\sum_{n=1}^{\infty}|z|^{2n}/|z|^{2(n-J+1)}=\infty,
    $$
    which implies that $g\notin H^r((0,1))$, and yields the contradiction. If instead we have $\int_{I_0}|p'(u)|^2\dd u=0$, then $p|_{I_0}$ is constant and so $g$ is piecewise constant on $\bigcup_{n=1}^{\infty}I_n$. Moreover, for $|z|\neq 1$, $g$ is not weakly differentiable and hence $g\notin H^r((0,1))$, giving the same contradiction. Then, since $\spec_{\mathrm{su}}(\koop_{F})$ is closed,
    $\{z\in\mathbb{C}:\sqrt{\alpha}\leq |z|\leq 1/\sqrt{\alpha}\}\subset \spec_{\mathrm{su}}(\koop_{F})=\spec_{\mathrm{ap}}(\koop_F^*).
    $
\end{proof}

We now prove our main impossibility result on $H^r((0,1))$.

\begin{theorem}[Approximate point spectrum lower bound on $H^r((0,1))$]
    \label{notdelta2specapinterval_thm}
    Let $r\in\mathbb{N}$; then there is no sequence of general algorithms that can compute the approximate point spectrum of any densely defined Koopman operator on $H^r((0,1))$ in one limit, i.e.,
    $$
        \left\{\Xi_{\spec^*_{\mathrm{ap}}},\Omega_{H^r((0,1))},\mathcal{M}_{\mathrm{AW}},\Lambda_{(0,1)}\right\}\notin\Delta_2^G.
    $$
\end{theorem}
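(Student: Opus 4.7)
I plan to argue by contradiction, assuming $\{\Gamma_n\}$ is a $\Delta_2^G$-tower for the problem and constructing a single $F^*\in\Omega_{H^r((0,1))}$ for which $\Gamma_n(F^*)\not\to\spec_{\mathrm{ap}}(\koop_{F^*}^*)$ in the Attouch--Wets metric. The idea exploits \cref{iemspecinterval_lemma}: for the smooth interval exchange $F_\alpha$ with all slopes equal to $\alpha<1$, $\spec_{\mathrm{ap}}(\koop_{F_\alpha}^*)$ contains the annulus $\{\sqrt\alpha\le|z|\le 1/\sqrt\alpha\}$, while for a smooth interval exchange whose slopes $\beta_m$ tend to $1$ (summably, so that the interval lengths $c_{n+1}-c_n=(c_1-c_0)\prod_{k<n}\beta_k$ still form a convergent series) the same Gelfand-type upper bound that drives the proof of \cref{iemspecinterval_lemma} yields $\limsup_l\|\koop_F^l\|^{1/l}=\limsup_l\|\koop_F^{-l}\|^{1/l}=1$, forcing the approximate point spectrum of $\koop_F^*$ into a small neighborhood of the unit circle. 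Since any general algorithm reads only finitely many evaluations $F(\hat x_j)$ and the slopes near $\{0,1\}$ are what control the spectrum, one can make the two regimes indistinguishable on any prescribed finite set of queries.

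Fix $\alpha\in(0,1)$ with $1/\sqrt\alpha>2$ and let $F_\alpha$ be as above, so $\spec_{\mathrm{ap}}(\koop_{F_\alpha}^*)$ contains points of modulus greater than $2$. By Attouch--Wets convergence, there is $n_0$ such that for every $n\ge n_0$ the set $\Gamma_n(F_\alpha)$ contains a point of modulus greater than $3/2$. For each such $n$, $\Lambda_{\Gamma_n}(F_\alpha)$ is finite and hence contained in some window $W_n=(c_{-M_n}^\alpha,c_{M_n}^\alpha)$ of the interval structure of $F_\alpha$. I extract a subsequence $\{n_k\}$ along which the windows grow fast (say $M_{n_k}\ge 2^k$), and let $Q_{n_k}\subset\mathbb Z$ be the finite set of indices $m$ with $I_m^\alpha\cap\Lambda_{\Gamma_{n_k}}(F_\alpha)\ne\emptyset$; then $\mathcal Q=\bigcup_k Q_{n_k}$ has upper density zero in $\mathbb Z$. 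I then define $F^*$ to be a smooth interval exchange whose slope on $I_m$ is $\alpha$ for $m\in\mathcal Q$ and $\beta_m^*=1-c/|m|^{1+\epsilon}$ (with $c,\epsilon>0$ chosen so that $\sum_n\prod_{k<n}\beta_k^*<\infty$) for $m\notin\mathcal Q$. Because the $\alpha$-slope indices are density-zero and the remaining slopes tend to $1$, the supremum $\sup_n\bigl(\prod_{j=1}^l z_{n+j}\bigr)^{1/2l}$ appearing in the upper-bound half of the proof of \cref{iemspecinterval_lemma} still tends to $1$ as $l\to\infty$ (and likewise for the backward iterate); this forces $\spec_{\mathrm{ap}}(\koop_{F^*}^*)$ to lie in a small neighborhood of the unit circle disjoint from $\{|z|>3/2\}$.

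The contradiction follows from consistency of general algorithms: by construction, $F^*$ agrees with $F_\alpha$ on every $W_{n_k}$ and hence on every queried point in $\Lambda_{\Gamma_{n_k}}(F_\alpha)$, so \cref{genalg_defn} gives $\Gamma_{n_k}(F^*)=\Gamma_{n_k}(F_\alpha)$ for every $k$. Thus $\Gamma_{n_k}(F^*)$ contains a point of modulus greater than $3/2$ for every $k$, incompatible with $\lim_n d_{\mathrm{AW}}(\Gamma_n(F^*),\spec_{\mathrm{ap}}(\koop_{F^*}^*))=0$. The main obstacle is the compatibility of the windows $\{W_{n_k}\}$ with the slope sequence $\{\beta_m^*\}$: because changing $\beta_m$ also changes the interval endpoints $c_m^*$, one must argue inductively, alternating between adding new indices to $\mathcal Q$ (with slope $\alpha$, so the associated intervals continue to coincide with those of $F_\alpha$) and filling in the tail slopes $\beta_m^*\to 1$, while verifying simultaneously (i) the Gelfand product $\sup_n\prod_j z_{n+j}\to 1$ remains valid under this dispersed-$\alpha$ profile, (ii) $\sum_n(c_{n+1}^*-c_n^*)=1/2$ on each side so that $F^*\in\Omega_{H^r((0,1))}$, and (iii) the subsequence $\{n_k\}$ can be refined so that $\mathcal Q$ stays sparse; the $M_{n_k}\ge 2^k$ growth is what buys enough room for this dispersal.
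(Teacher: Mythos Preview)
There is a genuine gap at precisely the point you flag as ``the main obstacle,'' and your inductive sketch does not resolve it. In an interval exchange map the endpoint $c_m$ is determined cumulatively by all the slopes $s_0,\dots,s_{m-1}$ (since $|I_{k+1}|=s_k|I_k|$), so as soon as a single slope $s_j\ne\alpha$ is committed, every endpoint $c_m^*$ with $|m|>|j|$ shifts away from $c_m^\alpha$, and $F^*$ no longer coincides with $F_\alpha$ anywhere beyond that index. Your consistency claim ``$F^*$ agrees with $F_\alpha$ on every $W_{n_k}$'' therefore requires slope $\alpha$ on \emph{every} index in $[-M_{n_k},M_{n_k}]$, not just on the sparse set $\mathcal Q$; since $M_{n_k}\to\infty$ this forces $F^*=F_\alpha$, leaving no room for the $\beta^*$-slopes at all. (A separate arithmetic slip: with $\beta_m^*=1-c/|m|^{1+\epsilon}$ and $\epsilon>0$ one has $\sum_m\log\beta_m^*>-\infty$, so $\prod_{k<n}\beta_k^*$ tends to a positive constant and the interval lengths are not summable---the map cannot fit in $(0,1)$.)

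The paper avoids the compatibility problem by \emph{alternating} between two fixed slopes $\alpha_1,\alpha_2$ with $(1+2\tau)\alpha_1^{-(r-1/2)}<(1-2\tau)\alpha_2^{-1/2}$. One builds $F_1,F_2,\dots$ inductively, each satisfying the hypotheses of \cref{iemspecinterval_lemma} with asymptotic slope $\alpha_1$ ($k$ odd) or $\alpha_2$ ($k$ even); $F_{k+1}$ is obtained from $F_k$ by altering \emph{only the tails} beyond the window that $\Gamma_{n_k}$ queries on input $F_k$, so the central interval structure is preserved by construction and no compatibility argument is needed. Crucially, the contradiction comes purely from the oscillation $\Gamma_{n_k}(F)=\Gamma_{n_k}(F_k)$ between the $\alpha_1$- and $\alpha_2$-regimes; one never computes or bounds $\spec_{\mathrm{ap}}(\koop_F^*)$ for the limit map, so the delicate spectral analysis you attempt for $F^*$ is unnecessary.
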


\begin{proof}
    Suppose, by contradiction, that $\{\Gamma_n\}$ is a $\Delta_2^G$-tower for $\{\Xi_{\spec^*_{\mathrm{ap}}},\Omega_{H^r((0,1))},\mathcal{M}_{\mathrm{AW}},\Lambda_{(0,1)}\}$ satisfying
    \begin{equation} \label{eq_spga_f}
        \lim_{n\rightarrow\infty}\Gamma_n(F)=\spec_{\mathrm{ap}}(\koop^*_F)\quad \forall F\in\Omega_{H^r((0,1))}.
    \end{equation}
    Throughout the proof, fix $\alpha_1,\alpha_2\in (0,1)$ and $\tau>0$ such that $(1+2\tau)\alpha_1^{-(r-1/2)}< (1-2\tau)\alpha_2^{-1/2}$, and let $K\subset\mathbb{C}$ be a compact set such that $B_{2/\alpha_2}(0)\subset K$.

    We first define an interval exchange map $F_1$ with $|I_n^{(1)}|=\beta_1\alpha_1^n$ for $n=0,1,\dots$, where $\beta_1$ is chosen such that
    $$
        \sum_{n=-\infty}^{\infty}|I_n^{(1)}|=\beta_1\left(1+2\sum_{n=1}^{\infty}\alpha_1^n\right)=\beta_1\left(1+2\cfrac{\alpha_1}{1-\alpha_1}\right)=1.
    $$
    We select $F_1(x)=\alpha x$ on $\smash{I_j^{(1)}}$ for $j\geq 1$, $F_1(x)=x/\alpha$ on $I_j$ for $j\leq -1$ and choose it to be a smooth transition function $g^{(1)}$ on $I_0$ such that $F_1$ is globally smooth and strictly increasing; since $g^{(1)}$ must be continuous at its endpoints in $I_0$ we automatically have an interval exchange map. In particular,
    $$
        F_1(x)=\begin{cases}
            x/\alpha_1,             & x\in \bigcup_{n=-\infty}^{-1}I_n^{(1)}, \\
            g^{(1)}(x),             & x\in I_{0}^{(1)},                       \\
            \alpha_1x+(1-\alpha_1), & x\in\bigcup_{n=1}^{\infty}I_n^{(1)}.
        \end{cases}
    $$
    Following \cref{iemspecinterval_lemma}, we have
    $$
        \{z\in\mathbb{C}:\alpha_1^{1/2}\leq |z|\leq \alpha_1^{-1/2)}\}\subset\spec_{\mathrm{ap}}(\koop_{F_1}^*)\subset\spec(\koop^*_{F_1})\subset\{z\in\mathbb{C}:\alpha_1^{r-1/2}\leq |z|\leq \alpha_1^{-(r-1/2)}\}.
    $$
    It follows that by combining \cref{genalg_defn} and the definition of convergence in the Attouch--Wets topology (see \cref{sec_mean_spectral}), there exist $n_1>0$ and $N_1>1$ such that
    $$
        \sup\{|z|:{z\in K\cap \Gamma_{n_1}({F_1})}\}\leq (1+\tau)\alpha_1^{-(r-1/2)},\quad\Lambda_{\Gamma_{n_1}}(F_1)\subset\bigcup_{n=-N_1}^{n=N_1-1}I_n^{(1)}.
    $$
    Next, we choose intervals $\{I_n^{(2)}\}$ such that
    $$
        |I_n^{(2)}|=\begin{cases}
            |I_n^{(1)}|,                     & 0\leq n\leq N_1, \\
            \beta^{(2)}\alpha_2^{n-(N_1+1)}, & n>N_1,
        \end{cases}
    $$
    where as before $\beta_2$ is chosen such that $\sum_{n=-\infty}^{\infty}|I_n^{(2)}|=1$. Then we define $F_2(x)$ to be
    $$
        F_2(x)=\begin{cases}
            x/\alpha_2,             & x\in\bigcup_{n=-\infty}^{-N_1-2}I_n^{(2)}, \\
            g_1^{(2)}(x),           & x\in I_{-N_1-1}^{(2)},                     \\
            F_1(x),                 & x\in\bigcup_{n=-N_1}^{N_1-1}I_n^{(2)},     \\
            g_2^{(2)}(x),           & x\in I_{N_1}^{(2)},                        \\
            \alpha_2x+(1-\alpha_2), & x\in\bigcup_{n=N_1+1}^{\infty}I_n^{(2)},
        \end{cases}
    $$
    where $\smash{g_1^{(2)}}$ and $\smash{g_2^{(2)}}$ are transition functions such that $F_2$ is globally smooth and strictly increasing. By \cref{iemspecinterval_lemma} again, we know that
    $$
        \{z\in\mathbb{C}:\alpha_2^{1/2}\leq |z|\leq \alpha_2^{-1/2)}\}\subset\spec_{\mathrm{ap}}(\koop_{F_2}^*)\subset\spec(\koop^*_{F_2})\subset\{z\in\mathbb{C}:\alpha_2^{r-1/2}\leq |z|\leq \alpha_2^{-(r-1/2)}\}
    $$
    and so by \cref{genalg_defn} of a general algorithm, there exists $n_2>n_1$, $N_2>N_1+2$ such that
    $$
        \sup\{|z|:{z\in K\cap \Gamma_{n_2}({F_2})}\}\geq (1-\tau)\alpha_2^{-1/2},\quad\Lambda_{\Gamma_{n_2}}(F_2)\subset\bigcup_{n=-N_2}^{n=N_2-1}I_n^{(2)}.
    $$
    We may now proceed by induction and construct a sequence of maps $F_l$ of the form given in \cref{iemspecinterval_lemma} and $n_l,N_l\in\mathbb{N}$ for all $l\in\mathbb{N}$. In particular, suppose that we have some $F_k$ with intervals $\smash{\{I_n^{(k)}\}}$, $n_k$ and $N_k$ and want to construct $F_{k+1}$. Take $i=1$ if $k$ is even and $i=2$ if $k$ is odd. Let $\smash{\{I_n^{(k+1)}\}}$ be such that
    $$
        |I_n^{(k+1)}|=\begin{cases}
            |I_n^{(k)}|,                       & 0\leq n\leq N_k, \\
            \beta^{(k+1)}\alpha_i^{n-(N_1+1)}, & n>N_k,
        \end{cases}
    $$
    where as before $\beta^{(k+1)}$ is chosen such that $\sum_{n=-\infty}^{\infty}|I_n^{(k+1)}|=1$. Then we define $F_{k+1}(x)$ to be
    $$
        F_{k+1}(x)=\begin{cases}
            x/\alpha_i,             & x\in\bigcup_{n=-\infty}^{-N_k-2}I_n^{(k+1)}, \\
            g_1^{(k+1)}(x),         & x\in I_{-N_k-1}^{(k+1)},                     \\
            F_k(x),                 & x\in\bigcup_{n=-N_k}^{N_k-1}I_n^{(k+1)},     \\
            g_2^{(k+1)}(x),         & x\in I_{N_k}^{(k+1)},                        \\
            \alpha_ix+(1-\alpha_i), & x\in\bigcup_{n=N_k+1}^{\infty}I_n^{(k+1)},
        \end{cases}
    $$
    where $\smash{g_1^{(k+1)}}$ and $\smash{g_2^{(k+1)}}$ are transition functions such that $F_2$ is globally smooth and strictly increasing. By the same arguments as previously, if $k$ is odd:
    \begin{equation}
        \label{eqn:koddprob}
        \sup\{|z|:{z\in K\cap \Gamma_{n_k}({F_k})}\}\leq (1+\tau)\alpha_1^{-(r-1/2)},\quad\Lambda_{\Gamma_{k}}(F_k)\subset\bigcup_{n=-N_k}^{n=N_k-1}I_n^{(k)},
    \end{equation}
    while if $k$ is even:
    \begin{equation}
        \label{eqn:kevenprob}
        \sup\{|z|:{z\in K\cap \Gamma_{n_k}({F_k})}\}\geq (1-\tau)\alpha_2^{-1/2},\quad\Lambda_{\Gamma_{n_k}}(F_k)\subset\bigcup_{n=-N_k}^{n=N_k-1}I_n^{(k)}.
    \end{equation}
    Next, we define the function $F$ by $F(x)=\lim_{k\rightarrow\infty}F_k(x)$, where the limit exists by construction as for all $m\geq k$,
    $$\bigcup_{n=-N_k}^{N_k-1}I_n^{(m)}=\bigcup_{n=-N_k}^{N_k-1}I_n^{(k)},\quad F_m|_{\bigcup_{n=-N_k}^{N_k-1}I_n^{(m)}}=F_k|_{\bigcup_{n=-N_k}^{N_k-1}I_n^{(k)}};$$ note that this also implies that $F$ is a smooth, strictly increasing, interval exchange map on $(0,1)$. However, $\sup_{x\in I_{N_k}^{(k+1)}}|(g_2^{(k+1)})''(x)|$ is unbounded as $k\rightarrow\infty$, since at either end of the interval $I_{N_k}^{(k+1)}$ $(g_2^{(k+1)})'(x)$ is $\alpha_1$ and $\alpha_2$ in some order but $|I_{N_k}^{(k+1)}|\rightarrow 0$ as $k\rightarrow\infty$; hence $\koop_F$ is unbounded whenever $r>1$. It is, however, densely defined by considering compactly supported smooth functions, which are dense in $H^r((0,1))$.

    By consistency of general algorithms, $\Gamma_{n_k}(F)=\Gamma_{n_k}({F_k})$ for all $k$. Since $(1+2\tau)\alpha_1^{-(r-1/2)}<(1-2\tau)\alpha_2^{-1/2}$, it follows that $\Gamma_{n_k}(F)$ cannot converge, the required contradiction.
\end{proof}

\begin{remark}[Extensions to higher dimensions and other domains]
    It is straightforward to extend this proof to $\mathcal{X}=(0,1)^d$, under the assumption that $d<r/2$ so $H^r((0,1)^d)$ is still an RKHS. Indeed, we may adapt \cref{iemspecinterval_lemma} to functions of the form $\hat{F}(x_1,\dots,x_d)=(F(x_1),c_2,\dots,c_d)$ where $F$ satisfies the conditions of the original \cref{iemspecinterval_lemma} and $\{c_i\}_{i=2}^d$ are fixed constants. The estimates of $\|\koop^lg\|_{H^r}^2$ for $l\geq 1$ then hold exactly (up to a constant) as $\hat{F}$ only depends on $x_1$ and the constant functions lie in the Sobolev space. For finding the surjective spectrum, if we take $h\in H^r((0,1)^d)$ such that $\operatorname{supp} h\subset I_0\times (0,1)^{d-1}$ and $\partial_ih=0$ for $i=2,\dots,d$, then we must also have $\partial_ig=0$ for $i=2,\dots,d$ and the same arguments goes through again. \cref{notdelta2specapinterval_thm} then goes through using the functions $\hat{F}$. By translating and scaling, it is immediate to show that the result holds for any rectangular domain. A similar trick of taking $\hat{F}$ to be radial shows that the result also holds on spherical domains.
\end{remark}

\subsubsection{Computing the approximate point pseudospectrum of $\koop^*$ is not in $\Delta_1^G$}

The same family of adversarial dynamical systems can also be used to prove that computing $\spec_{\mathrm{ap},\epsilon}(\koop^*)$ does not lie in $\Delta_1^G$. We prove it on the class of bounded operators with the Hausdorff metric, which implies it for unbounded operators in the Attouch--Wets metric. As before, this result holds even with exact information and can be adapted to more general rectangular domains.

\begin{theorem}[Approximate point pseudospectrum lower bound]\label{notdelta1specapeps_thm}
    For all $\epsilon>0$,
    $$
        \left\{\Xi_{\spec_{\mathrm{ap},\epsilon}^*},\Omega_{H^r((0,1))}^B,\mathcal{M}_{\mathrm{H}},\Lambda_{(0,1)}\right\}\notin \Delta_1^G.
    $$
\end{theorem}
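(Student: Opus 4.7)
The plan is to adapt the adversarial construction of \cref{notdelta2specapinterval_thm} to a single-step modification. Assuming, for contradiction, that $\{\Gamma_n\}$ is a $\Delta_1^G$-tower so that $d_H(\Gamma_n(F),\spec_{\mathrm{ap},\epsilon}(\koop_F^*))\leq 2^{-n}$ for every $F\in\Omega_{H^r((0,1))}^B$, I will for an arbitrary fixed $n$ construct two bounded interval exchange maps $F_1,F_2$ whose $\Gamma_n$-outputs coincide by consistency of general algorithms but whose approximate point pseudospectra are Hausdorff-separated by more than $2\cdot 2^{-n}$, contradicting the triangle inequality.

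First I would fix some $\alpha_1\in(0,1)$ and invoke \cref{iemspecinterval_lemma} to produce a smooth, strictly increasing interval exchange map $F_1$ with intervals $\{I_k^{(1)}\}$ and a bounded Koopman operator on $H^r((0,1))$. The pseudoeigenvector characterization of $\spec_{\mathrm{ap},\epsilon}$ together with the explicit norm estimate from \cref{iec_boundedness} yields a constant $C_1$ (depending only on $\alpha_1$, $r$, and the fixed transition function) with $\spec_{\mathrm{ap},\epsilon}(\koop_{F_1}^*)\subset\{z\in\mathbb{C}:|z|\leq C_1+\epsilon\}$. By \cref{genalg_defn}, running $\Gamma_n(F_1)$ accesses only finitely many values $F_1(\hat{x}_j)$, so there exists $N\in\mathbb{N}$ with $\Lambda_{\Gamma_n}(F_1)\subset\{F\mapsto F(\hat{x}_j):\hat{x}_j\in\bigcup_{|k|\leq N}I_k^{(1)}\}$. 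Next, I choose $\alpha_2\in(0,\alpha_1)$ small enough that $\alpha_2^{-1/2}>C_1+\epsilon+2\cdot 2^{-n}$ and construct $F_2$ via the same template used to pass from $F_1$ to $F_2$ inside the proof of \cref{notdelta2specapinterval_thm}: keep $I_k^{(2)}=I_k^{(1)}$ for $|k|\leq N$, rescale the outer tails by $\alpha_2$, and patch the two intervals $I_{\pm(N+1)}^{(2)}$ with smooth transition functions ensuring $F_2$ is globally smooth and strictly increasing. Since $F_2$ is a single interval exchange map satisfying the hypotheses of \cref{iemspecinterval_lemma}, the operator $\koop_{F_2}$ is bounded and $\{z:\alpha_2^{1/2}\leq|z|\leq\alpha_2^{-1/2}\}\subset\spec_{\mathrm{ap}}(\koop_{F_2}^*)\subset\spec_{\mathrm{ap},\epsilon}(\koop_{F_2}^*)$.

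To close the argument, because $F_1$ and $F_2$ agree pointwise on $\bigcup_{|k|\leq N}I_k^{(1)}$, they produce identical values on every $\hat{x}_j$ queried by $\Gamma_n(F_1)$, so \cref{genalg_defn} forces $\Gamma_n(F_2)=\Gamma_n(F_1)$. But $\alpha_2^{-1/2}\in\spec_{\mathrm{ap},\epsilon}(\koop_{F_2}^*)$ sits at distance at least $\alpha_2^{-1/2}-(C_1+\epsilon)>2\cdot 2^{-n}$ from every point of $\spec_{\mathrm{ap},\epsilon}(\koop_{F_1}^*)$, whereas the two $2^{-n}$ error bounds and the triangle inequality give $d_H(\spec_{\mathrm{ap},\epsilon}(\koop_{F_1}^*),\spec_{\mathrm{ap},\epsilon}(\koop_{F_2}^*))\leq 2\cdot 2^{-n}$, the desired contradiction. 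The main obstacle is extracting the a priori upper bound $C_1$ on $\|\koop_{F_1}^*\|$ needed to calibrate $\alpha_2$; this reduces to tracking the constants in the derivation of \cref{iec_boundedness}, using that every derivative of $F_1$ and $F_1^{-1}$ is uniformly bounded by quantities depending only on $\alpha_1$ and the fixed transition function $g^{(1)}$.
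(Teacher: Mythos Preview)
Your proposal is correct and follows essentially the same approach as the paper: construct two bounded interval exchange maps that agree on the finite central region queried by the algorithm but have tail scalings $\alpha_1,\alpha_2$ producing Hausdorff-separated pseudospectra, then invoke consistency and the triangle inequality. The only cosmetic differences are that the paper bounds $\spec_{\mathrm{ap},\epsilon}(\koop_{F_1}^*)$ via an explicit Neumann series (equivalent to your operator-norm bound $C_1+\epsilon$), and it fixes $n=1$ rather than arbitrary $n$; your stated ``main obstacle'' of extracting $C_1$ is not actually an obstacle, since only the existence of a finite bound is needed, and that is already provided by the boundedness in \cref{iemspecinterval_lemma}.
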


\begin{proof} Take $\epsilon>0$, fix $\alpha\in (0,1)$ and define an interval exchange map $F:(0,1)\rightarrow (0,1)$ with $|I_n|=\beta\alpha^n$ for $n\in\mathbb{N}\cup\{0\}$, where $\beta$ is chosen so that
    $$
        \sum_{n=-\infty}^{\infty}|I_n|=\beta\left(1+2\sum_{n=1}^{\infty}\alpha^n\right)=1.
    $$
    We take $F(x)=\alpha x$ on $I_j$ for $j\geq 1$, $F(x)=x/\alpha$ on $I_j$ for $j\leq -1$ and $F(x)=g(x)$ on $I_{0}$, where $g$ is a smooth transition function such that $F$ is globally smooth and strictly increasing. In particular, we have that
    $$
        F(x)=\begin{cases}
            x/\alpha,            & x\in \bigcup_{n=-\infty}^{-1}I_n, \\
            g(x),                & x\in I_{0},                       \\
            \alpha x+(1-\alpha), & x\in\bigcup_{n=1}^{\infty}I_n.
        \end{cases}
    $$
    Then, as $\koop_{F}$ is bounded, for $|z|>\|\koop_{F}^*\|$, by a standard Neumann series argument, we have that $\koop_F^*-zI=-z(I-\koop_F/z)$ is invertible with inverse given by $-1/z(I+\koop_F/z+\koop_F^2/z^2+\cdots)$, and so
    \begin{equation} \label{eq_inv_koop}
        \|(\koop_{F}^*-z I)^{-1}\|\leq\frac{1/|z|}{1-\|\koop_{F}^*\|/|z|}=\frac{1}{|z|-\|\koop_{F}^*\|}.
    \end{equation}
    Therefore, there exists $N>0$ such that for all $|z|\geq N$, $\|(\koop_{F}^*-zI)^{-1}\|\leq 1/\epsilon$.

    Suppose for contradiction that $\{\Gamma_n\}$ is a $\Delta_1^G$-tower for $\{\Xi_{\spec_{\mathrm{ap},\epsilon}^*},\Omega_{H^r((0,1))}^B,\mathcal{M}_{\mathrm{H}},\Lambda_{(0,1)}\}$ satisfying
    $$
        d_{\mathrm{H}}(\Gamma_n(F),\spec_{\mathrm{ap},\epsilon}(\koop_F^*))\leq 2^{-n}\quad \forall n\in\mathbb{N},F\in\Omega_{H^r((0,1))}.
    $$
    By \cref{genalg_defn} of a general algorithm, there exists $N_1>1$ such that
    \begin{equation}
        \label{eqn:evalfnsdelta1}
        d_{\mathrm{H}}(\Gamma_{n_1}(F),\spec_{\mathrm{ap},\epsilon}(\koop_F^*))\leq 2^{-n_1},\quad\Lambda_{\Gamma_{n_1}}(F)\subset\bigcup_{n=-N_1}^{N_1-1}I_n.
    \end{equation}
    Now, choose $\hat{\alpha}\in\mathbb{R}$ such that $\hat{\alpha}^{-1/2}>N+1$, and define intervals $\hat{I}_n$ such that
    $$
        |\hat{I}_n|=\begin{cases}
            |I_n|,                                & 0\leq n\leq N_1, \\
            \hat{\beta} \hat{\alpha}^{n-(N_1+1)}, & n>N_1,
        \end{cases}
    $$
    where $\hat{\beta}$ is chosen such that $\sum_{n=-\infty}^{\infty}|\hat{I}_n|=1$. Then, we define the map $\hat{F}:(0,1)\rightarrow(0,1)$ by
    $$
        \hat{F}(x)=\begin{cases}
            x/\hat{\alpha},                 & x\in\bigcup_{n=-\infty}^{-N_1-2}I_n, \\
            g_1(x),                         & x\in I_{-N_1-1},                     \\
            F(x),                           & x\in\bigcup_{n=-N_1}^{N_1-1}I_n,     \\
            g_2(x),                         & x\in I_{N_1},                        \\
            \hat{\alpha}x+(1-\hat{\alpha}), & x\in\bigcup_{n=N_1+1}^{\infty}I_n,
        \end{cases}
    $$
    where $g_1$ and $g_2$ are smooth transition functions such that $\hat{F}$ is globally smooth and strictly increasing. Applying \cref{iemspecinterval_lemma} again yields
    \begin{equation}
        \label{eqn:spectrumboundalphahat}
        \{z\in\mathbb{C}:\hat{\alpha}^{1/2}\leq |z|\leq \hat{\alpha}^{-1/2)}\}\subset \spec_{\mathrm{ap}}(\koop_{\hat{F}}^*).
    \end{equation}
    Since $\bigcup_{n=-N_1}^{N_1-1}\hat{I}_n=\bigcup_{n=-N_1}^{N_1-1}I_n$ and $F|_{\bigcup_{n=-N_1}^{N_1-1}I_n}=\hat{F}|_{\bigcup_{n=-N_1}^{N_1-1}\hat{I}_n}$, by \eqref{eqn:evalfnsdelta1} and the consistency of general algorithms, we have
    \begin{equation}\label{eqn:FFhatagreement}
        \Gamma_{1}(\koop_F)=\Gamma_{1}(\koop_{\hat{F}}).
    \end{equation}
    Let $z\in\mathbb{C}$ such that $|z|\geq N$, then for all $\|E\|<\epsilon$, $\|(\koop_F^*-zI)^{-1}\|\|E\|<1$ (cf.~\cref{eq_inv_koop}). Then by the same Neumann series argument as before, $\koop^*_F+E-zI=(\koop_F^*-zI)(I+(\koop_F^*-zI)^{-1}E)$ is invertible and so $z\notin\spec(\koop_{F}^*+E)$, and hence $z\notin\spec_{\mathrm{ap},\epsilon}(\koop_{F}^*)$. This implies that \begin{equation}\label{eqn:spectrumboundN}\spec_{\mathrm{ap},\epsilon}(\koop_{F}^*)\subset \{z\in\mathbb{C}:|z|\leq N\}.\end{equation}
    However, using \eqref{eqn:spectrumboundalphahat}, \eqref{eqn:spectrumboundN} and \eqref{eqn:FFhatagreement}, the following inequalities hold:
    \begin{align*}
        1<\hat{\alpha}^{-1/2}-N & \leq d_{\mathrm{H}}(\{z\in\mathbb{C}:|z|\leq N\},\spec_{\mathrm{ap},\epsilon}(\koop^*_{\hat{F}}))\leq d_{\mathrm{H}}(\spec_{\mathrm{ap},\epsilon}(\koop^*_{F}),\spec_{\mathrm{ap},\epsilon}(\koop_{\hat{F}}^*)) \\&
        \leq d_{\mathrm{H}}(\Gamma_1(F),\spec_{\mathrm{ap},\epsilon}(\koop^*_{F}))+d_{\mathrm{H}}(\Gamma_1(F),\spec_{\mathrm{ap},\epsilon}(\koop^*_{\hat{F}}))                                                                                    \\
                                & =d_{\mathrm{H}}(\Gamma_1(F),\spec_{\mathrm{ap},\epsilon}(\koop^*_{F}))+d_{\mathrm{H}}(\Gamma_1(\hat{F}),\spec_{\mathrm{ap},\epsilon}(\koop^*_{\hat{F}}))\leq 1/2+1/2=1,
    \end{align*}
    giving the desired contradiction.
\end{proof}

\subsection{Impossibility results for $H^r(\mathbb{R}^d)$}
\label{sobolevreallineimposs_sect}

We now adapt the proof of \cref{iemspecinterval_lemma} to consider the class of bounded (instead of unbounded)  operators and rely on interval exchange maps on $\mathbb{R}$ (instead of $(0,1)$). An interval exchange map on $\mathbb{R}$ is defined as follows. Suppose that $\{c_n\}_{n=0}^{\infty}\subset[0,\infty)$ is a strictly increasing sequence, where $c_0=0$ and $\lim_{n\rightarrow\infty}c_n=\infty$. Set $c_{-n}=-c_n$ for $n\in\mathbb{N}$ and define a collection of intervals $\{I_n\}_{n=-\infty}^{\infty}$ by $I_n=[c_n,c_{n+1})$. An interval exchange map corresponding to $\{c_n\}$ is a continuous, strictly increasing (and hence bijective) map $F:(0,1)\rightarrow(0,1)$ such that $F(c_n)=c_{n+1}$ and $F(I_n)=I_{n+1}$ for all $n\in\mathbb{Z}$.

\begin{lemma}
    \label{iemspecrealline_lemma}
    Let $F$ be a $C^{\infty}$, invertible interval exchange map on $\mathbb{R}$ with intervals $\{I_n\}_{n=-\infty}^{\infty}$ such that there exists $x_0,x_1>0$ for which $F'(x)=1/\alpha\in (1,\infty)$ for all $x>x_0$ and $F'(x)=\alpha\in (0,1)$ for all $x<-x_1$. The Koopman operator $\koop_F$ defined on the Sobolev space $H^r(\mathbb{R})$ for some $r\in\mathbb{N}$ is bounded and satisfies
    $$
        \{z\in\mathbb{C}:\alpha^{1/2}\leq |z|\leq \alpha^{-1/2)}\}\subset\spec_{\mathrm{ap}}(\koop_F^*)\subset\spec(\koop^*_F)\subset\{z\in\mathbb{C}:\alpha^{r-1/2}\leq |z|\leq \alpha^{-(r-1/2)}\}.
    $$
\end{lemma}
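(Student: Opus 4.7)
My plan is to mirror the strategy of \cref{iemspecinterval_lemma}, adapting the bookkeeping to the whole real line and to the swapped roles of $\alpha$ and $1/\alpha$. The proof has three ingredients: boundedness of $\koop_F$ on $H^r(\mathbb{R})$, an upper bound on the spectral radii of $\koop_F$ and $\koop_F^{-1}$ that pins $\spec(\koop_F)$ inside the outer annulus, and a non-surjectivity argument that yields the inner inclusion.

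For boundedness, I would use the equivalent norm $\|f\|_{H^r(\mathbb{R})}^2\asymp\|f\|_{L^2}^2+\|f^{(r)}\|_{L^2}^2$ and apply Fa\`a di Bruno \cref{eq_faa_bruno} exactly as before. After the change of variable $y=F(x)$, the key inputs are that $(F^{-1})'$ is globally bounded (it equals $\alpha$ for $y$ large positive, $1/\alpha$ for $y$ large negative, and is continuous on the compact transition region) and that each $F^{(j)}$ is globally bounded; together these control every resulting integral by some $\|g^{(k)}\|_{L^2}^2$. Density of $C_c^\infty(\mathbb{R})$ in $H^r(\mathbb{R})$ then gives the bounded extension via a closedness argument.

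For the spectral radius bound I would iterate the Fa\`a di Bruno expansion of $(g\circ F^l)^{(r)}$. The dominant contribution—from the multi-index $(r,0,\dots,0)$ producing $[(F^l)'(x)]^r g^{(r)}(F^l(x))$—changes variables to an integrand of the form $[(F^l)'(F^{-l}(y))]^{2r-1}|g^{(r)}(y)|^2$. Along any orbit $\{F^{-k-1}(y)\}_{k=0}^{l-1}$ the factor $F'$ equals $1/\alpha$ on indices $\geq J$, $\alpha$ on indices $\leq -J$, and is bounded above by a uniform constant on the finitely many transition indices, so $(F^l)'(F^{-l}(y))\leq C(1/\alpha)^l$ uniformly. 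The remaining Fa\`a di Bruno terms involve products of $(F^l)^{(j)}$ growing only polynomially in $l$, exactly as in \cref{iemspecinterval_lemma}. Hence $\|\koop_F^l\|_{H^r}^{1/l}\leq C^{1/l}l^{r/l}\alpha^{-(r-1/2)}$, and Gelfand's formula gives $\spec(\koop_F)\subset\{|z|\leq\alpha^{-(r-1/2)}\}$. The identical argument applied to $F^{-1}$ (whose derivative profile is the mirror image) yields $\inf_{z\in\spec(\koop_F)}|z|\geq\alpha^{r-1/2}$, establishing the outer annulus.

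The crucial step is the inner inclusion $\{\alpha^{1/2}\leq|z|\leq\alpha^{-1/2}\}\subset\spec_{\mathrm{su}}(\koop_F)$. Here the structure differs from \cref{iemspecinterval_lemma}: on the real line both tails of the recurrence matter simultaneously, and it is an $L^2$ (rather than derivative) estimate that forces the contradiction. Fixing $z$ in the annulus, I would assume surjectivity, choose $h\in H^r(\mathbb{R})$ supported in $I_0$ with $h\not\equiv 0$, and solve $(\koop_F-zI)g=h$. The recurrence $g(F(x))=zg(x)+h(x)$ determines $g$ from $f:=g|_{I_0}$ exactly as before: $g|_{I_n}=z^{n-1}p\circ F^{-n}$ for $n\geq 1$ with $p=zf+h$, and $g|_{I_{-n}}=z^{-n}f\circ F^n$ for $n\geq 1$. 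A change of variable yields
\begin{align*}
\int_{I_n}|g|^2\,dx&=|z|^{2(n-1)}\int_{I_0}|p(u)|^2(F^n)'(u)\,du,\\
\int_{I_{-n}}|g|^2\,dx&=|z|^{-2n}\int_{I_0}|f(u)|^2(F^{-n})'(u)\,du,
\end{align*}
and the uniform lower bounds $(F^n)'(u)\geq C(1/\alpha)^n$ and $(F^{-n})'(u)\geq C(1/\alpha)^n$ (for $u\in I_0$, $n$ large) follow from $F'=1/\alpha$ on $I_k$ for $k\geq J$ and $F'=\alpha$ on $I_k$ for $k\leq -J$ together with uniform positive lower bounds on $F'$ on the finitely many transition intervals. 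Consequently the positive tail of $\|g\|_{L^2}^2$ is bounded below by a multiple of $\sum_n(|z|^2/\alpha)^n\|p\|_{L^2(I_0)}^2$ and the negative tail by $\sum_n(1/(|z|^2\alpha))^n\|f\|_{L^2(I_0)}^2$. For $|z|$ in the closed annulus both geometric ratios are $\geq 1$, so finiteness of $\|g\|_{L^2}$ (hence of $\|g\|_{H^r}$) forces $\|p\|_{L^2(I_0)}=\|f\|_{L^2(I_0)}=0$; but $f=0$ then gives $p=h$, contradicting $h\not\equiv 0$. Thus $\koop_F-zI$ is not surjective, and $z\in\spec_{\mathrm{su}}(\koop_F)=\spec_{\mathrm{ap}}(\koop_F^*)$.

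The main obstacle will be obtaining uniform-in-$u\in I_0$ geometric lower bounds on $(F^{\pm n})'(u)$; the asymptotic factor $(1/\alpha)^n$ from the far-field intervals is easy, but I need to absorb the finitely many transition intervals $I_k$ with $|k|\leq J$ into a single constant $C>0$, which requires knowing $F'$ is bounded below away from zero on each such interval (true by smoothness and strict monotonicity of $F$).
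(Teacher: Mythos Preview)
Your proposal is correct and follows the paper's approach: Fa\`a di Bruno plus density for boundedness, Gelfand's formula for the outer annulus, and $L^2$-tail divergence of a putative preimage $g$ of a bump $h$ supported in $I_0$ for the inner inclusion. The one notable variation is in the last step: the paper uses only the positive tail and then handles the degenerate case separately (via a piecewise-constant/weak-differentiability argument carried over from \cref{iemspecinterval_lemma}), whereas you use both tails simultaneously to force $\|p\|_{L^2(I_0)}=\|f\|_{L^2(I_0)}=0$ and hence $h=p-zf=0$ directly---this is cleaner, avoids the $|z|\neq 1$ exclusion, and sidesteps the somewhat garbled edge-case argument in the paper.
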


\begin{remark}
    Notice that $\alpha$ and $1/\alpha$ are switched in the definition of $F$ compared to \cref{iemspecinterval_lemma}. This is necessary due to how the functions are constructed in the proof of \cref{notdelta2specaprealline_thm}.
\end{remark}

\begin{proof}[Proof of \cref{iemspecrealline_lemma}]
    The Sobolev norm $\|f\|_{H^r}^2$ is equivalent to $\|f\|_{L^2}^2+\|f^{(r)}\|_{L^2}^2$ on $\mathbb{R}$, i.e., we may consider only the smallest and largest derivatives \cite[Thm.~5.2]{adams_sobolev_2003}. By the inverse function theorem, since $F$ is $C^{\infty}$ and strictly increasing, $F^{-1}$ exists and is $C^{\infty}$. Note also that all derivatives of $F$ and $F^{-1}$ are bounded on $\mathbb{R}$ (by compactness of the interval $[-x_1,x_0]$); additionally, by compactness and that $F$ is strictly increasing, $F'$ and $(F^{-1})'$ are bounded both above and below on $\mathbb{R}$. We first show that $\koop_F$ is bounded. For any $g\in C_c^{\infty}(\mathbb{R})$, where $\lesssim$ indicates inequality up to a constant not depending on $g$, we have
    \begin{align*}
        \|\koop_F g\|_{H^r}^2 & \lesssim \|\koop_F g\|_{L^2}^2+\|(\koop_F g)^{(r)}\|_{L^2}^2                                                                                                                             \\
                              & \lesssim \int_{\mathbb{R}}|g(F(x))|^2\dd x+\sum_{m_1+\cdots+rm_r=r}\int_\mathbb{R}|g^{(m_1+\dots+m_r)}(F(x))|^2\prod_{j=1}^r|F^{(j)}(x)|^{2m_j}\dd x                                     \\
                              & \lesssim \int_{\mathbb{R}}|g(F(x))|^2\dd x+\sum_{m_1+\dots+rm_r=r}\sup_{x\in   \mathbb{R}}\left(\prod_{j=1}^r|F^{(j)}(x)|^{2m_j}\right)\int_\mathbb{R}|g^{(m_1+\dots+m_r)}(F(x))|^2\dd x \\
                              & \lesssim \int_{\mathbb{R}}|g(u)|^2(F^{-1})'(u)\dd u+\sum_{m_1+\dots+rm_r=r}\int_\mathbb{R}|g^{(m_1+\dots+m_r)}(u)|^2(F^{-1})'(u)\dd u\lesssim \|g\|^2_{H^r}.
    \end{align*}
    It follows by the density of $C^{\infty}_c(\mathbb{R})$ and a closedness argument \cite[Ex.~IV.19]{ikeda_koopman_2024} that $\koop_F$, and hence $\koop_F^*$, is bounded on $H^r(\mathbb{R})$. Next, for any $l\in\mathbb{N}$ and $g\in C^{\infty}_c(\mathbb{R})$, where $\lesssim$ indicates inequality up to a constant not depending on $g$ or $l$, we have
    \begin{equation} \label{eq_sum_int}
        \begin{aligned}
            \|\koop_F^l g\|_{H^r}^2 & \lesssim \|\koop_F^l g\|_{L^2}^2+\|(\koop_F^l g)^{(r)}\|_{L^2}^2                                                                                                           \\
                                    & \lesssim \int_{\mathbb{R}}|g(F^l(x))|^2\dd x+\sum_{m_1+\cdots+rm_r=r}\int_\mathbb{R}|g^{(m_1+\dots+m_r)}(F^l(x))|^2\left(\prod_{j=1}^r|(F^l)^{(j)}(x)|^{2m_j}\right)\dd x.
        \end{aligned}
    \end{equation}
    Splitting up the integral in \cref{eq_sum_int} into the intervals $I_n$ yields
    \[
        \begin{split}
            \int_\mathbb{R}|g^{(m_1+\dots+m_r)}(F^l(x))|^2\prod_{j=1}^r|(F^l)^{(j)}(x)|^{2m_j}\dd x =\sum_{n\in\mathbb{Z}}\int_{I_n}|g^{(m_1+\dots+m_r)}(F^l(x)|^2\prod_{j=1}^r|(F^l)^{(j)}(x)|^{2m_j}\dd x \\
            =\sum_{n\in\mathbb{Z}}\int_{I_{n+l}}|g^{(m_1+\dots+m_r)}(u)|^2\left(\prod_{j=1}^r|(F^l)^{(j)}(F^{-l}(u))|^{m_j}\right)^2((F^l)'(F^{-l}(u)))^{-1}\dd u
        \end{split}
    \]
    Repeated applications of the product and chain rules and that $m_1+\dots+rm_r=r$ imply that $\prod_{j=1}^r|(F^l)^{(j)}(x)|^{m_j}$ is a sum of at most $l^r$ terms, where each term is a product of at most $lr$ copies of shifted $F'$ terms, at most $r$ of which have argument in the interval $I_{n+j-1}$ for $1\leq j\leq l$, and at most $r$ higher derivative terms. Let $x_n=\min\{1,\sup_{t\in I_n}|F'(t)|\}$ and let $y_n=\inf_{t\in I_n}|F'(t)|$. Then, we have that
    \begin{equation*}
        \begin{split}
             & \sum_{n\in\mathbb{Z}}\int_{I_{n+l}}|g^{(m_1+\dots+m_r)}(u)|^2\left(\prod_{j=1}^r|(F^l)^{(j)}(F^{-l}(u))|^{m_j}\right)^2((F^l)'(F^{-l}(u)))^{-1}\dd u                                                                    \\
             & \quad\quad\quad\lesssim\sum_{n\in\mathbb{Z}}l^{2r}\prod_{j=1}^{l}x_{n+j-1}^{2r}y_{n+j-1}\int_{I_n}|g^{(m_1+\dots+m_r)}(u)|^2\dd u\lesssim l^{2r}\|g\|_{H^r}^2\sup_{n\in\mathbb{Z}}\prod_{j=1}^lx_{n+j-1}^{2r}y_{n+j-1}.
        \end{split}
    \end{equation*}
    Similarly, for the simpler term in \cref{eq_sum_int}, we have
    \[
        \int_{\mathbb{R}}|g(F^l(x))|^2\dd x=\sum_{n\in\mathbb{Z}}\int_{I_n}|g(F^l(x))|^2\dd x=\sum_{n\in\mathbb{Z}}\int_{I_{n+l}}|g(u)|^2((F^l)'(F^{-l}(u)))^{-1}\dd u\lesssim \|g\|_{H^r}^2\sup_{n\in\mathbb{Z}}\prod_{j=1}^ly_{n+j-1}.
    \]
    Therefore, if $\|g\|_{H^r}=1$, we have for some constant $C$ not depending on $l$ or $g$,
    $$
        \|\koop_F^lg\|_{H^r}^{1/l}\leq C^{1/2l}l^{r/l}\left(\sup_{n\in\mathbb{Z}}\prod_{j=1}^lz_{n+j}\right)^{1/2l},
    $$
    where $z_{n}=\min\{x_n^{2r},1\}y_n$. Next, by the AM-GM inequality, we obtain
    \[
        \left(\sup_{n\in\mathbb{Z}}\prod_{j=1}^lz_{n+j}\right)^{1/2l}\leq\sqrt{\sup_{n\in\mathbb{Z}}\frac{1}{l}\sum_{j=1}^lz_{n+j}}.\]
    For any $L\in\mathbb{N}$,
    $$
        \limsup_{l\rightarrow\infty}\sup_{n\in\mathbb{Z}}\frac{1}{l}\sum_{j=1}^lz_{n+j}\leq \limsup_{l\rightarrow\infty}\frac{1}{l}\sum_{j=1}^l\sup_{|n|>L}z_{n}=\sup_{|n|>L}z_n,
    $$
    and so taking $L\rightarrow\infty$ we find that
    $$
        \limsup_{l\rightarrow\infty}\sup_{n\in\mathbb{Z}}\frac{1}{l}\sum_{j=1}z_{n+j}\leq \limsup_{|l|\rightarrow\infty}z_l=\alpha^{-(2r-1)},
    $$
    which implies that $\lim_{l\rightarrow\infty}\|\koop_F^l\|^{1/l}_{H^r}\leq \alpha^{-(r-1/2)}$. If we replace $l$ by $-l$, the same argument applies but with $F$ replaced by $F^{-1}$ and vice versa. Finally, by Gelfand's spectral radius formula, we have
    $
        \spec(\koop_F^*)\subset\{z\in\mathbb{C}:\alpha^{r-1/2}\leq|z|\leq\alpha^{-(r-1/2)}\}.
    $

    For the other inclusion, we fix $z\in\mathbb{C}$ such that $\sqrt{\alpha}< |z|< 1/\sqrt{\alpha}$ and $|z|\neq 1$. Suppose by contradiction that $\koop_{F}-zI$ is surjective and choose $h\in H^r(\mathbb{R})$ such that $\mathrm{supp}\;h\subset I_0$. Then, there exists $g\in H^r(\mathbb{R})$ such that $\koop_F g-z g=h$, and we define $f=g|_{I_0}$. After evaluating at $x\in I_0$, $g|_{I_1}(F(x))=zf(x)+h(x)$ and so $g|_{I_1}=z f\circ F^{-1}+h\circ F^{-1}$. Since $\mathrm{supp}\;h\subset I_0$, by evaluating at $x\in I_1$, $g|_{I_2}(F(x))=zg|_{I_1}(x)+h(x)=zg|_{I_1}(x)$ and so $g|_{I_2}=z^2f\circ F^{-2}+zh\circ F^{-2}$. By induction, setting $p(x)=z f(x)+h(x)$ for all $x\in I_0$, we have that
    $
        g|_{I_n}=z^{n-1}(p\circ F^{-n})
    $
    for all $n\geq 1$. Now, as in the proof of \cref{iemspecinterval_lemma}, we want to show that $g\notin H^r(\mathbb{R})$. Due to the switched role of $\alpha$ and $1/\alpha$, rather than considering $\|g'\|_{L^2}$ we instead consider $\|g\|_{L^2}$. Assume that $\int_{I_0}|p'(u)|^2\dd u>0$, we estimate
    $\|g\|_{L^2}$ as follows:
    \begin{align*}
        \|g\|_{L^2} &
        \geq\sum_{n=1}^{\infty}\int_{I_n}|z|^{2(n-1)}|(p(F^{-n}(x))|^2\dd x                              =\sum_{n=1}^{\infty}|z|^{2(n-1)}\int_{I_0}|p'(u)|^2((F^{-1})'(F(u))\cdots (F^{-1})'(F^n(u)))^{-1}\dd u \\
                    & \geq|z|^{-2}\int_{I_0}|p'(u)|^2\dd u\sum_{n=1}^{\infty}|z|^{2n}\left(\prod_{j=1}^{n}\sup_{t\in I_j}|(F^{-1})'(t)|\right)^{-1}.
    \end{align*}
    By assumption, there exists $J\in\mathbb{N}$ such that for all $j\geq J$, $(F^{-1})'|_{I_j}=\alpha$ and $1/|z|<1/\sqrt{\alpha}$, so for all $j\geq J$ $(\sup_{t\in I_j}|(F^{-1})'(t)|)^{-1}>1/|z|^2$. Additionally, as $(F^{-1})'$ is bounded away from $0$, we have
    $$
        \|g\|_{L^2}\geq C\sum_{n=J}^{\infty}|z|^{2n}\prod_{j=J}^n\inf_{t\in I_j}|(F^{-1})'(t)|\geq C\sum_{n=1}^{\infty}|z|^{2n}/|z|^{2(n-J+1)}=\infty,
    $$
    where
    $$C\coloneq |z|^{-2}\int_{I_0}|p'(u)|^2\dd u\prod_{j=1}^{J-1}\inf_{t\in I_j}|(F^{-1})'(t)|>0.$$
    Therefore, $g\notin H^r(\mathbb{R})$, which yields the desired contradiction. If instead we have $\int_{I_0}|p'(u)|^2\dd u=0$ then $p|_{I_0}$ is constant and so $g$ is piecewise constant on $\bigcup_{n=1}^{\infty}I_n$ and for $|z|\neq 1$, $g$ is not weakly differentiable and hence $g\notin H^r(\mathbb{R})$ also. Then, since $\spec_{\mathrm{su}}(\koop_{F})$ is closed, we have
    $\{z\in\mathbb{C}:\;\sqrt{\alpha}\leq |z|\leq 1/\sqrt{\alpha}\}\subset \spec_{\mathrm{su}}(\koop_{F})=\spec_{\mathrm{ap}}(\koop_F^*).
    $
\end{proof}

Now, we will use the above lemma to prove our desired impossibility result.

\begin{theorem}[Approximate point spectrum lower bound on $H^r(\mathbb{R})$]
    \label{notdelta2specaprealline_thm}
    Let $r\in\mathbb{N}$; then there is no sequence of general algorithms that can compute the approximate point spectrum of any bounded Koopman operator on $H^r(\mathbb{R})$ in one limit, i.e.,
    $$
        \left\{\Xi_{\spec^*_{\mathrm{ap}}},\Omega_{H^r(\mathbb{R})}^B,\mathcal{M}_{\mathrm{H}},\Lambda_{\mathbb{R}}\right\}\notin\Delta_2^G.
    $$
\end{theorem}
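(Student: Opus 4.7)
The plan is to emulate the proof of \cref{notdelta2specapinterval_thm}, substituting \cref{iemspecrealline_lemma} for \cref{iemspecinterval_lemma}, and to exploit the noncompactness of $\mathbb{R}$ to ensure that the limiting adversarial map produces a \emph{bounded} Koopman operator, so that it lies in $\Omega_{H^r(\mathbb{R})}^B$ rather than merely being densely defined. Fix $\alpha_1,\alpha_2\in(0,1)$ and $\tau>0$ with $(1+2\tau)\alpha_1^{-(r-1/2)}<(1-2\tau)\alpha_2^{-1/2}$, pick any compact $K\subset\mathbb{C}$ containing $\overline{B_{2/\sqrt{\alpha_2}}(0)}$, and assume for a contradiction that $\{\Gamma_n\}$ is a general $\Delta_2^G$-tower computing $\spec_{\mathrm{ap}}(\koop_F^*)$ in the Hausdorff metric for every $F\in\Omega_{H^r(\mathbb{R})}^B$.

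I will inductively build $C^\infty$ interval exchange maps $F_k$ on $\mathbb{R}$ with intervals $\{I_n^{(k)}\}$, together with indices $n_k$ and cutoffs $N_k$, so that $F_{k+1}$ is obtained from $F_k$ by modifying it only outside $\bigcup_{|n|\leq N_k}I_n^{(k)}$, overwriting the slope asymptotics to alternate between $\alpha_1^{\mp 1}$ (for odd $k$) and $\alpha_2^{\mp 1}$ (for even $k$), via two smooth transition intervals $I_{\pm(N_k+1)}^{(k+1)}$ (of length bounded below by $1$, say) joining the inner region to the new affine tails. \cref{iemspecrealline_lemma} applied to $F_k$ yields
\[
\{z:\alpha_i^{1/2}\leq|z|\leq\alpha_i^{-1/2}\}\subset\spec_{\mathrm{ap}}(\koop_{F_k}^*)\subset\{z:\alpha_i^{r-1/2}\leq|z|\leq\alpha_i^{-(r-1/2)}\},
\]
where $i=1$ for odd $k$ and $i=2$ for even $k$. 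Combining Hausdorff convergence with \cref{genalg_defn}, for odd $k$ I obtain $\sup\{|z|:z\in K\cap\Gamma_{n_k}(F_k)\}\leq(1+\tau)\alpha_1^{-(r-1/2)}$, while for even $k$ I obtain $\sup\{|z|:z\in K\cap\Gamma_{n_k}(F_k)\}\geq(1-\tau)\alpha_2^{-1/2}$, with $\Lambda_{\Gamma_{n_k}}(F_k)\subset\bigcup_{|n|\leq N_k}I_n^{(k)}$ in both cases.

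Set $F(x)=\lim_{k\to\infty}F_k(x)$; the pointwise limit is well defined since any fixed $x$ eventually lies in the stable inner region $\bigcup_{|n|\leq N_k}I_n^{(k)}$. By consistency of general algorithms, $\Gamma_{n_k}(F)=\Gamma_{n_k}(F_k)$ for every $k$, so $\sup\{|z|:z\in K\cap\Gamma_{n_k}(F)\}$ alternates between a value $\leq(1+\tau)\alpha_1^{-(r-1/2)}$ and a value $\geq(1-\tau)\alpha_2^{-1/2}$; by the choice of $\tau$, this rules out convergence in the Hausdorff metric on $K$, contradicting the assumed convergence of $\Gamma_n(F)$ to $\spec_{\mathrm{ap}}(\koop_F^*)$.

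The main obstacle, and the reason this strengthened statement succeeds on $\mathbb{R}$ whereas the finite-interval version required unbounded operators, is verifying that $F\in\Omega_{H^r(\mathbb{R})}^B$. On $(0,1)$ the intervals approaching the endpoints must shrink, forcing the derivatives of the successive transition functions to blow up, and $\koop_F$ to be unbounded. On $\mathbb{R}$, since the interval lengths $|I_n^{(k)}|$ grow geometrically as $|n|\to\infty$, I have the freedom to choose each transition interval $I_{\pm(N_k+1)}^{(k+1)}$ to have length uniformly bounded below in $k$, and hence to pick transition functions $g_{1,2}^{(k+1)}$ with all derivatives uniformly bounded in $k$. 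Combined with the affine behavior on the remaining pieces (slopes in $\{\alpha_1^{\pm 1},\alpha_2^{\pm 1}\}$), this gives $F,F^{-1}\in C^r(\mathbb{R})$ with uniformly bounded derivatives up to order $r$, and $F',(F^{-1})'$ bounded away from $0$ and $\infty$. The change-of-variables estimate appearing in the boundedness part of the proof of \cref{iemspecrealline_lemma} then yields $\|\koop_F g\|_{H^r}\lesssim\|g\|_{H^r}$ on the dense subspace $C_c^\infty(\mathbb{R})\subset H^r(\mathbb{R})$, so $\koop_F$ extends to a bounded operator, placing $F\in\Omega_{H^r(\mathbb{R})}^B$ as required. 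The extension to $H^r(\mathcal{X})$ with $\mathcal{X}$ a product involving at least one copy of $\mathbb{R}$ then proceeds as in the remark following \cref{notdelta2specapinterval_thm}, by letting the adversarial map act nontrivially only in the $\mathbb{R}$-direction.
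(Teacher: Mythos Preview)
Your proposal is correct and follows essentially the same approach as the paper: both assume a $\Delta_2^G$-tower for contradiction, inductively build interval exchange maps $F_k$ on $\mathbb{R}$ whose asymptotic slopes alternate between $\alpha_1^{\pm1}$ and $\alpha_2^{\pm1}$, invoke \cref{iemspecrealline_lemma} to pin down $\spec_{\mathrm{ap}}(\koop_{F_k}^*)$, and use consistency of general algorithms to show $\Gamma_{n_k}(F)$ cannot converge for the limiting map $F$. You correctly identify the key new point versus the finite-interval case---that on $\mathbb{R}$ the transition intervals can be kept uniformly large, so the derivatives of the transition functions (and hence of $F$) remain uniformly bounded, yielding $F\in\Omega_{H^r(\mathbb{R})}^B$; the paper establishes the same uniform bound by an explicit rescaling argument showing the transition functions are dilations of a fixed profile, but your length-bounded-below argument achieves the same end.

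Two minor remarks: since you are working in $\Omega_{H^r(\mathbb{R})}^B$ with the Hausdorff metric, the intersection with a compact $K$ is unnecessary (the paper drops it in this case), though it does no harm; and the transition interval lengths are determined by the interval-exchange structure rather than freely chosen, so ``bounded below by $1$, say'' should read ``bounded below by a fixed positive constant depending on $\alpha_1,\alpha_2$''---but this is cosmetic.
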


\begin{proof}
    Suppose by contradiction, that $\{\Gamma_n\}$ is a $\Delta_2^G$-tower for $\{\Xi_{\spec^*_{\mathrm{ap}}},\Omega_{H^r(\mathbb{R})}^B,\mathcal{M}_{\mathrm{H}},\Lambda_{\mathbb{R}}\}$ satisfying
    $$
        \lim_{n\rightarrow\infty}\Gamma_n(F)=\spec_{\mathrm{ap}}(\koop_F^*)\quad
        \forall F\in\Omega_{H^r(\mathbb{R})}^B.
    $$
    Fix $\alpha_1,\alpha_2\in (0,1)$ and $\tau>0$ such that $(1+2\tau)\alpha_1^{-(r-1/2)}< (1-2\tau)\alpha_2^{-1/2}$.

    We first define an interval exchange map $F_1:\mathbb{R}\rightarrow\mathbb{R}$ with $I_0^{(1)}=[0,1)$, $I_1^{(1)}=[1,1/\alpha_1)$, $I_2^{(2)}=[1/\alpha_1,1/\alpha_1^2)$ and so on (by symmetry $I_{-1}^{(1)}=[-1,0)$, $I_{-2}^{(1)}=[-1/\alpha_1,-1)$, $\ldots$). We select $F_1$ to be affine on all intervals except $I_{-2}$, $I_{-1}$ and $I_{0}$, where we define it to be a smooth transition function $g^{(1)}$ such that $F_1$ is globally smooth and strictly increasing; so that $F_1$ is still an interval exchange map, we require $g^{(1)}(-1/\alpha_1)=-1$, $g^{(1)}(-1)=0$, $g^{(1)}(0)=1$ and $g^{(1)}(1)=1/\alpha$ (the first and last conditions are also imposed by continuity). In particular, we have
    $$
        F_1(x)=\begin{cases}
            \alpha_1x,           & x\in \bigcup_{n=-\infty}^{-3}I_n, \\
            g^{(1)}(x),          & x\in I_{-2}\cup I_{-1}\cup I_{0}, \\
            \frac{1}{\alpha_1}x, & x\in\bigcup_{n=1}^{\infty}I_n.
        \end{cases}
    $$
    Following \cref{iemspecrealline_lemma}, we have
    $$
        \{z\in\mathbb{C}:\alpha_1^{1/2}\leq |z|\leq \alpha_1^{-1/2)}\}\subset\spec_{\mathrm{ap}}(\koop_{F_1}^*)\subset\spec(\koop^*_{F_1})\subset\{z\in\mathbb{C}:\alpha_1^{r-1/2}\leq |z|\leq \alpha_1^{-(r-1/2)}\}.
    $$
    Then, by \cref{genalg_defn} of a general algorithm, there exists $n_1>0$, $N_1>3$ such that
    $$
        \sup\{|z|:{z\in \Gamma_{n_1}(F_1)}\}\leq (1+\tau)\alpha_1^{-(r-1/2)},\quad\Lambda_{\Gamma_{n_1}}(F_1)\subset\bigcup_{n=-N_1}^{n=N_1-1}I_n^{(1)}.
    $$
    Next, we choose a new sequence $\{c_n^{(2)}\}_{n=0}^\infty$ such that $I_n^{(2)}=I_n^{(1)}$ for $-N_1\leq n\leq N_1$, $I_{N_1+1}^{(2)}=[1/(\alpha_1^{N_1}),1/(\alpha_1^{N_1}\alpha_2))$, $I_{N_1+2}^{(2)}=[1/(\alpha_1^{N_1}\alpha_2),1/(\alpha_1^{N_1}\alpha_2^2))$ and so on, and $I_{-N_1-1}=[-1/(\alpha_1^{N_1-1}\alpha_2),-1/(\alpha_1^{N_1-1}))$ and so on.
    Then, we define the function $F_2:\mathbb{R}\rightarrow\mathbb{R}$ by
    $$
        F_2(x)=\begin{cases}
            \alpha_2x,           & x\in\bigcup_{n=-\infty}^{-N_1-2}I_n, \\
            g_1^{(2)}(x),        & x\in I_{-N_1-1},                     \\
            F_1(x),              & x\in\bigcup_{n=-N_1}^{N_1-1}I_n,     \\
            g_2^{(2)}(x),        & x\in I_{N_1},                        \\
            \frac{1}{\alpha_2}x, & x\in\bigcup_{n=N_1+1}^{\infty}I_n,
        \end{cases}
    $$
    where $\smash{g_1^{(2)}}$ and $\smash{g_2^{(2)}}$ are transition functions such that $F_2$ is globally smooth and strictly increasing. Applying \cref{iemspecrealline_lemma} again yields
    $$
        \{z\in\mathbb{C}:\alpha_2^{1/2}\leq |z|\leq \alpha_2^{-1/2)}\}\subset\spec_{\mathrm{ap}}(\koop_{F_2}^*)\subset\spec(\koop^*_{F_2})\subset\{z\in\mathbb{C}:\alpha_2^{r-1/2}\leq |z|\leq \alpha_2^{-(r-1/2)}\}
    $$
    and so by \cref{genalg_defn} of a general algorithm, there exists $n_2>n_1$, $N_2>N_1+2$ such that
    $$
        \sup\{|z|:{z\in \Gamma_{n_2}(F_2)}\}\geq (1-\tau)\alpha_2^{-1/2},\quad\Lambda_{\Gamma_{n_2}}(F_2)\subset\bigcup_{n=-N_2}^{n=N_2-1}I_n^{(2)}.
    $$
    We may proceed in the same way by induction and construct a sequence of functions $\{F_k\}_k$ (of the form given in \cref{iemspecrealline_lemma}), $n_k$ and $N_k$ such that for all $k$, $\bigcup_{n=-N_k}^{N_k-1}I_n^{(k)}=\bigcup_{n=-N_k}^{N_k-1}I_n^{(k+1)}$, $F_{k+1}|_{\bigcup_{n=-N_k}^{N_k-1}I_n^{(k)}}=F_k|_{\bigcup_{n=-N_k}^{N_k-1}I_n^{(k)}}$, and if $k$ is odd:
    $$
        \sup\{|z|:{z\in \Gamma_{n_k}(F_k)}\}\leq (1+\tau)\alpha_1^{-(r-1/2)},\quad\Lambda_{\Gamma_{k}}(F_k)\subset\bigcup_{n=-N_k}^{n=N_k-1}I_n^{(k)},
    $$
    while if $k$ is even:
    $$
        \sup\{|z|:{z\in \Gamma_{n_k}(F_k)}\}\geq (1-\tau)\alpha_2^{-1/2},\quad\Lambda_{\Gamma_{n_k}}(F_k)\subset\bigcup_{n=-N_k}^{n=N_k-1}I_n^{(k)}.
    $$
    Moreover, since the functions $g_1^{(k)}$ and $g_2^{(k)}$ are chosen to smoothly connect the functions $\alpha_1 x$ and $\alpha_2 x$ in either order or the functions $x/\alpha_1$ and $x/\alpha_2$ in either order over intervals of increasing size, we can choose them to be rescalings of one another, which ensures that the derivatives of $F_k$ are uniformly bounded in $k$ and also that the $(F_k^{-1})'$ are uniformly bounded. In particular, consider the function $g_2^{(2)}(x)$ which transitions between $x/\alpha_1$ and $x/\alpha_2$ on the interval $I_{N_1}^{(1)}=[1/(\alpha_1^{N_1-1}),1/(\alpha_1^{N_1}))$ with $g_2^{(2)}(1/(\alpha_1^{N_1-1}))=1/\alpha_1^{N_1}$ and $g_2^{(2)}(1/(\alpha_1^{N_1}))=1/(\alpha_1^{N_1}\alpha_2)$. Suppose that in a future step of the induction we need to transition again between $x/\alpha_1$ and $x/\alpha_2$ on an interval $I=[\beta,\beta/\alpha_1)$ with a transition function $g$ such that $g(\beta)=g(\beta/\alpha_1)$ and $g(\beta/\alpha_1)=g(\beta/(\alpha_1,\alpha_2)$ (where $\beta>1/\alpha_1^{N_1-1}$ is a suitable product of $1/\alpha_1$ and $1/\alpha_2$ constructed by the induction). Then we see that $g(x)=\beta\alpha_1^{N_1-1}g_2^{(2)}(x/(\beta \alpha_1^{N_1-1}))$ works, and $g'(x)=(g_2^{(2)})'(x/(\beta\alpha_1^{N_1-1}))$, so $\sup_{x\in I}g'(x)=\sup_{x\in I_{N_1}^{(1)}}(g_2^{(2)})'(x)$ and by the inverse function theorem we also have that $\sup_{x\in I}(g^{-1})'(x)=\sup_{x\in I_{N_1}^{(1)}}((g_2^{(2)})^{-1})'(x)$; additionally, $g''(x)=(1/(\beta/\alpha_1^{N_1-1}))(g_2^{(2)})''(x/(\beta\alpha_1^{N_1-1}))$, and so $\sup_{x\in I}g''(x)<\sup_{x\in I_{N_1}^{(1)}}(g_2^{(2)})''(x)$, and the same holds for all higher derivatives.

    Next, we define the function $F$ by $F(x)=\lim_{k\rightarrow\infty}F_k(x)$, where the limit exists by construction as for all $m\geq k$,
    $$\bigcup_{n=-N_k}^{N_k-1}I_n^{(m)}=\bigcup_{n=-N_k}^{N_k-1}I_n^{(k)},\quad F_m|_{\bigcup_{n=-N_k}^{N_k-1}I_n^{(m)}}=F_k|_{\bigcup_{n=-N_k}^{N_k-1}I_n^{(k)}};$$ it is also clearly a smooth, strictly increasing, interval exchange map. Moreover, all derivatives of $F$ are bounded, and $(F^{-1})'$ is also bounded. Hence, it is straightforward to see that $\koop_F$ is bounded by using the same arguments as in the start of \cref{iemspecrealline_lemma}.

    By consistency of general algorithms, $\Gamma_{n_k}(F)=\Gamma_{n_k}({F_k})$ for all $k$. Since $(1+2\tau)\alpha_1^{-(r-1/2)}<(1-2\tau)\alpha_2^{-1/2}$, it follows that $\Gamma_{n_k}(F)$ cannot converge, the required contradiction.
\end{proof}

\begin{remark}[Extensions to higher dimensions and other domains]
    The above results can be directly extended to $H^r(\mathbb{R}^d)$ for $r>d/2$ by taking $\hat{F}(x_1,\dots,x_d)=(F(x_1),x_2,\dots,x_d)$. When estimating the bounds on $\|\koop^l g\|_{H^r}^2$, we obtain a number (depending on $r$ and $d$) of additional terms that incorporate the derivatives in $x_2,\dots,x_d$. However, these terms all satisfy the same bound as the corresponding term where all derivatives fall on $x_1$, and it does not affect the result. For the surjective spectrum, the same argument works by taking $\mathrm{supp}\;h\subset I_0\times\mathbb{R}^{d-1}$. Note that this also works for more general rectangular domains, where some sides can be the whole real line and others a finite interval, as long as at least one side is the whole real line.
\end{remark}

\begin{remark}
    Similarly to \cref{notdelta1specapeps_thm}, we may use the same family of adversarial dynamical systems to directly prove that the problem of computing $\spec_{\mathrm{ap},\epsilon}(\koop^*)$ does not lie in $\Delta_1^G$ on $H^r(\mathbb{R}^d)$ or more general rectangular domains.
\end{remark}

\subsection{Random algorithms and/or data do not help}
\label{sec_extension_to random}

We now show that the above impossibility results extend to probabilistic algorithms.
To study such algorithms in the most general setting—and thereby obtain the strongest possible impossibility results—we allow general algorithms access to an infinite sequence of fair coin tosses. In the context of computing spectral properties of Koopman and Perron--Frobenius operators from snapshots, such randomization could be used, for example, to select data or in the computations itself.

Define $\mathcal{C}=\{0,1\}^{\mathbb{N}}$ equipped with the product topology where each $\{0,1\}$ has the discrete topology, and let $p_j$ denote the projection map $p_j(a)=a_j$ for $a\in \mathcal{C}$, $j\in\mathbb{N}$. Finally, define the probability measure $\mathbb{P}$ to be that of a fair coin, i.e., for $n\in\mathbb{N}$ and $a_1,\dots,a_n\in\{0,1\}$, $\mathbb{P}\left(\{b\in \mathcal{C}:b_j=a_j,j=1,\dots,n\}\right)=2^{-n}$. We may now define a probabilistic computational problem and a probabilistic general algorithm.

\begin{remark}[$\mathrm{NH}$ point]
    In the definitions below, we include a new point $\mathrm{NH}$ to be interpreted as `non-halting.' For example, consider the algorithm that, when given access to arbitrarily many tosses of a potentially biased coin, is asked whether the coin comes up heads with probability $0$. If the coin comes up heads at any point, the algorithm outputs a negative answer, but if the coin comes up tails infinitely many times, it never outputs an answer. `Non-halting' allows for the possibility that the algorithm may never output an answer.
\end{remark}

\begin{definition}[Probabilistic computational problem]
    \label{probcompprob_defn}
    The probabilistic computational problem corresponding to a computational problem $\{\Xi,\Omega,\mathcal{M},\Lambda\}$ is defined as $\{\Xi^{\mathbb{P}},\Omega^{\mathbb{P}},\mathcal{M}^{\mathbb{P}},\Lambda^{\mathbb{P}}\}$, where $\Omega^{\mathbb{P}}=\Omega\times \mathcal{C}$, $\mathcal{M}^{\mathbb{P}}=\mathcal{M}\cup\{\mathrm{NH}\}$, where $\mathrm{NH}$ is added as an isolated point to $(\mathcal{M},d)$, $\Xi^{\mathbb{P}}(\zeta,a)=\Xi(\zeta)$ for $(\zeta,a)\in\Omega^{\mathbb{P}}$, and $\Lambda^{\mathbb{P}}=\{\tilde{f}:\tilde{f}(\zeta,a)\coloneqq f(\zeta),f\in\Lambda,(\zeta,a)\in\Omega^{\mathbb{P}}\}\cup\{p_j:p_j(\zeta,a)\coloneqq a_j,j\in\mathbb{N},(\zeta,a)\in\Omega^{\mathbb{P}}\}$.
\end{definition}

\begin{definition}[Probabilistic general algorithm]
    \label{probgenalg_defn}
    Given a computational problem $\{\Xi,\Omega,\mathcal{M},\Lambda\}$, a probabilistic general algorithm is a map $\Gamma:\Omega^\mathbb{P}\to \mathcal{M}\cup\{\mathrm{NH}\}$ with the following property. For any $(\zeta,a)\in\Omega^\mathbb{P}$, there exists a non-empty subset of evaluations $\Lambda_\Gamma(\zeta,a) \subset\Lambda^\mathbb{P}$ such that:
    \begin{itemize}[leftmargin=0.7cm]
        \item If $(\zeta',b)\in\Omega^\mathbb{P}$ with $f(\zeta,a)=f(\zeta',b)$ for every $f\in\Lambda_\Gamma(\zeta,a)$, then $\Lambda_\Gamma(\zeta,a)=\Lambda_\Gamma(\zeta',b)$ and $\Gamma(\zeta,a)=\Gamma(\zeta',b)$;
        \item If $\Gamma(\zeta,a)\neq \mathrm{NH}$, then $\Lambda_\Gamma(\zeta,a)$ is finite.
    \end{itemize}
    We will refer to a `sequence of probabilistic general algorithms' as an SPGA.
\end{definition}

This definition of an SPGA includes any standard model of computation that possesses the capability of flipping coins (e.g., probabilistic Turing or BSS machines)\footnote{One could also consider other, even continuous, probability distributions. In the case of BSS machines, machines that can pick numbers uniformly at random in $[0,1]$ are no more computationally powerful \cite[Sec.~17.5]{BCSS}. Hence, we do not consider such scenarios, which are also unrealistic in practice.}, thus providing a broad class of algorithms for our impossibility results. One can show, (e.g., see \cite[Appendix]{colbrook2024limits}) that if $\Gamma$ is a probabilistic general algorithm  for a computational problem $\{\Xi,\Omega,\mathcal{M},\Lambda\}$, then for any fixed $\zeta\in\Omega$, the map $a\mapsto \Gamma(\zeta,a)$ is measurable. Hence, we may define
$$
    \mathbb{P}\left(\lim_{n\rightarrow\infty}\Gamma_n(\zeta)=\Xi(\zeta)\right)=\mathbb{P}\left(\left\{a\in \mathcal{C}:\lim_{n\rightarrow\infty}\Gamma_n(\zeta,a)=\Xi^{\mathbb{P}}(\zeta,a)\right\}\right).
$$
This allows us define the probabilistic SCI classes $\Delta_1^{\mathbb{P}}$ and $\Delta_2^{\mathbb{P}}$.

\begin{definition}[Probabilistic SCI class]
    \label{probsci_lemma}
    A computational problem $\{\Xi,\Omega,\mathcal{M},\Lambda\}$ belongs to $\Delta_1^{\mathbb{P}}$ if there exists an SPGA $\{\Gamma_n\}$ with
    $$
        \mathbb{P}\left(d(\Gamma_{n}(\zeta),\Xi(\zeta))\leq 2^{-n}\right)> 2/3\quad \forall n\in\mathbb{N}, \zeta\in\Omega.
    $$
    A computational problem $\{\Xi,\Omega,\mathcal{M},\Lambda\}$ belongs to $\Delta_2^{\mathbb{P}}$ if there exists an SPGA  $\{\Gamma_n\}$ with
    $$
        \mathbb{P}\left(\lim_{m\rightarrow\infty}\Gamma_{m}(\zeta)=\Xi(\zeta)\right)> 2/3\text{ and }
        \mathbb{P}\left(\Gamma_{n}(\zeta)\neq \mathrm{NH}\right)> 2/3\quad \forall n\in\mathbb{N}, \zeta\in\Omega.
    $$
\end{definition}

\begin{remark}
    The choice of $2/3$ in the above definition is standard in complexity theory but also occurs due to the inclusion of the additional point $\mathrm{NH}$.
\end{remark}

It is straightforward to show that if a computational problem does not lie in $\Delta_j^{\mathbb{P}}$, then it does not lie in $\Delta_j^G$, for $j\in\{1,2\}$. Remarkably, the converse is also true. The idea of the proof of the following theorem is that since a probabilistic general algorithm only ever uses finitely many evaluation functions when it does not output $\mathrm{NH}$, we can construct a finite set of evaluation functions such that, with high probability, the probabilistic general algorithm gives output close to the problem function and uses only evaluation functions from that finite set. Then, by deterministically providing coin toss inputs to the finite set of evaluation functions, we can find an element of the metric space which is close to the output of the probabilistic general algorithm with high probability; the general algorithm then outputs this element. Then since the intersection of these high probabilistic events has non-zero probability (i.e., both occur for some input sequence of coin tosses), the resulting deterministic general algorithm has the desired behaviour for a $\Delta_i$ tower, $i=1,2$. This result is of independent interest in the SCI hierarchy.

\begin{theorem}[Equivalence between $\Delta_j^G$ and $\Delta_j^{\mathbb{P}}$]
    \label{random_no_help_thm}
    Let $\{\Xi,\Omega,\mathcal{M},\Lambda\}$ be a computational problem, where $\Lambda$ is countable, and let $j=1$ or $2$. Then $\{\Xi,\Omega,\mathcal{M},\Lambda\}\in \Delta_j^{\mathbb{P}}$ if and only if $\{\Xi,\Omega,\mathcal{M},\Lambda\}\in \Delta_j^G$. In particular, if $\{\Xi,\Omega,\mathcal{M},\Lambda\}\not\in \Delta_j^G$, then $\{\Xi,\Omega,\mathcal{M},\Lambda\}\notin \Delta_j^{\mathbb{P}}$.
\end{theorem}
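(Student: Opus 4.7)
The forward inclusion $\Delta_j^G\subseteq\Delta_j^{\mathbb{P}}$ will be immediate: a deterministic tower trivially becomes a probabilistic one that ignores the coin-toss coordinates, so both probabilities in \cref{probsci_lemma} equal $1>2/3$. I focus on the converse. Given an SPGA $\{\Gamma_n\}$ certifying $\{\Xi,\Omega,\mathcal{M},\Lambda\}\in\Delta_j^{\mathbb{P}}$, I enumerate $\Lambda=\{f_1,f_2,\ldots\}$ (possible by the countability assumption) and set $F_m=\{f_1,\ldots,f_m\}\cup\{p_1,\ldots,p_m\}\subset\Lambda^{\mathbb{P}}$.

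The structural backbone of the proof is the family of events
$$
G_{n,m}(\zeta)=\{a\in\mathcal{C}:\Gamma_n(\zeta,a)\neq\mathrm{NH},\ \Lambda_{\Gamma_n}(\zeta,a)\subset F_m\}.
$$
These are nested in $m$ and increase to $\{a:\Gamma_n(\zeta,a)\neq\mathrm{NH}\}$, since \cref{probgenalg_defn} forces any halting run to use only a finite evaluation set, which is eventually contained in $F_m$. The same definition's consistency clause shows that $\Gamma_n(\zeta,\cdot)$ restricted to $G_{n,m}(\zeta)$ depends only on $(a_1,\ldots,a_m)$, so $G_{n,m}(\zeta)$ is a cylinder over $\{0,1\}^m$ whose $\mathbb{P}$-measure equals the fraction of length-$m$ strings $a$ for which simulating $\Gamma_n(\zeta,(a,0^\infty))$ halts using only evaluations from $F_m$.

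For $j=1$, set $C_n(\zeta)=\{a:d(\Gamma_{n+2}(\zeta,a),\Xi(\zeta))\leq 2^{-(n+2)}\}$; by hypothesis $\mathbb{P}(C_n(\zeta))>2/3$, and $C_n(\zeta)\subseteq\{\Gamma_{n+2}\neq\mathrm{NH}\}$. Continuity of $\mathbb{P}$ from below yields $\mathbb{P}(C_n(\zeta)\cap G_{n+2,m}(\zeta))\uparrow\mathbb{P}(C_n(\zeta))>2/3$, so for $m$ sufficiently large more than $2/3$ of the strings $a\in\{0,1\}^m$ yield ``good'' runs. Define $\tilde\Gamma_n(\zeta)$ to search $m=1,2,\ldots$: simulate $\Gamma_{n+2}(\zeta,(a,0^\infty))$ for every $a\in\{0,1\}^m$, retain those $a$ whose runs halt using only $F_m$-evaluations together with their outputs (as a multiset $Y_m$), and stop once some $y\in Y_m$ lies within $2^{-(n+1)}$ of more than two-thirds of the $|Y_m|$ outputs, counted with multiplicity over $\{0,1\}^m$. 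This condition is met for large $m$ because all good outputs lie pairwise within $2\cdot 2^{-(n+2)}=2^{-(n+1)}$, so any good output already qualifies as $y$. The returned $y$ is within $2^{-(n+1)}$ of some good output, giving $d(\tilde\Gamma_n(\zeta),\Xi(\zeta))\leq 2^{-(n+1)}+2^{-(n+2)}<2^{-n}$; only $F_m$-evaluations are consulted, so $\tilde\Gamma_n$ inherits finiteness of $\Lambda_{\tilde\Gamma_n}$ and consistency from $\Gamma_{n+2}$.

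For $j=2$, the hypothesis $\mathbb{P}(\Gamma_k(\zeta)\to\Xi(\zeta))>2/3$ combined with continuity of $\mathbb{P}$ applied to the increasing events $\bigcap_{k\geq N}\{d(\Gamma_k,\Xi(\zeta))<\epsilon\}$ yields, for every rational $\epsilon>0$, a threshold $k_0(\zeta,\epsilon)$ with $\mathbb{P}(d(\Gamma_k(\zeta),\Xi(\zeta))<\epsilon)>2/3$ for $k\geq k_0$. I then apply the case $j=1$ construction with target precision $1/n$, diagonally scanning pairs $(k,m)$ for the first configuration admitting a $y$ within radius $1/n$ of more than two-thirds of the retained outputs, and output this $y$. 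Termination for every $n$ and every $\zeta$ follows from the case $j=1$ analysis, and the output satisfies $d(\tilde\Gamma_n(\zeta),\Xi(\zeta))\leq 2/n\to 0$. The main technical subtlety is the absence of an algebraic median on a general metric space, so there is no canonical ``majority point'' construction; I sidestep this by restricting candidate centres $y$ to the finite recorded output multiset $Y_m$, so that once the good fraction exceeds $2/3$ any single good output already witnesses the majority condition, which is all that is required.
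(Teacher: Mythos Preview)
Your overall architecture---enumerate finite evaluation sets $F_m$, use that $G_{n,m}(\zeta)$ is a cylinder over $\{0,1\}^m$ determined by $f_1(\zeta),\ldots,f_m(\zeta)$, and search for a majority output---is the right idea and parallels the paper's approach. However, there are two gaps, one minor and one serious.

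\textbf{The $j=1$ case.} Your stopping criterion asks for $y$ close to more than $\tfrac{2}{3}|Y_m|$ of the retained outputs. The correctness claim (``$y$ is within $2^{-(n+1)}$ of some good output'') needs that the bad-and-retained count is below this threshold. You know bad-and-retained $<\tfrac{1}{3}\cdot 2^m$ (since $\mathbb{P}(C_n(\zeta)^c)<\tfrac13$ and the relevant events are cylinders), but this does \emph{not} imply bad-and-retained $<\tfrac{2}{3}|Y_m|$ when $|Y_m|$ is small. Concretely, at a level $m$ where only one string is retained and it happens to be bad (which is consistent with $\mathbb{P}$(bad)$<1/3$), your condition is met with $y$ equal to that bad output. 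The fix is easy: take the threshold to be $\tfrac{2}{3}\cdot 2^m$ rather than $\tfrac{2}{3}|Y_m|$. Then ``close to $y$'' $>\tfrac{2}{3}\cdot 2^m$ and bad-and-retained $<\tfrac{1}{3}\cdot 2^m$ forces overlap with good outputs, and the termination argument still works since eventually $\mathbb{P}(C_n\cap G_{n+2,m})>\tfrac23$.

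\textbf{The $j=2$ case.} Here the gap is structural. Your diagonal scan over $(k,m)$ stops at the \emph{first} configuration with a two-thirds cluster, but for small $k$ there is no guarantee that $\mathbb{P}(d(\Gamma_k(\zeta,\cdot),\Xi(\zeta))<1/n)>2/3$---indeed the ``bad'' probability for $\Gamma_k$ can be $1$. If, say, $\Gamma_1(\zeta,a)=x_0$ for all $a$ with $x_0$ far from $\Xi(\zeta)$ (perfectly consistent with $\Delta_2^{\mathbb{P}}$ since only the tail behaviour of $\Gamma_k$ is constrained), then at $(k,m)=(1,1)$ every retained output equals $x_0$, your condition triggers, and $\tilde\Gamma_n(\zeta)=x_0$ for \emph{every} $n$. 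The asserted bound $d(\tilde\Gamma_n(\zeta),\Xi(\zeta))\le 2/n$ therefore fails. The paper handles this by a two-step construction: first, for each fixed precision level $N$, build $\Gamma_n^N$ with $\limsup_n d(\Gamma_n^N(\zeta),\Xi(\zeta))\le 2^{1-N}$ (this is where your $j=1$-style argument applies, with $k=n$ varying and $N$ fixed), and then run a separate stabilisation over $N$---choosing $N(m)\le m$ maximal so that $\Gamma_m^{N(m)}$ is consistent with all $\Gamma_m^l$ for $l\le N$---to merge the family $\{\Gamma_n^N\}_N$ into a single convergent sequence. Your diagonal scan collapses these two steps and loses the guarantee.
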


We will use the following lemma~\cite{colbrook2024limits}, which provides a convenient form of the consistency properties of \cref{probgenalg_defn}, as part of the proof of \cref{random_no_help_thm}.

\begin{lemma}[Consistency lemma, {\cite{colbrook2024limits}}]\label{probconsistency_lemma}
    Let $\Gamma$ be a probabilistic general algorithm for a computational problem $\{\Xi,\Omega,\mathcal{M},\Lambda\}$, $\zeta\in\Omega$, and $S\subset\Lambda^{\mathbb{P}}$. Then, for any $\zeta'$ such that $f(\zeta)=f(\zeta')$ for any $f\in S\cup\Lambda$,
    $$
        \{a\in \mathcal{C}:\Lambda_{\Gamma}(\zeta,a)\subset S\}=\{a\in \mathcal{C}:\Lambda_{\Gamma}(\zeta',a)\subset S\},
    $$
    and for any $a$ in this set, $\Gamma(\zeta,a)=\Gamma(\zeta',a)$.
\end{lemma}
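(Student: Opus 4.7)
The plan is to deduce the lemma directly from the consistency clause in the definition of a probabilistic general algorithm (\cref{probgenalg_defn}) applied to the pair $(\zeta,a)$ and $(\zeta',a)$ for a fixed common $a\in\mathcal{C}$. The only real content is to verify the hypothesis of that consistency clause on $\Lambda_\Gamma(\zeta,a)$ by splitting the evaluation functions in $\Lambda^{\mathbb{P}}$ into the two types used in \cref{probcompprob_defn}: the $\tilde f$ type, where $\tilde f(\zeta,a)=f(\zeta)$ depends only on $\zeta$, and the $p_j$ type, where $p_j(\zeta,a)=a_j$ depends only on $a$. On the first type the hypothesis $f(\zeta)=f(\zeta')$ for $f\in\Lambda$ does all the work; on the second type agreement is automatic because we keep the same $a$.

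Concretely, I would fix $a\in\mathcal{C}$ and assume $\Lambda_\Gamma(\zeta,a)\subset S$. For every $f\in\Lambda_\Gamma(\zeta,a)$ I check that $f(\zeta,a)=f(\zeta',a)$:
\begin{itemize}[leftmargin=*,topsep=0pt,itemsep=0pt]
    \item If $f=\tilde g$ arises from some $g\in\Lambda$, then $\tilde g(\zeta,a)=g(\zeta)=g(\zeta')=\tilde g(\zeta',a)$, using the hypothesis $g(\zeta)=g(\zeta')$ for $g\in\Lambda$.
    \item If $f=p_j$, then $p_j(\zeta,a)=a_j=p_j(\zeta',a)$ trivially.
\end{itemize}
With this hypothesis verified, the consistency clause of \cref{probgenalg_defn}, applied to the pair $(\zeta,a)$ and $(\zeta',a)$, yields $\Lambda_\Gamma(\zeta,a)=\Lambda_\Gamma(\zeta',a)$ and $\Gamma(\zeta,a)=\Gamma(\zeta',a)$. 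In particular $\Lambda_\Gamma(\zeta',a)\subset S$, so $a$ lies in the right-hand set of the claimed equality.

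Finally, I would observe that the assumption $f(\zeta)=f(\zeta')$ for $f\in S\cup\Lambda$ is symmetric in $\zeta$ and $\zeta'$, so swapping their roles and repeating the argument yields the reverse inclusion of sets. This simultaneously gives the claimed equality of the sets and the pointwise identity $\Gamma(\zeta,a)=\Gamma(\zeta',a)$ on the common set. I do not anticipate any technical obstacle: the only subtlety is the notational convention in the hypothesis `$f(\zeta)=f(\zeta')$ for $f\in S\cup\Lambda$', which I interpret in the natural way (as $f(\zeta,a)=f(\zeta',a)$ for $f\in S$ and $g(\zeta)=g(\zeta')$ for $g\in\Lambda$); under that reading the proof is a short verification once the two types of evaluation functions in $\Lambda^{\mathbb{P}}$ are separated.
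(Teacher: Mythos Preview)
The paper does not actually supply a proof of this lemma; it is stated with a citation to \cite{colbrook2024limits} and used as a black box in the proof of \cref{random_no_help_thm}. Your proposed argument is correct and is exactly the natural direct verification from \cref{probgenalg_defn}: split $\Lambda^{\mathbb{P}}$ into the lifted evaluations $\tilde g$ (where the hypothesis on $\Lambda$ gives agreement) and the coin-toss projections $p_j$ (where agreement is automatic because $a$ is held fixed), then invoke consistency and symmetry. One minor remark: in your argument the $S$-part of the hypothesis ``$f(\zeta)=f(\zeta')$ for $f\in S\cup\Lambda$'' is never actually used---the $\Lambda$-part alone suffices, since every element of $\Lambda_\Gamma(\zeta,a)\subset\Lambda^{\mathbb{P}}$ is already either a lifted $\tilde g$ with $g\in\Lambda$ or a projection $p_j$---so the lemma as stated is slightly over-hypothesised, but this does not affect your proof.
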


\begin{proof}[Proof of \cref{random_no_help_thm}]
    \textbf{Equivalence of $\Delta_1^{\mathbb{P}}$ and $\Delta_1^{G}$ when $\Lambda$ is countable:} It is clear that if a problem lies in $\Delta_1^{G}$, then it lies in $\Delta_1^{\mathbb{P}}$. For the converse, suppose that $\{\Xi,\Omega,\mathcal{M},\Lambda\}\in\Delta_1^{\mathbb{P}}$. Let $n\in\mathbb{N}$, then there exists a probabilistic general algorithm $\Gamma'$ such that
    \begin{equation}
        \label{delta_1P_assumption}
        \mathbb{P}\left(\left\{a\in\mathcal{C}:d(\Gamma'(\zeta,a),\Xi(\zeta))\leq 2^{-(n+1)}\right\}\right)> 2/3\quad \forall \zeta \in\Omega.
    \end{equation}
Since the set of finite subsets of $\Lambda^\mathbb{P}$ is countable, there exists an increasing sequence of finite sets $S_{1}\subset S_{2}\subset S_{3}\subset \cdots\subset \Lambda^\mathbb{P}$ such that if $S\subset\Lambda^\mathbb{P}$ is finite, then $S\subset S_M$ for sufficiently large $M$. Let $\zeta\in\Omega$ be an input, for any finite set $S\subset\Lambda^\mathbb{P}$, define the set
    $$
        U_S=\left\{a\in\mathcal{C}:\Lambda_{\Gamma'}(\zeta,a)\subset S\right\}.
    $$
    We claim that $U_S$ is open and hence measurable. To see this, suppose that $a\in U_S$. Since $\Lambda_{\Gamma'}(\zeta, a)\subset S$ is finite, \cref{probgenalg_defn} implies that there exists $N\in\mathbb{N}$ such that if $b\in\mathcal{C}$ with $p_n(b) = p_n(a)$ for all $n\leq N$, then $\Lambda_{\Gamma'}(\zeta, b) = \Lambda_{\Gamma'}(\zeta, a)$ and $\Gamma'(\zeta,b) = \Gamma'(\zeta,a)$. In particular, $b\in U_S$. Hence, an open neighborhood of $a$ exists inside $U_S$, and the claim follows.

    If $a\in\mathcal{C}$ has $\Gamma'(\zeta,a)\neq\mathrm{NH}$, then $\Lambda_{\Gamma'}(\zeta,a)$ is finite. It follows that
    $$
        \left\{a\in\mathcal{C}:\Gamma'(\zeta,a)\neq\mathrm{NH}\right\}\subset\bigcup_{S\subset\Lambda^\mathbb{P},S\text{ finite}}U_S=:U,
    $$
    where $U$ is open and hence measurable. Then, by \cref{delta_1P_assumption} and that $\mathrm{NH}$ is an isolated point, it follows that $\mathbb{P}(U)>2/3$. Moreover, the partition of $\Lambda^\mathbb{P}$ defined above implies that $U=\bigcup_{m=1}^\infty U_{S_{m}}$. Since the sets $\{U_{S_m}\}_m$ are increasing and $\mathbb{P}(U)>2/3$, we may choose $m$ minimal such that $\mathbb{P}(U_{S_{m}})>2/3$. We claim that with the choice $S(\zeta)=S_m$, the map $\zeta\mapsto S(\zeta)$ is a general algorithm with $\Lambda_{S}(\zeta)=\Lambda\cap S(\zeta)$. To see this, suppose that $\zeta'\in\Omega$ with $f(\zeta)=f(\zeta')$ for all $f\in \Lambda\cap S(\zeta)$. \cref{probconsistency_lemma} implies that
    $$
        \mathbb{P}\left(\left\{a\in\mathcal{C}:\Lambda_{\Gamma'}(\zeta',a)\subset S(\zeta)\right\}\right)=\mathbb{P}\left(\left\{a\in\mathcal{C}:\Lambda_{\Gamma'}(\zeta,a)\subset S(\zeta)\right\}\right)> 2/3.
    $$
    Since we chose the set $S_m$ with $m$ minimal in the above construction, it follows that $S(\zeta')\subset S(\zeta)$. Since $f(\zeta)=f(\zeta')$ for all $f\in \Lambda\cap S(\zeta')\subset \Lambda\cap S(\zeta)$, we apply \cref{probconsistency_lemma} again (with the roles of $\zeta$ and $\zeta'$ reversed) to see that
    $$
        \mathbb{P}\left(\left\{a\in\mathcal{C}:\Lambda_{\Gamma'}(\zeta,a)\subset S(\zeta')\right\}\right)=\mathbb{P}\left(\left\{a\in\mathcal{C}:\Lambda_{\Gamma'}(\zeta',a)\subset S(\zeta')\right\}\right)> 2/3.
    $$
    Again, using the minimality of $m$, we see that $ S(\zeta)\subset S(\zeta')$. It follows that $S(\zeta)=S(\zeta')$ and $\Lambda_{S}(\zeta)=\Lambda_{S}(\zeta')$. Hence, $S$ is a general algorithm as claimed.

    Given $\zeta\in\Omega$, let
    $$
        E_1(\zeta)=\left\{a\in\mathcal{C}:d(\Gamma'(\zeta,a),\Xi(\zeta))\leq 2^{-(n+1)}\right\}.
    $$
    Since $\mathbb{P}(E_1(\zeta))>2/3$ and $\mathbb{P}(U_{S(\zeta)})>2/3$, it follows immediately that $\mathbb{P}(E_1(\zeta)\cap U_{S(\zeta)})>1/3$. Let $\Gamma(\zeta)$ be any member of $\mathcal{M}$ that satisfies
    $$
        \mathbb{P}\left(\left\{a\in\mathcal{C}:\Lambda_{\Gamma'}(\zeta,a)\subset S(\zeta),d(\Gamma(\zeta),\Gamma'(\zeta,a))\leq 2^{-(n+1)}\right\}\right)> 1/3.
    $$
    Such a choice $\Gamma(\zeta)$ must always exist (since the above is satisfied by $\Xi(\zeta)$ since $\mathbb{P}(E_1(\zeta)\cap U_{S(\zeta)})>1/3$). Since $\Gamma(\zeta)$ only depends on finitely many $f(\zeta)$ such that $f\in\Lambda\cap S(\zeta)$, and $S(\zeta)$ is a general algorithm, we may choose $\Gamma(\zeta)$ consistently as $\zeta$ varies so that it is a general algorithm. Note that
    $$
        \mathbb{P}\left(E_1(\zeta)\cap\left\{a\in\mathcal{C}:\Lambda_{\Gamma'}(\zeta,a)\subset S(\zeta),d(\Gamma(\zeta),\Gamma'(\zeta,a))\leq 2^{-(n+1)}\right\}\right)>2/3+1/3-1=0.
    $$
    Hence, there exists $a$ in this intersection so that
    $$
        d(\Gamma(\zeta),\Xi(\zeta))\leq d(\Gamma(\zeta),\Gamma'(\zeta,a))+d(\Gamma'(\zeta,a),\Xi(\zeta))\leq 2^{-n}.
    $$
    Since $n\in\mathbb{N}$ was arbitrary, it follows that $\{\Xi,\Omega,\mathcal{M},\Lambda\}\in\Delta_1^{G}$.

    \vspace{2mm}

    \noindent
    \textbf{Equivalence of $\Delta_2^{\mathbb{P}}$ and $\Delta_2^{G}$ when $\Lambda$ is countable:} It is clear that if a problem lies in $\Delta_2^{G}$, then it lies in $\Delta_2^{\mathbb{P}}$. For the converse, suppose that $\{\Xi,\Omega,\mathcal{M},\Lambda\}\in\Delta_2^{\mathbb{P}}$. Then there exists an SPGA $\{\Gamma_n'\}$ such that
    $$
        \mathbb{P}\left(\left\{a\in\mathcal{C}:\lim_{m\rightarrow\infty}\Gamma_{m}'(\zeta)=\Xi(\zeta)\right\}\right)> 2/3\text{ and }
        \mathbb{P}\left(\left\{a\in\mathcal{C}:\Gamma_{n}'(\zeta,a)\neq \mathrm{NH}\right\}\right)> 2/3\quad \forall n\in\mathbb{N}, \zeta\in\Omega.
    $$
    Let $N\in\mathbb{N}$. Arguing as above, there exist general algorithms (now dependent on $n$) $\{S_n\}$ such that each $S_n(\zeta)$ is a finite subset of $\Lambda^{\mathbb{P}}$ that satisfy
    $$
        \mathbb{P}\left(\left\{a\in\mathcal{C}:\Lambda_{\Gamma_n'}(\zeta,a)\subset S_n(\zeta)\right\}\right)>2/3.
    $$
    We now consider the set
    $$
        R_n(\zeta)=\left\{x\in\mathcal{M}:\mathbb{P}\left(\left\{a\in\mathcal{C}:\Lambda_{\Gamma_n'}(\zeta,a)\subset S_n(\zeta),d(\Gamma_n'(\zeta,a),x)\leq 2^{-N}\right\}\right)>1/3\right\}.
    $$
    Let $x_0$ be a fixed member of $\mathcal{M}$. If $R_n(\zeta)=\emptyset$, we set $\Gamma_n^N(\zeta)=x_0$. Otherwise, let $\Gamma_n^N(\zeta)=x$ for some element $x$ of $R_n(\zeta)$. This choice can be executed so that $\Gamma_n^N$ is a general algorithm.

    We have
    $$
        \left\{a\in\mathcal{C}:\lim_{m\rightarrow\infty}\Gamma_{m}'(\zeta,a)=\Xi(\zeta)\right\}\subset
        \bigcup_{M=1}^\infty \left\{a\in\mathcal{C}:d(\Gamma_{m}'(\zeta,a),\Xi(\zeta))\leq 2^{-N}\text{ for all }m\geq M\right\}.
    $$
    It follows that there exists some $M$ so that the set
    $$
        E_M(\zeta)=\left\{a\in\mathcal{C}:d(\Gamma_{m}'(\zeta,a),\Xi(\zeta))\leq 2^{-N}\text{ for all }m\geq M\right\}
    $$
    has $\mathbb{P}(E_M(\zeta))>2/3$. If $n\geq M$, then $\mathbb{P}(E_M(\zeta)\cap U_{S_n(\zeta)})>1/3$. In particular, $\Xi(\zeta)\in R_n(\zeta)$ so that $R_n(\zeta)\neq\emptyset$. Moreover,
    $$
        \mathbb{P}\left(E_M(\zeta)\cap\left\{a\in\mathcal{C}:\Lambda_{\Gamma_n'}(\zeta,a)\subset S_n(\zeta),d(\Gamma_n'(\zeta,a),\Gamma_n^N(\zeta))\leq 2^{-N}\right\}\right)>2/3+1/3-1=0.
    $$
    If $a$ lies in this intersection, then
    $$
        d(\Gamma_n^N(\zeta),\Xi(\zeta))\leq d(\Gamma_n^N(\zeta),\Gamma_n'(\zeta,a))+d(\Gamma_n'(\zeta,a),\Xi(\zeta))\leq 2^{1-N}.
    $$
    It follows that
    \begin{equation}
        \label{prob_osc_bound}
        \limsup_{n\rightarrow\infty}d(\Gamma_n^N(\zeta),\Xi(\zeta))\leq 2^{1-N}.
    \end{equation}
    We must now alter $\{\Gamma_n^N\}$ to obtain a $\Delta_2^G$-tower.

    Given $m$, choose $N=N(m)\leq m$ maximal such that for all $1\leq l\leq N$,
    $$
        d(\Gamma_m^N(\zeta),\Gamma_m^l(\zeta))\leq 4(2^{-N}+2^{-l}).
    $$
    If no such $N$ exists then set $\Gamma_{m}(\zeta)=x_0$, otherwise set $\Gamma_{m}(\zeta)=
        \Gamma_m^{N(m)}(\zeta)$. The bound in \cref{prob_osc_bound} implies that $\lim_{m\rightarrow\infty}N(m)=\infty$. Furthermore, for any fixed $q\in\mathbb{N}$, since for sufficiently large $m$ $N(m)\geq q$, it follows that
    \begin{align*}
        \limsup_{m\rightarrow\infty}d(\Gamma_{m}(\zeta),\Xi(\zeta)) & \leq
        \limsup_{m\rightarrow\infty}d(\Gamma_m^{N(m)}(\zeta),\Gamma_m^{q}(\zeta))
        +\limsup_{m\rightarrow\infty}d(\Gamma_m^{q}(\zeta),\Xi(\zeta))                                                                              \\
                                                            & \leq \limsup_{m\rightarrow\infty}4(2^{-N(m)}+2^{-q}) + 2^{1-q}=6\cdot 2^{-q}.
    \end{align*}
    Since $q$ was arbitrary, we have the desired convergence. It follows that $\{\Xi,\Omega,\mathcal{M},\Lambda\}\in\Delta_2^{G}$.
\end{proof}

\section{Spectral measures of unitary and self-adjoint operators on RKHSs}
\label{unisa_sect}

Spectral computations often simplify when the underlying operators are normal; typically, this reduces the SCI of the method by $1$ by converting towers of algorithms for the pseudospectrum into towers using the same number of limits for the spectrum \cite{colbrook4,colbrook3}. In particular, if $T$ is a normal operator, its spectrum is stable since
$$
    \spec_\epsilon(T)=\spec(T)+B_{\epsilon}(0)=\left\{z\in\mathbb{C}:\mathrm{dist}(z,\spec(T))\leq \epsilon\right\}.
$$
Moreover, normal operators are guaranteed to be diagonalizable via projection-valued spectral measures and the spectral theorem (see \cref{sec:specmeasintro}), even if they lack a basis of eigenvectors. Hence, knowing when an operator is normal is important. In the RKHS case, we can sometimes choose a specific kernel to make an operator normal. In this section, we provide a combination of necessary and sufficient conditions for Koopman operators defined on RKHSs to be unitary and self-adjoint and provide examples of such systems. We design efficient, data-driven algorithms to compute the corresponding scalar-valued spectral measures for unitary and self-adjoint operators, which recover the full projection-valued spectral measures.

\subsection{Spectral measures}
\label{sec:specmeasintro}

In finite dimensions, any normal matrix $T\in\mathbb{C}^{n\times n}$ has an orthonormal basis of eigenvectors $\{v_j\}_{j=1}^n$ with eigenvalues $\{\lambda_j\}_{j=1}^n$ such that $Tv_j=\lambda_jv_j$ for $j=1,\dots,n$ and
$$
    v=\left(\sum_{j=1}^nv_jv_j^*\right)v,\qquad \text{and}\qquad Tv=\left(\sum_{j=1}^n\lambda_j v_jv_j^*\right)v\qquad \forall v\in\mathbb{C}^n.
$$
However, in infinite dimensions, a normal operator on a Hilbert space $\mathcal{H}$ may not have a basis of eigenvectors. Indeed, it may not even have any eigenvectors, e.g., $\partial_x^2$ on $L^2(\mathbb{R})$. Nevertheless, projection-valued measures provide an alternative method of diagonalizing an operator.

If $X$ is a set and $\mathcal{F}$ a $\sigma$-algebra of subsets of $X$, then a projection-valued measure $\mathcal{E}$ for $(X,\mathcal{F},\mathcal{H})$ is a map from $\mathcal{F}$ to the set of orthogonal projections on $\mathcal{H}$ such that:
\begin{itemize}
    \item $\mathcal{E}(X)=I$ and $\mathcal{E}(\emptyset)=0$;
    \item $\mathcal{E}(E_1\cap E_2)=\mathcal{E}(E_1)\mathcal{E}(E_2)$ for all $E_1,E_2\in\mathcal{F}$;
    \item if $\{E_n\}_{n=1}^{\infty}\subset\mathcal{F}$ are pairwise disjoint, then $\mathcal{E}(\cup_{n=1}^{\infty}E_n)=\sum_{n=1}^{\infty}\mathcal{E}(E_n)$.
\end{itemize}
The spectral theorem for normal operators~\cite[Thm.~X.4.11]{conway_course_2007} states that if $T$ is a normal operator on $\mathcal{H}$, then there is a unique projection-valued spectral measure $\mathcal{E}$ on the Borel subsets of $\mathbb{C}$ such that if $S\cap \spec(T)=\emptyset$ then $\mathcal{E}(S)=0$, if $U$ is open with $U\cap\spec(T)\neq \emptyset$ then $\mathcal{E}(U)\neq 0$ and
$$
    v_1=\left(\int_{\spec(T)}1\dd\mathcal{E}(\lambda)\right)v_1\quad\forall v_1\in\mathcal{H}\qquad \text{and}\qquad Tv_2=\left(\int_{\spec(T)}\lambda\dd\mathcal{E}(\lambda)\right)\quad \forall v_2\in\mathcal{D}(T).
$$
These relations directly generalize the finite-dimensional case.

It is sufficient to study the corresponding scalar-valued spectral measures defined for $v,w\in\mathcal{H}$ by
$$
    \mu_{v,w}(S)=\langle\mathcal{E}(S)v,w\rangle_{\mathcal{H}}=\langle\mathcal{E}(S)v,\mathcal{E}(S)w\rangle_{\mathcal{H}},
$$
where $S$ a Borel subset of $\mathbb{C}$. If $v=w$, we denote $\mu_{v,v}=\mu_v$. The support of the spectral measure $\mu_v$ is a subset of $\spec(T)$ and $\mu_v(\mathbb{C})=\|v\|^2$. If $T$ is self-adjoint, its spectral measure is supported on $\mathbb{R}$; if it is unitary, its spectral measure is supported on the complex unit circle $\mathbb{T}$. For visualization purposes, we reparametrize spectral measures on $\mathbb{T}$ by $\xi_{v,w}(S)=\mu_{v,w}(\{e^{i\theta}:\theta\in S\})$ for $S\subset [-\pi,\pi]_{\mathrm{per}}$.

Spectral measures lead to generalizations of the KMD presented in \cref{specprop_sect}~\cite{mezic2005spectral}. For example, suppose that $\koop_F$ is a unitary operator and its spectrum consists only of point spectra (i.e., eigenvalues, denoted $\spec_{\mathrm{p}}(\koop_F)$) and absolutely continuous spectrum. Then, given a vector-valued observable $\m{g}$, a starting point $x_0$, and number of time steps $n$, we have~\cite{colbrook2025rigged}:
\begin{align*}
    \m{g}(x_n)=\m{g}(F^n(x_0)) & =\koop_F^n\left(\sum_{\lambda\in\spec_{\mathrm{p}}(\koop_F)}\varphi_{\lambda}(x_0)\m{v}_{\lambda}+\int_{[-\pi,\pi]_{\mathrm{per}}}\mathbb{\varphi}_{\theta,\m{g}}(x_0)\dd\theta\right) \\&=\sum_{\lambda\in\spec_{\mathrm{p}}(\koop)}\lambda^n\varphi_{\lambda}(x_0)\m{v}_{\lambda}+\int_{[-\pi,\pi]_{\mathrm{per}}}e^{in\theta}\mathbb{\varphi}_{\theta,\m{g}}(x_0)\dd\theta.
\end{align*}
Here, $\koop_F\varphi_{\lambda}=\lambda\varphi_{\lambda}$ for each $\lambda\in\spec_{\mathrm{p}}(\koop_F)$, $\m{v}_{\lambda}$ are the corresponding Koopman modes and $\varphi_{\theta,\m{g}}$ is the vector-valued Radon--Nikodym derivative of $\mathrm{d}\mathcal{E}(\theta)\m{g}$; a continuous family of generalized eigenfunctions.
Later in \cref{compspecmeas_sec}, we will discuss practical algorithms for computing spectral measures of unitary and self-adjoint operators. However, in the next section, we first examine conditions under which Koopman operators on RKHSs are unitary and self-adjoint.

\subsection{Characterizing unitary and self-adjoint Koopman operators on RKHSs}

In the $L^2$ setting, the Koopman operator is unitary if and only if the underlying dynamics are measure-preserving and invertible (up to measure zero). The following theorem characterizes when Koopman operators on RKHSs are unitary. Note that we require pointwise invertibility here since functions in an RKHS are defined pointwise. Moreover, $\koop_F$ is unitary, self-adjoint, or normal if and only if $\koop^*_F$ is unitary, self-adjoint, or normal, respectively.

\begin{theorem}[Unitary Koopman operator on an RKHS]
    \label{unisuff_thm}
    Let $\koop_F$ be the Koopman operator associated with the dynamics $(F,\mathcal{X})$ on an RKHS $\mathcal{H}$ with kernel $\mathfrak{K}$.
    \begin{enumerate}[nosep]
        \item[(i)] If $F$ is a bijection and $\mathfrak{K}(F(x),F(y))=\mathfrak{K}(x,y)$ for all $x,y\in\mathcal{X}$, then $\koop_F$ is unitary.
        \item[(ii)] If $\koop_F$ is unitary, then $\mathfrak{K}(F(x),F(y))=\mathfrak{K}(x,y)$ for all $x,y\in\mathcal{X}$.
    \end{enumerate}
\end{theorem}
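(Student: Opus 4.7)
The plan is to use the kernel-adjoint identity $\koop^*\mathfrak{K}_x = \mathfrak{K}_{F(x)}$ established in \cref{koopstarkernelfunction_eqn}, together with the reproducing property, to reduce both implications to algebraic manipulations on the dense subspace $\mathcal{D}_0 = \spann\{\mathfrak{K}_x : x\in\mathcal{X}\}$.

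I would handle part (ii) first, as it is essentially one line. If $\koop_F$ is unitary, then so is $\koop^*$, so $\koop\koop^* = I$, and for any $x,y\in\mathcal{X}$,
\[
\mathfrak{K}(F(x),F(y)) = \langle \mathfrak{K}_{F(x)}, \mathfrak{K}_{F(y)}\rangle_{\mathfrak{K}} = \langle \koop^*\mathfrak{K}_x, \koop^*\mathfrak{K}_y\rangle_{\mathfrak{K}} = \langle \koop\koop^*\mathfrak{K}_x, \mathfrak{K}_y\rangle_{\mathfrak{K}} = \mathfrak{K}(x,y).
\]

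For part (i), my strategy is to reverse this manipulation and then bootstrap a densely-defined isometry to a unitary on all of $\mathcal{H}$. First, I would combine kernel invariance with the reproducing property to show that $\koop$ itself acts cleanly on kernel functions: using bijectivity of $F$ to rewrite $x = F(F^{-1}(x))$ and applying invariance to the pair $(F^{-1}(x), y)$,
\[
(\koop\mathfrak{K}_x)(y) = \mathfrak{K}_x(F(y)) = \mathfrak{K}(x,F(y)) = \mathfrak{K}(F^{-1}(x),y) = \mathfrak{K}_{F^{-1}(x)}(y).
\]
This yields $\koop\mathfrak{K}_x = \mathfrak{K}_{F^{-1}(x)}\in\mathcal{H}$, so $\mathcal{D}_0\subseteq\mathcal{D}(\koop)$ and $\koop$ is densely defined. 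A direct computation on $u=\sum_i a_i\mathfrak{K}_{x_i}\in\mathcal{D}_0$ then gives
\[
\|\koop u\|_{\mathfrak{K}}^2 = \sum_{i,j}a_i\overline{a_j}\,\mathfrak{K}(F^{-1}(x_i),F^{-1}(x_j)) = \sum_{i,j}a_i\overline{a_j}\,\mathfrak{K}(x_i,x_j) = \|u\|_{\mathfrak{K}}^2,
\]
where the middle equality again uses invariance. Hence $\koop|_{\mathcal{D}_0}$ is an isometry, and bijectivity of $F$ ensures it maps $\mathcal{D}_0$ onto $\mathcal{D}_0$, a dense subspace of $\mathcal{H}$.

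The final, and only delicate, step is to identify the bounded unitary extension $U:\mathcal{H}\to\mathcal{H}$ of $\koop|_{\mathcal{D}_0}$ with $\koop$ itself — a priori, $\koop$ could be a proper closed extension of $U$, or its domain could properly contain $\mathcal{D}_0$ with $\koop$ disagreeing with $U$ elsewhere. The closedness of $\koop$ (noted at the start of \cref{provconv_sect}) resolves both possibilities at once: for any $g\in\mathcal{H}$, choose $g_n\in\mathcal{D}_0$ with $g_n\to g$; then $\koop g_n = Ug_n \to Ug$ by boundedness of $U$, so closedness of $\koop$ forces $g\in\mathcal{D}(\koop)$ and $\koop g = Ug$, yielding $\mathcal{D}(\koop)=\mathcal{H}$ and $\koop=U$ unitary. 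I expect this closedness-based identification to be the only step requiring genuine care; the rest is a short calculation tracking $F^{-1}$ through the invariant kernel.
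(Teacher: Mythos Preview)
Your proposal is correct and follows essentially the same route as the paper: both compute $\koop\mathfrak{K}_x=\mathfrak{K}_{F^{-1}(x)}$ via kernel invariance, use density and closedness to extend to a bounded operator on all of $\mathcal{H}$, and handle part (ii) by the one-line computation with $\koop^*\mathfrak{K}_x=\mathfrak{K}_{F(x)}$. The only cosmetic difference is the final step of (i): the paper verifies $\koop\koop^*=\koop^*\koop=I$ directly on kernel functions and extends by density, whereas you argue that $\koop|_{\mathcal{D}_0}$ is an isometry with dense range and invoke the standard extension; your version is arguably slightly cleaner in that you check the isometry on full linear combinations rather than single kernels.
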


\begin{proof}
    We first prove (i) so assume that $F$ is a bijection and $\mathfrak{K}(F(x),F(y))=\mathfrak{K}(x,y)$ for all $x,y\in\mathcal{X}$. For any such $x,y$, we observe that
    $(\mathfrak{K}_x\circ F)(y)=\mathfrak{K}_x(F(y))=\mathfrak{K}(x,F(y))=\mathfrak{K}(F^{-1}(x),y)=\mathfrak{K}_{F^{-1}(x)}(y)$. Therefore, for all $x\in\mathcal{X}$, $\mathfrak{K}_x\in\mathcal{D}(\koop_F)$ and $\koop_F \mathfrak{K}_x=\mathfrak{K}_{F^{-1}(x)}$. As the span of the kernel functions is dense in the RKHS, $\koop$ is densely defined. Additionally, for any $x\in \mathcal{X}$, $\|\koop_F \mathfrak{K}_x\|^2_{\mathfrak{K}}=\mathfrak{K}(F^{-1}(x),F^{-1}(x))=\mathfrak{K}(x,x)=\|\mathfrak{K}_x\|^2_{\mathfrak{K}}$ by assumption. Therefore, $\koop_F$ is bounded on $\spann\{\mathfrak{K}_x:x\in\mathcal{X}\}$ and, since this is dense, by closedness $\koop_F$ is a bounded operator on all of $\mathcal{H}$. For any $x\in\mathcal{X}$, $\koop_F\koop_F^* \mathfrak{K}_x=\koop_F \mathfrak{K}_{F(x)}=\mathfrak{K}_{F^{-1}(F(x))}=\mathfrak{K}_x$ by \cref{koopstarkernelfunction_eqn}, and similarly $\koop_F^*\koop_F \mathfrak{K}_x=\koop_F^*\mathfrak{K}_{F^{-1}(x)}=\mathfrak{K}_{F(F^{-1}(x))}=\mathfrak{K}_x$. Therefore, for all $g\in\spann\{\mathfrak{K}_x:x\in\mathcal{X}\}$ we have $\koop_F\koop_F^*g=\koop_F^*\koop_F g=g$. Hence, by density, we find that $\koop_F\koop_F^*=\koop_F^*\koop_F=I$, which implies that $\koop_F$ is unitary.

    We now consider the partial converse in (ii) and assume that $\koop_F$ is unitary. For all $x,y\in\mathcal{X}$, $\langle \koop_F^*\mathfrak{K}_x,\koop_F^*\mathfrak{K}_y\rangle_{\mathfrak{K}}=\langle \mathfrak{K}_x,\mathfrak{K}_y\rangle_{\mathfrak{K}}$ and so $\mathfrak{K}(F(x),F(y))=\mathfrak{K}(x,y)$.
\end{proof}

For many dynamical systems of interest, \cref{unisuff_thm} significantly constrains the set of kernels that can lead to unitary Koopman operators. We have the following corollary, which we will use later.

\begin{corollary}
    \label{uniiffisom_corol}
    Consider the Koopman operator $\koop_F$ associated to the dynamics $(F,\mathcal{X})$ on an RKHS $\mathcal{H}$ with kernel $\mathfrak{K}$. If $F$ is bijective and $\mathfrak{K}(x,y)=\mathfrak{K}_0(\|x-y\|)$ is radial with $\mathfrak{K}_0$ injective, then $\koop_F$ is unitary if and only if $F$ is an isometry.
\end{corollary}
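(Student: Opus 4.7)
The plan is to derive both implications directly from Theorem~\ref{unisuff_thm}, using the radial structure of $\mathfrak{K}$ together with the injectivity of $\mathfrak{K}_0$ to translate the kernel invariance condition $\mathfrak{K}(F(x),F(y))=\mathfrak{K}(x,y)$ into the metric invariance condition $\|F(x)-F(y)\|=\|x-y\|$. No further machinery appears to be required.

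For the forward direction, I would assume $F$ is a bijective isometry of $\mathcal{X}$. Then for every pair $x,y\in\mathcal{X}$, $\|F(x)-F(y)\|=\|x-y\|$, and since $\mathfrak{K}$ is radial,
\[
\mathfrak{K}(F(x),F(y))=\mathfrak{K}_0(\|F(x)-F(y)\|)=\mathfrak{K}_0(\|x-y\|)=\mathfrak{K}(x,y).
\]
Together with the bijectivity of $F$, part~(i) of Theorem~\ref{unisuff_thm} then delivers unitarity of $\koop_F$ immediately.

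For the reverse direction, assume $\koop_F$ is unitary. Part~(ii) of Theorem~\ref{unisuff_thm} gives $\mathfrak{K}(F(x),F(y))=\mathfrak{K}(x,y)$ for all $x,y\in\mathcal{X}$, which by the radial form of $\mathfrak{K}$ reads $\mathfrak{K}_0(\|F(x)-F(y)\|)=\mathfrak{K}_0(\|x-y\|)$. Applying injectivity of $\mathfrak{K}_0$ yields $\|F(x)-F(y)\|=\|x-y\|$, so $F$ is an isometry. Combined with the assumed bijectivity, this finishes the characterization.

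There is no real obstacle here: the argument is a short bookkeeping exercise built on top of Theorem~\ref{unisuff_thm}. The only point worth flagging is that injectivity of $\mathfrak{K}_0$ is exactly what is needed to pass from kernel invariance to metric invariance; without it, one could only conclude that $\|F(x)-F(y)\|$ lies in some level set of $\mathfrak{K}_0$, which would be insufficient.
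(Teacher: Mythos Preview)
Your proposal is correct and follows exactly the same approach as the paper's proof: both directions are obtained by combining Theorem~\ref{unisuff_thm} with the observation that, for a radial kernel with injective $\mathfrak{K}_0$, the condition $\mathfrak{K}(F(x),F(y))=\mathfrak{K}(x,y)$ is equivalent to $\|F(x)-F(y)\|=\|x-y\|$. The paper's version is simply more terse.
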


\begin{proof}
    By assumption, $\mathfrak{K}(F(x),F(y))=\mathfrak{K}(x,y)$ for all $x,y\in\mathcal{X}$ if and only if $\|F(x)-F(y)\|=\|x-y\|$ for all $x,y\in\mathcal{X}$, if and only if $F$ is an isometry. The result follows by \cref{unisuff_thm}.
\end{proof}

\begin{example}
    \label{gaussrbf_exmpl}
    Consider the Gaussian RBF on $\mathbb{R}^n$. Then, \cref{uniiffisom_corol} states that Koopman operators on the native space of the Gaussian RBF are unitary only when the dynamics are a composition of rotations, reflections, and translations of $\mathbb{R}^n$. Note that isometries are always injective, and are always surjective on compact sets.
\end{example}

We can also obtain similar conditions for $\koop_F$ to be self-adjoint.

\begin{theorem}[Self-adjoint Koopman operator on an RKHS]
    \label{sakoop_thm}
    Let $\koop_F$ be a bounded Koopman operator associated with the dynamics $(F,\mathcal{X})$ on an RKHS $\mathcal{H}$ with kernel $\mathfrak{K}$. Then, $\koop_F$ is self-adjoint if and only if $\mathfrak{K}(x,F(y))=\mathfrak{K}(F(x),y)$ for all $x,y\in\mathcal{X}$.
\end{theorem}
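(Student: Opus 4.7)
The plan is to reduce the self-adjointness condition to an identity on kernel functions using the reproducing property together with the formula $\koop_F^*\mathfrak{K}_x=\mathfrak{K}_{F(x)}$ from \cref{koopstarkernelfunction_eqn}. Since $\koop_F$ is bounded, $\mathcal{D}(\koop_F)=\mathcal{H}$, so in particular every kernel function $\mathfrak{K}_x$ lies in the domain of $\koop_F$ and the operator is automatically densely defined. Moreover, $\spann\{\mathfrak{K}_x:x\in\mathcal{X}\}$ is dense in $\mathcal{H}$ by the Moore--Aronszajn theorem, so the bounded operators $\koop_F$ and $\koop_F^*$ coincide on $\mathcal{H}$ if and only if they coincide on every $\mathfrak{K}_x$.

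For the forward direction, suppose $\koop_F=\koop_F^*$. Then for every $x\in\mathcal{X}$, $\koop_F\mathfrak{K}_x=\koop_F^*\mathfrak{K}_x=\mathfrak{K}_{F(x)}$. Evaluating both sides at an arbitrary $y\in\mathcal{X}$ using the reproducing property and the identity $\mathfrak{K}_u(v)=\mathfrak{K}(u,v)$, the left-hand side becomes $(\koop_F\mathfrak{K}_x)(y)=\mathfrak{K}_x(F(y))=\mathfrak{K}(x,F(y))$, while the right-hand side becomes $\mathfrak{K}_{F(x)}(y)=\mathfrak{K}(F(x),y)$. This yields $\mathfrak{K}(x,F(y))=\mathfrak{K}(F(x),y)$ for all $x,y\in\mathcal{X}$.

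Conversely, suppose $\mathfrak{K}(x,F(y))=\mathfrak{K}(F(x),y)$ for all $x,y\in\mathcal{X}$. Then the same pointwise calculation gives $(\koop_F\mathfrak{K}_x)(y)=\mathfrak{K}(x,F(y))=\mathfrak{K}(F(x),y)=\mathfrak{K}_{F(x)}(y)$ for every $y$, so $\koop_F\mathfrak{K}_x=\mathfrak{K}_{F(x)}=\koop_F^*\mathfrak{K}_x$. Thus $\koop_F$ and $\koop_F^*$ agree on $\spann\{\mathfrak{K}_x:x\in\mathcal{X}\}$, and since both are bounded operators on $\mathcal{H}$ and this span is dense, they agree everywhere, i.e., $\koop_F=\koop_F^*$.

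There is no real obstacle here; the only technical point worth flagging is that boundedness of $\koop_F$ is used in two places: it ensures that the kernel functions lie in $\mathcal{D}(\koop_F)$ so that the expressions $\koop_F\mathfrak{K}_x$ are well-defined, and it allows a density argument to lift equality on the span of kernel functions to equality on all of $\mathcal{H}$ (otherwise one would have to verify equality of domains as well, which is delicate for unbounded closed operators). The conjugate symmetry of $\mathfrak{K}$ makes the condition $\mathfrak{K}(x,F(y))=\mathfrak{K}(F(x),y)$ symmetric in $x,y$, matching the self-adjoint symmetry as expected.
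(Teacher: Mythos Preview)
Your proof is correct and follows essentially the same approach as the paper: compute $(\koop_F\mathfrak{K}_x)(y)=\mathfrak{K}(x,F(y))$ and $(\koop_F^*\mathfrak{K}_x)(y)=\mathfrak{K}(F(x),y)$ via the reproducing property and \cref{koopstarkernelfunction_eqn}, then use density of the span of kernel functions. The paper's proof is a one-line version of what you wrote, and your additional remarks on where boundedness is used are accurate and helpful.
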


\begin{proof}
    This follows as for all $x,y\in\mathcal{X}$, $\mathcal{K}_F\mathfrak{K}_x(y)=\mathfrak{K}_x(F(y))=\mathfrak{K}(x,F(y))$ while $\mathcal{K}^*_F\mathfrak{K}_x(y)=\mathfrak{K}_{F(x)}(y)=\mathfrak{K}(F(x),y)$, and the kernel functions are dense in $\mathcal{H}$.
\end{proof}

\begin{remark}
    When $\koop_F$ is densely defined rather than just bounded, \cref{sakoop_thm} only shows that $\koop_F$ is symmetric. To get self-adjointness, we need to show that $\mathcal{D}(\koop^*_F)\subset \mathcal{D}(\koop_F)$ (as $\koop_F$ is symmetric, $\mathcal{D}(\koop_F)\subset \mathcal{D}(\koop^*_F)$ already).
\end{remark}

\begin{example}
    \label{linearker_example}
    Consider the RKHS with linear kernel $\mathfrak{K}(x,y)=xy$ and functions defined on $\mathbb{R}$. Then, the only continuous functions $F$ leading to self-adjoint operators are of the form $F(x)=cx$ for $c\in\mathbb{R}$. Indeed,
    \cref{sakoop_thm} yields the functional equation $F(x)y=xF(y)$. Substituting into this, we have $F(x+y)y=(x+y)F(y)$, and $F(x+y)x=(x+y)F(x)$. Combining these, we obtain $(x+y)(F(x+y)-F(x)-F(y))=0$, and so for $x+y\neq 0$, $F(x+y)=F(x)+F(y)$. We also have $F(x)(-x)=xF(-x)$, so $F(x)=-F(-x)$ for $x\neq 0$ and $F(0)(1)=(0)F(1)$ so $F(0)=0$. Hence, $F(x+y)=F(x)+F(y)$ for all $x,y\in\mathbb{R}$. This is Cauchy's functional equation, which has a unique solution $F(x)=F(1)x$. Suppose instead that we consider the polynomial kernel $\mathfrak{K}(x,y)=(\alpha xy+1)^d$ for $\alpha\in\mathbb{R}$ and $d\in\mathbb{N}$ odd. Then, as $x\mapsto x^d$ is bijective, we obtain the same functional equation $F(x)y=xF(y)$ and the same result.
\end{example}

\subsection{Computing spectral measures}
\label{compspecmeas_sec}

To compute scalar-valued spectral measures for self-adjoint or unitary Koopman operators, we use the resolvent-based framework developed in~\cite{colbrook2021rigorousKoop,colbrook_computing_2021,colbrook2019computing,colbrook2025rigged}, whose key theoretical tool is Stone's formula~\cite[Thm.~VII.13]{reed_methods_1980}.

\subsubsection{Self-adjoint operators}

In \cite{colbrook_computing_2021}, the authors developed high-order methods for computing spectral measures of general self-adjoint operators. We extend these to self-adjoint Koopman operators on RKHSs. Stone's formula ensures that for a self-adjoint operator $T$ on a Hilbert space $\mathcal{H}$, and any $a,b\in\mathbb{R}$ with $a<b$ and $v\in\mathcal{H}$, we have
\begin{equation}
    \label{selfadjoint_stones_eqn}
    \lim_{\epsilon\downarrow 0}\left(\frac{1}{2\pi i}\int_a^b(T-(x+i\epsilon)I)^{-1}-(T-(x-i\epsilon)I)^{-1}\dd x\right)v=\frac{1}{2}\left[\mathcal{E}((a,b))+\mathcal{E}([a,b])\right]v.
\end{equation}
By the functional calculus,
$$
    (T-(x+i\epsilon)I)^{-1}=\int_{\mathbb{R}}\frac{\dd\mathcal{E}(y)}{y-(x+i\epsilon)}.
$$
This allows us to rewrite the integrand in \cref{selfadjoint_stones_eqn} as a convolution of $\mathcal{E}$ with a Poisson's kernel for the upper half-plane $P_{\epsilon}^{\mathbb{H}}(x)=\pi^{-1}\epsilon/(\epsilon^2+x^2)$. It follows that
\begin{equation}
    \label{spectralmeas_computation_eqn}
    \lim_{\epsilon\downarrow 0}\int_a^b\frac{1}{\pi}\mathrm{Im}\left(\langle (T-(x+i\epsilon)I)^{-1}v,v\rangle_{\mathcal{H}}\right)\dd x=\lim_{\epsilon\downarrow0}\int_a^b[P_\epsilon^{\mathbb{H}} *\mu_v](x)\dd x=\frac{1}{2}[\mu_v((a,b))+\mu_v([a,b])],
\end{equation}
where $\mathrm{Re}(z)$ and $\mathrm{Im}(z)$ denote the real and imaginary part of a complex number $z$, respectively. The left-hand side of \cref{spectralmeas_computation_eqn} can be computed, allowing us to compute spectral measures.

However, turning this equation into an algorithm leads to a slow convergence rate when $\epsilon\rightarrow 0$. In particular, we cannot achieve a convergence faster than $O(\epsilon\log(\epsilon^{-1}))$. We use convolution with higher-order kernels rather than the Poisson kernel to speed up the convergence. We consider $m$th order kernels of the form
$$
    K(x)=\frac{1}{2\pi i}\sum_{j=1}^{m}\frac{\alpha_j}{x-a_j}-\frac{1}{2\pi i}\sum_{j=1}^{m}\frac{\overline{\alpha_j}}{x-\overline{a_j}},\quad x\in\mathbb{R},
$$
where $\{a_i\}_{i=1}^m$ are poles in the upper-half plane; typical choices are $a_j=\frac{2j}{m+1}-1+i$ for $1\leq j\leq m$. We enforce the following conditions on an $m$th order kernel: $\int_{\mathbb{R}}K(x)\dd x=1$, $K(x)x^j$ is integrable with $\int_{\mathbb{R}}K(x)x^j\dd x=0$ for $j=1,\dots,m-1$ and $K(x)=\mathcal{O}(|x|^{-(m+1)})$. This yields the following Vandermonde system for the residues:
\begin{equation}
    \label{vandermonde}
    \begin{pmatrix}
        1         & \dots  & 1           \\
        a_1       & \dots  & a_{m}       \\
        \vdots    & \ddots & \vdots      \\
        a_1^{m-1} & \dots  & a_{m}^{m-1}
    \end{pmatrix}\begin{pmatrix}
        \alpha_1 \\\alpha_2\\\vdots\\\alpha_{m}
    \end{pmatrix}=\begin{pmatrix}
        1 \\0\\\vdots\\0
    \end{pmatrix},
\end{equation}
which always has a unique solution. A higher-order Stone's formula can be obtained for these higher-order kernels. The full procedure is summarized in \cref{saspecmeas_alg}, which we call SpecRKHS-SAdjMeasure. We have written the algorithm using kEDMD (\cref{kedmd_alg}). Recall from \cref{avoidquadapprox_sect} that $\hat{\Kv}^\top$ is a Galerkin approximation of $\mathcal{K}^*$ on $\mathcal{H}$ with respect to the orthonormal basis $\{u_j\}_{j=1}^r$ defined in \cref{new_basis}.

\begin{algorithm}[t]
    \caption{SpecRKHS-SAdjMeasure: Computing the scalar-valued spectral measures of self-adjoint Perron--Frobenius operators $\koop_F^*$ at points $\{x_k\}_{k=1}^P\subset\mathbb{R}$ with respect to $g=\sum_{j=1}^rg_ju_j\in\mathcal{H}$.}\label{saspecmeas_alg}
    \textbf{Input:} Snapshot data $\{(x^{(i)},y^{(i)}=F(x^{(i)}))\}_{i=1}^N$, kernel function $\mathfrak{K}:\mathcal{X}\times\mathcal{X}\rightarrow\mathbb{C}$, sample points $\{x_k\}_{k=1}^P\subset\mathbb{R}$, poles $\{a_j\}_{j=1}^m\subset\{z\in\mathbb{C}:\mathrm{Im}(z)>0\}$, $\epsilon>0$, rank $r\leq N$, and $\m{g}\in\mathbb{C}^r$.

    \begin{algorithmic}[1]
        \STATE{Use kEDMD (\cref{kedmd_alg}) to compute the matrix $\hat{\Kv}$.}
        \STATE{Compute the eigenvectors $V$ and eigenvalues $\Lambda$ of $\hat{\Kv}^\top$, and define $\m{h}=V^{-1}\m{g}$, $\m{h}'=V^*\m{g}$.}
        \STATE{Solve the Vandermonde system \eqref{vandermonde} for the residues $\alpha_1,\dots,\alpha_m\in\mathbb{C}$.}
        \FOR{$k=1,\dots,P$}
        \STATE{Compute $\m{w}_j^{\epsilon}=(\Lambda-(x_k-\epsilon a_j)I)^{-1}\m{h}$ for $1\leq j\leq m$.}
        \STATE{Set $[K_{\epsilon}^{\mathbb{T}}*\mu_g](x_k)=-\frac{1}{\pi}\mathrm{Im}\left(\sum_{j=1}^m\alpha_j{\m{h}'}^*\m{w}_j^{\epsilon}\right)$.}
        \ENDFOR
    \end{algorithmic}
    \textbf{Output:} The approximate spectral measure $[K_{\epsilon}^{\mathbb{T}}*\mu_g](x_k)$ for $k=1,\dots,P$.
\end{algorithm}

\subsubsection{Unitary operators}

In \cite{colbrook2025rigged} (see also the earlier paper \cite{colbrook2021rigorousKoop}), the authors extended the methods of \cite{colbrook_computing_2021} to unitary operators. This paper was written in the context of Koopman operators on $L^2$ spaces, but the techniques apply immediately to general unitary operators. We extend them to unitary Koopman operators on RKHSs. For a unitary operator $T$, we define the Carath\'eodory function as
$$
    F_{\mathcal{E}}(z)=\int_{[-\pi,\pi]_{\mathrm{per}}}\frac{e^{i\varphi}+z}{e^{i\varphi}-z}\dd\mathcal{E}(\varphi)=(T+zI)^{-1}(T-zI),\quad |z|\neq 1.
$$
Then, for any $a,b\in[-\pi,\pi]_{\mathrm{per}}$ with $-\pi\leq a<b<\pi$ and $v\in\mathcal{H}$, the unitary Stone's formula states that
$$
    \lim_{\epsilon\downarrow 0}\left(\frac{1}{4\pi}\int_a^bF_{\mathcal{E}}((1+\epsilon)^{-1}e^{i\theta})-F_{\mathcal{E}}((1+\epsilon)e^{i\theta})\dd\theta\right)v=\frac{1}{2}[\mathcal{E}((a,b))+\mathcal{E}([a,b])]v.
$$
This can also be expressed as a convolution with the Poisson kernel for the unit disk, which motivates the development of higher-order kernels and a variant on Stone's formula for those, leading to \cref{unispecmeas_alg}, which we call SpecRKHS-UniMeasure.

\begin{algorithm}[t]
    \caption{SpecRKHS-UniMeasure: Computing the scalar-valued spectral measures of unitary Perron--Frobenius operators $\koop_F^*$ at points $\{\theta_k\}_{k=1}^P\subset[-\pi,\pi]_{\mathrm{per}}$ with respect to $g=\sum_{j=1}^rg_ju_j\in\mathcal{H}$.}\label{unispecmeas_alg}
    \textbf{Input:} Snapshot data $\{(x^{(i)},y^{(i)}=F(x^{(i)}))\}_{i=1}^N$, kernel function $\mathfrak{K}:\mathcal{X}\times\mathcal{X}\rightarrow\mathbb{C}$, sample points $\{\theta_k\}_{k=1}^P\subset[-\pi,\pi]_{\mathrm{per}}$, poles $\{a_j\}_{j=1}^m\subset\{z\in\mathbb{C}:\mathrm{Im}(z)>0\}$, $\epsilon>0$, rank $r\leq N$, and $\m{g}\in\mathbb{C}^r$.

    \begin{algorithmic}[1]
        \STATE{Use kEDMD (\cref{kedmd_alg}) to compute the matrix $\hat{\Kv}$.}
        \STATE{Compute the eigenvectors $V$ and eigenvalues $\Lambda$ of $\hat{\Kv}^\top$, and define $\m{h}=V^{-1}\m{g}$, $\m{h}'=V^*\m{g}$.}
        \STATE{Solve the Vandermonde system \eqref{vandermonde} the residues $\alpha_1,\dots,\alpha_m\in\mathbb{C}$.}
        \FOR{$k=1,\dots,P$}
        \STATE{Set $z_j(\epsilon)=e^{i\theta_k-i\epsilon a_j}$ for $1\leq j\leq m$.}
        \STATE{Compute $\m{w}_j^{\epsilon}=(\Lambda-z_j(\epsilon)I)^{-1}(\Lambda+z_j(\epsilon)I)\m{h}$ for $1\leq j\leq m$.}
        \STATE{Set $[K_{\epsilon}^{\mathbb{T}}*\xi_g](\theta_k)=\frac{-1}{2\pi}\mathrm{Re}\left(\sum_{j=1}^m\alpha_j{\m{h}'}^*\m{w}_j^{\epsilon}\right)$.}
        \ENDFOR
    \end{algorithmic}
    \textbf{Output:} The approximate spectral measure $[K_{\epsilon}^{\mathbb{T}}*\xi_g](\theta_k)$ for $k=1,\dots,P$.
\end{algorithm}

\begin{remark}
    An alternative method for computing spectral measures of unitary operators is presented in~\cite{colbrook2021rigorousKoop} using auto-correlations to compute Fourier coefficients and then filtering. This method is straightforward in RKHSs since we can iterate $\koop_F^*\mathfrak{K}_x=\mathfrak{K}_{F(x)}$ to easily compute $\langle(\koop_F^*)^n\mathfrak{K}_{x_i},(\koop_F^*)^m\mathfrak{K}_{x_j}\rangle_{\mathfrak{K}}$ for general $n,m\in\mathbb{Z}$, but we do not provide details in this paper.
\end{remark}

\subsubsection{Convergence}

Convergence theorems and rates are provided in \cite{colbrook_computing_2021,colbrook2021rigorousKoop,colbrook2025rigged}. These extend immediately to \cref{saspecmeas_alg,unispecmeas_alg}, assuming we have a convergent approximation of the resolvent. We have written the above algorithms using the finite section method, i.e., assuming that $\mathcal{P}_N^*(\mathcal{P}_N\koop_F^*\mathcal{P}_N^*-zI)^{-1}\mathcal{P}_Ng\rightarrow (\koop_F-zI)^{-1}g$ as $N\rightarrow\infty$. For $\koop$ unitary, this holds for $|z|>1$ \cite[Thm.~5.1]{colbrook2021rigorousKoop}, and so the algorithm converges. For $\koop$ self-adjoint, the finite section method for the resolvent does not necessarily converge but often does in practice. However, we can use the matrices $G$, $A$, and $R$ from \cref{GA_defn,R_defn} to solve suitable normal equations and compute the resolvent with error control \cite[Thm.~B.2]{colbrook2019computing}.

\subsection{Numerical examples}

\subsubsection{Unitary example: M\"obius maps}
\label{sec:mobiusmaps}

M\"obius maps on the Poincar\'e disk $D=\{z\in\mathbb{C}:|z|<1\}$ are an example of unitary operators on an RKHS. Any M\"obius map of the form
$$
    T(z)=\frac{az+b}{\overline{b}z+\overline{a}}, \quad |a|^2-|b|^2=1,
$$
maps $D$ onto itself bijectively. We denote the set of such maps by $\mathcal{M}(D)$. In the hyperbolic metric associated with the Poincar\'e disk and distance function
$$
    d(x,y)=2\tanh^{-1}\left|\frac{y-x}{1-\overline{x}y}\right|,\quad x,y\in D,
$$
M\"obius maps in $\mathcal{M}(D)$ act as an isometry~\cite{anderson_hyperbolic_2006}. That is, for all $T\in \mathcal{M}(D)$ and $x,y\in D$,
$$
    d(T(x),T(y))=d(x,y).
$$
Hence, we consider the dynamics on $D$ given by a fixed $F\in\mathcal{M}(D)$ and choose our kernel to be
\begin{equation}
    \label{unitarymobiusmapkernel_eqn}
    \mathfrak{K}(x,y)=\exp(-\sigma d(x,y)^2),
\end{equation}
for some suitable scaling constant $\sigma>0$. After applying \cref{uniiffisom_corol} with the Euclidean distance replaced by the hyperbolic distance, we find that the associated Koopman operator $\mathcal{K}$ is unitary. If $\xi_v$ and $\xi_v'$ are the spectral measures of $\koop_F^*$ and $\koop_F$, then since $\koop_F^*=\koop_F^{-1}$, we obtain that $\xi_{v}(S)=\xi_{v}'(-S)$ for any Borel subset $S\subset[-\pi,\pi]_{\mathrm{per}}$. This means that it does not matter whether we compute the spectral measures of the Koopman operator or its adjoint.

\begin{figure}[t]
    \centering
    \includegraphics[width=0.45\linewidth]{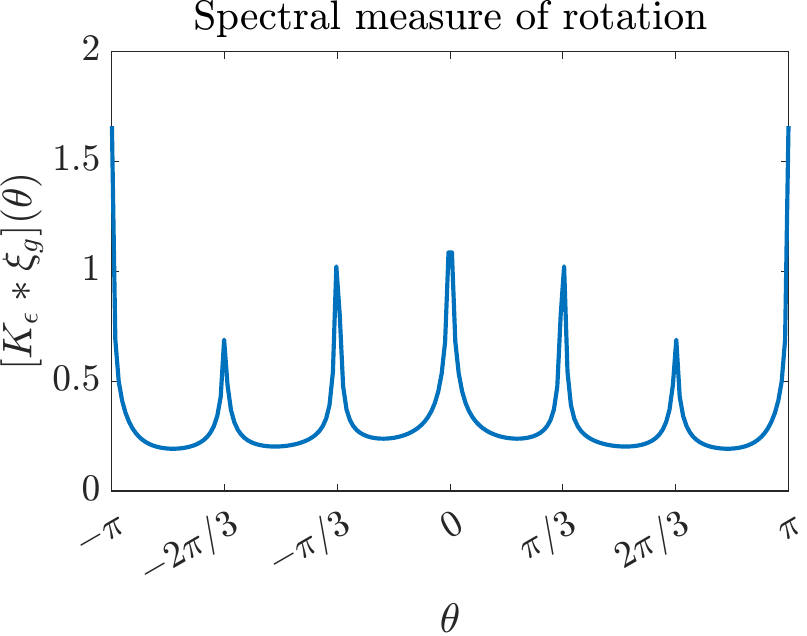}\hfill
    \includegraphics[width=0.45\linewidth]{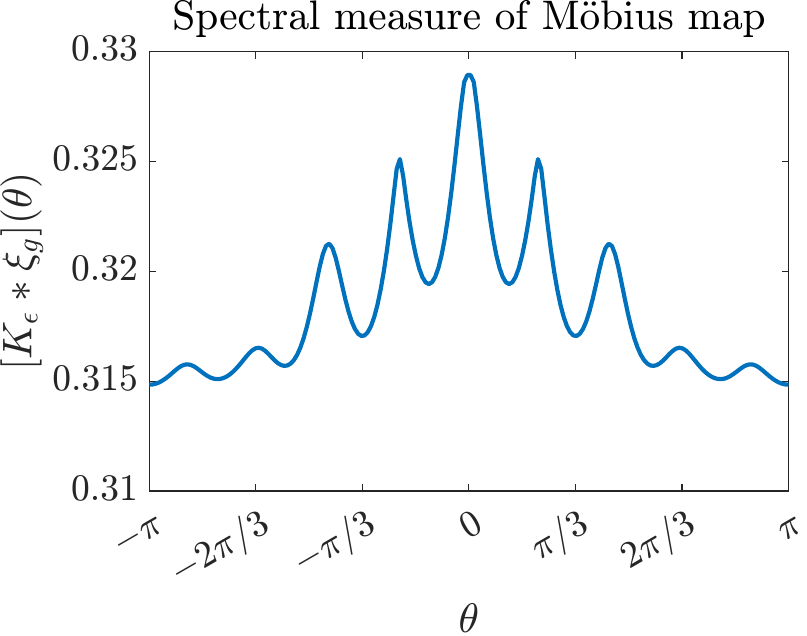}
    \caption{M{\"o}bius map. Left: Approximation to spectral measure for $T_1$; the rotation on the unit disk by $\pi/3$, plotted on a logarithmic scale. Right: Approximation to the spectral measure for the M{\"o}bius map defined by $T_2(z)=(az+b)/(\overline{b}z+\overline{a})$ with $a=\sqrt{2}e^{i\pi\sqrt{3}}$ and $b=e^{i\pi 9/7}$, now on a linear scale.}
    \label{fig:uni_rotation}
\end{figure}

We apply SpecRKHS-UniMeasure (\cref{unispecmeas_alg}) to the two M\"obius maps
\begin{equation}
    T_1(z)=e^{i\pi/3},\qquad T_2(z)=\frac{\sqrt{2}e^{i\pi\sqrt{3}}z+e^{i\pi 9/7}}{e^{-i\pi 9/7}z+\sqrt{2}e^{-i\pi\sqrt{3}}},
\end{equation}
with the kernel defined in \cref{unitarymobiusmapkernel_eqn}. We use $N=400$ pairs of snapshot data randomly generated with distribution $R^{\alpha}e^{2\pi i\Theta}$, where $R,\Theta$ are independent uniform random distributions on $(0,1)$. If $\alpha=1/2$, then the data are uniformly distributed in the disk, but we use $\alpha=1/4$ to improve the conditioning of $G$. We also select $\sigma$ to ensure the matrix $G$ is well-conditioned and take $\sigma=5$. We take $g$ to be an element of $V_N$ with uniformly randomly generated coefficients in $(0,1)$ and normalized such that $\|g\|_{\mathfrak{K}}^2=g^*Gg=1$. We use a high-order rational kernel of order $m=6$ with poles that have imaginary parts at $\pm i$ and real parts equally spaced, i.e., we take $a_j=2j/(m+1)-1+i$ for $j\in\{1,\ldots,m\}$, and we take $\epsilon=0.01$.

\cref{fig:uni_rotation} plots the results, where we have sampled the smoothed spectral measure in $[-\pi,\pi]$. For the rotation by $e^{i\pi/3}$, the spectral measure features sharp peaks at $e^{ki\pi/3}$ for $k\in\{-2,-1,0,1,2,3\}$, corresponding to eigenvalues at these points, as expected. For the more general M{\"o}bius map $T_2$, there is a continuous spectrum throughout the circle, with a potential embedded eigenvalue at $\theta=0$. To demonstrate the improved convergence provided by high-order kernels, we analyze the convergence of SpecRKHS-UniMeasure (\cref{unispecmeas_alg}) applied to the system given by the M{\"o}bius map $T_2$ as we vary $m$. We compute $K_{\epsilon}*\xi_g$ at a single point $\pi/3$ while varying $m$ and $\epsilon$ and keeping $N$ fixed, and plot the relative errors compared to the true value of $\rho_g(\pi/3)$. The results are shown in \cref{fig:uni_error_comp}. As $\epsilon$ decreases, convergence is significantly faster as we increase $m$. The observed error is $\mathcal{O}(\epsilon^m)$ for each $m$, as can be seen by the dotted black lines in \cref{fig:uni_error_comp}, in accordance with the theoretical convergence rates (up to logarithmic factors); for $m=6$, the stagnation for small $\epsilon$ is due to reaching machine precision.

\begin{figure}[t]
    \centering
    \includegraphics[width=0.45\linewidth]{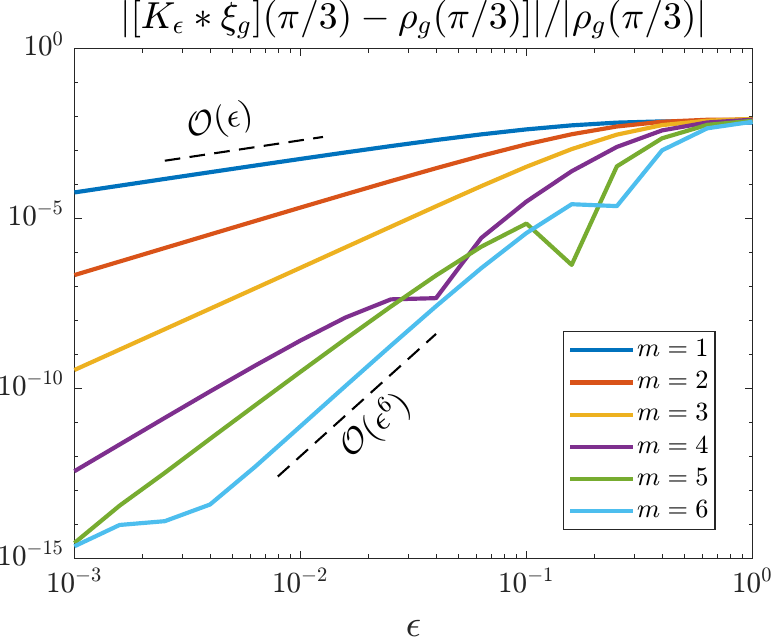}
    \caption{M{\"o}bius map. Analysis of the rate of convergence of SpecRKHS-UniMeasure (\cref{unispecmeas_alg}) to the true spectral measure for the M{\"o}bius map $T_2$ at a single point $z=\pi/3$. $N$ is fixed and we vary $\epsilon$ and $m$. The black dotted line show agreement with the theoretical convergence rate of $\mathcal{O}(\epsilon^m)$ (up to logarithmic factors).}
    \label{fig:uni_error_comp}
\end{figure}

\subsubsection{Self-adjoint example: Stochastic Koopman operators on RKHSs}

Stochastic Koopman operators~\cite{colbrook_beyond_2024,klus_eigendecompositions_2020,kostic_learning_2022,kostic_sharp_2023} provide examples of self-adjoint operators and constitute a further extension of our approach. We consider discrete-time stochastic systems of the form:
$$
    x_{n+1}=F(x_n,\tau_{n+1}),
$$
where $x_n\in\mathcal{X}$ (the state space) and $\{\tau_{n}\}\in\mathcal{X}_s$ are i.i.d. random variables with distribution $\rho$. The corresponding (stochastic) Koopman operator $\koop_F$ is defined on an appropriate function space $\mathcal{H}$ by
$$
    \koop_F g=\mathbb{E}_{\tau}[g\circ F_{\tau}]=\int_{\mathcal{X}_s}g\circ F_{\tau}\dd\rho(\tau),\quad
    \text{where }F_{\tau}(x)=F(x,\tau),\,g\in\mathcal{H}.
$$
Suppose that $\mathcal{H}$ is an RKHS with kernel $\mathfrak{K}$. Then, for all $x,y\in\mathcal{X}$,
$$
    \langle\koop_F^*\mathfrak{K}_x,\mathfrak{K}_y\rangle_{\mathfrak{K}}=\langle \mathfrak{K}_x,\koop_F\mathfrak{K}_y\rangle_{\mathfrak{K}}=\overline{\langle\koop_F \mathfrak{K}_y,\mathfrak{K}_x\rangle_{\mathfrak{K}}}=\overline{(\koop_F \mathfrak{K}_y)(x)}=\overline{\mathbb{E}_{\tau}[\mathfrak{K}_y(F_\tau(x))]}=
    \overline{\mathbb{E}_{\tau}[\langle \mathfrak{K}_y,\mathfrak{K}_{F_{\tau}(x)}\rangle_{\mathfrak{K}}]}.
$$
It follows that
$$
    \langle \koop_F^*\mathfrak{K}_x,\mathfrak{K}_y\rangle_{\mathfrak{K}}=\mathbb{E}_{\tau}[\langle \mathfrak{K}_{F_{\tau}(x)},\mathfrak{K}_y\rangle_{\mathfrak{K}}]
$$
This formula allows us to compute this inner product by taking an average over realizations of the system.

As an example of a stochastic process that can produce self-adjoint Koopman operators, we consider discrete-time Markov chains on infinite graphs \cite{klus_dynamical_2024}. A discrete-time stochastic process $\{x_n\}_{n\in\mathbb{N}}$ on a finite or countable state space $\mathcal{X}=\{s_1,s_2,\dots\}$ is said to have the Markov property, and in this case called a discrete-time Markov chain, if for all $n\in\mathbb{N}$ and for all $s_0,\dots,s_{n+1}\in \mathcal{X}$,
$$
    \mathbb{P}\left(x_{n+1}=s_{n+1}\mid x_n=s_n,\dots, x_0=s_0\right)=\mathbb{P}\left(x_{n+1}=s_{n+1}\mid x_n=s_n\right).
$$
A Markov chain is called time-homogeneous if for all $n\in\mathbb{N}$ and for all $x,y\in\mathcal{X}$, we have
$\mathbb{P}\left(x_{n+1}=y \mid x_n=x\right)=\mathbb{P}\left(x_n=y \mid x_{n-1}=x\right)$. In this case, the transition probability $p_{xy}$ from state $x$ to state $y$ is defined by
$p_{xy}=p(x,y)=\mathbb{P}\left(x_{n+1}=y\mid x_n=x\right)$. The Markov chain is said to be reversible if there exists a positive function $\pi:\mathcal{X}\mapsto\mathbb{R}$ such that for all $x,y\in\mathcal{X}$, $\pi(x)p(x,y)=\pi(y)p(y,x)$. Given such a function $\pi$, we define a $\pi$-weighted $L^2$ space of functions over $\mathcal{X}$ as
$$
    L^2_{\pi}(\mathcal{X})\coloneqq\left\{g:\mathcal{X}\rightarrow\mathbb{C}:\sum_{x\in\mathcal{X}}\pi(x)|g(x)|^2<\infty\right\},
$$
which is an RKHS with kernel function $\mathfrak{K}(x,y)=\delta_{xy}/\pi(x)$. Rescaling the kernel functions thus gives an orthonormal basis.

A Markov chain defines a Koopman operator associated with the function $F:\mathcal{X}\times\mathcal{X}_s\rightarrow\mathcal{X}$ where $\mathcal{X}_s=[0,1]$, the $\tau_n$ are independent, uniform distributions on $[0,1]$ and for all $n\in\mathbb{N}$,
$$
    F(s_n,\tau)=s_k,\quad \text{if }\sum_{l=1}^{k-1}p(s_n,s_l)\leq\tau<\sum_{l=1}^kp(s_n,s_l),
$$
where, if $k=1$, we take the first sum to be $0$, and if $\mathcal{X}$ is finite, we take the last strict inequality not to be strict. The associated stochastic Koopman operator is given by
$$
    \koop_Fg(x)=\sum_{y\in\mathcal{X}}p(x,y)g(y),
$$
and we assume that it is bounded.
To find conditions on $\koop_F$ to be self-adjoint, note that
$$
    \langle \koop_F^*\mathfrak{K}_x,\mathfrak{K}_y\rangle_{\mathfrak{K}}=
    \mathbb{E}_{\tau}[\langle \mathfrak{K}_{F_{\tau}(x)},\mathfrak{K}_{y}\rangle_{\mathfrak{K}}]=\sum_{s\in\mathcal{X}}p(x,s)\langle \mathfrak{K}_{s},\mathfrak{K}_{y}\rangle_{\mathfrak{K}}
    =\sum_{s\in\mathcal{X}}p(x,s)\delta_{sy}/\pi(s)=p(x,y)/\pi(y).
$$
Since $p$ and $\pi$ are real-valued, we may swap the roles of $x$ and $y$ to see that
$$
    \langle \koop_F\mathfrak{K}_x,\mathfrak{K}_y\rangle_{\mathfrak{K}}=
    \langle \mathfrak{K}_y,\koop_F\mathfrak{K}_x\rangle_{\mathfrak{K}}=
    \langle \koop_F^*\mathfrak{K}_y,\mathfrak{K}_x\rangle_{\mathfrak{K}}=p(y,x)/\pi(x).
$$
By density of the functions $\mathfrak{K}_x$, it follows that $\koop_F$ is self-adjoint if and only if
$$
    \pi(x)p(x,y)=\pi(y)p(y,x)\quad \forall x,y\in\mathcal{X}.
$$
These are known as the detailed balance equations.

As a simple numerical example, we consider the state space $\mathcal{X}=\mathbb{Z}$ with transition probabilities:
\begin{equation}
    \label{stochasticexample_eqn}
    p_1(n,n-1)=p_1(n,n)=p_1(n,n+1)=1/3,\quad p_1(n,m) = 0 \text{ otherwise}.
\end{equation}
These probabilities satisfy the detailed balance equations for $\pi(n)=1$. With respect to the orthonormal basis of kernel functions on $L^2(\mathcal{X})$, $\koop_F$ is represented by an infinite matrix that is tridiagonal, symmetric, and (bi-infinite) Toeplitz with all non-zero entries of values $1/3$. Hence, it has symbol $f(z)=(z+1+z^{-1})/3$ and so its spectrum is $\spec(\koop_F)=f(\mathbb{T})=[-1/3,1]$ \cite[Sec.~1.6]{bottcher_toeplitz_2000}. Moreover, we can analytically compute the spectral measures with respect to $g\in L^2(\mathcal{X})$. Indeed, $L^2(\mathcal{X})$ is isomorphic to the space
$$
    L^2(\mathbb{T})=\left\{\sum_{n=-\infty}^{\infty}a_ne^{in\theta}:\sum_{n=-\infty}^{\infty}|a_n|^2<\infty\right\},
$$
with inner product given by $\langle f,g\rangle=\frac{1}{2\pi}\int_{\mathbb{T}}f(\theta)\overline{g(\theta)}\dd\theta$.
On this space, $\koop_F$ is given by multiplication by $(e^{i\theta}+1+e^{-i\theta})/3$. Now, for $f,g\in L^2(\mathbb{T})$, using the spectral theorem initially and changing variables to $\lambda=(1+2\cos\theta)/3$ and considering both branches of the inverse, we have
\begin{equation*}
    \begin{split}
         & \int_{\mathbb{R}}\lambda^n\dd\mu_{f,g}(\lambda)=\langle \koop^nf,g\rangle=\frac{1}{2\pi}\int_{-\pi}^{\pi}\left[\frac{1}{3}\left(1+2\cos\theta\right)\right]^nf(\theta)\overline{g(\theta)}\dd\theta                                                                 \\
         & =\sum_{i=1}^2\frac{3}{4\pi}\int^1_{-1/3}\lambda^nf\left((-1)^i\cos^{-1}\left(\frac{3\lambda-1}{2}\right)\right)\overline{g\left((-1)^i\cos^{-1}\left(\frac{3\lambda-1}{2}\right)\right)}\sin\left(\cos^{-1}\left(\frac{3\lambda-1}{2}\right)\right)^{-1}\dd\lambda.
    \end{split}
\end{equation*}
As moments define a measure on a bounded interval \cite{feller_introduction_1991} (even if we did not know the spectrum explicitly, we know that $\koop$ is bounded, so has bounded spectrum), we see that the spectral measure $\mu_{f,g}$ of $\koop_F$ is absolutely continuous with Radon--Nikodym derivative
$$
    \rho_{f,g}(\lambda)=\chi_{[-1/3,1]}(\lambda)\sum_{i=1}^2\frac{3}{4\pi}f\!\left((-1)^i\cos^{-1}\!\left(\frac{3\lambda-1}{2}\right)\!\right)\overline{g\!\left((-1)^i\cos^{-1}\!\left(\frac{3\lambda-1}{2}\right)\!\right)}\sin\!\left(\!\cos^{-1}\!\left(\frac{3\lambda-1}{2}\right)\!\right)^{-1},
$$
where $\chi_S$ denotes the indicator function of a set $S$.
We take $f(\theta)=g(\theta)=\sin\theta$, which corresponds to $(\delta_1-\delta_{-1})/2$ in $L^2(\mathcal{X})$ and yields
\begin{equation}
    \label{radonnikodymexact_eqn}
    \rho_{f}(\lambda)=\chi_{[-1/3,1]}(\lambda)\frac{3}{4\pi}\left(6\lambda+3-9\lambda^2\right)^{1/2}.
\end{equation}

\begin{figure}[t]
    \centering
    \includegraphics[width=0.45\linewidth]{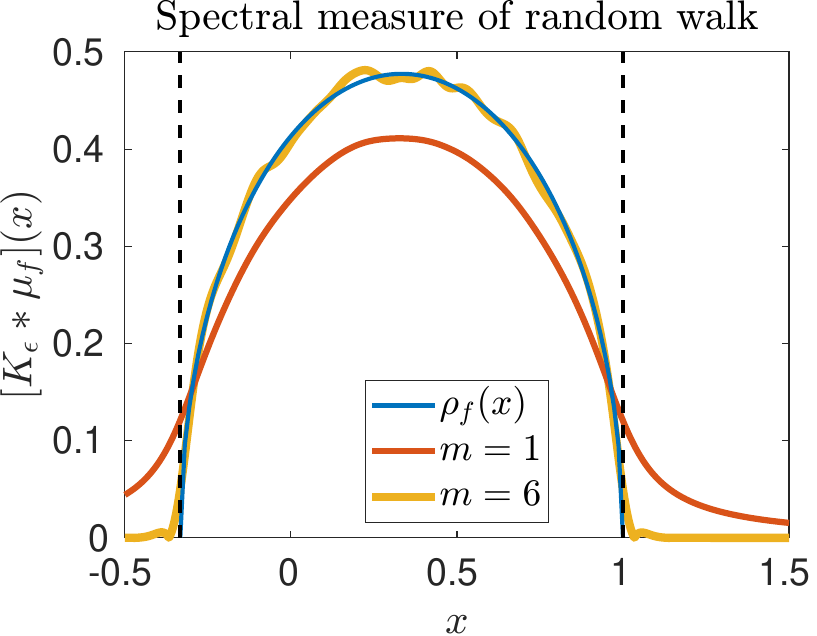}\hfill
    \includegraphics[width=0.45\linewidth]{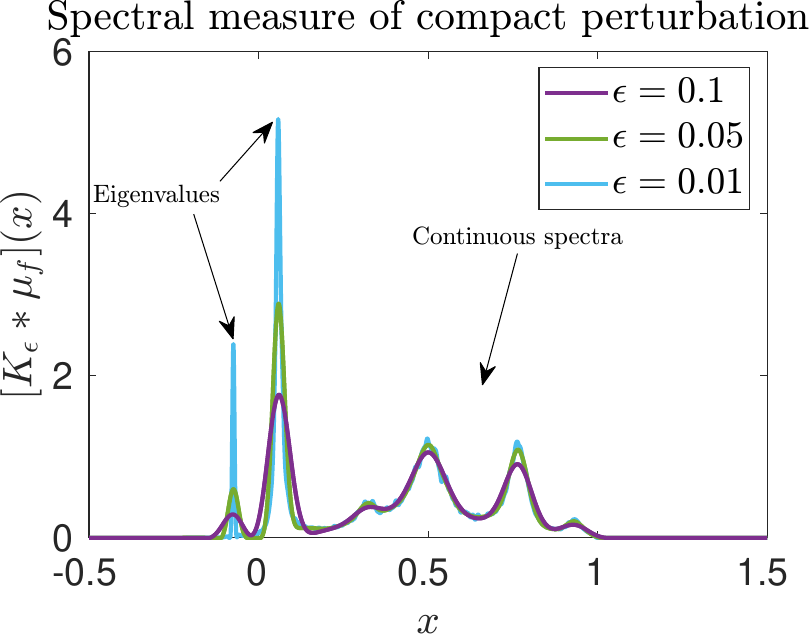}
    \caption{Random walk. Left: Approximation to spectral measure of system defined by \cref{stochasticexample_eqn} using SpecRKHS-SAdjMeasure (\cref{saspecmeas_alg}). The exact expression of the spectral measure $\rho_f$ is given in \cref{radonnikodymexact_eqn}, and the boundary of the spectrum is shown by the dashed vertical lines. We have shown two values of $m$ (order of the convolution kernel). Right: Approximation to the spectral measure of a random perturbation of the same system. Here, we vary $\epsilon$ and observe convergence to the continuous spectra and eigenvalues.}
    \label{fig:unispecmeas}
\end{figure}

We apply SpecRKHS-SAdjMeasure (\cref{saspecmeas_alg}) to numerically compute the spectral measure of $\koop$ with respect to $f$. For the snapshot data, we take $x^{(j)}=j$ for $j\in\{-5000,\ldots,5000\}$, and for each $x^{(j)}$, we sample $F_{\tau}(x^{(j)})$ $10^5$ times to construct a finite-section approximation of the stochastic Koopman operator. As in the case of the M{\"o}bius map, we use rational kernels of order $m$ with poles $a_k=2k/(m+1)-1+i$ for $k\in\{1,\ldots,m\}$. The left panel of \cref{fig:unispecmeas} shows the smoothed spectral measures for $m = 1$ and $m = 6$. The improved accuracy of the higher-order kernel is evident: for $m = 6$, the result closely matches the exact expression in \cref{radonnikodymexact_eqn}. Moreover, if the probabilistically sampled matrices $G$ and $A$ are replaced with their exact values, the computed spectral measure becomes indistinguishable from the true one, indicating that the observed oscillations are due to the finite number of samples used to approximate the expectation.

We also apply our algorithm to a randomly generated perturbation of \cref{stochasticexample_eqn}, which induces a compact perturbation of the Koopman operator and thus preserves its essential spectrum~\cite[Thm.~5.35]{kato_perturbation_1995}. Specifically, for each $i\in\{-4,\ldots,4\}$, we randomly generate independent and uniformly distributed values $a_{ij}\in(-1/8,1/8)$ for $j\in\{1,2\}$ and define a perturbed Markov chain via
\begin{equation}
    p_2(i,i-1)=1/4+a_{i1},\quad
    p_2(i,i)=1/2-a_{i1}-a_{i2},\quad
    p_2(i,i+1)=1/4+a_{i2},
\end{equation}
with $p_2=p_1$ elsewhere. We use the same snapshot data, number of samples, and rational kernel as for the unperturbed system. The right panel of \cref{fig:unispecmeas} shows the smoothed spectral measures for $m = 6$ and various values of $\epsilon$. As $\epsilon \downarrow 0$, the smoothed spectral measures converge to the (now perturbed) Radon--Nikodym derivative in regions of absolutely continuous spectrum. Conversely, in regions where the smoothed spectral measures diverge as $\epsilon \downarrow 0$, the blow-up indicates they are approximating Dirac delta distributions associated with point spectrum (eigenvalues).

\section{Examples in high dimensions}
\label{numexample_sect}

In this section, we implement our algorithms on a range of numerical examples of practical interest, including turbulent fluid flow (\cref{sec:turbulent}), molecular dynamics (\cref{sec:ifabp}), and climate forecasting (\cref{sec:antarctic,sec:seaheight}). The datasets used throughout are obtained from real data or high-fidelity numerical simulations. All the numerical examples in this section are performed on a standard laptop with $16$GB of RAM and an Intel\textsuperscript{\textregistered} Core\textsuperscript{TM} i7-9750H CPU @ 2.60\,GHz. Despite considering high-dimensional state spaces, our algorithms are fast to train and evaluate since they predominantly exploit the smaller number of observables. Moreover, the forecasting methods introduced in \cref{sec:errorcontrolpfmd} often significantly reduce the model's size while maintaining or improving accuracy. Hence, as discussed in \cref{sec:errorcontrolpfmd}, using the Perron--Frobenius operator rather than the Koopman operator for predictions is computationally cheaper when handling large dimensions $d$.

Throughout this section, we use Mat{\'e}rn kernels rather than Wendland kernels to generate RKHSs equivalent to Sobolev spaces, as Mat{\'e}rn kernels offer more stable evaluations. The Wendland kernel function $\varphi_{d,k}(r)$ must be of the form $(1-r)_+^{\lfloor{d/2}\rfloor +k+1}q(r)$ for $q$ a polynomial, due to the requirement of having $\lfloor d/2\rfloor+k$ continuous derivatives at $r=1$ \cite[Thm.~9.13]{wendland_scattered_2004}, and is unstable to evaluate numerically for large $d$. On the other hand, the key parameter that influences the power of $\sigma\|x\|$ to take and the order of the Bessel function to use is $n-d/2$ for the Mat{\'e}rn kernel, which can be kept small regardless of how large $d$ is by choosing a sufficiently large value of $n$.

\subsection{Turbulent channel flow}
\label{sec:turbulent}

\begin{figure}[th]
    \centering
    \includegraphics[width=0.48\linewidth]{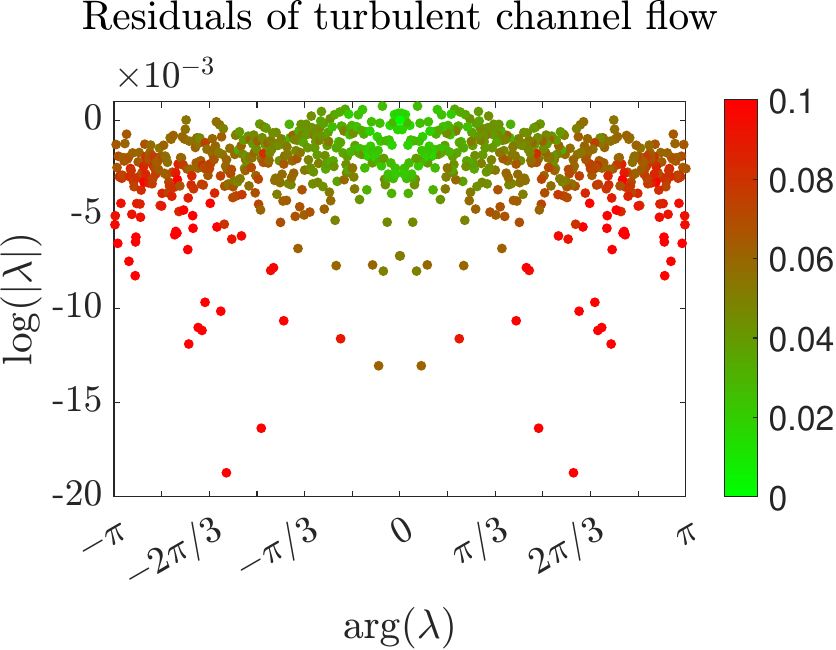}\hfill
    \includegraphics[width=0.48\linewidth]{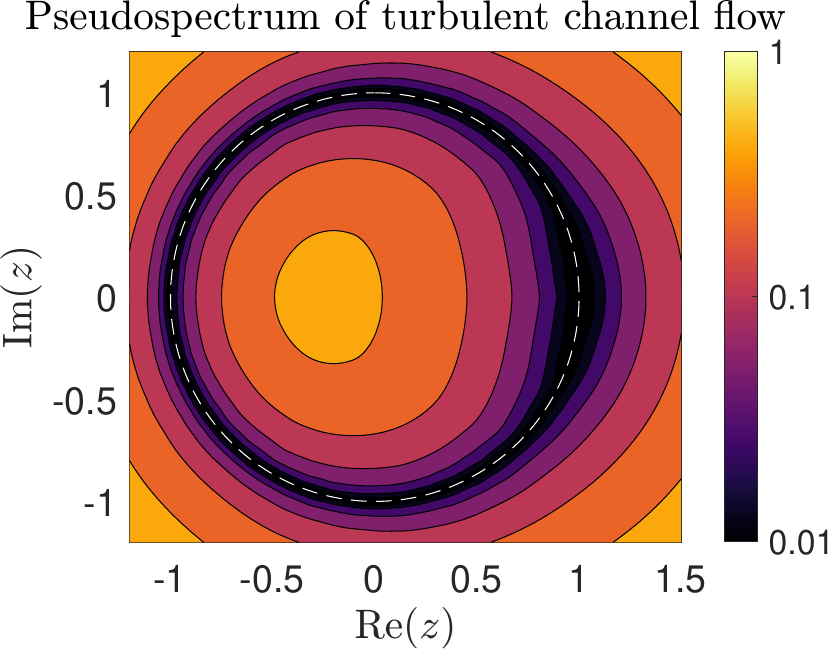}
    \caption{Turbulent channel flow. Left: The eigenvalues outputted by kEDMD using a Mat{\'e}rn kernel are plotted based on their argument and (the log of) their modulus. The colors highlight the size of their residuals, computed by SpecRKHS-Eig (\cref{evalverif_alg}). Right: Pseudospectral contours for the same system using the same kernel, computed by SpecRKHS-PseudoPF (\cref{pspecadjoint_alg}). The white dotted line displays the unit circle $\{z\in\mathbb{C}:|z|=1\}$.}
    \label{fig:channelresidualspspec}
\end{figure}

\begin{figure}[th!]
    \centering
    \begin{overpic}[height=0.2\linewidth]{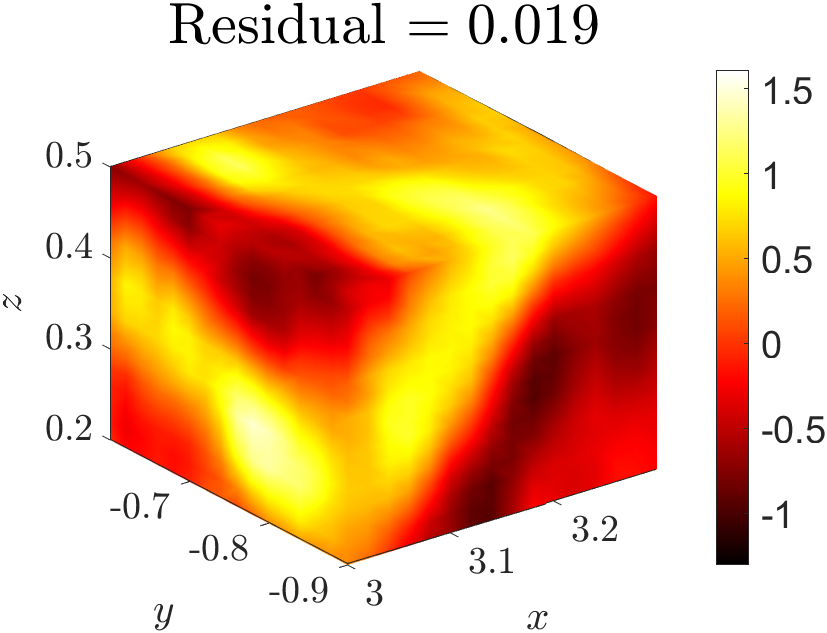}
       \put(0,71){(a)} 
    \end{overpic}
    \hspace{0.1cm}
    \includegraphics[height=0.2\linewidth]{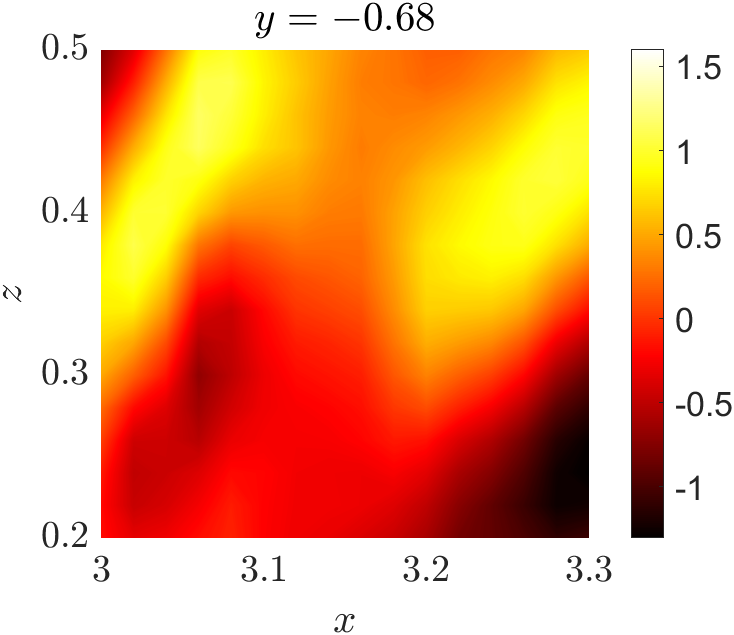}
    \includegraphics[height=0.2\linewidth]{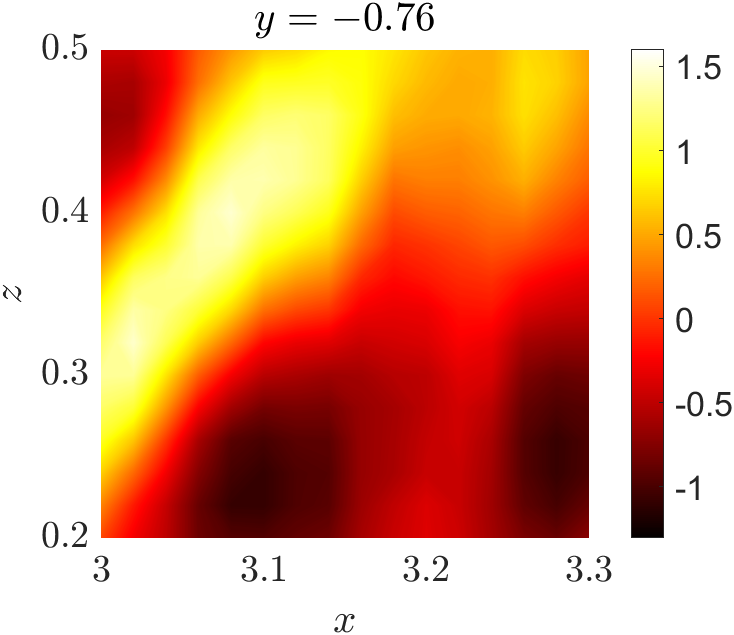}
    \includegraphics[height=0.2\linewidth]{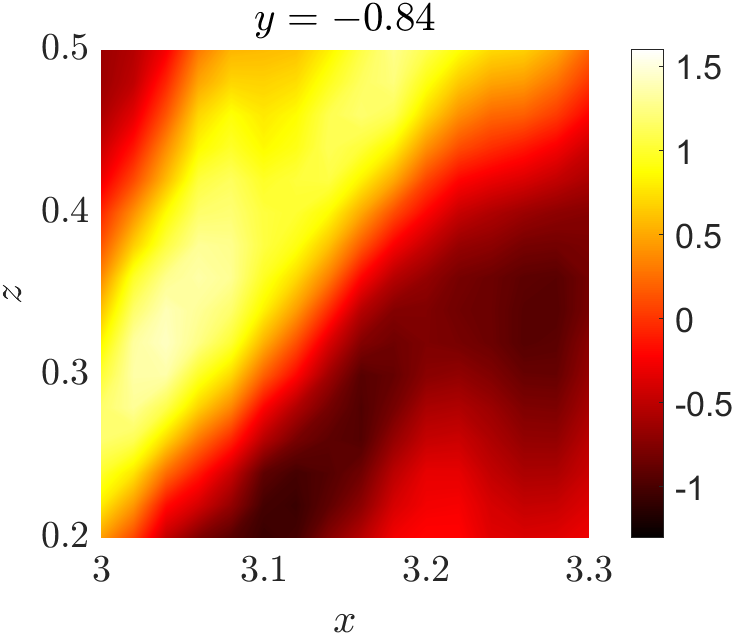}\\
   \begin{overpic}[height=0.2\linewidth]{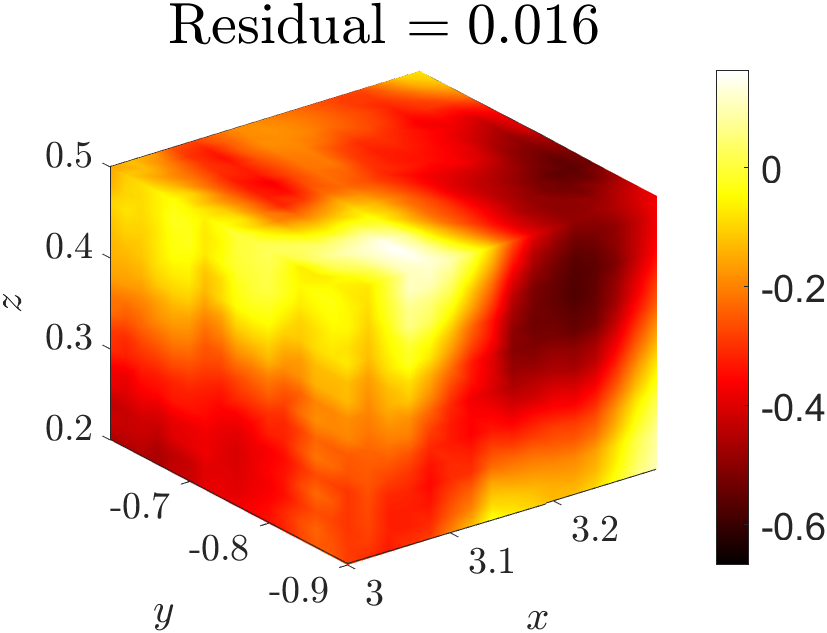}
          \put(0,71){(b)} 
   \end{overpic}
        \hspace{0.1cm}
    \includegraphics[height=0.2\linewidth]{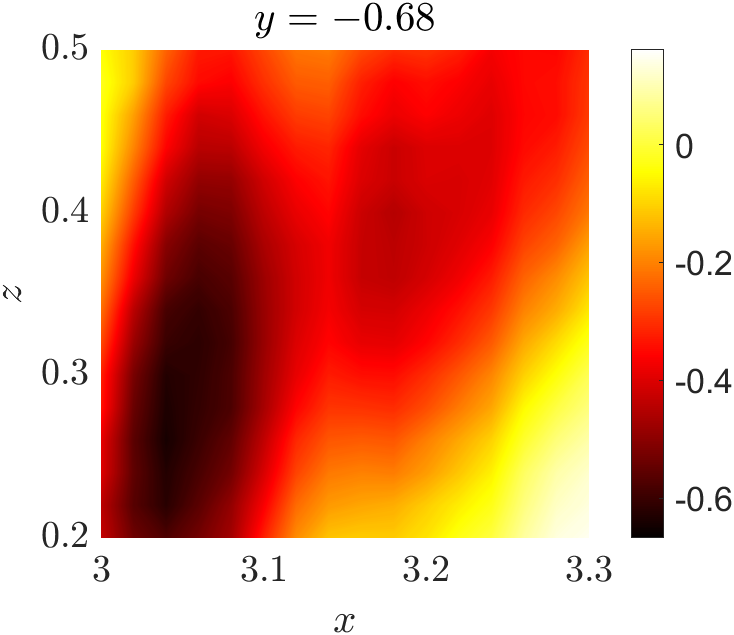}
    \includegraphics[height=0.2\linewidth]{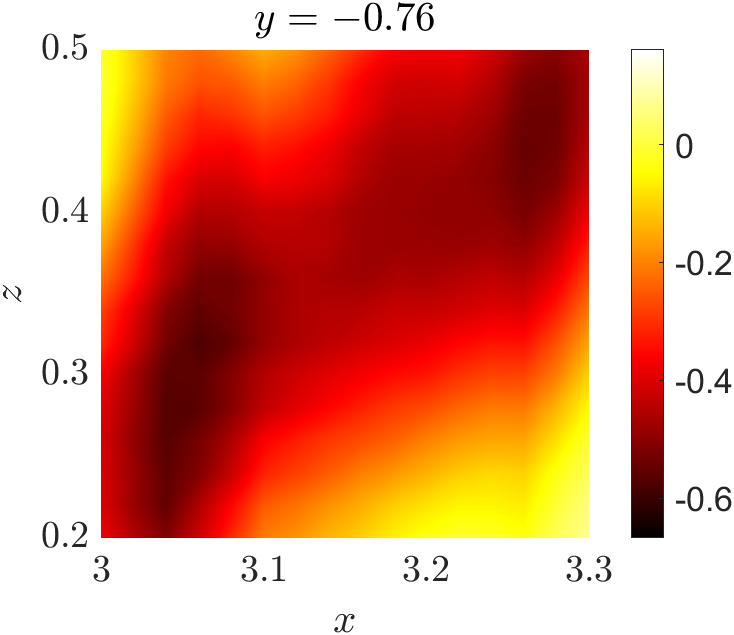}
    \includegraphics[height=0.2\linewidth]{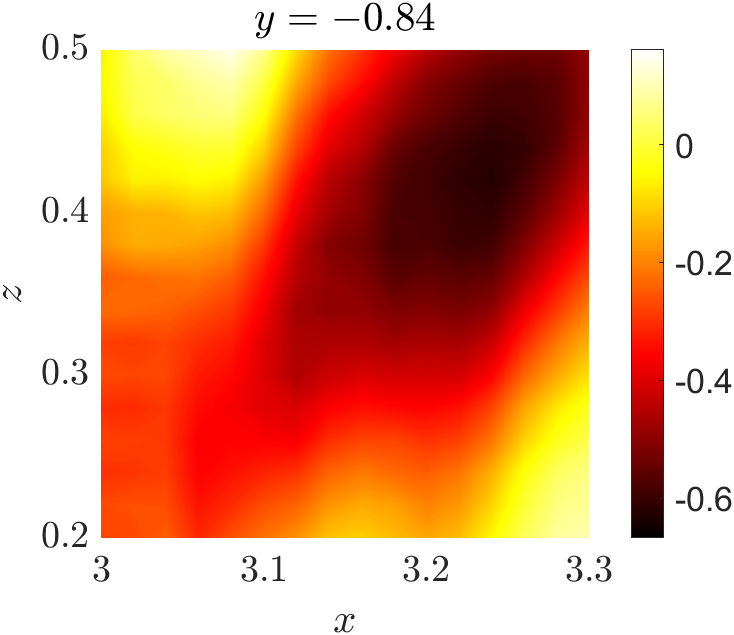}\\
    \begin{overpic}[height=0.2\linewidth]{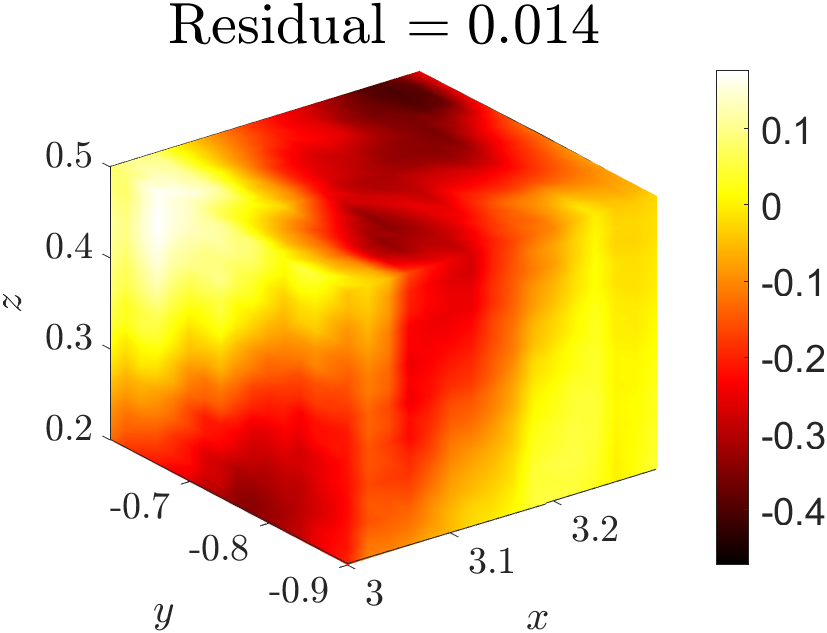}
    \put(0,71){(c)} 
   \end{overpic}
        \hspace{0.1cm}
    \includegraphics[height=0.2\linewidth]{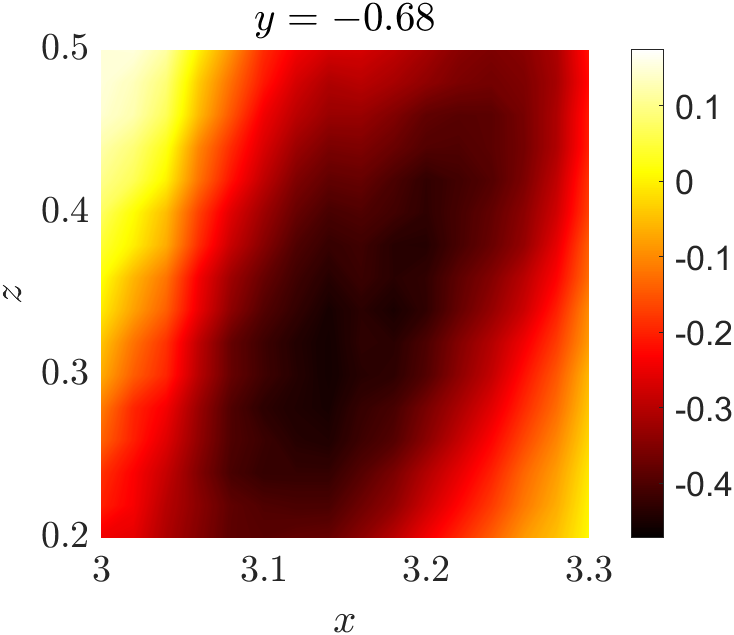}
    \includegraphics[height=0.2\linewidth]{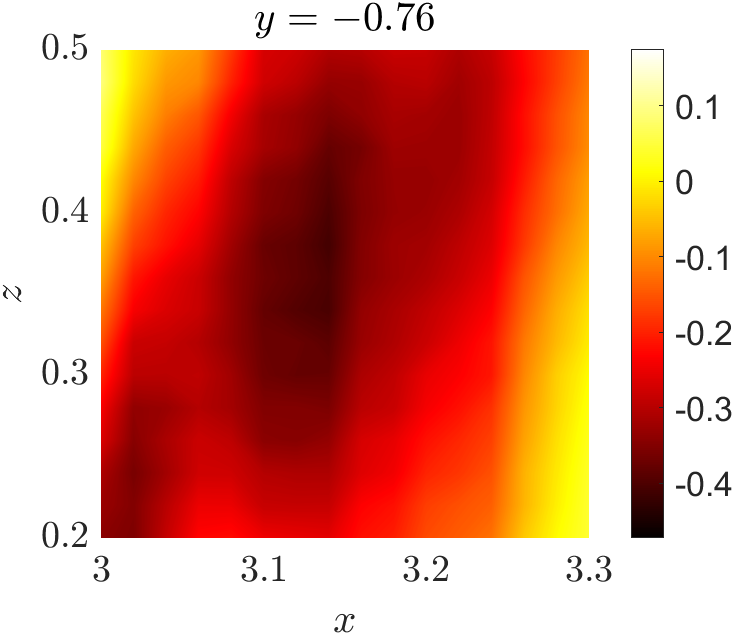}
    \includegraphics[height=0.2\linewidth]{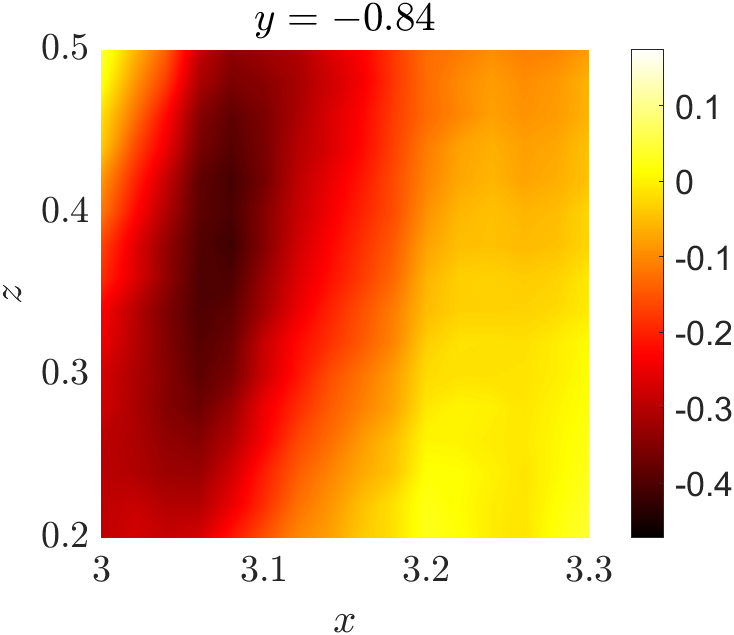}
    \caption{Turbulent channel flow. (a)-(c) Perron--Frobenius modes corresponding to three verified eigenfunctions for the turbulent channel flow. Each mode is plotted first in three dimensions with its residual computed by SpecRKHS-Eig (\cref{evalverif_alg}), along with two-dimensional slices along a fixed $y$-direction.}
    \label{fig:channelpdfmodes}
\end{figure}

As a first example to demonstrate the practicality of our algorithms in high dimensions, we consider the problem of turbulent channel flow, with data obtained from \cite{graham_web_2016}. The system is a turbulent flow in a channel of size $8\pi\times 2\times 3\pi$, with friction velocity Reynolds number of $Re\approx 1000$, friction velocity $u=0.0499$ and viscosity $5\times 10^{-5}$. The simulation solves the three-dimensional incompressible Navier-Stokes equation on $2048\times 512\times 1536$ nodes; we sample the pressure at $4096$ equally spaced nodes on the subdomain $[3.0,3.3]\times [-0.9,-0.6]\times [0.2,0.5]$. We take $800$ snapshots of a single trajectory at time intervals $0.0325$, so the trajectory lasts $26$ seconds, and use the Mat{\'e}rn kernel defined by
$$
    \mathfrak{K}(x,y)=\begin{cases}
        1,                                     & \text{if } x=y,   \\
        (\sigma\|x-y\|_2)K_1(\sigma\|x-y\|_2), & \text{otherwise},
    \end{cases}
$$
where $K_1$ is a modified Bessel function of the second kind.

Given this setup, we use SpecRKHS-Eig (\cref{evalverif_alg}) to compute residuals of eigenpairs computed by kEDMD, and then use SpecRKHS-PseudoPF (\cref{pspecadjoint_alg}) to compute the approximate point pseudospectrum of the Perron--Frobenius operator. We observe in \cref{fig:channelresidualspspec} that most verified eigenpairs lie close to $\lambda=1$, as can also be seen in the pseudospectral plot. The `bulge' in the pseudospectral plot around $\lambda=1$ shows that the Perron--Frobenius operator is highly non-normal, consistent with transient energy growth typical in turbulent flows.

We also compute the Perron--Frobenius modes corresponding to the verified eigenfunctions. In particular, by solving a least squares problem, we can expand the eigenfunction $\psi$ in terms of the observables $g_i(x)=[x]_i$ (where $[x]_i$ denotes the $ith$ component of $x$), i.e., $\psi=\sum_{i=1}^dc_ig_i$, and plot the Perron--Frobenius modes $c_i$ at the corresponding locations. \cref{fig:channelpdfmodes} shows these modes, plotted both on the full $3$D domain and in slices with fixed $y$-coordinate, along with the residuals of the corresponding eigenfunction. The three leading verified eigenfunctions correspond to slowly decaying modes in the flow. By examining the Perron–Frobenius modes (spatial structures) associated with these eigenfunctions (\cref{fig:channelpdfmodes}), we can identify regions in the channel that oscillate or decay coherently. Each mode highlights a spatial pattern of pressure variation, and its residual indicates how perfectly it follows $\lambda^n$ scaling.

\subsection{Intestinal fatty acid binding proteins}
\label{sec:ifabp}

\begin{figure}[t]
    \centering
    \includegraphics[width=0.48\linewidth]{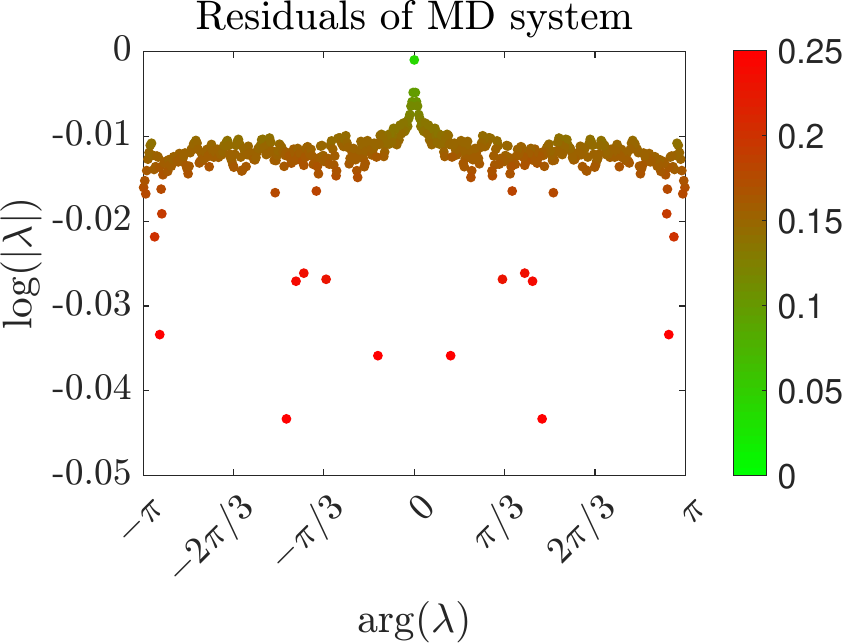}\hfill
    \includegraphics[width=0.43\linewidth]{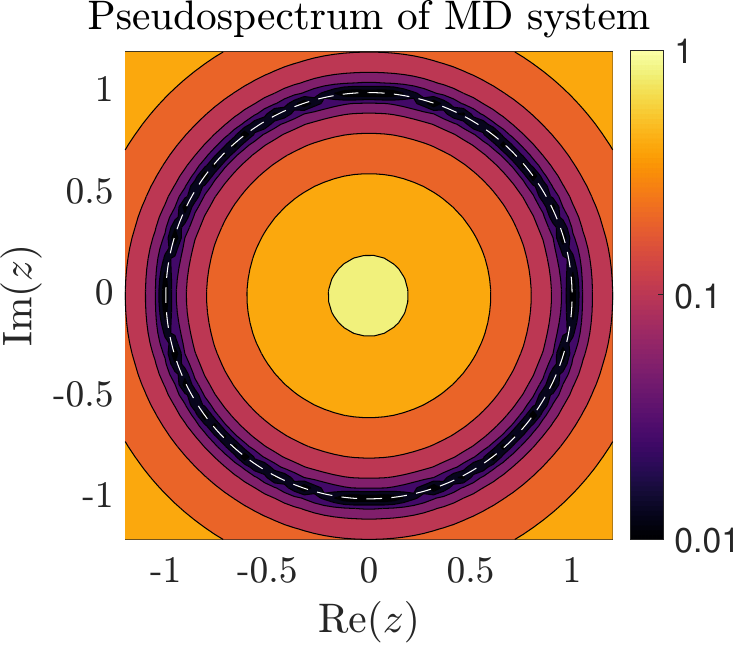}
    \caption{Molecular dynamics (MD) dataset. Left: The eigenvalues outputted by kEDMD using a Mat{\'e}rn kernel are plotted based on their argument and (the log of) their modulus. The color shows the size of their residuals, computed by SpecRKHS-Eig (\cref{evalverif_alg}). Right: Pseudospectral contours for the same system using the same kernel, computed by SpecRKHS-PseudoPF (\cref{pspecadjoint_alg}). The white dotted line displays the unit circle $\{z\in\mathbb{C}:|z|=1\}$.}
    \label{fig:mdresidualspspec}
\end{figure}

In this example, we use a molecular dynamics (MD) dataset \cite{Beckstein2018} consisting of a trajectory of 500 snapshots for an intestinal fatty acid binding protein (I-FABP), including both water and ions as solvents. The trajectory was computed using CHARMM with a $2$fs timestep, and the snapshots were saved every $1$ ps; the trajectory was then RMSD-fitted to the protein. The state space consists of the $x$, $y$, and $z$ positions of the $12445$ particles making up the protein, giving a dimension of $d=37335$.

We choose $n=18668$ such that $n-d/2=1/2>0$ and use the Mat{\'e}rn kernel:
$$\mathfrak{K}(x,y)=\begin{cases}
        1,                                     & x=y,              \\
        (\sigma\|x-y\|_2)K_1(\sigma\|x-y\|_2), & \text{otherwise},
    \end{cases}$$
whose native space corresponds to the Sobolev space $H^{18668}(\mathbb{R}^{37335})$. To ensure well-conditioned matrices, we select $\sigma=1/2000$. We run SpecRKHS-Eig (\cref{evalverif_alg}) and SpecRKHS-PseudoPF (\cref{pspecadjoint_alg}) on this example and report the results in \cref{fig:mdresidualspspec}. There are no significant damped modes (no $|\lambda|<1$ eigenvalues beyond numerical noise). The approximate point spectrum appears to be the entire unit circle. The pseudospectral contours are nearly circular (\cref{fig:mdresidualspspec}) with $\|(\koop_F^*-zI)^{-1}\|^{-1}= \mathrm{dist}(z,\mathbb{T})$, suggesting the Perron--Frobenius operator is close to normal (possibly unitary). Physically, this might indicate that the system's dynamics (on the timescale of the simulation) are largely conservative without clear exponential decay.

Motivated by this and the fact that $\|G-M\|/\|G\|\approx 3\times 10^{-3}$ is small (possibly corresponding to a quasi-conservative molecular vibration), we apply SpecRKHS-UniMeasure (\cref{unispecmeas_alg}) to compute the spectral measure of $\koop_F^*$. To remove the eigenvalue at $\lambda=1$, we select our observable $g=\sum_{i=1}^N c_i\mathfrak{K}_{x_i}$ to be orthogonal to the constant function (or, strictly speaking, any approximation of the constant function on a compact set containing all the data points), which is equivalent to $\sum_{i=1}^N c_i=0$. As in \cref{sec:mobiusmaps}, we use a rational kernel with $m=6$ and poles $a_j=2j/(m+1)-1+i$ for $j\in\{1,\ldots,m\}$. \cref{fig:mdspecmeas} shows the smoothed spectral measure for various $\epsilon$, where there are non-trivial regions of continuous spectra throughout the unit circle.

This suggests that the molecular system does not have a small set of dominant Koopman modes (aside from the trivial mode); instead, its dynamics contain a continuum of frequencies/modes – consistent with a high-dimensional chaotic motion. Our algorithms successfully detected this continuous spectral component, which would be impossible to see with standard kEDMD alone.

\begin{figure}[t]
    \centering
    \includegraphics[width=0.48\linewidth]{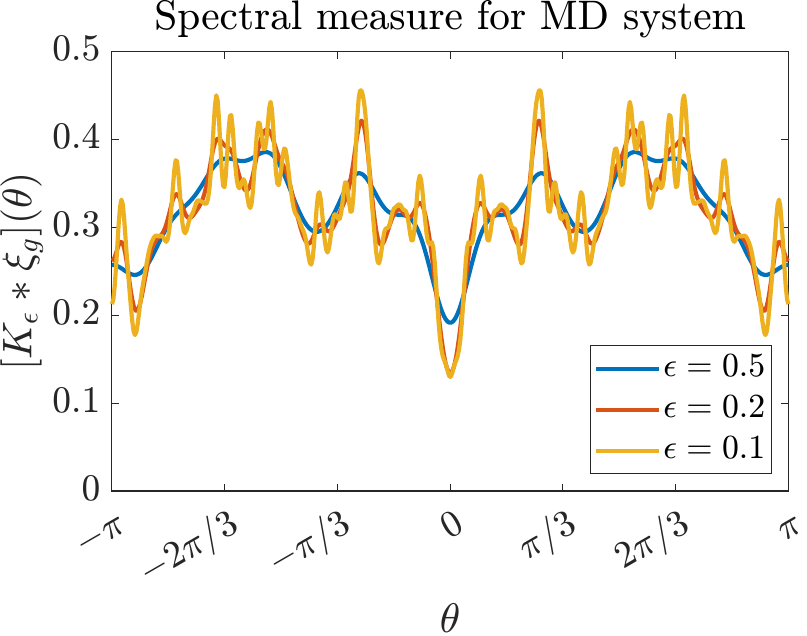}
    \caption{Molecular dynamics (MD) dataset. Approximation to the spectral measure of molecular dynamics system with respect to an observable orthogonal to the constant function using SpecRKHS-UniMeasure     (\cref{unispecmeas_alg}).}
    \label{fig:mdspecmeas}
\end{figure}

\subsection{Antarctic sea ice concentration}
\label{sec:antarctic}

\begin{figure}[t]
    \centering
    \includegraphics[width=0.48\linewidth]{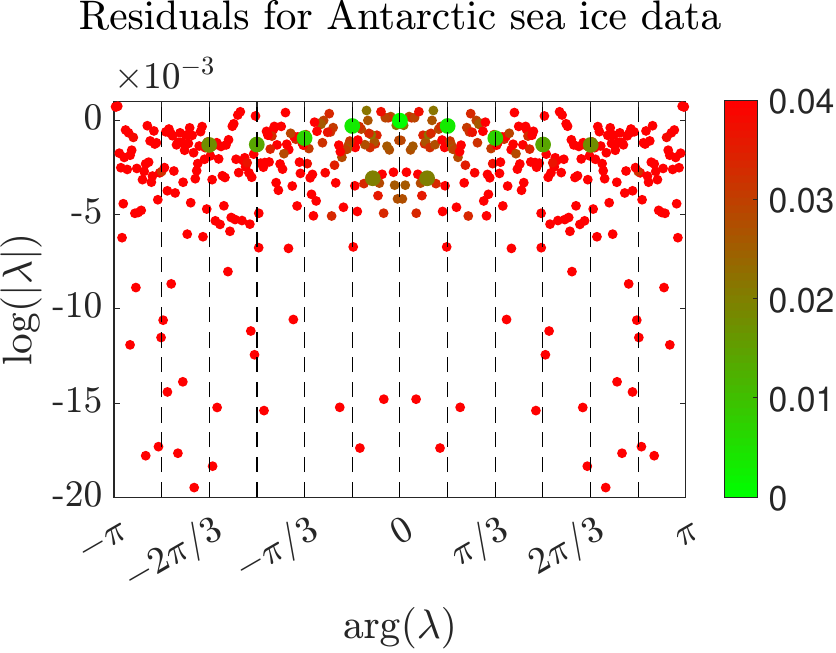}\hfill
    \includegraphics[width=0.43\linewidth]{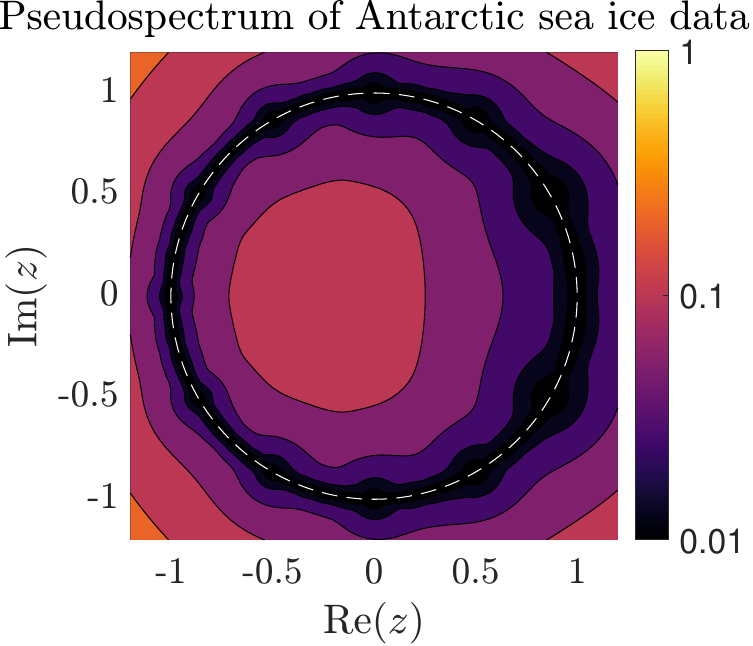}
    \caption{Antarctic sea ice data. Left: The eigenvalues outputted by kEDMD using a Mat{\'e}rn kernel are plotted based on their argument and (the log of) their modulus. The color coding shows the size of their residuals, as computed by SpecRKHS-Eig (\cref{evalverif_alg}). The dotted black lines correspond to the lines $\mathrm{arg}(\lambda)=\pi k/6$ for $k=-5,\dots,5$, and the $11$ dominant eigenvalues which we use for predictions are larger. Right: The plot shows pseudospectral contours for the same system using the same kernel, as computed by SpecRKHS-PseudoPF (\cref{pspecadjoint_alg}). The white dotted line shows the circle $\{z\in\mathbb{C}:|z|=1\}$.}
    \label{fig:antarcticresidualspspec}
\end{figure}

\begin{figure}[th!]
    \centering
    \includegraphics[width=0.32\linewidth]{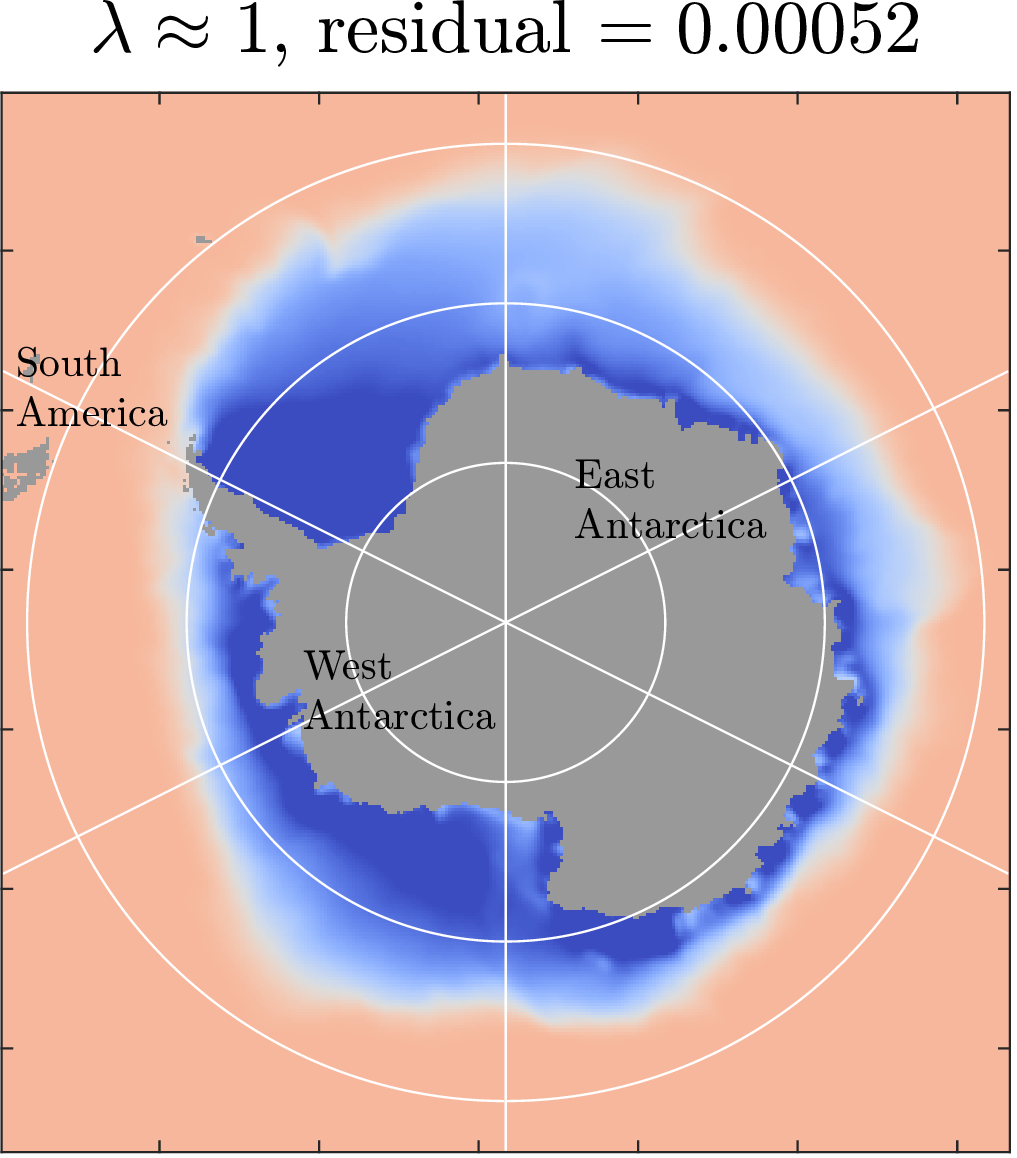}
    \includegraphics[width=0.32\linewidth]{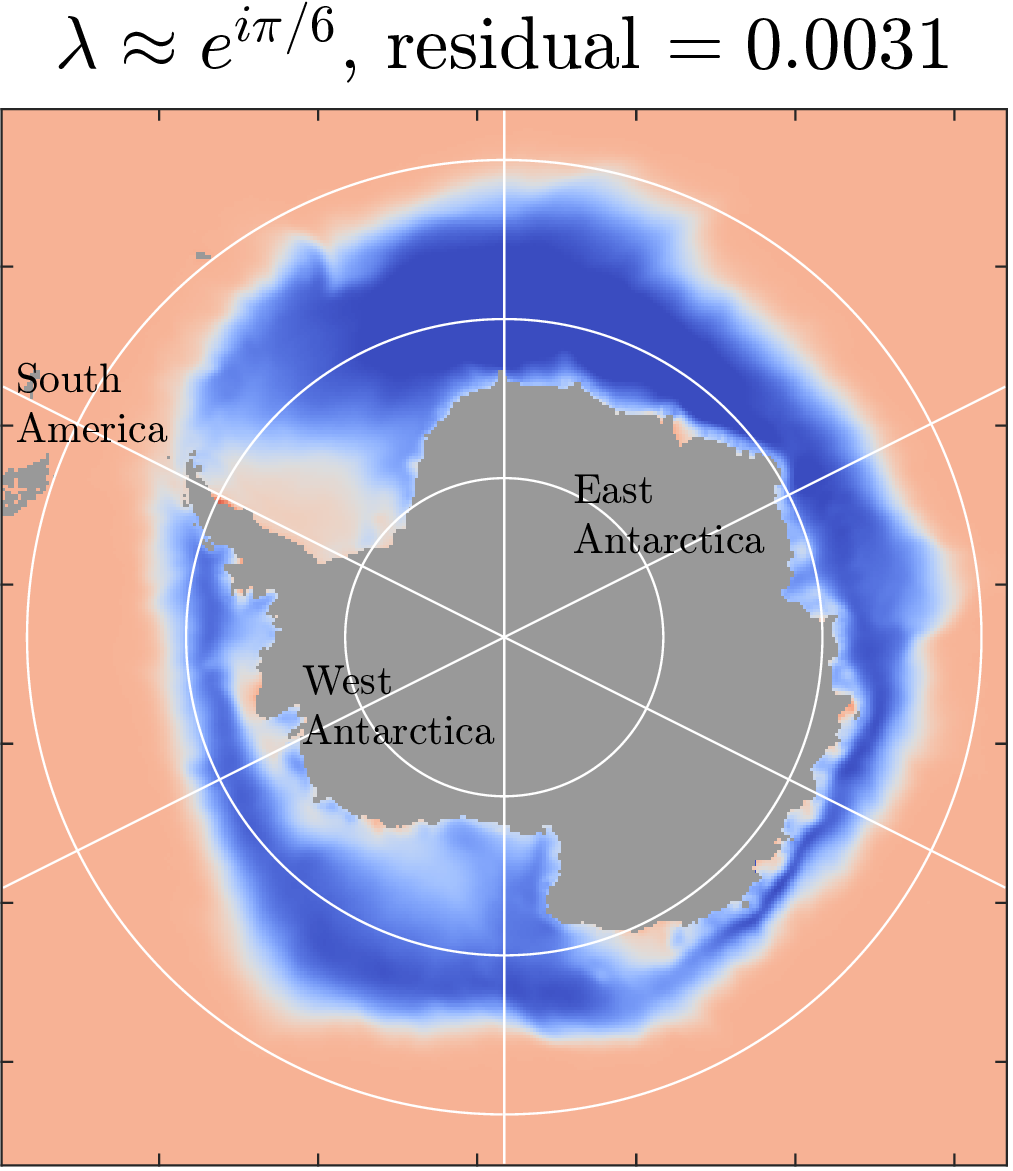}
    \includegraphics[width=0.32\linewidth]{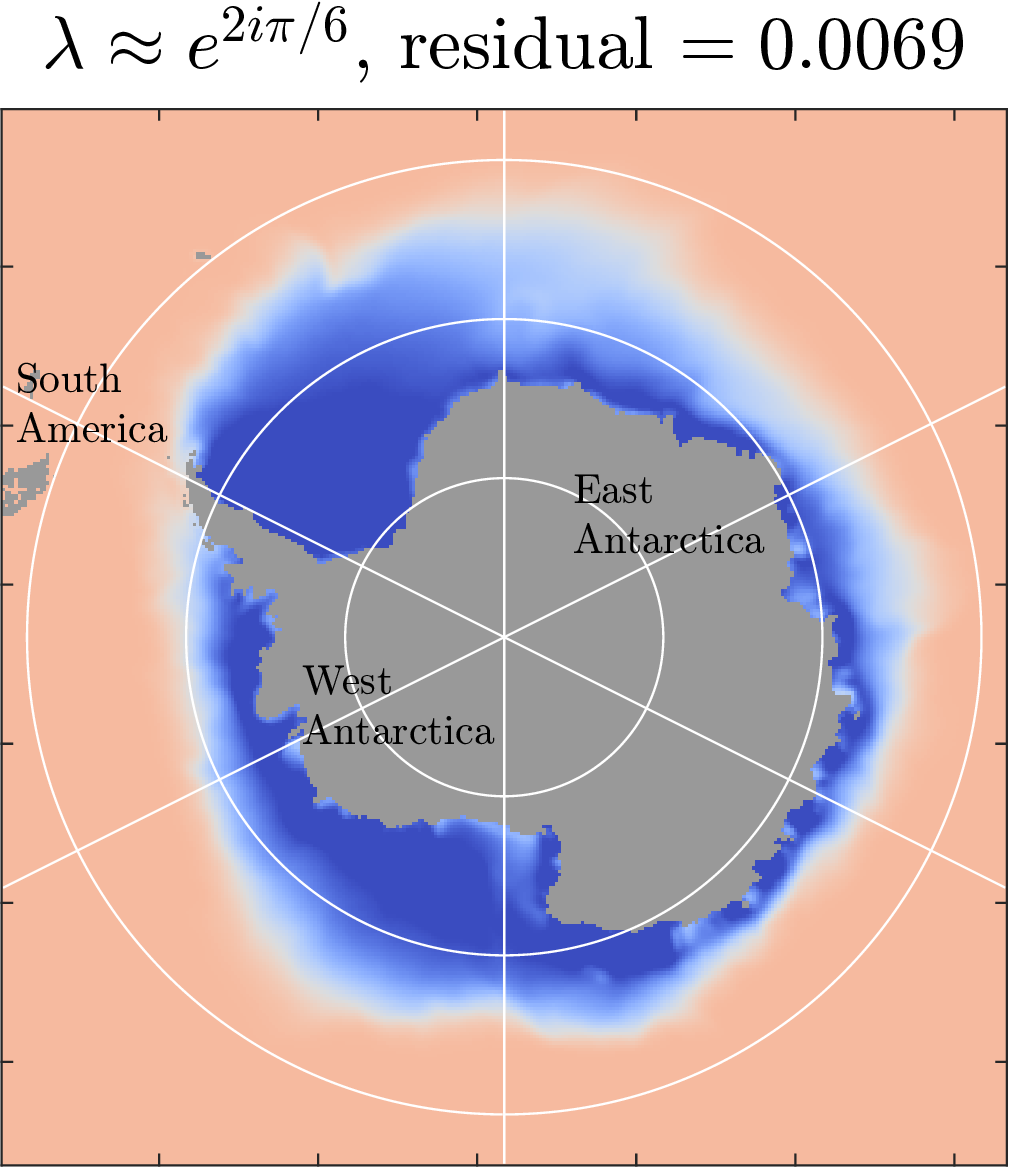}
    \caption{Antarctic sea ice data. Perron--Frobenius modes for Antarctic sea ice data, along with their residuals computed by SpecRKHS-Eig (\cref{evalverif_alg}).
    }
    \label{fig:antarcticpdfmodes}
\end{figure}

We now use monthly satellite data of sea ice concentration in the Southern Hemisphere on a grid of $25\times 25$ km regions from 1979 to 2023 inclusive \cite{hogg_exponentially_2020,sea_ice_data}. The dataset has dimension $d=82907$, and we take monthly samples across the 45 years considered, yielding $540$ snapshots. We remove the final 5 years of data to test our model's predictions and use the remaining years of data as training data to build the model. We use a Mat{\'e}rn kernel corresponding to an integer-valued Sobolev space:
\begin{equation}
    \label{eqn:maternkernelhalf}
    \mathfrak{K}(x,y)=\begin{cases}
        \sqrt{\pi/2},                                    & \text{if } x=y,   \\
        (\sigma\|x-y\|_2)^{3/2}K_{3/2}(\sigma\|x-y\|_2), & \text{otherwise}.
    \end{cases}
\end{equation}
The native space of this kernel is equivalent to the Sobolev space $H^{41454}(\mathbb{R}^{82907})$. Note that explicitly $K_{3/2}(x)=K_{-3/2}(x)=\sqrt{\pi/2}\,e^{-x}(x^{-1}+1)x^{-1/2}$, and here we take the scale factor to be $\sigma=1/10000$.

\begin{figure}[t]
    \centering
    \includegraphics[width=0.45\linewidth]{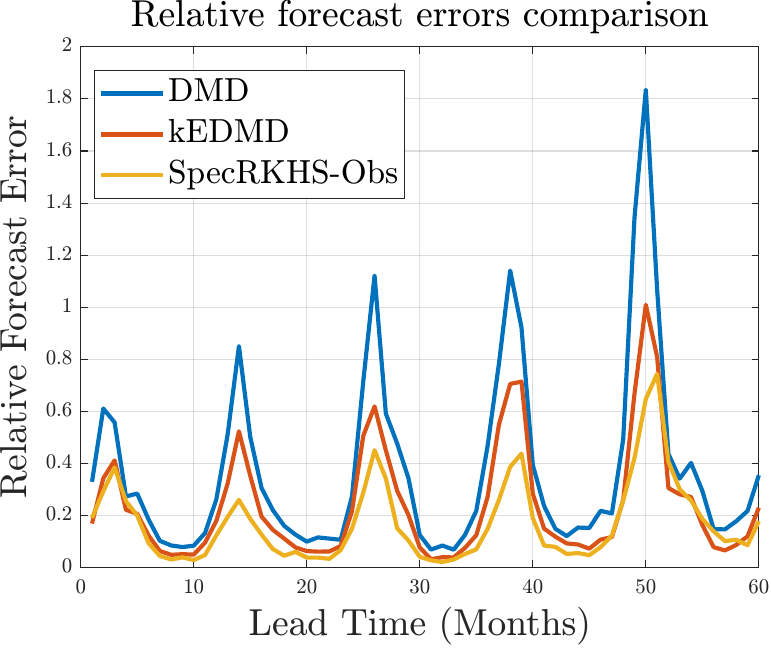}
    \caption{Antarctic sea ice data. Relative forecast errors for DMD, kEDMD, and SpecRKHS-Obs (\cref{alg_verifiedPF}), compared to the exact sea ice concentrations over a five-year period.}
    \label{fig:antarcticforecasterrors}
\end{figure}

\begin{figure}[th!]
    \centering
    \includegraphics[width=0.32\linewidth]{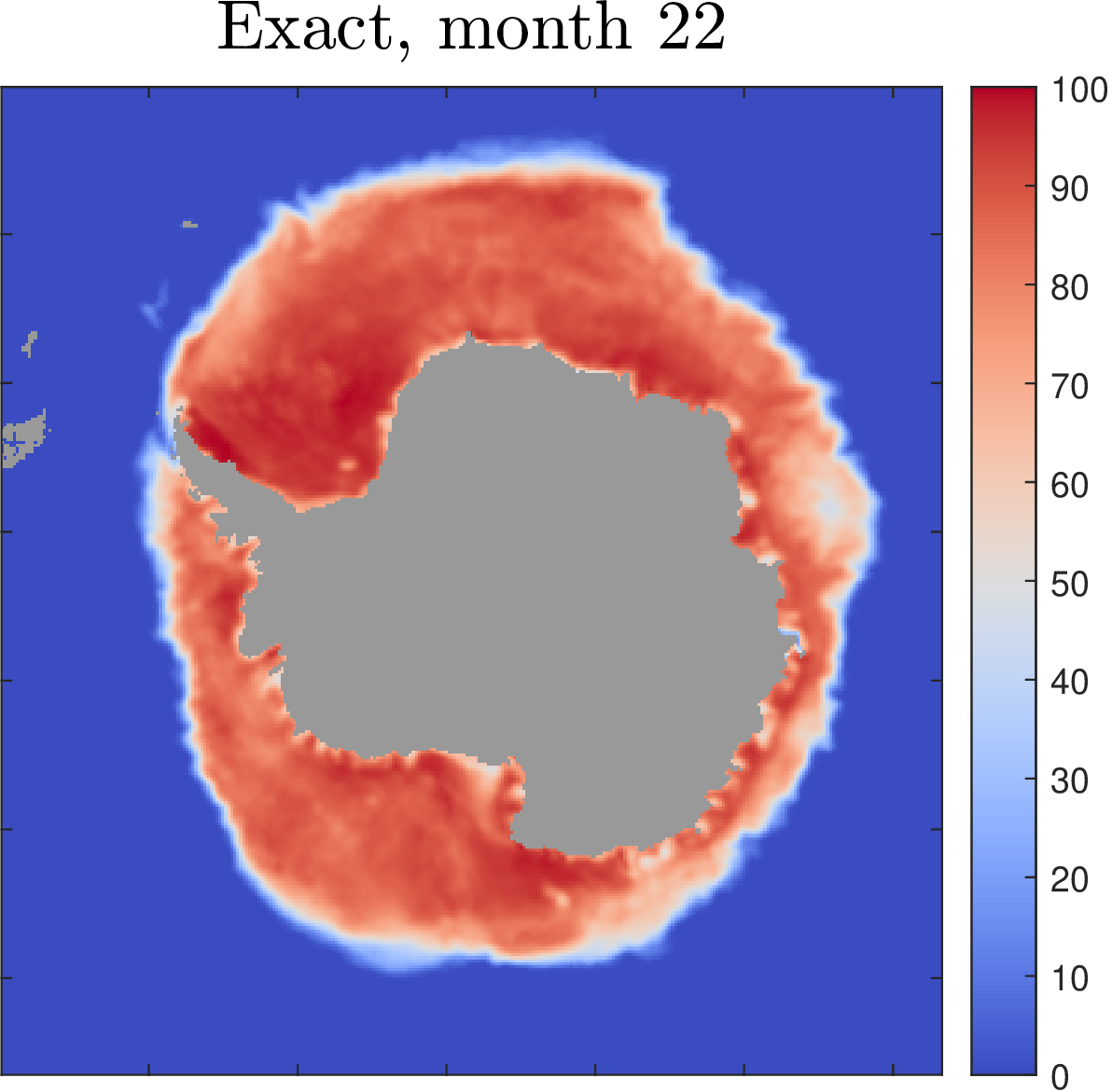}\hfill
    \includegraphics[width=0.32\linewidth]{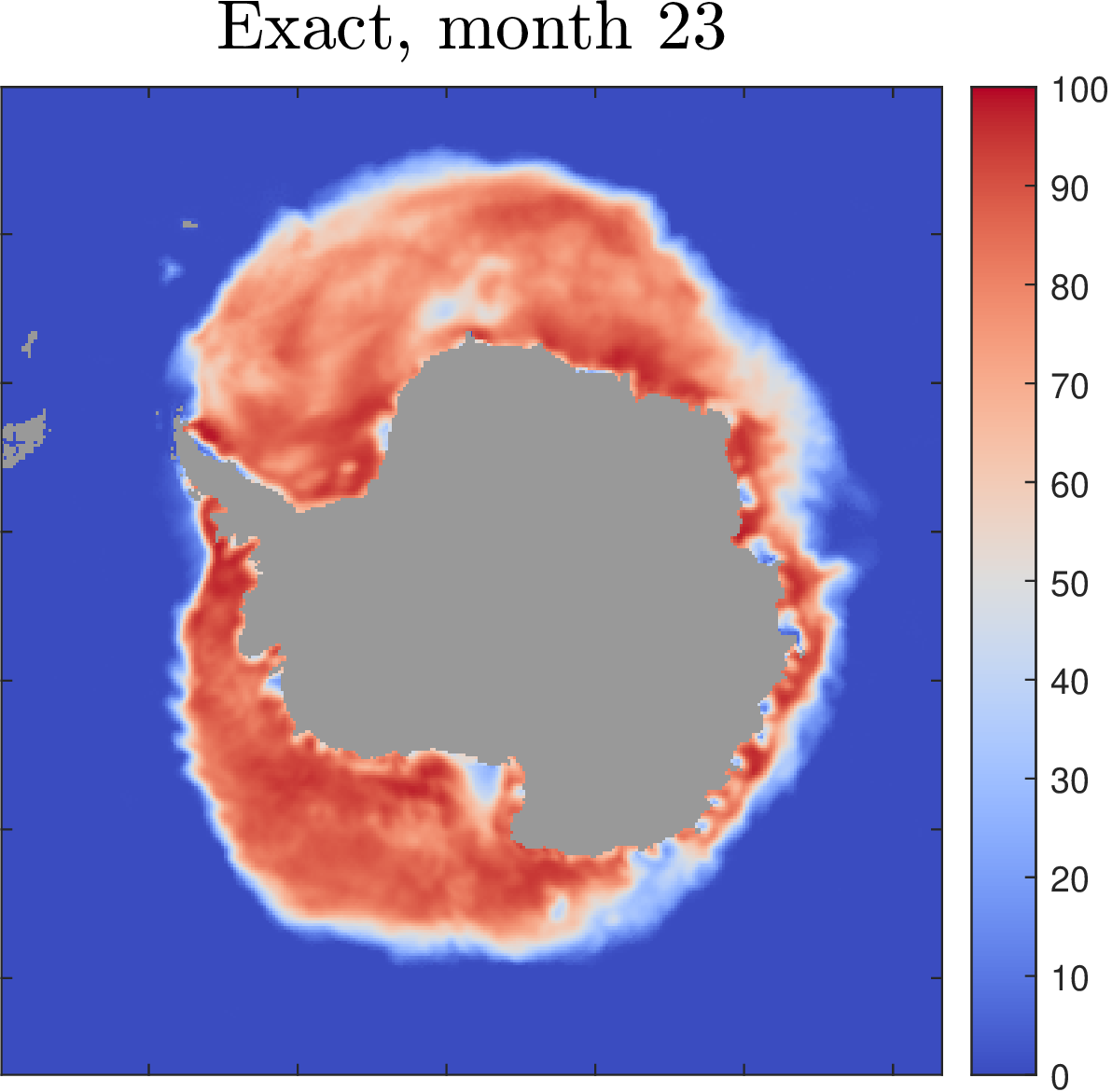}\hfill
    \includegraphics[width=0.32\linewidth]{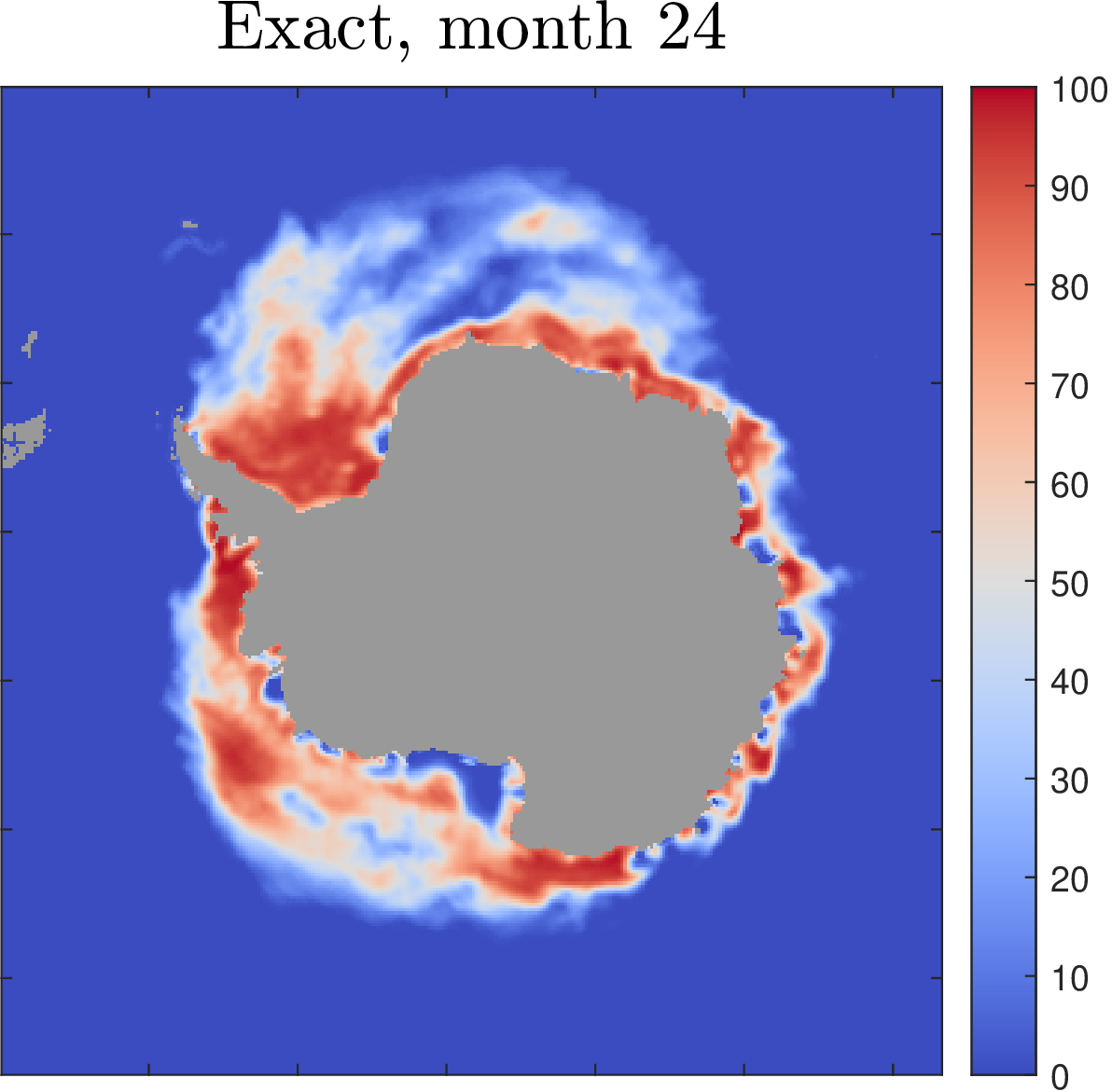}\\ \vspace{0.5cm}
    \includegraphics[width=0.32\linewidth]{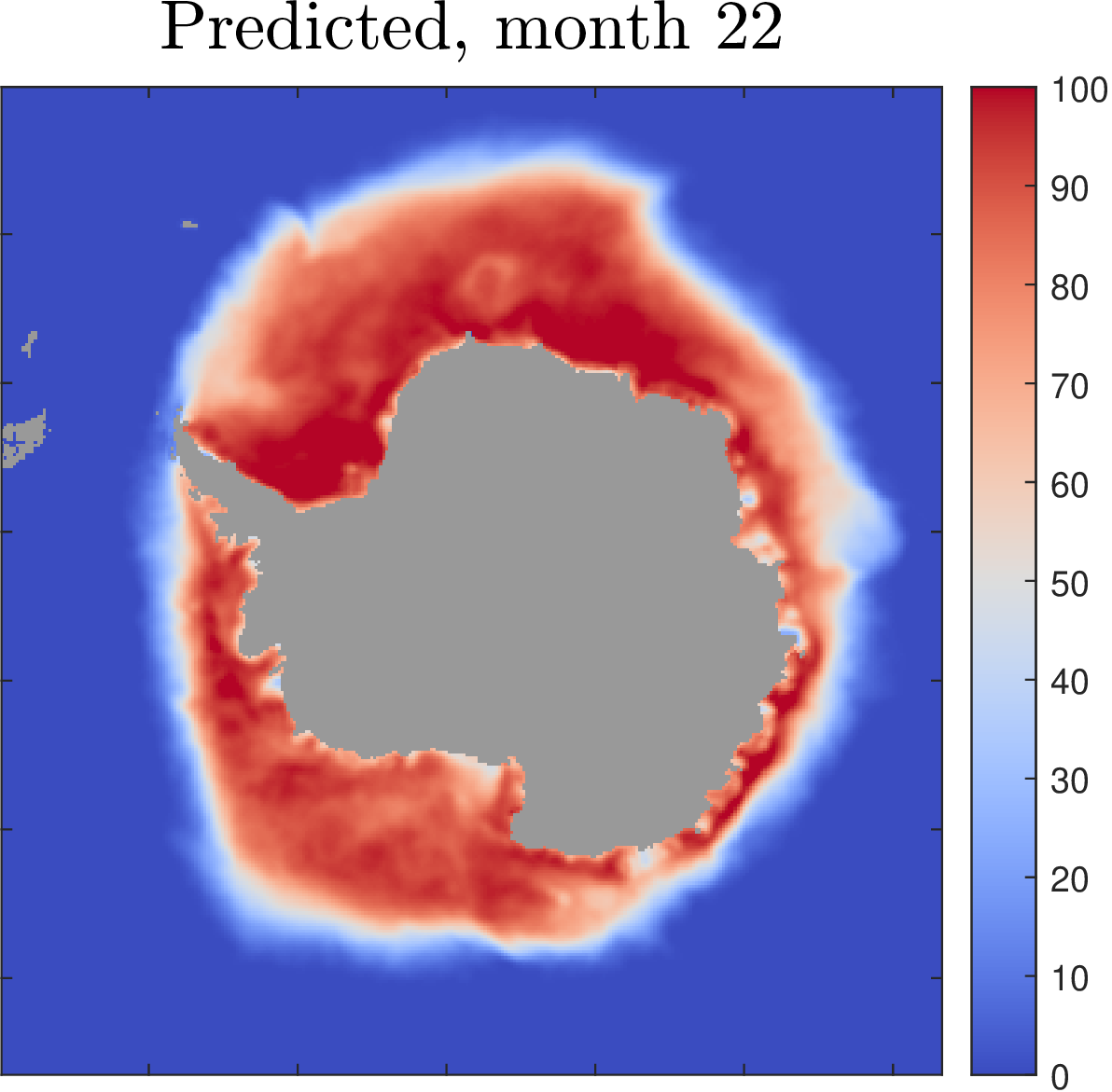}\hfill
    \includegraphics[width=0.32\linewidth]{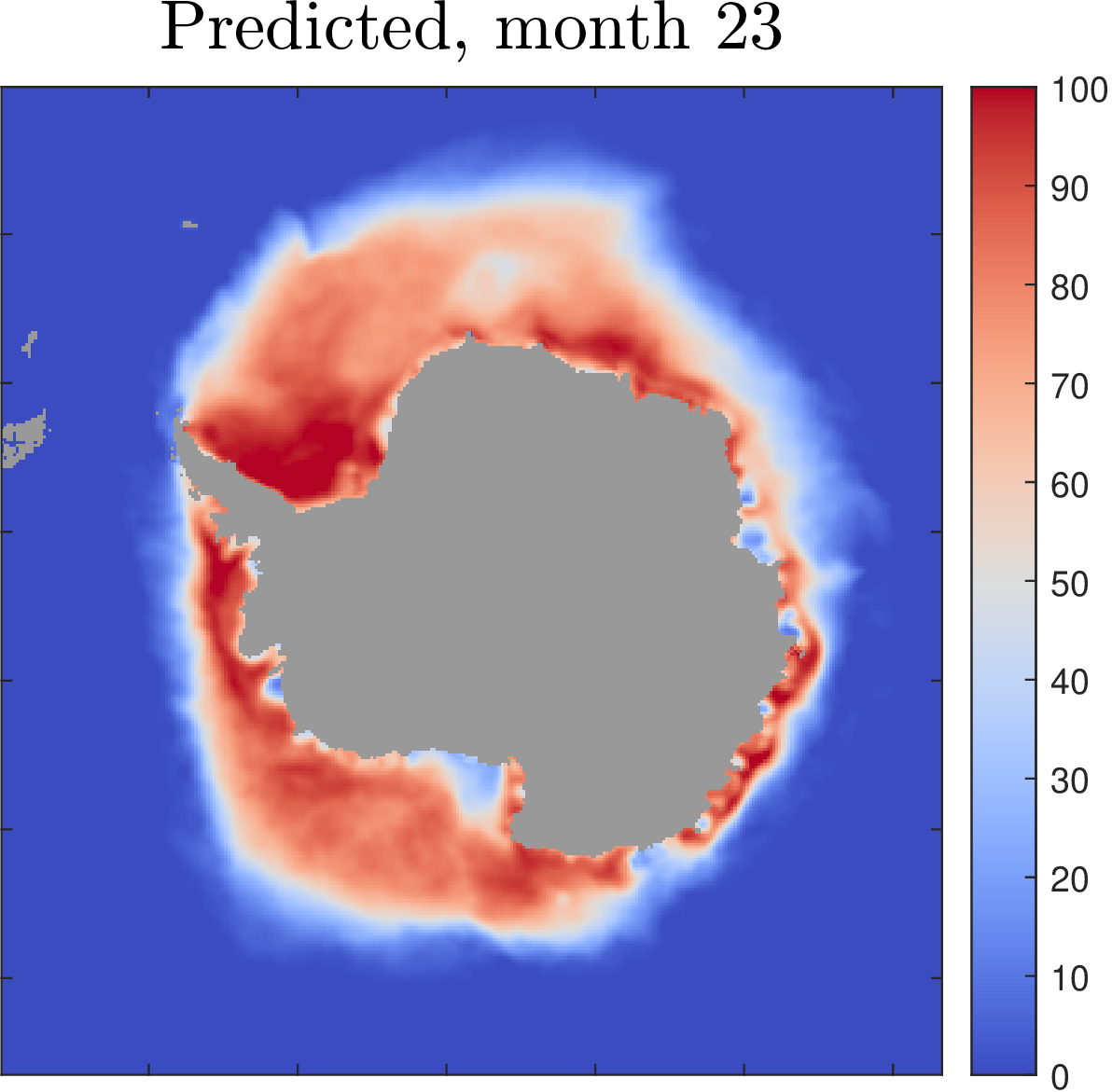}\hfill
    \includegraphics[width=0.32\linewidth]{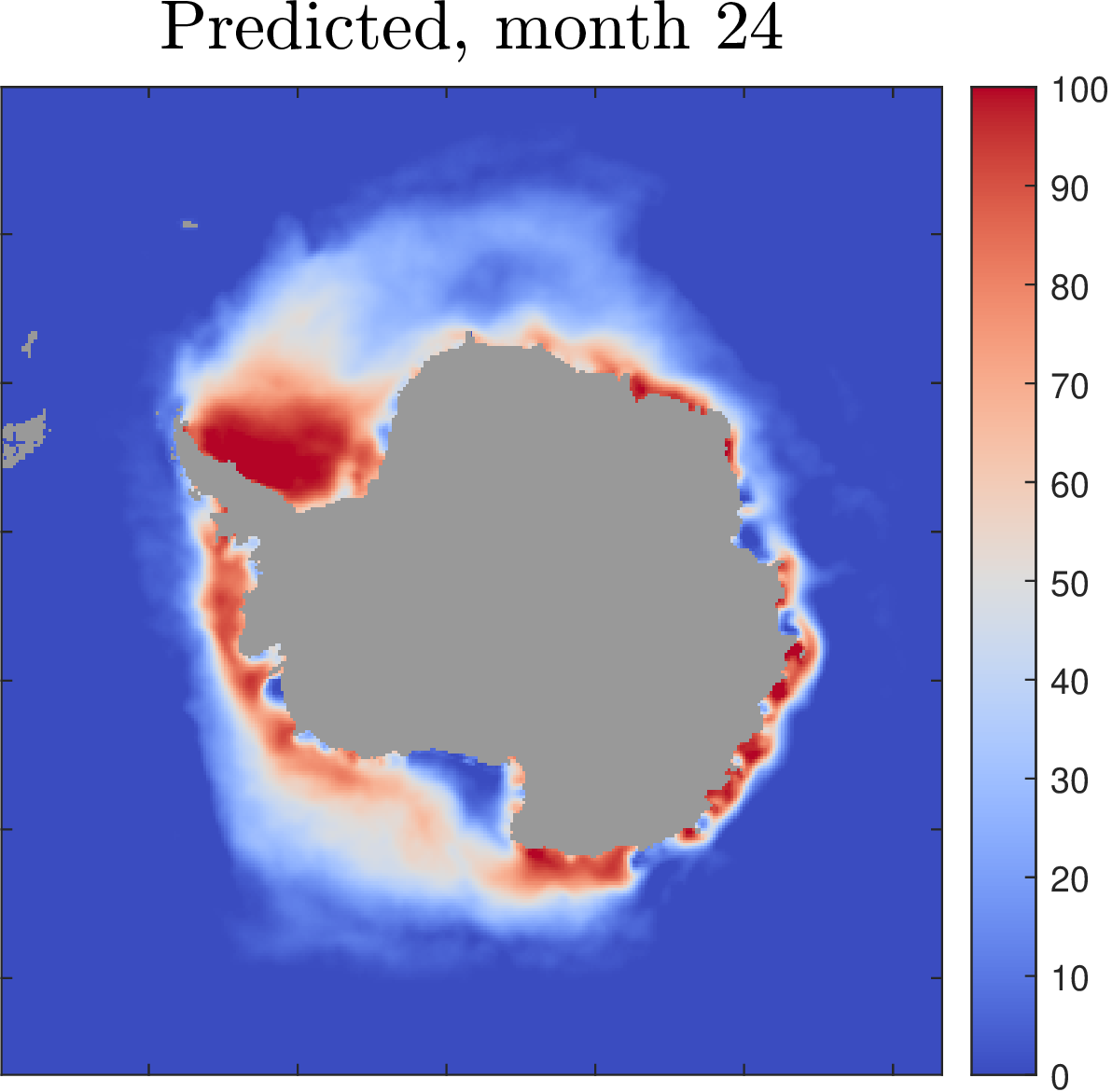}
    \caption{Antarctic sea ice data. Plots of exact and predicted Antarctic sea ice concentrations over a three-month period drawn from data the model was not trained on, using SpecRKHS-Obs (\cref{alg_verifiedPF}) applied to the state observables $g(x)=[x]_i$ for $i=1,\dots,d$.}
    \label{fig:antarcticpredictedexactplots}
\end{figure}

\cref{fig:antarcticresidualspspec} displays the outputs of SpecRKHS-Eig (\cref{evalverif_alg}) and SpecRKHS-PseudoPF (\cref{pspecadjoint_alg}) on this system. The left panel shows the kEDMD eigenvalues with residuals computed by SpecRKHS-Eig. Of the eleven eigenpairs with the smallest residuals (and hence highest coherence), nine have eigenvalues with complex arguments $\pi k/6$ for $k=-4,\dots,4$, i.e., they correspond to monthly variations of sea ice concentration, and all have magnitude less than $1$ which implies that they are decaying. The two remaining eigenpairs are more heavily decaying modes with small arguments corresponding to sea ice loss. We select these eleven eigenpairs because, beyond this point, the number of eigenpairs with residuals below any given threshold increases sharply. The eigenpairs with the lowest time variation are more dominant and have lower residuals. The pseudospectral plot on the right verifies that these are accurate approximate eigenvalues while suggesting that there may also be eigenvalues (though with larger residuals) at $\exp(i\pi k/6)$ for $k\in\{-5,5,6\}$.

In \cref{fig:antarcticpdfmodes}, we display the verified Perron--Frobenius modes outputted by SpecRKHS-Eig (\cref{evalverif_alg}). Since the modes corresponding to complex conjugate eigenvalues are the same, the nine dominant eigenpairs give rise to five unique dominant modes. The monthly modes involve both decaying and increasing sea ice concentrations, as would be expected due to seasonal variations.

Finally, we use the verified eigenvalues to perform model reduction on the KMD and predict future sea ice concentrations. We compare these predictions to the five years of test data on which we did not train the model. We consider the observables given by $g_i(x)=[x]_i$ for $i=1,\ldots,d$, and define $\m{g}(x)=(g_1(x)\,\cdots\, g_d(x))^T$. While these do not lie in the Sobolev space being considered, we can uniformly approximate them on any compact set in which we expect the predictions to lie, so there are no theoretical restrictions. Then, we can use the method of \cref{sec:errorcontrolpfmd} to predict the future evolution of these observables.

\cref{fig:antarcticforecasterrors} shows the relative forecast errors in our proposed method applied to the Antarctic sea ice dataset, compared to two standard methods: DMD and kEDMD. We significantly outperform DMD and kEDMD while also achieving significant model reduction, allowing our model to be evaluated significantly faster and identifying the dominant physical processes for deeper analysis. The kEDMD mode decomposition uses $450$ modes, but we use only the $11$ dominant modes extracted by computing residuals in our proposed method.
In \cref{fig:antarcticpredictedexactplots}, we plot the exact and predicted (using the model given by the verified Perron--Frobenius mode decomposition) distributions of Antarctic sea ice concentration over a three-month period (which is part of the test data and not the training data). Our method accurately captures the global variations in sea ice concentrations.

Our methods extracted the known seasonal cycle dynamics and provided superior predictive skills with a vastly simpler model. This demonstrates its practical value for complex geophysical time series.

\subsection{Northern Hemisphere sea surface height}
\label{sec:seaheight}

This final example examines the dynamics of the monthly mean sea surface height in the Northern Hemisphere. We use the OFES simulation of hindcast data from 1950 to the present day, which was conducted on the Earth Simulator with the support of JAMSTEC \cite{jamstec_2009}. This simulation is quasi-global, covering latitudes from $74.95$\textdegree S to $74.95$\textdegree N in intervals of $0.1$\textdegree (i.e., excluding arctic regions) and longitudes from $0.05$ to $359.95$, also in intervals $0.1$\textdegree. Since the Northern and Southern Hemispheres exhibit different behavior, we restrict latitudes from $0.05$\textdegree N to $74.95$\textdegree N. At each longitude-latitude pair, the simulation provides monthly data for the mean sea surface height, and we ignore any region that is not ocean. We use simulation data from 1995 to 2019 inclusive as training data and then data from 2020 to 2024 inclusive to test the accuracy of our Koopman-based model. This provides $300$ snapshots for training data, where each snapshot has $60330$ degrees of freedom.

We use the Mat{\'e}rn kernel
\[
    \mathfrak{K}(x,y)=\begin{cases}
        1,                                       & \text{if } x=y,   \\
        (\sigma\|x-y\|_2)^2K_2(\sigma\|x-y\|_2), & \text{otherwise},
    \end{cases}
\]
which corresponds to the Sobolev space $H^{30167}(\mathbb{R}^{60330})$, and select $\sigma=1/10000$.
\cref{fig:sealevelevals} shows residuals of kEDMD eigenpairs and pseudospectra computed using SpecRKHS-Eig (\cref{evalverif_alg}) and SpecRKHS-PseudoPF (\cref{pspecadjoint_alg}). In contrast to the previous section, the dominant eigenpairs now have eigenvalues with angles approximately $\pi k/6$ for $k\in\{-2,-1,0,1,2\}$. This implies that the Northern Hemisphere sea level height primarily has a semi-annual oscillation (6-month and 12-month cycles). In \cref{fig:sealevelpfmodes}, we plot the system's two dominant non-trivial Perron--Frobenius modes.

\begin{figure}[t]
    \centering
    \includegraphics[height=0.35\textwidth]{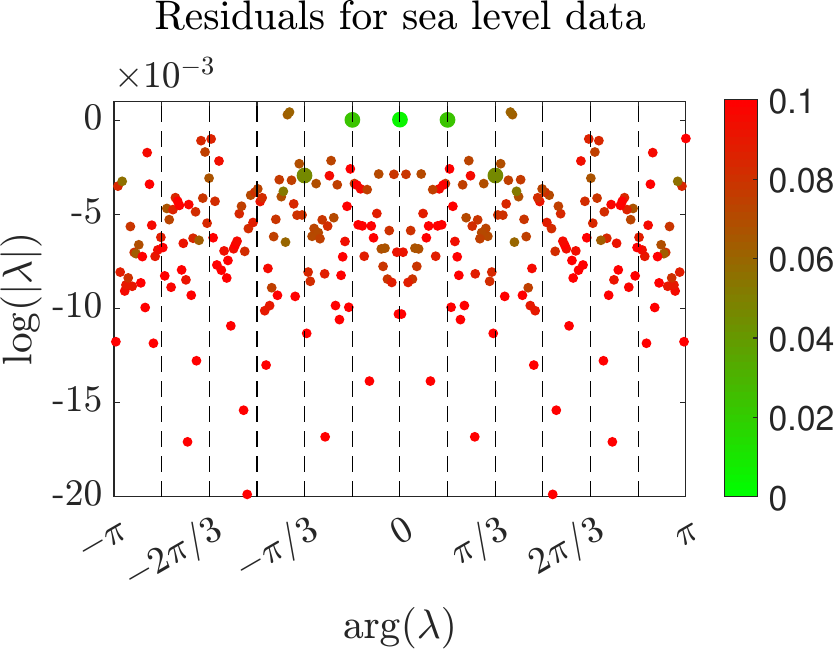}\hspace{1cm}
    \includegraphics[height=0.35\textwidth]{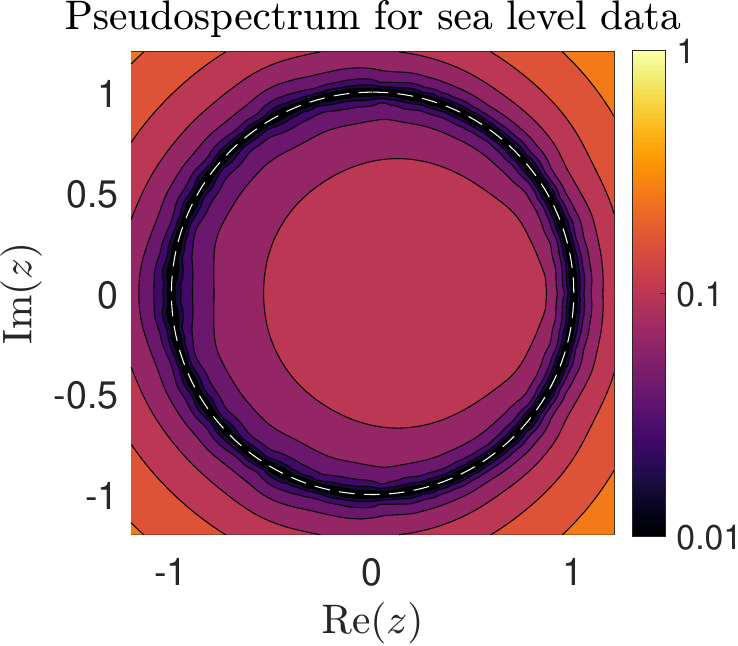}
    \caption{Northern Hemisphere sea surface height. Left: The eigenvalues outputted by kEDMD using a Mat{\'e}rn kernel are plotted based on their argument and (the log of) their modulus. The color coding shows the size of their residuals, as computed by SpecRKHS-Eig (\cref{evalverif_alg}). The dotted black lines correspond to the lines $\mathrm{arg}(\lambda)=\pi k/6$ for $k=-5,\dots,5$, and the $5$ dominant eigenvalues which we use for predictions are larger. Right: The plot shows pseudospectral contours for the same system using the same kernel, as computed by SpecRKHS-PseudoPF (\cref{pspecadjoint_alg}). The white dotted line shows the circle $\{z\in\mathbb{C}:|z|=1\}$.}
    \label{fig:sealevelevals}
\end{figure}

\begin{figure}[th!]
    \centering
    \includegraphics[height=0.2\textwidth]{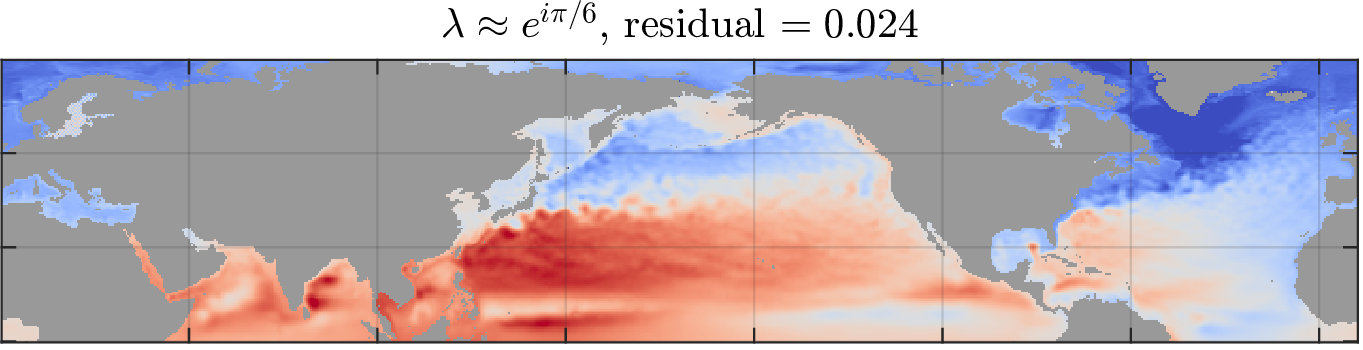}\vspace{0.5cm}
    \includegraphics[height=0.2\textwidth]{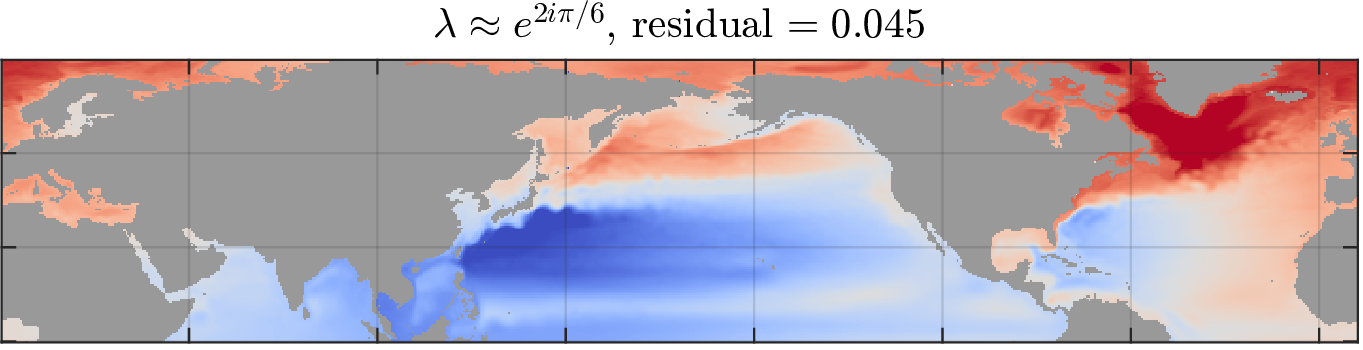}
    \caption{Northern Hemisphere sea surface height. Top: The Perron--Frobenius eigenvalue and mode corresponding to the first verified, non-trivial eigenvalue for the Northern Hemisphere sea level heights system, which has argument approximately $\pi/6$. Bottom: The Perron--Frobenius eigenvalue and mode corresponding to the first verified, non-trivial eigenvalue for the Northern Hemisphere sea level heights system, which has argument approximately $2\pi/6$. In both cases the latitude ranges from $74.95$\textdegree S to $74.95$\textdegree N the longitude from $0.05$ to $359.95$, both in intervals of $0.1$\textdegree.}
    \label{fig:sealevelpfmodes}
\end{figure}

Next, using the verified eigenpairs shown in \cref{fig:sealevelevals} and the arguments of \cref{sec:errorcontrolpfmd}, we compute predictions of future Northern Hemisphere sea level heights and compare them to the simulated values. As before, we compare the relative errors of our method to those of DMD and kEDMD, and the results are shown in \cref{fig:sealevelerror}. Our RKHS approach achieved dramatically better  forecasts than standard DMD or kEDMD. Moreover, our model uses only 5 modes versus 299 for kEDMD – a 98\% reduction in model complexity.

\begin{figure}[t]
    \centering
    \includegraphics[width=0.45\linewidth]{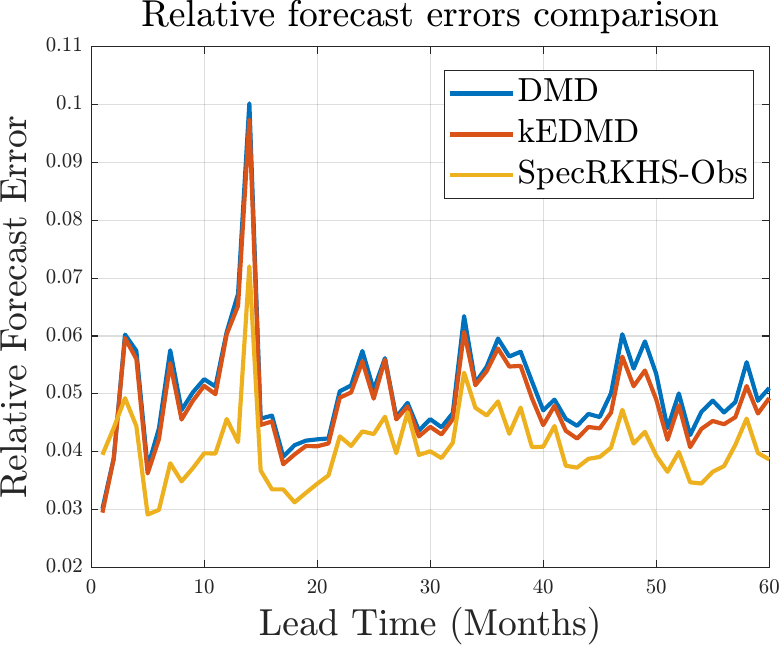}
    \caption{Relative forecast errors for DMD, kEDMD, and SpecRKHS-Obs (\cref{alg_verifiedPF}), compared to the exact Northern Hemisphere sea level heights over five years.}
    \label{fig:sealevelerror}
\end{figure}
\begin{figure}[t]
    \centering
    \includegraphics[height=0.25\linewidth]{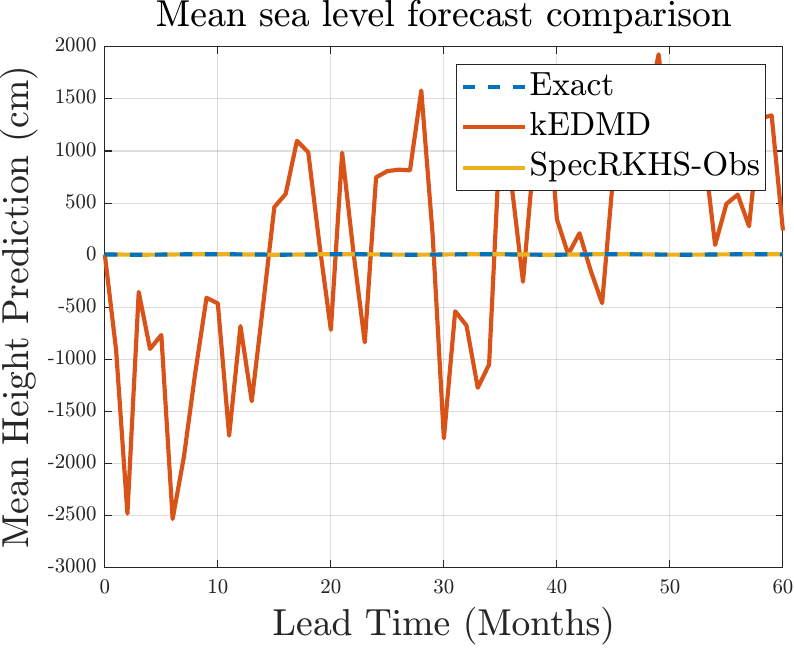}\hfill
    \includegraphics[height=0.25\linewidth]{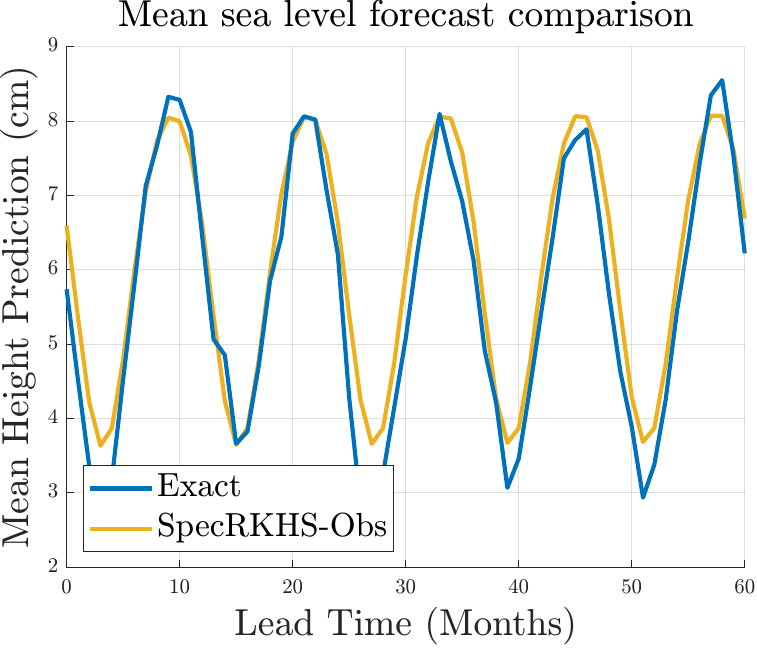}\hfill
    \includegraphics[height=0.25\linewidth]{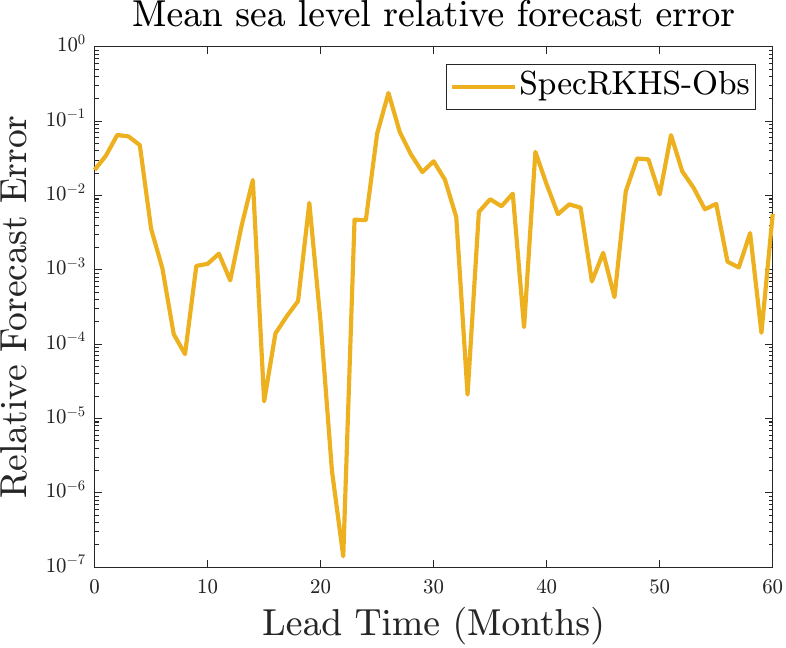}
    \caption{Left: A comparison of predictions between exact simulation data, kEDMD, and SpecRKHS-Obs (\cref{alg_verifiedPF}). Middle: The same plot as on the left but with kEDMD removed. Right: Relative forecast error between SpecRKHS-Obs and the exact mean sea level values over five years.}
    \label{fig:sealevelmeanpressurepred}
\end{figure}

\begin{figure}[th!]
    \centering
    \includegraphics[height=0.35\textwidth]{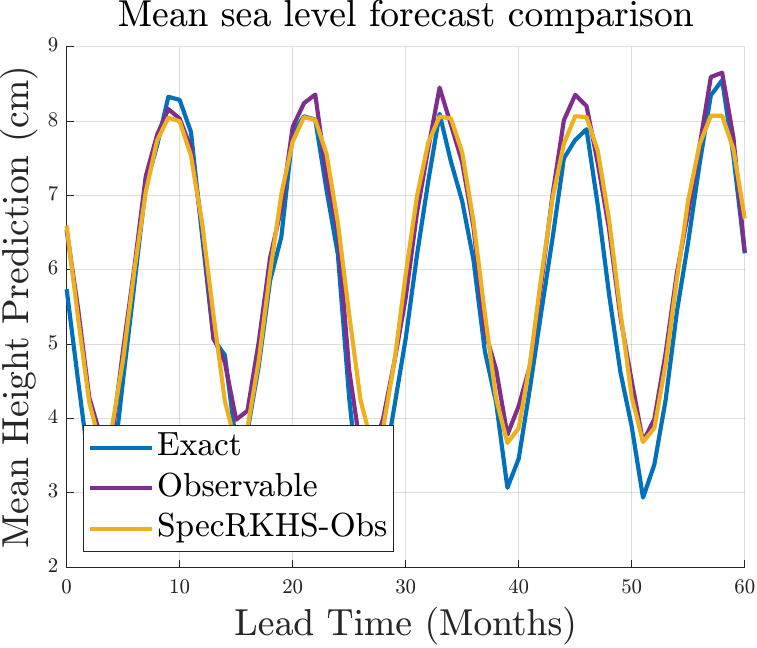}\hspace{1cm}
    \includegraphics[height=0.35\textwidth]{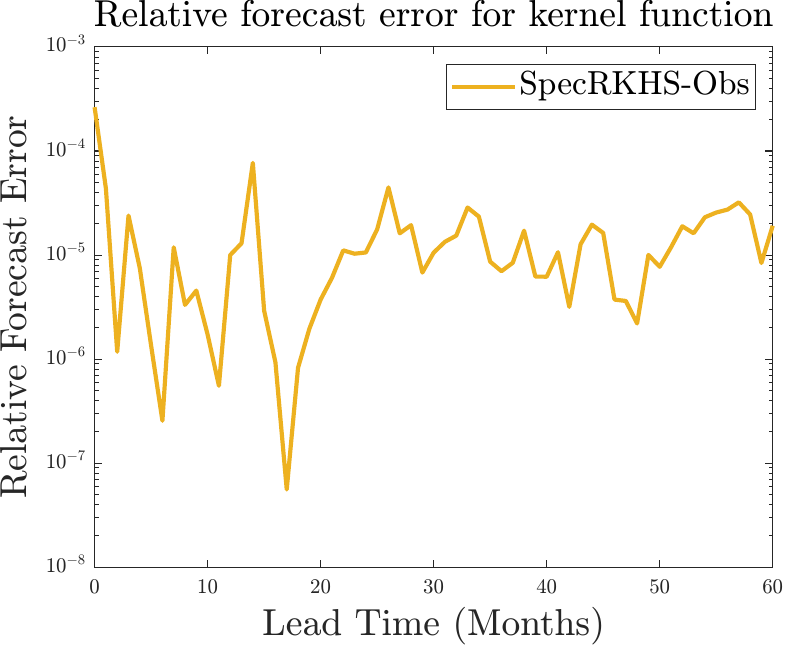}
    \caption{Left: An examination of the sources of error when predicting future trajectories using SpecRKHS-Obs (\cref{alg_verifiedPF}). Right: The relative forecast error between exact values of $\mathfrak{K}_{x_0}$ and approximations of $\mathfrak{K}_{x_0}$ by a basis of verified eigenfunctions.}
    \label{fig:sealevelobs}
\end{figure}

Finally, the Koopman approach also allows for accurate prediction of observables besides the states themselves. For example, we may be interested in the mean sea level height across the Northern Hemisphere. Again, using \cref{sec:errorcontrolpfmd}, we compute an expansion of the mean sea level height observable with respect to the approximate eigenbasis by solving a least squares problem and using this to form the KMDs. We plot the future forecasts for kEDMD and the proposed method and compare them to the exact values of the mean sea level heights on the left-hand side and middle of \cref{fig:sealevelmeanpressurepred}. While the proposed method accurately captures the six-month variation from highest to lowest mean sea level height, the kEDMD predictions are flawed with several overly high magnitude Koopman modes. A different choice of kernel can improve the kEDMD predictions, but this shows that our proposed method is much more robust to the choice of kernel due to the eigenvalue verification procedure. The right-hand side of \cref{fig:sealevelmeanpressurepred} shows the relative error in the predictions of our method.

In addition to the two components of the error mentioned in \cref{sec:errorcontrolpfmd}, a third arises because we do not know the mean sea level observable exactly. The first component arises when approximating (in a least-squares sense) the kernel function at the starting value ${x_0}$ in the eigenbasis (the term involving $\delta$ in \cref{error_pfmd}), the second from the evolution of the verified eigenfunctions themselves (the term involving $\epsilon$ in \cref{error_pfmd}) and the third from errors in expanding the mean sea level observable in terms of kernel function. The left-hand side of \cref{fig:sealevelobs} shows the exact value of the mean sea level height, the exact evolution of the eigenbasis expanded observable approximating the mean sea level height, and the evolution of this observable according to the proposed method. Most of the errors come from the third component, i.e., being unable to express the mean sea level observable exactly, rather than the first two. In particular, the right-hand side of \cref{fig:sealevelobs} shows the relative forecast error between the exact values of observable $\mathfrak{K}_{x_0}$ on the data and approximations of $\mathfrak{K}_{x_0}$ using the verified Perron--Frobenius mode decomposition as discussed in \cref{sec:errorcontrolpfmd}; we see that this error is small, reinforcing the claim of the previous sentence.

\section*{Acknowledgements}
This work was supported by the Office of Naval Research (ONR) under grant N00014-23-1-2729. The authors would like to thank James Hogg for useful discussions about the Antarctic sea ice dataset.

\addcontentsline{toc}{section}{References}
\small
\linespread{0.9}
\bibliography{RKHS}
\bibliographystyle{abbrv}
\end{document}